\numberwithin{equation}{section}
\newtheorem{thm}{Theorem}[section]
\newtheorem{prop}[thm]{Proposition}
\newtheorem{lem}[thm]{Lemma}
\newtheorem{cor}[thm]{Corollary}
\theoremstyle{remark}
\newtheorem{rem}{Remark}[section]
\newtheorem{defn}{Definition}
\newcommand{\BBB}{\mathbb}
\newcommand{\R}{{\BBB R}}
\newcommand{\Z}{{\BBB Z}}
\newcommand{\T}{{\BBB T}}
\newcommand{\N}{{\BBB N}}
\newcommand{\HT}{{\mathcal H}}%
\newcommand{\LR}[1]{{\langle {#1} \rangle }}
\newcommand{\lec}{{\ \lesssim \ }}
\newcommand{\al}{\alpha}
\newcommand{\ga}{\gamma}
\newcommand{\vp}{\varphi}
\newcommand{\e}{\varepsilon}
\newcommand{\la}{\lambda}
\newcommand{\La}{\Lambda}
\newcommand{\de}{\delta}
\newcommand{\supp}{\operatorname{supp}}
\newcommand{\I}{\infty}
\newcommand{\sgn}{\operatorname{sgn}}
\newcommand{\EQS}[1]{\begin{align} #1 \end{align}}
\newcommand{\EQQS}[1]{\begin{align*} #1 \end{align*}}
\newcommand{\EQQ}[1]{\begin{equation*} \begin{split} #1
 \end{split} \end{equation*}}
\newcommand{\F}{\mathcal{F}}
\newcommand{\ti}{\widetilde}
\newcommand{\ha}{\widehat}
\newcommand{\ds}{\partial_{x}}%
\newcommand{\dt}{\partial_{t}}%
\title[L.W.P. for 4thBO]
{Local well-posedness for fourth order Benjamin-Ono type equations}
\author[T. Tanaka]{Tomoyuki Tanaka}
\address[T. Tanaka]{Graduate School of Mathematics, Nagoya University,
Chikusa-ku, Nagoya, 464-8602, Japan}
\email[T. Tanaka]{d18003s@math.nagoya-u.ac.jp}
\keywords{Benjamin-Ono equation, well-posedness, Cauchy problem, energy method, higher order}
\begin{document}
\maketitle

\begin{abstract}
We continue to study the local well-posedness for higher order Benjamin-Ono type equations, especially fourth order equations.
The proof is based on the energy methods with correction terms.
Although one of correction terms can eliminate the highest order derivative loss in the energy inequality, it may yield a lower order derivative loss than the worst term.
In order to cancel this derivative loss, we define correction terms inductively.
\end{abstract}
\maketitle
\setcounter{page}{001}

\section{Introduction}

We consider the Cauthy problem of the following fourth order Benjamin-Ono type equations:
\EQS{
\label{BO1}
&\dt u=\ds K(u),\\
\label{BO2}
&u(0,x)=\vp(x),
}
where $t\in\R$, $x\in \R$ or $\T(=:\R/2\pi\Z)$, $u=u(t,x),\vp=\vp(x)\in\R$,
\EQS{\label{K}
\begin{aligned}
K(u)
&:=\HT\ds^{3}u+c_{1}u\ds^{2}u+c_{2}(\ds u)^{2}+c_{3}(\HT\ds u)^{2}+c_{4}\HT(u\HT\ds^{2}u)\\
&\quad+c_{5}\HT(u^{2}\ds u)+c_{6}u\HT(u\ds u)+c_{7}u^{2}\HT\ds u-u^{4}
\end{aligned}
}
and $c_{j}\in\R$ for $j=1,\dots,7$.
$\HT$ is the Hilbert transform defined by
\[\ha{\HT f}(0)=0\quad{\rm and}\quad\ha{\HT f}(\xi)=-i\sgn(\xi)\hat{f}(\xi)\]
for $\xi\in\R\backslash\{0\}$ or $\Z\backslash\{0\}$
where $\hat{f}$ is the Fourier transform of $f$: $\hat{f}(\xi)=\F f(\xi)=(2\pi)^{-1}\int f(x)e^{-ix\xi}dx$.
The well-known Benjamin-Ono equation
\EQS{
\label{2BO}
\dt u+\HT\ds^{2}u+2u\ds u=0
}
describes the behavior of long internal waves in deep stratified fluids.
The equation \eqref{2BO} also has infinitely many conservation laws,
which generates a hierarchy of Hamiltonian equations of order $j$.
The equation \eqref{BO1} with $c_{1}=3$, $-c_{2}=c_{5}=c_{6}=c_{7}=-2$ and $c_{3}=c_{4}=-1$ is integrable
and the third equation in the Benjamin-Ono hierarchy \cite{Matsuno}.

There are a lot of literature on the Cauchy problem on \eqref{2BO}.
On the real line case,
Ionescu-Kenig \cite{IoKe07} showed the local well-posedness in $H^{s}(\R)$ for $s\ge0$
(see also \cite{Molinet12} for another proof
and \cite{IoKe072} for the local well-posedness with small complex valued data).
On the periodic case, Molinet \cite{Molinet08,Molinet09} showed the local well-posedness
in $H^{s}(\T)$ for $s\ge0$ and that this result was sharp.
See \cite{ABFS,BuPl,Iorio86,KeKe,KoTz,Ponce,Tao} for former results.

In \cite{Tanaka}, we studied the local well-posedness for the equation
\EQS{
\label{3BO}
\dt u=\ds(\ds^{2}u+d_{1}u\HT\ds u+d_{2}\HT(u\ds u)-u^{3}),\quad x\in\T,
}
where $d_{1},d_{2}\in\R$.
The equation \eqref{3BO} with $d_{1}=d_{2}=3/2$ is integrable and the second equation in the Benjamin-Ono hierarchy.
The local well-posedness for \eqref{3BO} is based on the energy method with a correction term.
Namely, we employ the energy method to
\[E_{*}(u)
:=\|u\|_{L^{2}}^{2}+\|D^{s}u\|_{L^{2}}^{2}+a_{s}\|u\|_{L^{2}}^{4s+2}
  +b_{s}\int u(\HT D^{s}u)D^{s-2}\ds udx\]
(see Definition 2 in \cite{Tanaka}) in order to eliminate the first order derivative loss.
In fact, we have the second order derivative loss resulting from nonlinear terms in the energy inequality, but it can be reduced to the first order derivative loss because of the symmetry (see Lemma 2.6 in \cite{Tanaka}).
For related results such as the local well-posedness on the real line, see \cite{FeHa96,Feng97,LPP11,MP12}.

On the other hand, as far as we know, there are no well-posedness results for \eqref{BO1} either on the real line or on the torus.
In particular, some of nonlinear terms in \eqref{BO1} have three derivatives, which implies that the local well-posedness for \eqref{BO1} is far from trivial.
The main result is the following:

\begin{thm}\label{main}
We write $M=\R$ or $\T$.
Let $s\ge s_{0}>7/2$.
For any $\vp\in H^{s}(M)$,
there exist $T=T(\|\vp\|_{H^{s_{0}}})>0$ and the unique solution $u\in C([-T,T];H^{s}(M))$
to the IVP \eqref{BO1}--\eqref{BO2} on $[-T,T]$.
Moreover, for any $R>0$,
the solution map $\vp\mapsto u(t)$ is continuous
from the ball $\{\vp\in H^{s}(M);\|\vp\|_{H^{s}}\le R\}$ to $C([-T,T];H^{s}(M))$.
\end{thm}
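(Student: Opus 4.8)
The plan is to prove Theorem~\ref{main} by the energy method with correction terms, following the scheme of \cite{Tanaka} but replacing the single correction used there by an inductively generated family. The crux is an a priori estimate: I would construct a modified energy $E^{s}(u)$ that stays comparable to $\|u\|_{H^{s}}^{2}$ as long as $\|u\|_{H^{s_{0}}}$ is bounded and that obeys
\[
\frac{d}{dt}E^{s}(u)\lesssim C\!\left(\|u\|_{H^{s_{0}}}\right)\|u\|_{H^{s}}^{2}
\]
with no loss of derivatives. Granting such an estimate, Gronwall yields a uniform $H^{s}$ bound on an interval $[-T,T]$ with $T=T(\|\vp\|_{H^{s_{0}}})$, and the existence, uniqueness and continuity assertions are then assembled in the standard way.

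To see where the corrections come from, differentiate the plain energy $\|D^{s}u\|_{L^{2}}^{2}$ along \eqref{BO1}. The leading operator $\ds\HT\ds^{3}=\HT\ds^{4}$ is skew-adjoint and contributes nothing, but the two quadratic terms of \eqref{K} carrying three derivatives, $c_{1}u\ds^{2}u$ and $c_{4}\HT(u\HT\ds^{2}u)$, yield after $D^{s}\ds$ and pairing with $D^{s}u$ a term losing three derivatives relative to $\|u\|_{H^{s}}^{2}$. Integrating by parts with the self/skew-adjointness of the leading symbols brings the worst piece down to expressions such as $\int(\ds^{3}u)(D^{s}u)^{2}\,dx$, which is precisely why the threshold $s_{0}>7/2$ enters, since $H^{s_{0}}\hookrightarrow W^{3,\I}$ demands $s_{0}>7/2$. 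What survives, however, still loses lower-order derivatives; following the abstract, I would add a correction term of the shape $\int u\,(\HT D^{s}u)\,(D^{s-k}\ds u)\,dx$ — the placement of $\HT$ and the weight $k$ fixed by matching the residual — whose time derivative, computed via \eqref{BO1}, cancels the highest remaining loss but produces a fresh loss one order lower. Repeating this, I would \emph{define the correction terms inductively}, each one absorbing the loss left by its predecessor, until the cascade terminates in a term bounded by $C(\|u\|_{H^{s_{0}}})\|u\|_{H^{s}}^{2}$. Checking that this induction actually closes — that the orders strictly decrease and that every residual has the algebraic form some correction can cancel, including the interaction between the local term $c_{1}u\ds^{2}u$ and the nonlocal term $c_{4}\HT(u\HT\ds^{2}u)$ — is the main obstacle, and is where the fourth-order problem is genuinely harder than the third-order one of \cite{Tanaka}.

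For existence I would regularize \eqref{BO1} by artificial viscosity, solving $\dt u=\ds K(u)-\eps\ds^{4}u$. Since $\HT\ds^{4}$ does not affect the energy while $-\eps\ds^{4}$ is dissipative and of higher order than every nonlinear term of \eqref{K}, this regularized equation is semilinear parabolic and has smooth local solutions by a contraction argument; after verifying that the viscosity contributions to $\frac{d}{dt}E^{s}$ are dominated by the dissipation, the modified energy gives bounds uniform in $\eps$ and a common existence time. A compactness argument (Aubin--Lions) then lets me pass to the limit $\eps\to0$, producing a solution in $C([-T,T];H^{s})$ first for smooth data and, by the a priori estimate and density, for every $\vp\in H^{s}$. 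For uniqueness I would take two solutions $u_{1},u_{2}$ and run an energy estimate on the difference $v=u_{1}-u_{2}$; the equation for $v$ is linear in $v$ with coefficients built from $u_{1},u_{2}\in H^{s}$, so a correction-term estimate of the same type, carried out at a suitable level $H^{\sigma}$ with $\sigma\le s$, closes by Gronwall and forces $v\equiv0$.

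Finally, continuous dependence would follow from the Bona--Smith method: approximate $\vp$ by frequency-truncated data $\vp_{\delta}$, use that the resulting solutions are smooth and depend continuously on the data at high regularity, and estimate $u-u_{\delta}$ in $C([-T,T];H^{s})$ by combining the $H^{s}$ smallness of the data tails — propagated by the level-$s$ a priori bound — with the convergence of the smooth solutions; the triangle inequality over the ball $\{\vp\in H^{s};\|\vp\|_{H^{s}}\le R\}$ then gives the asserted continuity. I expect essentially all the difficulty to reside in the energy estimate of the second paragraph; once it and its analogue for the difference equation are in place, the existence, uniqueness and continuity steps are routine adaptations of \cite{Tanaka}.
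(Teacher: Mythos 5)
Your proposal follows essentially the same route as the paper: a modified energy with inductively determined correction terms (the paper uses exactly three, $M_{s}^{(1)},M_{s}^{(2)},M_{s}^{(3)}$, the last with a coefficient quadratic in $s$ because the time derivative of $M_{s}^{(1)}$ feeds back into the lower-order loss), parabolic regularization by $-\e\ds^{4}u$, uniform-in-$\e$ energy bounds, and a Bona--Smith argument for convergence in $C([-T,T];H^{s})$ and continuous dependence. The only cosmetic difference is that you invoke Aubin--Lions compactness to pass to the limit, whereas the paper obtains a Cauchy sequence directly from the two-solution energy estimates; both land in the same place once the Bona--Smith step is carried out.
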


Now we mention the idea of the proof of Theorem \ref{main}.
We may have the third order derivative loss since nonlinear terms in \eqref{BO1} have three derivatives at most.
By the symmetry, it can be reduced to the second order derivative loss (see Lemma \ref{lem4.5}).
Our proof is based on the energy method, and the standard energy estimate gives only the following:
\EQS{\label{deri.los.}
\frac{d}{dt}\|D^{s}u(t)\|_{L^{2}}^{2}
\le C(1+\|u\|_{H^{s_{0}}})^{3}\|u(t)\|_{H^{s}}^{2}+|L_{1}(u)|+|L_{2}(u)|+|L_{3}(u)|,
}
where $s_{0}>7/2$, $D=\F^{-1}|\xi|\F$ and
\EQQS{
&L_{1}(u):=\la_{1}(s)\int\ds u(D^{s}\ds u)^{2}dx,
\quad L_{2}(u):=\la_{2}(s)\int(\HT\ds^{2}u)(\HT D^{s}\ds u)D^{s}udx,\\
&L_{3}(u):=\la_{3}(s)\int u\ds u (\HT D^{s}\ds u)D^{s}udx
}
(see Definition \ref{corr.terms.} for definitions of $\la_{j}(s)$).
Here, we note that $L_{1}(u)$ is the second order derivative loss, and $L_{2}(u)$ and $L_{3}(u)$ are the first order derivative losses.
We need to to handle $L_{j}(u)$ for $j=1,2,3$ by $\|u\|_{H^{s}}$ if we use the standard argument.
However, it is impossible to do that.
In order to overcome this difficulty, we modify the energy by adding correction terms, following the idea from Kwon \cite{Kwon} who studied the local well-posedness for the fifth order KdV equation (see also Segata \cite{Segata}, Kenig-Pilod \cite{KePi16} and Tsugawa \cite{Tsugawa17}).
Namely, we consider
\EQQS{
E_{s}(u):=\frac{1}{2}\|u\|_{L^{2}}^{2}(1+C_{s}\|u\|_{L^{2}}^{2}+C_{s}\|u\|_{L^{2}}^{4s})
            +\frac{1}{2}\|D^{s}u\|_{L^{2}}^{2}+\sum_{j=1}^{3}M_{s}^{(j)}(u),
}
with
\EQQS{
&M_{s}^{(1)}(u):=\frac{\la_{1}(s)}{4}\int u(\HT D^{s}u)\HT D^{s-1}udx,\\
&M_{s}^{(2)}(u):=\frac{\la_{2}(s)}{4}\int (\HT\ds u)(D^{s-1}u)^{2}dx,\\
&M_{s}^{(3)}(u):=\frac{\la_{1}(s)\la_{4}(s)+4\la_{3}(s)}{32}\int u^{2}(D^{s-1}u)^{2}dx
}
(see Definition \ref{corr.terms.}).
The first two terms correspond to $\|u\|_{H^{s}}$, and $M_{s}^{(1)}(u)$, $M_{s}^{(2)}(u)$ and $M_{s}^{(3)}(u)$ are correction terms.
As defined in Definition \ref{corr.terms.},
we note that $\la_{j}(s)$ for $j=1,2,3,4$ is a linear polynomial in $s$.
The coefficient of $M_{s}^{(j)}(u)$ can be determined so that the time derivative of $M_{s}^{(j)}(u)$ cancels out $L_{j}(u)$ for $j=1,2$.
On the other hand, the time derivative of $M_{s}^{(1)}(u)$ also yields $L_{3}(u)$, that is,
\[\frac{d}{dt}M_{s}^{(1)}(u)\sim L_{1}(u)+L_{3}(u)\]
since $L_{1}(u)$ is the second order derivative loss.
Therefore, we need to collect coefficients of $L_{3}(u)$ resulting from both $\|D^{s}u\|$ and $M_{s}^{(1)}(u)$ when we determine the coefficient of $M_{s}^{(3)}(u)$.
For this reason, the coefficient of $M_{s}^{(3)}(u)$ is a quadratic polynomial in $s$.

Subsequently, using the conservation law corresponding to the $H^{4}$-norm of the solution,
we can obtain an {\it a priori} estimate of solutions in $H^{4}$.
Therefore, we can easily extend the solution obtained in Theorem \ref{main} globally.
Namely, we obtain the following result:

\begin{cor}
We write $M=\R$ or $\T$.
The Cauchy problem \eqref{BO1}--\eqref{BO2} with $c_{1}=3$, $-c_{2}=c_{5}=c_{6}=c_{7}=-2$ and $c_{3}=c_{4}=-1$
is globally well-posed in $H^{s}(M)$ for $s\ge4$.
\end{cor}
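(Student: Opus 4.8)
The plan is to upgrade the local solutions of Theorem~\ref{main} to global ones by means of the conserved quantities of the integrable equation, exploiting the fact that the existence time in Theorem~\ref{main} depends only on the lower norm $\|\vp\|_{H^{s_{0}}}$ for a fixed $s_{0}\in(7/2,4]$, while the solution keeps the regularity of its data. Fix such an $s_{0}$. Given $\vp\in H^{s}(M)$ with $s\ge4$ we have in particular $\vp\in H^{4}(M)$, so I would first run Theorem~\ref{main} and extend the solution to a maximal interval $(-T_{\max},T_{\max})$ inside which $u\in C((-T_{\max},T_{\max});H^{s}(M))$. The associated blow-up alternative reads: if $T_{\max}<\I$, then $\|u(t)\|_{H^{s_{0}}}$ is unbounded on $[0,T_{\max})$, since otherwise a uniform bound $\|u(t)\|_{H^{s_{0}}}\le M$ would let me reapply Theorem~\ref{main} from times near $T_{\max}$ with the fixed step $T(M)>0$. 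As $s_{0}\le4$, it therefore suffices to bound $\|u(t)\|_{H^{4}}$ uniformly on the existence interval.

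For the parameter choice $c_{1}=3$, $-c_{2}=c_{5}=c_{6}=c_{7}=-2$, $c_{3}=c_{4}=-1$ the equation is integrable, and following \cite{Matsuno} it possesses the conserved functional $\mathcal{E}_{4}(u)$ of the Benjamin-Ono hierarchy whose principal (quadratic) part is $\|D^{4}u\|_{L^{2}}^{2}$ and whose remaining terms are, after integration by parts, integrals of nonlinear expressions carrying at most three derivatives on each factor. I would establish coercivity by estimating each of these lower order terms through the one-dimensional embedding $H^{1}\hookrightarrow L^{\I}$ together with Gagliardo-Nirenberg interpolation: for instance a typical cubic term is controlled by
\EQQS{
\Big|\int u(\ds^{3}u)^{2}\,dx\Big|\le\|u\|_{L^{\I}}\|\ds^{3}u\|_{L^{2}}^{2}\le C(\|u\|_{L^{2}})\|u\|_{H^{4}}^{2-\de}
}
with $\de>0$, and every other term is subcritical in the same sense. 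Summing these bounds yields $\|D^{4}u\|_{L^{2}}^{2}\le\mathcal{E}_{4}(u)+C(\|u\|_{L^{2}})\|u\|_{H^{4}}^{2-\de}$.

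Finally I would close the estimate. Since $\|u\|_{L^{2}}$ is conserved (the lowest conservation law of the hierarchy) and $\mathcal{E}_{4}(u(t))=\mathcal{E}_{4}(\vp)$, Young's inequality absorbs the subcritical power $\|u\|_{H^{4}}^{2-\de}$ into $\|u\|_{H^{4}}^{2}\sim\|u\|_{L^{2}}^{2}+\|D^{4}u\|_{L^{2}}^{2}$, producing an a priori bound $\sup_{t}\|u(t)\|_{H^{4}}\le\Phi(\|\vp\|_{H^{4}})$ with $\Phi$ continuous and increasing. This bounds $\|u(t)\|_{H^{s_{0}}}$ uniformly, so the blow-up alternative forces $T_{\max}=\I$, and the $H^{s}$ solution extends to all of $\R$; uniqueness and continuous dependence are inherited from the local statements of Theorem~\ref{main}. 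The hard part will be justifying that $\mathcal{E}_{4}$ is genuinely conserved along merely $H^{4}$ solutions, because the naive differentiation of $\mathcal{E}_{4}(u(t))$ in time loses derivatives; I would resolve this by approximating $\vp$ with smooth data, for which the conservation holds by a direct computation, and then passing to the limit via the continuity of the solution map provided by Theorem~\ref{main}.
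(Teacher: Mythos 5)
Your proposal is correct and follows essentially the same route the paper indicates (the paper only sketches this in the introduction): use the conservation law of the Benjamin--Ono hierarchy whose quadratic part is the $H^{4}$-norm to get an a priori bound, and combine it with the fact that the local existence time in Theorem~\ref{main} depends only on $\|\vp\|_{H^{s_{0}}}$ with $s_{0}\le4$ to iterate globally. Your additional care about coercivity of the lower-order terms and about justifying conservation at $H^{4}$ regularity by smooth approximation fills in details the paper leaves implicit.
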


In what follows, we consider our problem only on $M=\T$, and the proof on $\R$ is alomst same as that on $\T$.
There are two differences, and one is the following:
\EQQS{
\HT(\HT f)(x)=
\begin{cases}
-f(x),&x\in\R,\\
-f(x)+\hat{f}(0),&x\in\T.
\end{cases}
}
However, such a difference does not yield difficulties in our argument since we have $|\hat{f}(0)|\le\|\hat{f}\|_{l^{\I}(\Z)}\le\|f\|_{L^{1}(\T)}\lesssim\|f\|_{L^{2}(\T)}$.
The other one is the Gagliardo-Nirenberg inequality (Lemma \ref{G.N.}), that is, we do not need to add $\|f\|_{L^{2}(\R)}$ on $\R$ when $l=0$.

This paper is organized as follows.
In Section 2, we prove the main result, admitting two Propositions \ref{ene.est.pr} and \ref{ene.est.0}.
In Section 3, we show the main estimate which is Proposition \ref{ene.est.pr}, that is, the energy inequality between two solutions in $H^{s}$.
In Section 4, we give a proof of the energy estimate in $L^{2}$ which is Proposition \ref{ene.est.0}.

\section{Proof of Theorem \ref{main}}
In this section we prove Theorem \ref{main}, admitting two propositions.
We denote the norm in $L^{p}(\T)$ by $\|\cdot\|_{p}$.
In particular, we simply write $\|\cdot\|:=\|\cdot\|_{2}$.
We denote $\|f\|_{H^{s}}:=2^{-1/2}(\|f\|^{2}+\|D^{s}f\|^{2})^{1/2}$ for a function $f$ and $s\ge0$,
where $D=\F^{-1}|\xi|\F$.
Let $\LR{\cdot,\cdot}:=\LR{\cdot,\cdot}_{L^{2}}$.
We also use the same symbol for $\LR{\cdot}:=(1+|\cdot|^{2})^{1/2}$.
Let $[A,B]=AB-BA$.
\begin{defn}
For a function $u$, we define
\EQQS{
&F_{1}(u):=\HT\ds^{3}u,\quad F_{2}(u):=c_{1}u\ds^{2}u+c_{2}(\ds u)^{2}+c_{3}(\HT\ds u)^{2}+c_{4}\HT(u\HT\ds^{2}u),\\
&F_{3}(u):=c_{5}\HT(u^{2}\ds u)+c_{6}u\HT(u\ds u)+c_{7}u^{2}\HT\ds u,\quad F_{4}(u):=-u^{4}.
}
Recall that $K(u)=F_{1}(u)+F_{2}(u)+F_{3}(u)+F_{4}(u)$.
\end{defn}

\begin{lem}\label{G.N.}
Assume that $l\in\mathbb{N}\cup \{0\}$ and $s\ge1$ satisfy $l\le s-1$ and a real number $p$ satisfies $2\le p\le\I$.
Put $\al=(l+1/2-1/p)/s$.
Then, we have
\begin{equation*}
\|\ds^{l}f\|_{p}\lesssim
\begin{cases}
\|f\|^{1-\al}\|D^{s}f\|^{\al}\quad (when\quad 1\le l\le s-1),\\
\|f\|^{1-\al}\|D^{s}f\|^{\al}+\|f\|\quad (when\quad l=0),
\end{cases}
\end{equation*}
for any $f\in H^{s}(\T)$.
\end{lem}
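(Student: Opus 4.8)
The plan is to prove this Gagliardo--Nirenberg inequality by a Littlewood--Paley decomposition combined with Bernstein's inequality and interpolation. Write $f=\sum_{j\ge 0}P_j f$, where $P_0$ projects onto frequencies $|\xi|\lesssim 1$ and $P_j$ onto $|\xi|\sim 2^j$ for $j\ge 1$. I would rely on two standard facts: Bernstein's inequality $\|P_j g\|_p\lesssim 2^{j(1/2-1/p)}\|P_j g\|$ for $2\le p\le\I$, and the frequency-localized equivalences $\|\ds^l P_j f\|\sim 2^{jl}\|P_j f\|$ and $\|D^s P_j f\|\sim 2^{js}\|P_j f\|$ valid for $j\ge 1$. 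Setting $\be:=l+1/2-1/p$, so that $\al=\be/s$, these combine to give $\|\ds^l P_j f\|_p\lesssim 2^{j\be}\|P_j f\|$ for each block.

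The core of the argument is the interpolation for the high-frequency part. Summing in $j$ and splitting at a dyadic threshold $2^J$, Cauchy--Schwarz yields, using $\be>0$ for the low sum and $\be<s$ (which holds since $l\le s-1$ and $p\ge 2$) for the high sum,
\EQQS{
\sum_{j\ge 1}2^{j\be}\|P_j f\|\lesssim 2^{J\be}\|f\|+2^{J(\be-s)}\|D^s f\|,
}
for any $J\ge 0$. Whenever $\|D^s f\|\ge\|f\|$, choosing $2^J\sim(\|D^s f\|/\|f\|)^{1/s}\ge 1$ is admissible, and this choice balances the two terms to produce the bound $\|f\|^{1-\al}\|D^s f\|^{\al}$.

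It remains to arrange that the threshold is always admissible, which is exactly the low-frequency bookkeeping on $\T$ and the main point requiring care. For $1\le l\le s-1$, I would first reduce to the mean-zero case: since $\ds^l$ annihilates the constant mode, $\ds^l f=\ds^l g$ with $g:=f-\ha f(0)$, and for mean-zero $g$ the Poincar\'e inequality $\|g\|\le\|D^s g\|$ holds; hence the optimal threshold is always admissible and the interpolation gives $\|\ds^l f\|_p\lesssim\|g\|^{1-\al}\|D^s g\|^{\al}\le\|f\|^{1-\al}\|D^s f\|^{\al}$, with no additive term. For $l=0$ the mean cannot be discarded, since the block $P_0 f$ carries the zero Fourier mode on which $D^s$ vanishes; I would estimate $\|P_0 f\|_p\lesssim\|P_0 f\|\lesssim\|f\|$ directly and apply the interpolation above to the remaining part $\sum_{j\ge 1}P_j f$, which is supported on $|\xi|\gtrsim 1$ and therefore again admits an admissible threshold. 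This accounts for the extra additive $\|f\|$ in the statement, and it is precisely the place where the torus differs from the real line, as noted in the introduction.
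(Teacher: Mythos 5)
Your proof is correct, and it is necessarily a different route from the paper's, because the paper does not prove this lemma at all: its ``proof'' is the citation ``See Section 2 in [SchwarzJr.] and Lemma 2.1 in [Tanaka]'', deferring to the classical calculus-based derivation of periodic Gagliardo--Nirenberg inequalities in those references. Your Littlewood--Paley argument --- Bernstein on each dyadic block, Cauchy--Schwarz on the low and high sums (valid since $0<\be<s$ follows from $l\le s-1$ and $p\ge2$), and optimization of the threshold $2^{J}$ --- is a standard, self-contained alternative, and you correctly identify the low-frequency bookkeeping as the only point where the torus differs from $\R$: subtracting the mean for $1\le l\le s-1$ so that Poincar\'e makes the optimal threshold admissible, and paying the additive $\|f\|$ for the $P_{0}$ block when $l=0$. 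Two details you leave implicit are routine but worth recording: (i) when $l=0$ and $p=2$ you have $\be=0$ and the geometric series in the low sum does not close, but then $\al=0$ and the asserted inequality reduces to $\|f\|\lesssim\|f\|$, so no argument is needed; (ii) for $1\le l\le s-1$ the block $P_{0}g$ of the mean-zero part must still be estimated, but its nonzero frequencies satisfy $|\xi|\sim1$, so all of its norms are comparable and it is bounded by $\min(\|g\|,\|D^{s}g\|)\le\|g\|^{1-\al}\|D^{s}g\|^{\al}$. What the citation buys the paper is brevity; what your argument buys is a transparent proof that extends immediately to fractional $l$ and makes visible exactly why the extra $\|f\|$ appears only in the $l=0$ case.
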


\begin{proof}
See Section 2 in \cite{SchwarzJr.} and Lemma 2.1 in \cite{Tanaka}.
\end{proof}

We employ the parabolic regularization:
\EQS{
\label{BO1pr}
&\dt u=\ds K(u)-\e\ds^{4}u,\\
\label{BO2pr}
&u(0,x)=\vp(x),
}
where $t\ge0$ and $\e>0$.
In what follows, we consider only $t\ge0$.
In the case $t\le0$, we only need to replace $-\e\ds^{4}u$ with $\e\ds^{4}u$ in \eqref{BO1pr}.
By the standard argument, we can establish the local well-posedness for \eqref{BO1pr}--\eqref{BO2pr} as follows.

\begin{prop}\label{para.regu.}
Let $s\ge3$ and $\e\in(0,1)$.
For any $\vp\in H^{s}(\T)$,
there exist $T_{\e}\in(0,\I]$ and the unique solution $u\in C([0,T_{\e}),H^{s}(\T))$
to the IVP \eqref{BO1pr}--\eqref{BO2pr} on $[0,T_{\e})$ such that
(i) $\liminf_{t\to T_{\e}}\|u(t)\|_{H^{3}}=\I$ or (ii) $T_{\e}=\I$ holds.
Moreover, we assume $\vp^{(j)}, \vp^{(\infty)}\in H^{s}(\T)$ satisfies
$\|\vp^{(j)}-\vp^{(\infty)}\|_{H^s}\to 0$ as $j\to \infty$.
Let $u^{(j)}$ (resp. $u^{(\infty)}$) $\in C([0,T_\e);H^{s}(\T))$
be the solution to \eqref{BO1pr}--\eqref{BO2pr} with initial data $\vp=\vp^{(j)}$ (resp. $\vp=\vp^{(\infty)}$).
Then, for any $T\in(0,T_\e)$, we have
$\sup_{t\in [0,T]} \|u^{(j)}(t)-u^{(\infty)}(t)\|_{H^s}\to 0$ as $j\to \infty$.
\end{prop}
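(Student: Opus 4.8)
The plan is to recast \eqref{BO1pr}--\eqref{BO2pr} as a fixed point problem through the Duhamel formula and to solve it by the contraction mapping principle, exploiting the parabolic smoothing of the linear part. Writing $L:=\HT\ds^{4}-\e\ds^{4}$ for the full linear operator (note $\ds F_{1}(u)=\HT\ds^{4}u$) and $N(u):=\ds(F_{2}(u)+F_{3}(u)+F_{4}(u))$ for the nonlinearity, the equation reads $\dt u=Lu+N(u)$. The operator $L$ has Fourier symbol $-i\sgn(\xi)|\xi|^{4}-\e|\xi|^{4}$, so the semigroup $S_{\e}(t):=e^{tL}$ has symbol $e^{-\e|\xi|^{4}t}e^{-i\sgn(\xi)|\xi|^{4}t}$, whose modulus $e^{-\e|\xi|^{4}t}$ produces a gain of derivatives. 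Optimizing $\sup_{\xi}|\xi|^{6}e^{-2\e|\xi|^{4}t}\lec(\e t)^{-3/2}$, I would first establish the smoothing estimate
\[
\|S_{\e}(t)f\|_{H^{s}}\lec(\e t)^{-3/4}\|f\|_{H^{s-3}}\qquad(0<t\le 1/\e),
\]
which is the crucial ingredient: it lets $S_{\e}(t-\ta)$ absorb the three derivatives carried by $N$ at the cost of a factor $(\e(t-\ta))^{-3/4}$ that remains integrable in $\ta$.

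Next I would establish the nonlinear estimates. The key observation is that every summand of $N$ is at most quartic and carries at most three derivatives, and that a Moser (tame product) estimate $\|fg\|_{H^{\si}}\lec\|f\|_{L^{\I}}\|g\|_{H^{\si}}+\|g\|_{L^{\I}}\|f\|_{H^{\si}}$ for $\si\ge0$ makes the resulting bound \emph{linear} in the highest norm. For instance the worst term obeys $\|\ds(u\ds^{2}u)\|_{H^{s-3}}=\|u\ds^{2}u\|_{H^{s-2}}\lec\|u\|_{L^{\I}}\|u\|_{H^{s}}+\|\ds^{2}u\|_{L^{\I}}\|u\|_{H^{s}}$, and since $s\ge3$ gives $\|u\|_{L^{\I}}+\|\ds^{2}u\|_{L^{\I}}\lec\|u\|_{H^{3}}$ by Lemma \ref{G.N.}, one obtains $\|N(u)\|_{H^{s-3}}\lec(1+\|u\|_{H^{3}})^{3}\|u\|_{H^{s}}$; likewise the difference is controlled by $\|N(u)-N(v)\|_{H^{s-3}}\lec(1+\|u\|_{H^{3}}+\|v\|_{H^{3}})^{3}\|u-v\|_{H^{s}}$. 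This linearity in $\|\cdot\|_{H^{s}}$, with coefficient depending only on $\|\cdot\|_{H^{3}}$, is exactly what forces the threshold $s\ge3$ and what produces the blow-up alternative phrased in terms of the $H^{3}$-norm.

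With these two estimates in hand, the map $\Phi(u)(t):=S_{\e}(t)\vp+\int_{0}^{t}S_{\e}(t-\ta)N(u(\ta))\,d\ta$ is a contraction on a ball of $C([0,T];H^{s})$ once $T=T(\e,\|\vp\|_{H^{s}})$ is small, because $\int_{0}^{t}(t-\ta)^{-3/4}\,d\ta=4t^{1/4}<\I$; strong continuity of the (analytic) semigroup together with dominated convergence gives $\Phi(u)\in C([0,T];H^{s})$, so the fixed point yields a unique local solution, extended to a maximal interval $[0,T_{\e})$ by the usual continuation argument. To show that breakdown is detected by the $H^{3}$-norm, I would feed the tame estimate into Duhamel to get
\[
\|u(t)\|_{H^{s}}\lec\|\vp\|_{H^{s}}+\e^{-3/4}\int_{0}^{t}(t-\ta)^{-3/4}(1+\|u(\ta)\|_{H^{3}})^{3}\|u(\ta)\|_{H^{s}}\,d\ta,
\]
and apply a weakly singular Gronwall inequality for the kernel $(t-\ta)^{-3/4}$: this shows $\|u(t)\|_{H^{s}}$ stays finite as long as $\|u(t)\|_{H^{3}}$ does, which is the dichotomy (i)--(ii). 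Continuous dependence then follows by running the same Duhamel/singular-Gronwall scheme on the difference $u^{(j)}-u^{(\infty)}$: since $\|\vp^{(j)}-\vp^{(\infty)}\|_{H^{s}}\to0$ controls the initial term and the $H^{3}$-norms of the $u^{(j)}$ are uniformly bounded on $[0,T]$ for $T<T_{\e}$, one gets $\sup_{t\in[0,T]}\|u^{(j)}(t)-u^{(\infty)}(t)\|_{H^{s}}\lec C(T,\e)\|\vp^{(j)}-\vp^{(\infty)}\|_{H^{s}}\to0$.

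The hard part will be twofold: extracting the nonlinear bound in the tame form (linear in the top norm, polynomial in $\|\cdot\|_{H^{3}}$) uniformly over all the terms in $K$, and correctly coupling the integrable but singular kernel $(t-\ta)^{-3/4}$ with a generalized Gronwall inequality so that the $H^{s}$-bound is propagated by the $H^{3}$-bound. Once these are secured, the contraction, uniqueness, continuation, and continuous-dependence steps are routine semigroup theory.
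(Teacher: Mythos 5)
Your proposal is correct and takes essentially the same route as the paper, which does not prove this proposition itself but delegates it to the standard parabolic-regularization arguments of \cite{Tsugawa17} (Prop.~2.8) and \cite{Tanaka} (Prop.~2.13): the semigroup smoothing bound $\|e^{tL}f\|_{H^{s}}\lesssim(\e t)^{-3/4}\|f\|_{H^{s-3}}$, a tame estimate on the three-derivative nonlinearity, a contraction in a ball of $C([0,T];H^{s})$, and a weakly singular Gronwall argument giving persistence of regularity and the $H^{3}$ blow-up alternative. One minor imprecision: the difference bound $\|N(u)-N(v)\|_{H^{s-3}}\lesssim(1+\|u\|_{H^{3}}+\|v\|_{H^{3}})^{3}\|u-v\|_{H^{s}}$ cannot hold with only $H^{3}$ norms in the coefficient (terms like $(u-v)\ds^{2}u$ force a factor $\|u\|_{H^{s}}$), but this is harmless since the contraction and the continuous-dependence step are run on a set that is bounded in $H^{s}$.
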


\begin{proof}
See Proposition 2.8 in \cite{Tsugawa17} or Proposition 2.13 in \cite{Tanaka}.
\end{proof}

We construct a solution to \eqref{BO1}--\eqref{BO2} by a limiting procedure for solutions obtained by Proposition \ref{para.regu.}. In this argument, it is important to establish the time $T$ independent of $\e$, which is proved in Proposition \ref{ene.est.}.
For that purpose, we define the energy with correction terms in $H^{s}(\T)$.
As stated in Section 1, we note that the coefficient of $M_{s}^{(3)}$ is a quadratic polynomial in $s$.

\begin{defn}\label{corr.terms.}
Let $s\ge1$.
We define
\EQQS{
&\la_{1}(s):=(c_{1}-c_{4})s-\frac{c_{1}}{2}+2c_{2}+\frac{c_{4}}{2}, &&\la_{2}(s):=-2c_{3}s-c_{4},\\
&\la_{3}(s):=-2(c_{5}+c_{6}+c_{7})s-2c_{5}-c_{6}, &&\la_{4}(s):=2(c_{1}-c_{4})s-5c_{1}+4c_{2}+5c_{4}.
}
For functions $f,g\in H^{s}(\T)$, we also define
\EQQS{
E_{s}(f,g):=\frac{1}{2}\|f-g\|^{2}(1+C_{s}\|f\|^{2}+C_{s}\|f\|^{4s})
            +\frac{1}{2}\|D^{s}(f-g)\|^{2}+\sum_{j=1}^{3}M_{s}^{(j)}(f,g),
}
where
\EQQS{
&M_{s}^{(1)}(f,g):=\frac{\la_{1}(s)}{4}\int_{\T}f(\HT D^{s}(f-g))\HT D^{s-1}(f-g)dx,\\
&M_{s}^{(2)}(f,g):=\frac{\la_{2}(s)}{4}\int_{\T}(\HT\ds f)(D^{s-1}(f-g))^{2}dx,\\
&M_{s}^{(3)}(f,g):=\frac{\la_{1}(s)\la_{4}(s)+4\la_{3}(s)}{32}\int_{\T}f^{2}(D^{s-1}(f-g))^{2}dx
}
and $C_{s}$ is sufficiently large constant such that Lemma \ref{comparison} holds.
For simplicity, we write $E_{s}(f):=E_{s}(f,0)$ and $M_{s}^{(j)}(u):=M_{s}^{(j)}(u,0)$ for $j=1,2,3$.
\end{defn}

We define the energy with correction terms in $L^{2}(\T)$ since there is a problem to define $D^{-1}$ at very low frequency in $E_{0}(f,g)$.
For that purpose, we introduce the following.

\begin{defn}\label{J}
Let $\psi\in C^{\I}(\R)$ be a function satisfying $0\le \psi\le 1$ on $\R$ and
\EQQS{
\psi(\xi)=
\begin{cases}
1,\quad|\xi|\ge2,\\
0,\quad|\xi|\le1.
\end{cases}
}
We also define the operator
\[Jf(x):=\F^{-1}\left(\frac{\psi(\xi)}{|\xi|}\hat{f}(\xi)\right)(x)\]
for a function $f$.
\end{defn}

\begin{lem}\label{H^-1}
It holds that
\[\|Jf\|\le 2\|f\|_{H^{-1}}\]
for any $f\in H^{-1}(\T)$.
\end{lem}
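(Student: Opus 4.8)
The plan is to work directly from the definition of the operator $J$ in Definition \ref{J} together with the Plancherel identity, reducing the claimed bound to a pointwise comparison of Fourier multipliers. First I would write
\EQQS{
\|Jf\|^{2}=\sum_{\xi\in\Z}\left|\frac{\psi(\xi)}{|\xi|}\ha{f}(\xi)\right|^{2}
=\sum_{\xi\in\Z}\frac{\psi(\xi)^{2}}{|\xi|^{2}}|\ha{f}(\xi)|^{2},
}
noting that the summand vanishes at $\xi=0$ because $\psi(0)=0$, so there is no issue at the origin. Since $\|f\|_{H^{-1}}^{2}=\sum_{\xi\in\Z}\LR{\xi}^{-2}|\ha{f}(\xi)|^{2}$ with $\LR{\xi}=(1+|\xi|^{2})^{1/2}$, the whole statement will follow from the pointwise multiplier estimate
\EQQS{
\frac{\psi(\xi)^{2}}{|\xi|^{2}}\le 4\,\LR{\xi}^{-2}\qquad(\xi\in\Z).
}

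The key step is therefore to verify this pointwise inequality, which I would split according to the support of $\psi$. For $|\xi|\le 1$ we have $\psi(\xi)=0$ by Definition \ref{J}, so the left-hand side is zero and the inequality holds trivially. For $|\xi|\ge 1$ (in particular on the support of $\psi$, where $|\xi|\ge 1$), I use $\psi(\xi)^{2}\le 1$ and the elementary bound $\LR{\xi}^{2}=1+|\xi|^{2}\le 2|\xi|^{2}$, which gives
\EQQS{
\frac{\psi(\xi)^{2}}{|\xi|^{2}}\le\frac{1}{|\xi|^{2}}\le\frac{2}{\LR{\xi}^{2}}\le\frac{4}{\LR{\xi}^{2}}.
}
(The constant $2$ would in fact suffice, but keeping $4$ matches the squared constant in the statement and leaves room for uniformity.) Summing the pointwise bound over $\xi\in\Z$ and taking square roots then yields $\|Jf\|\le 2\|f\|_{H^{-1}}$, as claimed.

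I do not expect any serious obstacle here; the lemma is essentially a bookkeeping statement that the multiplier $\psi(\xi)/|\xi|$ is dominated by $2\LR{\xi}^{-1}$, so that applying $J$ is controlled by the $H^{-1}$-norm. The only point that requires a moment's care is the behavior near $\xi=0$, and this is precisely why the cutoff $\psi$ was built to vanish on $|\xi|\le 1$ in the first place: it removes the singularity of $1/|\xi|$ at the origin and makes $J$ a bounded operator from $H^{-1}(\T)$ into $L^{2}(\T)$. Since we work on $M=\T$, the Fourier variable ranges over $\Z$ and all the displays are genuine sums rather than integrals, but the argument is identical on $\R$ with sums replaced by integrals.
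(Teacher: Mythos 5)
Your proof is correct and is essentially the paper's argument: the paper's one-line proof invokes exactly the same fact, namely $\LR{\xi}\le 2|\xi|$ for $|\xi|\ge1$, combined with the vanishing of $\psi$ near the origin. Your write-up just makes the Plancherel reduction and the case split explicit.
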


\begin{proof}
This follows from the fact that $\LR{\xi}\le2|\xi|$ for $|\xi|\ge1$.
\end{proof}

\begin{defn}
For functions $f,g\in H^{1}(\T)$, we define
\EQQS{
E(f,g)
:=\frac{1}{2}\|f-g\|^{2}+\frac{1}{2}\|f-g\|_{H^{-1}}^{2}(1+C\|f\|^{2}+C\|f\|^{4})+\sum_{j=1}^{3}M^{(j)}(f,g),
}
where
\EQQS{
&M^{(1)}(f,g):=\frac{\la_{1}(0)}{4}\int_{\T}f(\HT(f-g))\HT J(f-g)dx,\\
&M^{(2)}(f,g):=\frac{\la_{2}(0)}{4}\int_{\T}(\HT\ds f)(J(f-g))^{2}dx,\\
&M^{(3)}(f,g):=\frac{\la_{1}(0)\la_{4}(0)+4\la_{3}(0)}{32}\int_{\T}f^{2}(J(f-g))^{2}dx
}
and $C$ is sufficiently large constant such that Lemma \ref{comparison2} holds.
\end{defn}



\begin{lem}\label{comparison}
Let $s\ge1$ and let $C_{s}>0$ be sufficiently large.
Then for any $f,g\in H^{s}(\T)$, it follows that
\EQQS{
E_{s}(f,g)\le\|f-g\|^{2}(1+C_{s}\|f\|^{2}+C_{s}\|f\|^{4s})+\|D^{s}(f-g)\|^{2}\le 4E_{s}(f,g).
}
\end{lem}

\begin{proof}
Lemma \ref{G.N.} shows that
\EQQS{
|M_{s}^{(2)}(f,g)|
&=\left|\frac{\la_{2}(s)}{4}\int_{\T}(\HT f)(D^{s-1}(f-g))D^{s-1}\ds(f-g)dx\right|\\
&\le C\|f\|\|D^{s}(f-g)\|\|\HT D^{s-1}(f-g)\|_{\I}\\
&\le C\|f\|\|f-g\|^{1/2s}\|D^{s}(f-g)\|^{2-1/2s}+C\|f\|\|f-g\|\|D^{s}(f-g)\|\\
&\le C\|f-g\|^{2}(\|f\|^{2}+\|f\|^{4s})+\frac{1}{12}\|D^{s}(f-g)\|^{2}.
}
Similarly, we can estimate $M_{s}^{(1)}(f,g)$ and $M_{s}^{(3)}(f,g)$ as follows:
\[|M_{s}^{(1)}(f,g)|,|M_{s}^{(3)}(f,g)|\le C\|f-g\|^{2}(\|f\|^{2}+\|f\|^{4s})+\frac{1}{12}\|D^{s}(f-g)\|^{2},\]
which completes the proof.
\end{proof}

A similar argument of the previous lemma together with Lemma \ref{H^-1} yields the following.

\begin{lem}\label{comparison2}
Let $C>0$ be sufficiently large.
Then for any $f,g\in H^{1}(\T)$, it follows that
\EQQS{
E(f,g)\le\|f-g\|_{H^{-1}}^{2}(1+C\|f\|^{2}+C\|f\|^{4})+\|f-g\|^{2}\le 4E(f,g).
}
\end{lem}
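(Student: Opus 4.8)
The plan is to mimic the proof of Lemma \ref{comparison}, with the single modification that the operator $J$ replaces $D^{s-1}$ and the pairing is now measured against the $H^{-1}$-norm rather than against $\|D^{s}(f-g)\|$. The structure of the two lemmas is identical: we must show that the three correction terms $M^{(j)}(f,g)$ are each bounded by a small multiple of $\|f-g\|^{2}$ together with a controlled multiple of $\|f-g\|_{H^{-1}}^{2}(\|f\|^{2}+\|f\|^{4})$, so that absorbing them into the principal part $\tfrac12\|f-g\|^{2}+\tfrac12\|f-g\|_{H^{-1}}^{2}(1+C\|f\|^{2}+C\|f\|^{4})$ yields both inequalities once $C$ is chosen large enough.

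First I would estimate $M^{(2)}(f,g)$. Writing it out,
\EQQS{
|M^{(2)}(f,g)|\le\frac{|\la_{2}(0)|}{4}\|\HT\ds f\|_{\I}\|J(f-g)\|^{2}.
}
Here the key gain is Lemma \ref{H^-1}: we have $\|J(f-g)\|\le2\|f-g\|_{H^{-1}}$, so the quadratic factor is controlled by $\|f-g\|_{H^{-1}}^{2}$, exactly the low-regularity counterpart of $\|D^{s}(f-g)\|^{2}$ appearing in Lemma \ref{comparison}. To bound $\|\HT\ds f\|_{\I}$ I would invoke Lemma \ref{G.N.} with $l=1$ and a suitable $s$ (using that $\HT$ is bounded on $L^{\I}$ only up to the usual low-frequency caveat, so one passes through a fractional $L^{p}$ interpolation rather than directly through $L^{\I}$); the output is a bound of the form $C\|f\|^{1-\al}\|D^{s}f\|^{\al}+C\|f\|$, and a Young-type splitting converts the resulting product into $C\|f-g\|_{H^{-1}}^{2}(\|f\|^{2}+\|f\|^{4})+\tfrac{1}{12}\|f-g\|^{2}$. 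The terms $M^{(1)}(f,g)$ and $M^{(3)}(f,g)$ are handled in the same way: $M^{(1)}$ pairs $f\,\HT(f-g)$ against $\HT J(f-g)$, so one factor of $f-g$ lands in $L^{2}$ and the $J$-factor lands in $H^{-1}$ via Lemma \ref{H^-1}, while $M^{(3)}$ carries the factor $f^{2}$ in $L^{\I}$ and two $J$-factors, both absorbed by Lemma \ref{H^-1}.

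The main obstacle, and the reason this is genuinely a \emph{similar} rather than \emph{identical} argument, is bookkeeping at the low-frequency end. Unlike in Lemma \ref{comparison}, the operator $D^{-1}$ (i.e. $D^{s-1}$ at $s=1$ would formally be $D^{0}$, but in $E(f,g)$ one needs a true inverse derivative) is singular at $\xi=0$, which is precisely why the smoothed operator $J$ was introduced in Definition \ref{J}. One must therefore verify that replacing the honest $H^{-1}$-pairing by the $J$-regularized one loses nothing: this is exactly the content of Lemma \ref{H^-1}, so the apparent difficulty is already packaged away. After that, each $M^{(j)}$-estimate collapses to a Hölder inequality plus one application of Lemma \ref{G.N.} and one application of Lemma \ref{H^-1}, and summing the three bounds with the small constant $\tfrac{1}{12}$ three times gives $\tfrac14\|f-g\|^{2}$ of slack against the principal term; choosing $C$ large enough to dominate the accumulated $\|f\|^{2}+\|f\|^{4}$ factors then yields the two-sided comparison, completing the proof.
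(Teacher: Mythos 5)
Your overall plan (repeat the proof of Lemma \ref{comparison}, with Lemma \ref{H^-1} converting $\|J(f-g)\|$ into $\|f-g\|_{H^{-1}}$) is the right one and is what the paper intends, but your H\"older splitting assigns the $L^{\I}$ norm to the wrong factor, and this breaks the argument. You start from $|M^{(2)}(f,g)|\le\frac{|\la_{2}(0)|}{4}\|\HT\ds f\|_{\I}\|J(f-g)\|^{2}$ and propose to control $\|\HT\ds f\|_{\I}$ by Lemma \ref{G.N.}. This fails for two reasons. First, the lemma assumes only $f\in H^{1}(\T)$, so $\|\HT\ds f\|_{\I}$ need not be finite (Lemma \ref{G.N.} with $l=1$ requires $s\ge 2$). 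Second, and more fundamentally, any such bound produces a factor $\|D^{s}f\|^{\al}$ (or $\|\ds f\|_{\I}$), while the middle expression in the claimed two-sided inequality contains no norm of $f$ beyond powers of $\|f\|_{L^{2}}$; a Young-type splitting can only redistribute exponents among factors that are already present, so there is nothing on the right-hand side into which $\|D^{s}f\|^{\al}$ could be absorbed. The same objection applies to your treatment of $M^{(1)}$ and $M^{(3)}$, where you place $f$ (resp.\ $f^{2}$) in $L^{\I}$: $\|f\|_{\I}$ is not controlled by $\|f\|_{L^{2}}$.

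The correct adaptation of Lemma \ref{comparison} is the mirror image of what you wrote: every $f$-factor must land in $L^{2}$ (or $L^{1}$ for $f^{2}$), and the Gagliardo--Nirenberg interpolation must be applied to the $J(f-g)$ factor, interpolating between $\|J(f-g)\|\le 2\|f-g\|_{H^{-1}}$ (Lemma \ref{H^-1}) and $\|\ds J(f-g)\|\le\|f-g\|$ (the symbol of $J\ds$ is bounded by $1$). Concretely, with $w=f-g$, integrate by parts first, $\int_{\T}(\HT\ds f)(Jw)^{2}dx=-2\int_{\T}(\HT f)(Jw)(J\ds w)dx$, then estimate by $\|\HT f\|\,\|J\ds w\|\,\|Jw\|_{\I}\lesssim\|f\|\,\|w\|\,(\|w\|_{H^{-1}}^{1/2}\|w\|^{1/2}+\|w\|_{H^{-1}})$, and Young's inequality gives $\frac{1}{12}\|w\|^{2}+C(\|f\|^{2}+\|f\|^{4})\|w\|_{H^{-1}}^{2}$. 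For $M^{(1)}$ put $f$ and $\HT w$ in $L^{2}$ and $\HT Jw$ in $L^{\I}$ (again via Gagliardo--Nirenberg on $Jw$); for $M^{(3)}$ use $\int_{\T}f^{2}(Jw)^{2}dx\le\|f\|^{2}\|Jw\|_{\I}^{2}$. This is exactly the pattern of the displayed computation in the proof of Lemma \ref{comparison}, where $\HT f$ is placed in $L^{2}$ and the $L^{\I}$ norm falls on $\HT D^{s-1}(f-g)$, not on a derivative of $f$.
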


\begin{defn}
Let $s\ge0$.
For $f,g$, we define
\[I_{s}(f,g):=1+\|f\|_{H^{s}}+\|g\|_{H^{s}}.\]
\end{defn}

The main estimate in this paper is the following.

\begin{prop}\label{ene.est.pr}
Let $s\ge s_{0}>7/2$, $1\le s'\le s$, $\e_{j}\in(0,1)$, $\vp_{j}\in H^{s+4}(\T)$ and $u_{j}\in C([0,T_{\e_{j}});H^{s+4}(\T))$ be the solution to \eqref{BO1pr}--\eqref{BO2pr} obtained by Proposition \ref{para.regu.} with $\e=\e_{j}$ and $\vp=\vp_{j}$ for $j=1,2$.
Then there exists $C=C(s',s_{0})>0$ such that
\EQS{
\begin{aligned}\label{eq2.4}
&\frac{d}{dt}E_{s'}(u_{1}(t),u_{2}(t))\\
&\le CI_{s_{0}}(u_{1},u_{2})^{2(s'+2)}\{\|w\|_{H^{s'}}^{2}+\|w\|_{H^{s_{0}-3}}^{2}\|u_{2}\|_{H^{s'+3}}^{2}\\
&\quad+\|w\|_{H^{s_{0}}}^{2}(\|u_{1}\|_{H^{s'}}^{2}+\|u_{2}\|_{H^{s'}}^{2})\}
+\max\{\e_{1}^{2},\e_{2}^{2}\}\|u_{2}\|_{H^{s'+4}}^{2}
\end{aligned}
}
on $[0,\min\{T_{\e_{1}},T_{\e_{2}}\})$, where $w=u_{1}-u_{2}$.
\end{prop}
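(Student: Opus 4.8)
The plan is to run the energy method directly on $E_{s'}(u_{1},u_{2})$, differentiating each of its pieces in time and substituting the equations. First I would record that $w=u_{1}-u_{2}$ solves
\[\dt w=\ds\big(K(u_{1})-K(u_{2})\big)-\e_{1}\ds^{4}w-(\e_{1}-\e_{2})\ds^{4}u_{2},\]
and expand $K(u_{1})-K(u_{2})$ by telescoping every monomial of $K$ so that each resulting term is linear in $w$ with coefficients built from $u_{1}$ and $u_{2}$ (for instance $u_{1}\ds^{2}u_{1}-u_{2}\ds^{2}u_{2}=w\ds^{2}u_{1}+u_{2}\ds^{2}w$, and likewise for the Hilbert-transform quadratics, the cubics and the quartic). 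This reduces the whole computation to pairings in which at least one factor is $w$ or a derivative of $w$.

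For the principal quadratic form I would compute $\frac{d}{dt}\tfrac12\|D^{s'}w\|^{2}=\LR{D^{s'}w,D^{s'}\dt w}$. The dispersive contribution from $F_{1}$ equals $\LR{D^{s'}w,\HT\ds^{4}D^{s'}w}=0$ because $\HT\ds^{4}$ is skew-adjoint on $L^{2}(\T)$; the dissipative part, equal to $-\e_{1}\|\ds^{2}D^{s'}w\|^{2}\le0$, is discarded; and the mismatch $-(\e_{1}-\e_{2})\LR{D^{s'}w,\ds^{4}D^{s'}u_{2}}$ is split by Young's inequality into a piece absorbed by the dissipation and the error $C\max\{\e_{1}^{2},\e_{2}^{2}\}\|u_{2}\|_{H^{s'+4}}^{2}$, which is precisely the last term of \eqref{eq2.4}. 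The weighted factor $\tfrac12\|w\|^{2}(1+C_{s'}\|u_{1}\|^{2}+C_{s'}\|u_{1}\|^{4s'})$ is differentiated in the same spirit; since $s'\ge1$ and $s_{0}>7/2$ leave room at the $L^{2}$ level, its time derivative incurs no loss and is bounded by $CI_{s_{0}}(u_{1},u_{2})^{2(s'+2)}\|w\|_{H^{s'}}^{2}$ after Lemma \ref{G.N.}.

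The core of the proof is the nonlinear pairing $\LR{D^{s'}w,D^{s'}\ds(K(u_{1})-K(u_{2}))}$. Integrating by parts and commuting $D^{s'}$ past the lower-order factors—working with the explicit commutator structure rather than abstract Kato--Ponce bounds, so as to keep exact track of the constants—I would isolate the top-order contributions. By the symmetry of Lemma \ref{lem4.5}, the potential third-order loss collapses to the second-order loss $L_{1}$ and the first-order losses $L_{2}$ and $L_{3}$ (in their two-solution analogues), every remaining commutator remainder being of a form controlled by the right-hand side of \eqref{eq2.4}. In parallel I would differentiate the correction terms $M_{s'}^{(j)}(u_{1},u_{2})$, substitute the equations, and verify that $\frac{d}{dt}M_{s'}^{(1)}$ produces $L_{1}$ together with a first-order loss of $L_{3}$ type, that $\frac{d}{dt}M_{s'}^{(2)}$ produces $L_{2}$, and that the coefficients $\la_{j}(s')$ are tuned so these cancel the losses $L_{1},L_{2},L_{3}$ coming from the principal energy. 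The quadratic-in-$s'$ coefficient $\la_{1}(s')\la_{4}(s')+4\la_{3}(s')$ of $M_{s'}^{(3)}$ is then exactly what is needed for $\frac{d}{dt}M_{s'}^{(3)}$ to mop up the residual $L_{3}$ produced jointly by $\|D^{s'}w\|^{2}$ and by $M_{s'}^{(1)}$.

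The main obstacle is precisely this exact algebraic cancellation: the $\la_{j}(s')$ must be chosen so that, after all integrations by parts and fractional commutator expansions, the non-absorbable loss terms vanish identically rather than merely to leading order. Tracking every coefficient is delicate in the present two-solution setting, where each monomial of $K(u_{1})-K(u_{2})$ splits into several mixed terms in $w$, $u_{1}$ and $u_{2}$, and where the $M_{s'}^{(j)}$ themselves carry $u_{1}$ in their coefficients. Once the cancellations are confirmed, every surviving term either carries at most $s'$ derivatives on $w$, or has the mixed form $\|w\|_{H^{s_{0}-3}}\|u_{2}\|_{H^{s'+3}}$ in which the high derivative sits on $u_{2}$, so Lemma \ref{G.N.} and H\"older's inequality close the estimate with the stated weight $I_{s_{0}}(u_{1},u_{2})^{2(s'+2)}$.
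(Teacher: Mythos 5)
Your outline reproduces the paper's argument in Section 3: telescoping $K(u_{1})-K(u_{2})$ so every term is linear in $w$, skew-adjointness of $\HT\ds^{4}$, reduction of the third-order loss to the second-order one by the symmetry of Lemma \ref{lem4.5}, cancellation of the losses $L_{1},L_{2},L_{3}$ by $\frac{d}{dt}M_{s'}^{(j)}(u_{1},u_{2})$ with the quadratic coefficient of $M_{s'}^{(3)}$ collecting the residual $L_{3}$-contribution of $M_{s'}^{(1)}$, and Young's inequality for the viscous mismatch $(\e_{1}-\e_{2})\ds^{4}u_{2}$ — this is essentially the same proof. The one point you pass over is that the viscous term $-\e_{1}\ds^{4}w$ also enters $\frac{d}{dt}M_{s'}^{(j)}$ with up to $s'+4$ derivatives falling on $w$, which cannot be closed by $\|w\|_{H^{s'}}$ alone and is instead absorbed into the retained dissipation $2\e_{1}\|D^{s'+2}w\|^{2}$ by interpolation and Young's inequality with the exponents $p(k),q(k)$ (Lemmas \ref{lem3.50}--\ref{lem3.60}); the mechanism you describe for the principal part does extend to cover this, but it is a nontrivial portion of the paper's proof.
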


\begin{prop}\label{ene.est.0}
Let $s_{0}>7/2$, $T>0$ and $\e_{j}\in(0,1)$.
Let $u_{j}\in C([0,T];H^{s_{0}}(\T))\cap C((0,T);H^{s_{0}+1}(\T))$ satisfy \eqref{BO1pr} with $\e=\e_{j}$ on $[0,T]$ for $j=1,2$.
Then there esists $C=C(s_{0})>0$ such that
\EQS{
\begin{aligned}\label{eq2.5}
\frac{d}{dt}E(u_{1}(t),u_{2}(t))
\le CI_{s_{0}}(u_{1},u_{2})^{7}E(u_{1}(t),u_{2}(t))+\max\{\e_{1}^{2},\e_{2}^{2}\}\|u_{2}\|_{H^{s_{0}+1}}^{2}
\end{aligned}
}
on $[0,T]$, where $w=u_{1}-u_{2}$.
\end{prop}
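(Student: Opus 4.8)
The plan is to differentiate $E(u_1(t),u_2(t))$ in time and control each resulting term by the right-hand side of \eqref{eq2.5}. Writing $w=u_1-u_2$, I would first note that $w$ satisfies the difference equation obtained by subtracting the two copies of \eqref{BO1pr}: $\dt w = \ds(K(u_1)-K(u_2)) - \e_1\ds^4 u_1 + \e_2\ds^4 u_2$. Since $K$ is a polynomial in $u$ and its derivatives and Hilbert transforms, each difference $F_i(u_1)-F_i(u_2)$ splits into terms that are \emph{linear} in $w$ (with coefficients built from $u_1,u_2$) plus genuinely multilinear remainders; this is the standard ``telescoping'' device $ab-cd = a(b-d)+(a-c)d$. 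The parabolic terms contribute the factor $\max\{\e_1^2,\e_2^2\}\|u_2\|_{H^{s_0+1}}^2$ after pairing, exactly as in the statement, so the bulk of the work is the Hamiltonian part.

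The core of the argument is identical in spirit to the $H^s$ estimate of Proposition \ref{ene.est.pr}, but now carried out at regularity index $s'=0$ and with $D^{-1}$ replaced by the regularized operator $J$ of Definition \ref{J} to avoid the zero-frequency singularity. First I would compute $\frac{d}{dt}(\tfrac12\|w\|^2 + \tfrac12\|w\|_{H^{-1}}^2(1+C\|u_1\|^2+C\|u_1\|^4))$. Pairing $\dt w$ with $w$ and with $J^2w$ (the operator producing the $H^{-1}$ inner product on the relevant frequencies) produces, from the third-order term $\HT\ds^3 w$ and the quadratic/cubic nonlinearities, derivative-losing terms analogous to $L_1,L_2,L_3$ in \eqref{deri.los.}. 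The decisive point is that these losses are cancelled by the time derivatives of the correction terms $M^{(1)},M^{(2)},M^{(3)}$: the coefficients $\la_j(0)$, $\la_4(0)$ are precisely the $s=0$ specializations of the polynomials in Definition \ref{corr.terms.}, chosen so that $\frac{d}{dt}M^{(1)}$ cancels the leading loss (and simultaneously generates an $L_3$-type term absorbed by $M^{(3)}$), $\frac{d}{dt}M^{(2)}$ cancels the $L_2$-type loss, and $M^{(3)}$ mops up the residual $L_3$-type contribution. After these cancellations, every surviving term carries at most $L^2$-type (or $H^{-1}$-type, via $J$) norms of $w$, times $H^{s_0}$-norms of $u_1,u_2$; since $s_0>7/2$ controls up to roughly three derivatives in $L^\infty$ through Lemma \ref{G.N.} and Sobolev embedding, these are bounded by $CI_{s_0}(u_1,u_2)^{7}(\|w\|^2+\|w\|_{H^{-1}}^2)$, which by Lemma \ref{comparison2} is $\lesssim I_{s_0}^{7}E(u_1,u_2)$.

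The main obstacle I anticipate is the bookkeeping of the cancellations once $J$ replaces $D^{-1}$: because $J$ is only a \emph{regularized} inverse derivative (it differs from $D^{-1}$ on low frequencies through the cutoff $\psi$), the algebraic identities that give exact cancellation at high regularity hold only up to commutator-type errors supported at low frequency. I would argue that all such error terms involve $[\,J,\cdot\,]$-commutators against smooth multipliers and hence gain regularity on the low-frequency piece, so they are bounded by $\|w\|_{H^{-1}}^2$ or $\|w\|^2$ times $H^{s_0}$-norms of the coefficients, and are therefore harmless. A secondary point requiring care is that $u_1,u_2$ are only assumed in $C([0,T];H^{s_0})\cap C((0,T);H^{s_0+1})$, so the one extra derivative needed to justify the integrations by parts (and to make $\dt w$ well-defined pointwise) is available only on $(0,T)$; I would perform all computations there and extend to $[0,T]$ by the continuity of $E(u_1(t),u_2(t))$, exactly the regularity split built into the hypotheses.
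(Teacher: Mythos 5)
Your proposal follows essentially the same route as the paper: the energy method with the correction terms $M^{(1)},M^{(2)},M^{(3)}$ at regularity $s'=0$, with $J$ standing in for $D^{-1}$ and the resulting low-frequency discrepancies (the paper's Lemma \ref{freq.est.2}, bounding $\HT J\ds^{k+1}-\ds^{k}$ on $L^{2}$) absorbed as harmless errors, the $H^{-1}$ part handled by pairing with $\LR{D}^{-2}w$ and commutator estimates, and the coefficients $\la_{j}(0)$ producing the cancellation of the derivative-losing terms. The plan is correct and matches the paper's proof in all essential respects.
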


If we admit Propositions \ref{ene.est.pr} and \ref{ene.est.0}, we can show the main result.
We prove Proposition \ref{ene.est.pr} (resp. Proposition \ref{ene.est.0}) in Section 3 (resp. Section 4).

\begin{prop}\label{ene.est.}
Let $s\ge s_{0}>7/2$,
$\e\in(0,1)$, $\vp\in H^{s}(\T)$.
Let 
$T_{\e}>0$ and let $u\in C([0,T_{\e}),H^{s}(\T))\cap C((0,T_{\e});H^{s+4}(\T))$
be the solution to \eqref{BO1pr}--\eqref{BO2pr},
both of which are obtained by Proposition \ref{para.regu.}.
Then, there exist $T=T(s_{0},\|\vp\|_{H^{s_{0}}})>0$ and $C=C(s,s_{0},\|\vp\|_{H^{s_{0}}})>0$ such that
\begin{align}\label{single1}
T_{\e}\ge T,\quad\sup_{t\in[0,T]}E_{s}(u(t))\le CE_{s}(\vp),\quad\frac{d}{dt}E_{s}(u(t))\le CE_{s}(u(t))
\end{align}
on $[0,T]$, where $T$ (resp. $C$) is monotone decreasing (resp. increasing) with $\|\vp\|_{H^{s_{0}}}$.
\end{prop}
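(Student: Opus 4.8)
The plan is to reduce \eqref{single1} to the difference estimate \eqref{eq2.4} by specializing it to a single solution, and then to close a continuity/bootstrap argument in two stages: first at the base regularity $s_{0}$, to produce an existence time independent of $\e$, and afterwards at the full regularity $s$, by a linear Gronwall inequality. To this end I take $u_{2}\equiv 0$, which is the solution of \eqref{BO1pr}--\eqref{BO2pr} with $\vp_{2}=0$ and $\e_{2}=\e$, so that $T_{\e_{2}}=\I$, $\min\{T_{\e_{1}},T_{\e_{2}}\}=T_{\e}$, and in \eqref{eq2.4} every term carrying a factor $u_{2}$ — in particular the parabolic error $\max\{\e_{1}^{2},\e_{2}^{2}\}\|u_{2}\|_{H^{s'+4}}^{2}$ — vanishes. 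With $u_{1}=u$ one has $w=u$ and $E_{s'}(u_{1},u_{2})=E_{s'}(u)$.

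Applying this with $s'=s_{0}$ and using Lemma \ref{comparison} together with $I_{s_{0}}(u)=1+\|u\|_{H^{s_0}}$, I obtain a closed, $\e$-free differential inequality $\tfrac{d}{dt}E_{s_{0}}(u)\le\Phi(E_{s_{0}}(u))$ on $(0,T_{\e})$ for a fixed increasing continuous function $\Phi$ depending only on $s_{0}$ (note $\|u\|_{H^{s_0}}^{2}\le 4E_{s_{0}}(u)$ by Lemma \ref{comparison}). Comparing with the scalar ODE $z'=\Phi(z)$, $z(0)=E_{s_{0}}(\vp)$, whose maximal solution stays below some $\Lambda=\Lambda(E_{s_{0}}(\vp))$ on an interval $[0,T]$ with $T=T(E_{s_{0}}(\vp))>0$, I get $E_{s_{0}}(u(t))\le\Lambda$ for $t\in[0,\min\{T,T_{\e}\})$. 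Since $E_{s_{0}}(\vp)\lesssim\|\vp\|_{H^{s_0}}^{2}+\|\vp\|_{H^{s_0}}^{4s_{0}+2}$ by Lemma \ref{comparison}, both $T$ and $\Lambda$ depend only on $\|\vp\|_{H^{s_0}}$, with the desired monotonicity. The bound $T_{\e}\ge T$ now follows from the blow-up alternative of Proposition \ref{para.regu.}: because $s_{0}>7/2>3$, the above controls $\|u(t)\|_{H^{3}}\le\|u(t)\|_{H^{s_0}}\lesssim\Lambda^{1/2}$ on $[0,\min\{T,T_{\e}\})$, so $T_{\e}\le T$ would contradict alternative (i), while (ii) contradicts $T_{\e}\le T<\I$; hence $T_{\e}>T$ and $\sup_{t\in[0,T]}\|u(t)\|_{H^{s_0}}\le M:=M(\|\vp\|_{H^{s_0}})$.

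With this $H^{s_{0}}$-bound secured, I return to \eqref{eq2.4} with $s'=s$ and $u_{2}\equiv 0$. On $[0,T]$ the factor $I_{s_{0}}(u)^{2(s+2)}(1+\|u\|_{H^{s_0}}^{2})$ is bounded by a constant $C_{1}=C_{1}(s,s_{0},\|\vp\|_{H^{s_0}})$, so the right-hand side collapses to $C_{1}\|u\|_{H^{s}}^{2}\le CE_{s}(u)$ by Lemma \ref{comparison}; this is precisely the third assertion $\tfrac{d}{dt}E_{s}(u)\le CE_{s}(u)$. Integrating via Gronwall's inequality over $[0,T]$ and using continuity of $t\mapsto E_{s}(u(t))$ at $t=0$ gives $\sup_{t\in[0,T]}E_{s}(u(t))\le e^{CT}E_{s}(\vp)$, the second assertion; enlarging the constants if necessary arranges $T$ decreasing and $C$ increasing in $\|\vp\|_{H^{s_0}}$.

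Two points require care, and the second is the main obstacle. First, Proposition \ref{ene.est.pr} is stated for data in $H^{s+4}$ up to $t=0$, whereas here $u\in C((0,T_{\e});H^{s+4})$ only; I would apply \eqref{eq2.4} to the time-translates $u(\cdot+t_{0})$, $t_{0}>0$, which do have the required $H^{s+4}$-regularity at their initial time, obtain the inequalities on $(0,T_{\e})$, and then let $t_{0}\to 0^{+}$ using continuity of $E_{s}(u)$ on $[0,T_{\e})$ to reach the endpoint $t=0$. Second, the genuine difficulty is guaranteeing that the existence time $T$ is independent of $\e$: this is exactly what the choice $u_{2}\equiv 0$ buys, since it deletes the parabolic error term and leaves an $\e$-free ODE whose comparison time and bound depend only on $E_{s_{0}}(\vp)$. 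Coupling this $\e$-uniform a priori bound to the a priori $\e$-dependent maximal time $T_{\e}$ through the $H^{3}$ blow-up criterion is the crux of the whole scheme.
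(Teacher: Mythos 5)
Your proof is correct and follows essentially the same route as the paper: both specialize Proposition \ref{ene.est.pr} to $u_{2}\equiv 0$ (which kills the viscous error term and every $u_{2}$-weighted term), first close an $\e$-independent a priori bound at the base level $s_{0}$ by a continuity-type argument, and then run a linear Gronwall inequality at the level $s$. The only cosmetic difference is that the paper phrases the $s_{0}$-step as a doubling-time bootstrap (define $T_{\e}^{*}=\inf\{t: E_{s_{0}}(u(t))>2E_{s_{0}}(\vp)\}$ and apply a linear Gronwall on $[0,T_{\e}^{*}]$), whereas you use an unconditional nonlinear differential inequality plus ODE comparison; these are interchangeable, and your explicit appeal to the $H^{3}$ blow-up alternative to conclude $T_{\e}\ge T$ only makes precise what the paper leaves implicit.
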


\begin{proof}
Assume that the set $F=\{t\ge0; E_{s_{0}}(u(t))>2E_{s_{0}}(\vp)\}$ is not empty.
Set $T_{\e}^{*}=\inf F$.
Note that $0<T_{\e}^{*}\le T_{\e}$ and $E_{s_{0}}(u(t))\le2E_{s_{0}}(\vp)$ on $[0,T_{\e}^{*}]$.
Assume that there exists $t'\in[0,T_{\e}^{*}]$ such that
$E_{s_{0}}(u(t'))>2E_{s_{0}}(\vp)$.
This implies that $t'\ge T_{\e}^{*}$ by the definition of $T_{\e}^{*}$.
Then we have $t'=T_{\e}^{*}$.
Thus, $\sup_{t\in[0,T_{\e}^{*}]}E_{s_{0}}(u(t))\le C(\|\vp\|_{H^{s_{0}}})$ by $(ii)$ of Lemma \ref{comparison}.
By Proposition \ref{ene.est.pr} with $\vp_{2}=0$, there exists $C_{s}'=C(s,s_{0},\|\vp\|_{H^{s_{0}}})$ such that
\[\frac{d}{dt}E_{s}(u(t))\le C_{s}'E(u(t))\]
on $[0,T_{\e}^{*}]$.
The Gronwall inequality gives that
\begin{align}\label{eq3.62}
E_{s}(u(t))\le E_{s}(\vp)\exp(C_{s}'t)
\end{align}
on $[0,T_{\e}^{*}]$.
Here, we put $T=\min\{(2C_{s_{0}}')^{-1},T_{\e}^{*}\}$.
Then (\ref{eq3.62}) with $s=s_{0}$ shows that
\[E_{s_{0}}(u(t))\le E_{s_{0}}(\vp)\exp(2^{-1})<2E_{s_{0}}(\vp),\]
on $[0,T]$.
By the definition of $T_{\e}^{*}$ and the continuity of $E_{s_{0}}(u(t))$,
we obtain $0<T=(2C_{s_{0}}')^{-1}<T_{\e}^{*}\le T_{\e}$.
If $F$ is empty, then we have $T_{\e}^{*}=T_{\e}=\I$.
In particular, we can take $T=(2C_{s_{0}}')^{-1}<\I$, which concludes the proof.
\end{proof}

For the proof of the following uniqueness result, see Thorem 6.22 in \cite{Iorio}.

\begin{lem}[Uniqueness]\label{uniq.}
Let $s_{0}>7/2$, $\delta>0$ and
$u,v\in L^{\I}([0,\de];H^{s_{0}}(\T))$ satisfy \eqref{BO1} on $[0,\delta]$ with $u(0)=v(0)$
and satisfy
\[u,v\in C([0,\delta];H^{3}(\T))\cap C^{1}([0,\delta];H^{-1}(\T)).\]
Then $u\equiv v$ on $[0,\delta]$.
\end{lem}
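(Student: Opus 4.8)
The plan is to run an energy estimate on the difference $w:=u-v$, which satisfies $\dt w=\ds(K(u)-K(v))$ with $w(0)=0$, and to close it by Gronwall. Since $\dt w$ lies only in $H^{-1}$, the natural quantity is the correction--term energy at the $L^{2}$/$H^{-1}$ level, namely $E(u,v)$ from the definition preceding Lemma \ref{comparison2}. By Lemma \ref{comparison2} it suffices to prove the $\e=0$ analogue of Proposition \ref{ene.est.0}, that is $\frac{d}{dt}E(u(t),v(t))\lec I_{s_{0}}(u,v)^{7}E(u(t),v(t))$ on $[0,\de]$; since $w(0)=0$ gives $E(u(0),v(0))=0$, Gronwall then forces $E\equiv0$ and hence $w\equiv0$.

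First I would expand $K(u)-K(v)$ in $w$. The linear contribution to $\dt w$ is $\HT\ds^{4}w$, whose symbol $-i\,\sgn(\xi)\,\xi^{4}$ is purely imaginary and odd; being skew--adjoint it drops out of every constant--coefficient pairing defining $E$, in particular from $\frac{d}{dt}\|w\|_{H^{-1}}^{2}$ and $\frac{d}{dt}\|w\|^{2}$. The dangerous nonlinear contributions are the third--order ones coming from $c_{1}u\ds^{2}u$ and $c_{4}\HT(u\HT\ds^{2}u)$: after differentiation the leading part acting on $w$ is, modulo the order $-1$ commutator $[\HT,u]$ and using $\HT\HT=-\mathrm{Id}$, equal to $(c_{1}-c_{4})u\ds^{3}w$, exactly the coefficient recorded in $\la_{1}$. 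A bare $L^{2}$ or $H^{-1}$ pairing of this term with $w$ loses a full derivative, and the loss cannot be absorbed by interpolation against the $C([0,\de];H^{3})$ bound, since the resulting ODE $y'\lec y^{2/3}$ fails the Osgood condition. The point is that the correction terms $M^{(1)},M^{(2)},M^{(3)}$ are chosen precisely so that their time derivatives cancel these losses: differentiating $M^{(1)}$ produces the quadratic--in--$w$ counterpart of $(c_{1}-c_{4})u\ds^{3}w$ together with the lower losses matched by $M^{(2)},M^{(3)}$, after which every remaining term is of the form (coefficient in $u,v$) $\times$ (quadratic in $w$ at the energy level), estimable by $I_{s_{0}}(u,v)^{7}E(u,v)$ via Lemma \ref{G.N.} and standard commutator bounds, the borderline input being $\ds^{3}u\in L^{\I}$, which holds because $s_{0}>7/2$.

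The main obstacle is not the algebra of cancellation, which mirrors the proof of Proposition \ref{ene.est.0}, but the justification of the energy identity at the low regularity available here: $u,v$ are only $C([0,\de];H^{3})\cap C^{1}([0,\de];H^{-1})\cap L^{\I}([0,\de];H^{s_{0}})$, so Proposition \ref{ene.est.0} cannot be invoked verbatim (it demands $C((0,\de);H^{s_{0}+1})$). I would therefore regularize, applying a Friedrichs mollifier or a frequency truncation $P_{\le N}$ to $w$ and to the equation, compute $\frac{d}{dt}E$ for the truncated quantities where all pairings are legitimate $L^{2}$ integrals and every integration by parts is valid, and then let $N\to\I$. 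The commutator errors produced by the truncation carry a derivative compensated by the smoothing together with the $L^{\I}H^{s_{0}}$ bound, so they vanish in the limit, while the time derivative passes to $\dt w\in C([0,\de];H^{-1})$ paired against $w\in C([0,\de];H^{3})$; this is exactly the scheme behind Theorem 6.22 in \cite{Iorio}. Once the differential inequality is secured, Gronwall together with $E(u(0),v(0))=0$ and Lemma \ref{comparison2} yields $\|w(t)\|=0$ for all $t\in[0,\de]$, i.e. $u\equiv v$.
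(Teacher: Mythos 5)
Your proposal is correct in outline, but it takes a different route from the paper, which offers no argument of its own: the stated proof is a one--line citation to Theorem 6.22 of \cite{Iorio}. What you do instead is assemble the proof from the paper's own machinery, namely the $\e_{1}=\e_{2}=0$ case of Proposition \ref{ene.est.0}: the modified energy $E(u,v)$ at the $L^{2}$/$H^{-1}$ level, Lemma \ref{comparison2} to convert $E\equiv 0$ into $w\equiv 0$, and Gronwall from $E(u(0),v(0))=0$. Your diagnosis of why this detour is necessary is the valuable part: the third--order nonlinearities $c_{1}u\ds^{2}u$ and $c_{4}\HT(u\HT\ds^{2}u)$ leave, after integration by parts in the bare $L^{2}$ pairing, the first--order loss $\la_{1}(0)\LR{\ds u,(\ds w)^{2}}$ (cf.\ Lemma \ref{NT0}), which interpolation against the $C([0,\de];H^{3})$ bound turns into the non--Osgood inequality $y'\lec y^{2/3}$; so a direct $L^{2}$ contraction of the kind that suffices for the second--order equation \eqref{2BO} genuinely fails here, and the correction terms $M^{(1)},M^{(2)},M^{(3)}$ are needed even for uniqueness. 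You also correctly flag that Proposition \ref{ene.est.0} cannot be invoked verbatim because the hypotheses here give only $L^{\I}H^{s_{0}}\cap C H^{3}\cap C^{1}H^{-1}$ rather than $C((0,T);H^{s_{0}+1})$, and the frequency--truncation/Friedrichs regularization you propose is the standard and adequate fix: all pairings in $\frac{d}{dt}E$ make sense at this regularity (e.g.\ $J\dt w\in C([0,\de];L^{2})$ against $(\HT\ds u)Jw\in H^{2}$), and the truncation commutators vanish in the limit by the usual Friedrichs lemma with $\ds u\in L^{\I}$. The trade--off is that your argument requires rechecking every cancellation of Section 4 at $\e=0$ (all of which the paper has already recorded in \eqref{eq4.1}--\eqref{eq4.5}), whereas the paper's citation buys brevity at the cost of leaving unstated how the textbook argument accommodates the derivative loss specific to this fourth--order nonlinearity.
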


It is important to employ the Bona-Smith type argument in the energy inequality for two solutions in $H^{s}$.
For that purpose, we introduce the following.

\begin{defn}\label{BS}
Let $s\ge0$, $f\in H^{s}(\T)$ and $\eta\in(0,1)$.
And let $\rho\in C_{0}^{\I}(\R)$ be $\rho(x):=1-\psi(x)$ for $x\in\R$.
We put
\[\ha{L_{\eta}f}(k):=\rho(\eta k)\hat{f}(k).\]
\end{defn}

For the proof of the following lemma, see Remark 3.5 in \cite{ErTzi}.

\begin{lem}\label{lem_BS}
Let $s\ge 0$, $\al\ge0$, $\eta\in(0,1)$ and $f\in H^{s}(\T)$.
Then, $L_{\eta} f \in H^\I(\T)$ satisfies
\EQQ{
&\|L_{\eta} f -f\|_{H^s} \to 0 \quad(\eta\to 0), \quad\|L_{\eta}f -f \|_{H^{s-\al}} \lec \ga^{\al} \|f\|_{H^s},\\
&\|L_{\eta}f\|_{H^{s-\al}}\le \|f\|_{H^{s-\al}}, \quad\|L_{\eta}f\|_{H^{s+\al}} \lec \ga^{-\al}\|f\|_{H^s}.
}
\end{lem}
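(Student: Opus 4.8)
The plan is to view $L_{\eta}$ as the Fourier multiplier with symbol $\rho(\eta\,\cdot)$ and to read off the four assertions from the size and support of this symbol, together with those of its complement $\psi(\eta\,\cdot)=1-\rho(\eta\,\cdot)$. Recall from Definitions \ref{J} and \ref{BS} that $0\le\rho\le1$ with $\rho\equiv1$ on $[-1,1]$ and $\supp\rho\subset[-2,2]$, while $0\le\psi\le1$ with $\psi\equiv0$ on $[-1,1]$ and $\psi\equiv1$ outside $[-2,2]$. Since $\ha{L_{\eta}f}(k)=\rho(\eta k)\ha f(k)$ is supported in $|k|\le2/\eta$, the function $L_{\eta}f$ has only finitely many nonzero Fourier coefficients, hence $L_{\eta}f\in H^{\I}(\T)$. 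Throughout I use the equivalence $\|g\|_{H^{\si}}^{2}\sim\sum_{k\in\Z}\LR{k}^{2\si}|\ha g(k)|^{2}$, valid for every $\si\in\R$ (consistent with the convention already used for $H^{-1}(\T)$ in Lemma \ref{H^-1}).

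The three quantitative bounds follow directly from pointwise symbol estimates. For the monotonicity bound $\|L_{\eta}f\|_{H^{s-\al}}\le\|f\|_{H^{s-\al}}$, I simply use $0\le\rho(\eta k)\le1$, so that multiplying each Fourier coefficient by $\rho(\eta k)$ cannot increase any weighted $\ell^{2}$ norm. For the loss estimate, note that on $\supp\rho(\eta\,\cdot)$ one has $|k|\le2/\eta$, whence $\LR{k}\lec\eta^{-1}$ (using $\eta\in(0,1)$) and therefore $\LR{k}^{\al}\lec\eta^{-\al}$; factoring $\LR{k}^{2(s+\al)}=\LR{k}^{2\al}\LR{k}^{2s}$ and pulling the bound on $\LR{k}^{2\al}$ out of the sum yields $\|L_{\eta}f\|_{H^{s+\al}}\lec\eta^{-\al}\|f\|_{H^{s}}$. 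For the difference estimate, observe that $\ha{(L_{\eta}f-f)}(k)=-\psi(\eta k)\ha f(k)$ is supported in $|k|\ge1/\eta$, where $\LR{k}^{-\al}\le\eta^{\al}$; since $0\le\psi(\eta k)\le1$, factoring $\LR{k}^{2(s-\al)}=\LR{k}^{-2\al}\LR{k}^{2s}$ gives $\|L_{\eta}f-f\|_{H^{s-\al}}\lec\eta^{\al}\|f\|_{H^{s}}$.

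It remains to prove the convergence $\|L_{\eta}f-f\|_{H^{s}}\to0$ as $\eta\to0$, which is the only assertion not reducible to a pointwise symbol bound and is thus the one mild obstacle. I would argue by dominated convergence on the Fourier side: writing $\|L_{\eta}f-f\|_{H^{s}}^{2}\sim\sum_{k\in\Z}\LR{k}^{2s}\psi(\eta k)^{2}|\ha f(k)|^{2}$, each summand tends to $0$ as $\eta\to0$ (indeed $\psi(\eta k)=0$ once $\eta|k|\le1$, and $\psi(0)=0$), while it is dominated uniformly in $\eta$ by the summable sequence $\LR{k}^{2s}|\ha f(k)|^{2}$ since $f\in H^{s}(\T)$; hence the series tends to $0$. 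Alternatively, one may invoke density of trigonometric polynomials, on which $L_{\eta}$ acts as the identity for all small $\eta$, together with the uniform bound furnished by the case $\al=0$ of the difference estimate. Apart from supplying this limiting argument and matching the two frequency regions $|k|\lec\eta^{-1}$ and $|k|\gec\eta^{-1}$ to the respective factors $\eta^{-\al}$ and $\eta^{\al}$, the proof is entirely routine.
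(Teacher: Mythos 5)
Your proof is correct and complete. The paper itself gives no argument for this lemma, only the citation ``see Remark 3.5 in \cite{ErTzi}''; your symbol-support argument (monotone multiplier bound $0\le\rho(\eta k)\le 1$ for the non-expansion, the support conditions $|k|\le 2/\eta$ and $|k|\ge 1/\eta$ for the $\eta^{-\al}$ gain and $\eta^{\al}$ loss respectively, and dominated convergence on the Fourier side for the limit) is exactly the standard proof that reference supplies, so there is nothing to compare. One cosmetic point: the $\ga^{\al}$ and $\ga^{-\al}$ in the statement are evidently typos for $\eta^{\al}$ and $\eta^{-\al}$, and you have read them correctly.
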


\begin{proof}[Proof of Theorem \ref{main}]
We only need to prove Theorem \ref{main} for $t\ge0$ thanks to the transform $t\to -t$.
In what follows, without loss of generality, we may assume that $s_{0}$ is strictly smaller than $s$
since the assumption $\|\vp\|_{H^{s_{0}}}\le K$ is weaker than $\|\vp\|_{H^{s_{0}'}}\le K$
when $s_{0}<s_{0}'$.
First we prove the existence of the solution.
For $\vp\in H^{s}(\T)$, we put $\vp_{\eta}:=L_{\eta}\vp\in H^{\I}(\T)$ for $\eta\in(0,1)$.
By Proposition \ref{para.regu.},
there exists the unique solution $u_{\epsilon,\eta}\in C([0,T_{\e});H^{s}(\T))$ to \eqref{BO1pr}
with the initial data $\vp_{\eta}$ on $[0,T_{\e})$.
We see from Lemma \ref{lem_BS} that
\[\|\vp_{\eta}\|_{H^{s}}\le\|\vp\|_{H^{s}},\quad\|\vp_{\eta}\|_{H^{s_{0}}}\le\|\vp\|_{H^{s_{0}}}.\]
Then, Proposition \ref{ene.est.} with Lemma \ref{comparison} shows that there exists $T=T(s,s_{0},\|\vp\|_{H^{s_{0}}})>0$
such that
\[\sup_{t\in[0,T]}\|u_{\e,\eta}(t)\|_{H^{s}}\lesssim\sup_{t\in[0,T]} E_{s}(u_{\e,\eta}(t))^{1/2}
  \lesssim E_{s}(u_{\e,\eta}(0))^{1/2}\lesssim\|\vp_{\eta}\|_{H^{s}},\]
which implies that
\EQS{\label{eq2.6}
\sup_{t\in[0,T]}\|u_{\e,\eta}(t)\|_{H^{s+3}}\lesssim\eta^{-3}\|\vp\|_{H^{s}}.
}
Let $0<\e_{1}\le \e_{2}<1$ and $\eta_{j}=\e_{j}^{1/2s}$ for $j=1,2$.
Proposition \ref{ene.est.0} with $s'=s$ shows that there exists $C=C(s,s_{0},T,\|\vp\|_{H^{s_{0}}})>0$
such that
\EQQS{
\sup_{t\in[0,T]}\|u_{\e_{1},\eta_{1}}(t)-u_{\e_{2},\eta_{2}}(t)\|
&\le CE(u_{\e_{1},\eta_{1}}(0),u_{\e_{2},\eta_{2}}(0))\\
&\le C(\|\vp_{\eta_{1}}-\vp_{\eta_{2}}\|^{2}+\e_{2}^{2-2/s})^{1/2}
\le C\e_{2}^{1/2}.
}
By interpolation, it holds that for $\al\in[0,s]$,
\EQS{\label{eq2.7}
\sup_{t\in[0,T]}\|u_{\e_{1},\eta_{1}}(t)-u_{\e_{2},\eta_{2}}(t)\|_{H^{s-\al}}\lesssim \e_{2}^{\al/2s}.
}
Therefore, Proposition \ref{ene.est.pr} together with \eqref{eq2.6} and \eqref{eq2.7} shows that
\EQS{\label{eq2.8}
\sup_{t\in[0,T]}\|u_{\e_{1},\eta_{1}}(t)-u_{\e_{2},\eta_{2}}(t)\|_{H^{s}}
\lesssim\|\vp_{\eta_{1}}-\vp_{\eta_{2}}\|_{H^{s}}+\e_{2}^{(1-s_{0}/s)/2}
}
since $0<(1-s_{0}/s)/2<1-s_{0}/s<1-2/s$.
Then, $\{u_{\e,\eta}\}_{\e=\eta^{2s}}$ is a Cauchy sequence in $C([0,T];H^{s}(\T))$ as $\e\to0$
and there exists the limit $u\in C([0,T];H^{s}(\T))$.
It is clear that $u$ satisfy \eqref{BO1}--\eqref{BO2} on $[0,T]$.
We also note that letting $\e_{1}\to0$ in \eqref{eq2.8},
\EQS{\label{eq2.9}
\sup_{t\in[0,T]}\|u(t)-u_{\e}(t)\|_{H^{s}}
\lesssim\|\vp-\vp_{\e^{1/2s}}\|_{H^{s}}+\e^{(1-s_{0}/s)/2}
}
for $\e\in(0,1)$, where $u_{\e}:=u_{\e,\e^{1/2s}}$

Finally, we show the continuous dependence.
We claim that
if $\|\vp^{(j)}-\vp\|_{H^{s}}\to0$ as $j\to\I$, then $\sup_{t\in[0,T]}\|u^{(j)}(t)-u(t)\|_{H^{s}}\to0$ as $j\to\I$,
where $u^{(j)}$ (resp. $u$) is the solution to \eqref{BO1} with the initial data $\vp^{(j)}$ (resp. $\vp$) for $j\in\N$.
First note that the triangle inequality with Lemma \ref{lem_BS} gives that
\EQQS{
\|\vp^{(j)}-\vp_{\e^{1/2s}}^{(j)}\|_{H^{s}}
&\le\|\vp^{(j)}-\vp\|_{H^{s}}+\|\vp-\vp_{\e^{1/2s}}\|_{H^{s}}
   +\|\vp_{\e^{1/2s}}-\vp_{\e^{1/2s}}^{(j)}\|_{H^{s}}\\
&\lesssim\|\vp^{(j)}-\vp\|_{H^{s}}+\|\vp-\vp_{\e^{1/2s}}\|_{H^{s}}.
}
This together with \eqref{eq2.9} implies that
\EQQS{
&\sup_{t\in[0,T]}\|u^{(j)}(t)-u(t)\|_{H^{s}}\\
&\le\sup_{t\in[0,T]}\|u^{(j)}(t)-u_{\e}^{(j)}(t)\|_{H^{s}}+\sup_{t\in[0,T]}\|u_{\e}^{(j)}(t)-u_{\e}(t)\|_{H^{s}}
   +\sup_{t\in[0,T]}\|u_{\e}(t)-u(t)\|_{H^{s}}\\
&\le C\left(\|\vp^{(j)}-\vp\|_{H^{s}}+\|\vp-\vp_{\e^{1/2s}}\|_{H^{s}}
   +\sup_{t\in[0,T]}\|u_{\e}^{(j)}(t)-u_{\e}(t)\|_{H^{s}}+\e^{(1-s_{0}/s)/2}\right).
}
Let $\de>0$.
Then, there exists $\e_{0}\in(0,1)$ such that for any $\e\in(0,\e_{0})$
\EQQS{
C(\|\vp-\vp_{\e^{1/2s}}\|_{H^{s}}+\e^{(1-s_{0}/s)/2})<\frac{\de}{2}.
}
For each $\e\in(0,\e_{0})$,
we see from Proposition \ref{para.regu.} that there exists $N_{0}\in\N$ such that if $j>N_{0}$, then
\EQQS{
C\|\vp^{(j)}-\vp\|_{H^{s}}+C\sup_{t\in[0,T]}\|u_{\e}^{(j)}(t)-u_{\e}(t)\|_{H^{s}}<\frac{\de}{2},
}
which completes the proof of Theorem \ref{main}.
\end{proof}

\section{The energy estimate in $H^{s}$}
In this Section, we prove Proposition \ref{ene.est.pr}, which is the main estimate in this paper.
Before proving Proposition \ref{ene.est.pr}, we introduce some commutator estimates which are useful in evaluating nonlinear terms.

\begin{defn}
For $s\ge0$ and functions $f,g,h$ we define
\EQQS{
P_{s}^{(1)}(f,g)
&:=D^{s}\ds(f\ds^{2}g)-D^{s}\ds f\ds^{2}g-fD^{s}\ds^{3}g-(s+1)\ds fD^{s}\ds^{2}g\\
&\quad-\frac{s(s+1)}{2}\ds^{2}fD^{s}\ds g,\\
P_{s}^{(2)}(f,g)
&:=\HT D^{s}\ds(f\ds^{2}g)-(\HT D^{s}\ds f)\ds^{2}g-f\HT D^{s}\ds^{3}g\\
&\quad-(s+1)\ds f\HT D^{s}\ds^{2}g-\frac{s(s+1)}{2}\ds^{2}f\HT D^{s}\ds g,\\
P_{s}^{(3)}(f,g)
&:=D^{s}\ds(\ds f\ds g)-D^{s}\ds^{2}f\ds g-(s+1)D^{s}\ds f\ds^{2}g\\
&\quad-\ds fD^{s}\ds^{2}g-(s+1)\ds^{2}fD^{s}\ds g,\\
P_{s}^{(4)}(f,g)
&:=\HT D^{s}\ds(\ds f\ds g)-(\HT D^{s}\ds^{2}f)\ds g-(s+1)(\HT D^{s}\ds f)\ds^{2}g\\
&\quad-\ds f(\HT D^{s}\ds^{2}g)-(s+1)\ds^{2}f(\HT D^{s}\ds g),\\
P_{s}^{(5)}(f,g,h)
&:=D^{s}\ds(fg\ds h)-D^{s}\ds fg\ds h-fD^{s}\ds g\ds h-fgD^{s}\ds^{2}h\\
&\quad-(s+1)\ds fgD^{s}\ds h-(s+1)f\ds gD^{s}\ds h,\\
P_{s}^{(6)}(f,g,h)
&:=\HT D^{s}\ds(fg\ds h)-(\HT D^{s}\ds f)g\ds h-f(\HT D^{s}\ds g)\ds h-fg\HT D^{s}\ds^{2}h\\
&\quad-(s+1)\ds fg\HT D^{s}\ds h-(s+1)f\ds g\HT D^{s}\ds h,\\
P_{s}^{(7)}(f,g,h)
&:=D^{s}\ds(f\HT(g\ds h))-D^{s}\ds f\HT(g\ds h)-f(\HT D^{s}\ds g)\ds h-fg\HT D^{s}\ds^{2}h\\
&\quad-(s+1)\ds fg\HT D^{s}\ds h-(s+1)f\ds g\HT D^{s}\ds h,\\
P_{s}^{(8)}(f,g)
&:=D^{s}\ds(f\ds g)-D^{s}\ds f\ds g-fD^{s}\ds^{2}g-(s+1)\ds fD^{s}\ds g,\\
P_{s}^{(9)}(f,g)
&:=\HT D^{s}\ds(f\ds g)-(\HT D^{s}\ds f)\ds g-f\HT D^{s}\ds^{2}g-(s+1)\ds f\HT D^{s}\ds g.
}
\end{defn}

\begin{lem}\label{Leibniz}
Let $s_{0}>1/2$ and $s\ge0$.
Then
\[\|fg\|_{H^{s}}\lesssim\|f\|_{H^{s}}\|g\|_{H^{s_{0}}}+\|f\|_{H^{s_{0}}}\|g\|_{H^{s}}\]
for any $f,g\in H^{\max\{s_{0},s\}}(\T)$.
\end{lem}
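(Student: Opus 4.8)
The plan is to pass to the Fourier side and reduce the inequality to a convolution estimate on $\ell^{2}(\Z)$. Recall that the norm satisfies $\|h\|_{H^{s}}^{2}\sim\sum_{k\in\Z}\LR{k}^{2s}|\ha h(k)|^{2}$, and that on $\T$ the product is a convolution, $\ha{fg}(k)=\sum_{m\in\Z}\ha f(k-m)\ha g(m)$ up to the fixed normalization constant. The one elementary fact I would isolate first is the Peetre-type inequality: for $s\ge0$ and all $k,m\in\Z$,
\[\LR{k}^{s}\lec\LR{k-m}^{s}+\LR{m}^{s},\]
which follows from $\LR{k}\le\LR{k-m}+\LR{m}\le2\max\{\LR{k-m},\LR{m}\}$ by raising to the $s$-th power and using $s\ge0$.

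Inserting this bound into the convolution, I would estimate
\[\LR{k}^{s}|\ha{fg}(k)|\lec\sum_{m}\LR{k-m}^{s}|\ha f(k-m)||\ha g(m)|+\sum_{m}|\ha f(k-m)|\LR{m}^{s}|\ha g(m)|.\]
Writing $F(k):=\LR{k}^{s}|\ha f(k)|$ and $G(k):=\LR{k}^{s}|\ha g(k)|$, the right-hand side is $(F*|\ha g|)(k)+(|\ha f|*G)(k)$, a sum of two discrete convolutions. Taking the $\ell^{2}(\Z)$ norm and applying Young's inequality $\|a*b\|_{\ell^{2}}\le\|a\|_{\ell^{2}}\|b\|_{\ell^{1}}$ gives
\[\|fg\|_{H^{s}}\lec\|F\|_{\ell^{2}}\|\ha g\|_{\ell^{1}}+\|\ha f\|_{\ell^{1}}\|G\|_{\ell^{2}}\sim\|f\|_{H^{s}}\|\ha g\|_{\ell^{1}}+\|\ha f\|_{\ell^{1}}\|g\|_{H^{s}}.\]

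It remains to control the $\ell^{1}$ factors by the $H^{s_{0}}$ norms, and this is the only step where the hypothesis $s_{0}>1/2$ enters, so it is the main point of the argument. By Cauchy-Schwarz,
\[\|\ha g\|_{\ell^{1}}=\sum_{m}\LR{m}^{-s_{0}}\LR{m}^{s_{0}}|\ha g(m)|\le\Big(\sum_{m}\LR{m}^{-2s_{0}}\Big)^{1/2}\Big(\sum_{m}\LR{m}^{2s_{0}}|\ha g(m)|^{2}\Big)^{1/2}\lec\|g\|_{H^{s_{0}}},\]
where the series $\sum_{m}\LR{m}^{-2s_{0}}$ converges precisely because $2s_{0}>1$; the identical bound holds for $\|\ha f\|_{\ell^{1}}$. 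Substituting these two estimates yields exactly $\|fg\|_{H^{s}}\lec\|f\|_{H^{s}}\|g\|_{H^{s_{0}}}+\|f\|_{H^{s_{0}}}\|g\|_{H^{s}}$, which completes the proof. The only genuinely nontrivial ingredient is this last summability, the discrete analogue of the embedding of $H^{s_{0}}(\T)$ into the space of functions with summable Fourier coefficients; the Peetre inequality and Young's inequality are purely mechanical.
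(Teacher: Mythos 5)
Your proof is correct and is exactly the argument the paper compresses into its one-line citation of the inequality $\LR{\xi}^{s}\lesssim\LR{\xi-\eta}^{s}+\LR{\eta}^{s}$: Peetre's inequality, Young's convolution inequality on $\ell^{2}(\Z)$, and the Cauchy--Schwarz summability of $\LR{m}^{-2s_{0}}$ for $s_{0}>1/2$. Nothing further is needed.
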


\begin{proof}
This follows from the fact that $\LR{\xi}^{s}\lesssim\LR{\xi-\eta}^{s}+\LR{\eta}^{s}$ for any $\xi,\eta\in\Z$.
\end{proof}

For the proofs of the following three lemmas, see \cite{Tanaka}.

\begin{lem}\label{Leibniz2}
Let $s_{0}>3/2$ and $s\ge1$.
Then
\[\|[D^{s},f]\ds g\|\lesssim\|f\|_{H^{s}}\|g\|_{H^{s_{0}}}+\|f\|_{H^{s_{0}}}\|g\|_{H^{s}}\]
for any $f,g\in H^{\max\{s_{0},s\}}(\T)$.
\end{lem}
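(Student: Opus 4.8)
The plan is to reduce the commutator to a sum of ordinary products by a direct computation on the Fourier side, and then to close the estimate with H\"older's inequality together with the Sobolev embedding $H^{\sigma}(\T)\hookrightarrow L^{\I}(\T)$ for $\sigma>1/2$; the hypothesis $s_{0}>3/2$ enters precisely so that $\ds f$ and $\ds g$ lie in $L^{\I}$. First I would compute the Fourier coefficients. Writing $m$ for the frequency of $g$ and $\ell=k-m$ for that of $f$, one finds
\[
\F\big([D^{s},f]\ds g\big)(k)=\sum_{m\in\Z}\hat{f}(k-m)(im)(|k|^{s}-|m|^{s})\hat{g}(m),
\]
so everything hinges on the size of the multiplier $|m|\,\big||k|^{s}-|m|^{s}\big|$. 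Note that this vanishes on the diagonal $\ell=0$ and for $m=0$, so there is no difficulty at zero frequency and no need to insert $\LR{\cdot}$ weights.

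Next I would estimate the symbol. Since $s\ge1$, the function $t\mapsto|t|^{s}$ is Lipschitz on the segment joining $k$ and $m$ with constant $\lec\max(|k|,|m|)^{s-1}$ (for $s=1$ this is just $\big||k|-|m|\big|\le|k-m|$), so
\[
\big||k|^{s}-|m|^{s}\big|\lec|k-m|\,(|k|^{s-1}+|m|^{s-1}).
\]
Using $|k|\le|\ell|+|m|$ and the elementary inequality $(a+b)^{s-1}\lec a^{s-1}+b^{s-1}$ for $a,b\ge0$, which is valid because $s-1\ge0$, this yields the clean bound
\[
|m|\,\big||k|^{s}-|m|^{s}\big|\lec|\ell|^{s}|m|+|\ell|\,|m|^{s}.
\]

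Now I would dominate the commutator by two genuine products. Let $f_{1},g_{1},f_{2},g_{2}$ be the functions with the nonnegative Fourier coefficients $|\ell|^{s}|\hat{f}(\ell)|$, $|m|\,|\hat{g}(m)|$, $|\ell|\,|\hat{f}(\ell)|$, $|m|^{s}|\hat{g}(m)|$, respectively. Because these coefficients are nonnegative, the triangle inequality applied term by term in the convolution gives $\big|\F([D^{s},f]\ds g)(k)\big|\lec\F(f_{1}g_{1})(k)+\F(f_{2}g_{2})(k)$, whence by Plancherel
\[
\|[D^{s},f]\ds g\|\lec\|f_{1}g_{1}\|+\|f_{2}g_{2}\|.
\]
For the first term I would use $\|f_{1}g_{1}\|\le\|f_{1}\|\,\|g_{1}\|_{\I}$, noting $\|f_{1}\|=\|D^{s}f\|\lec\|f\|_{H^{s}}$ while $\|g_{1}\|_{H^{s_{0}-1}}\le\|g\|_{H^{s_{0}}}$ directly from the definition of $g_1$; since $s_{0}-1>1/2$, the embedding $H^{s_{0}-1}(\T)\hookrightarrow L^{\I}(\T)$ gives $\|g_{1}\|_{\I}\lec\|g\|_{H^{s_{0}}}$. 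Symmetrically $\|f_{2}g_{2}\|\le\|f_{2}\|_{\I}\|g_{2}\|\lec\|f\|_{H^{s_{0}}}\|g\|_{H^{s}}$, and adding the two bounds completes the proof.

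The main obstacle is the symbol estimate of the second step: one must check that the bound $\big||k|^{s}-|m|^{s}\big|\lec|k-m|(|k|^{s-1}+|m|^{s-1})$ holds for every real $s\ge1$ and all integers $k,m$ of arbitrary sign, including the case where the segment from $m$ to $k$ crosses the origin, and that the output frequency $|k|^{s-1}$ may then be traded for the input frequencies $|\ell|^{s-1}+|m|^{s-1}$. This is exactly where $s\ge1$ is used. Once the multiplier is controlled by $|\ell|^{s}|m|+|\ell|\,|m|^{s}$, the passage to products via nonnegativity of the coefficients and the final H\"older--Sobolev step are routine.
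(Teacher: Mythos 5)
Your proof is correct. The paper itself does not prove this lemma (it defers to Lemma 2.2 of the cited reference \cite{Tanaka}), but your argument — passing to the Fourier side, bounding the multiplier $|m|\,\big||k|^{s}-|m|^{s}\big|$ by $|k-m|^{s}|m|+|k-m|\,|m|^{s}$ via the mean value theorem and $s\ge1$, majorizing by products with nonnegative Fourier coefficients, and closing with H\"older and the embedding $H^{s_{0}-1}(\T)\hookrightarrow L^{\I}(\T)$ — is exactly the standard proof and matches the symbol-estimate technique the paper uses for its other commutator lemmas (e.g.\ Lemmas \ref{Leibniz}, \ref{comm.est.1} and \ref{comm.est.3}).
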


\begin{lem}\label{comm.est.H}
Let $s_{0}>1/2$, $k\in\N$ and $s_{1},s_{2}\ge0$.
Suppose that $s_{1}+s_{2}=k$.
Then, there exists $C=C(s_{0})>0$ such that for any $f\in H^{s_{0}+s_{1}}(\T)$ and $g\in H^{s_{2}}(\T)$
\[\|[\HT,f]\ds^{k}g\|\le C\|f\|_{H^{s_{0}+s_{1}}}\|g\|_{H^{s_{2}}}.\]
\end{lem}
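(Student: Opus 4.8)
The plan is to prove the estimate by a direct computation on the Fourier side, where the gain comes from the smoothing effect of the commutator $[\HT,f]$. By density it suffices to treat trigonometric polynomials $f,g$ and then pass to the limit. Writing $\xi$ for the output frequency and $\eta$ for the frequency carried by $g$ (so that $f$ carries $\xi-\eta$), and using $\widehat{\HT h}(\zeta)=-i\sgn(\zeta)\widehat h(\zeta)$ for every $\zeta\in\Z$, I would expand $[\HT,f]\ds^{k}g=\HT(f\ds^{k}g)-f\HT(\ds^{k}g)$ to obtain
\[
\widehat{[\HT,f]\ds^{k}g}(\xi)=-i\sum_{\eta\in\Z}\bigl(\sgn(\xi)-\sgn(\eta)\bigr)\,\widehat{f}(\xi-\eta)\,(i\eta)^{k}\widehat{g}(\eta).
\]
The decisive observation is that the commutator symbol $\sgn(\xi)-\sgn(\eta)$ is bounded by $2$ and, crucially, \emph{vanishes} unless $\xi$ and $\eta$ have opposite signs (with $0$ treated as its own sign). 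On the set where it is nonzero a short case check shows $|\eta|\le|\xi-\eta|$; that is, the high frequency is always carried by $f$.

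Granting this, and since $s_{1}\ge0$, I would split $|\eta|^{k}=|\eta|^{s_{1}}|\eta|^{s_{2}}\le|\xi-\eta|^{s_{1}}|\eta|^{s_{2}}$ to transfer $s_{1}$ derivatives onto $f$. With $F(\zeta):=\LR{\zeta}^{s_{1}}|\widehat f(\zeta)|$ and $G(\eta):=\LR{\eta}^{s_{2}}|\widehat g(\eta)|$ this gives the pointwise bound
\[
\bigl|\widehat{[\HT,f]\ds^{k}g}(\xi)\bigr|\le 2\sum_{\eta\in\Z}\LR{\xi-\eta}^{s_{1}}|\widehat{f}(\xi-\eta)|\,\LR{\eta}^{s_{2}}|\widehat{g}(\eta)|=2\,(F*G)(\xi).
\]
By Plancherel on $\T$ (up to an absolute constant) it then suffices to estimate $\|F*G\|_{\ell^{2}(\Z)}$, and $\|G\|_{\ell^{2}(\Z)}\sim\|g\|_{H^{s_{2}}}$ directly.

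Finally I would apply Young's inequality $\|F*G\|_{\ell^{2}}\le\|F\|_{\ell^{1}}\|G\|_{\ell^{2}}$ and bound the $\ell^{1}$-norm of $F$ by inserting $\LR{\zeta}^{-s_{0}}$ and using Cauchy-Schwarz:
\[
\|F\|_{\ell^{1}(\Z)}=\sum_{\zeta\in\Z}\LR{\zeta}^{-s_{0}}\,\LR{\zeta}^{s_{0}+s_{1}}|\widehat{f}(\zeta)|
\le\Bigl(\sum_{\zeta\in\Z}\LR{\zeta}^{-2s_{0}}\Bigr)^{1/2}\|f\|_{H^{s_{0}+s_{1}}}.
\]
The remaining series converges precisely because $s_{0}>1/2$, which is exactly where the hypothesis enters. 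Combining the three estimates yields $\|[\HT,f]\ds^{k}g\|\lec\|f\|_{H^{s_{0}+s_{1}}}\|g\|_{H^{s_{2}}}$ with a constant depending only on $s_{0}$. The heart of the argument—and the only place requiring care—is the sign analysis of the symbol $\sgn(\xi)-\sgn(\eta)$: its support forces $|\eta|\le|\xi-\eta|$, which is what lets all $k$ derivatives be absorbed by $f$ at the regularity cost $s_{1}$. The boundary cases $\xi=0$ or $\eta=0$ are harmless, since the term vanishes when $\eta=0$ (as $k\ge1$) and the inequality $|\eta|\le|\xi-\eta|$ persists otherwise.
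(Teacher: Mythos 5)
Your proof is correct and complete: the identity for the commutator symbol, the observation that $\sgn(\xi)-\sgn(\eta)$ vanishes unless the output and input frequencies have opposite signs (forcing $|\eta|\le|\xi-\eta|$ on the support, with the $\eta=0$ term killed by $(i\eta)^{k}$), and the Young/Cauchy--Schwarz step using $s_{0}>1/2$ are all sound. The paper itself gives no proof here, deferring to Lemma 2.4 of \cite{Tanaka}, but the Fourier-side sign analysis you carry out is the standard argument for this Calder\'on-type commutator estimate and is exactly where the smoothing of $[\HT,f]$ comes from.
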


\begin{lem}\label{comm.est.0}
Let $s_{0}>5/2$ and $s\ge0$.
Then there exists $C=C(s,s_{0})>0$ such that
\EQQS{
\|P_{s}^{(8)}(u,v)\|,\|P_{s}^{(9)}(u,v)\|
\le C(\|u\|_{H^{s_{0}}}\|v\|_{H^{s}}+\|u\|_{H^{s}}\|v\|_{H^{s_{0}}})
}
for any $u,v\in H^{\max\{s,s_{0}\}}(\T)$.
\end{lem}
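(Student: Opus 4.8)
The plan is to pass to the Fourier side and regard $P_{s}^{(8)}$ and $P_{s}^{(9)}$ as bilinear Fourier multipliers on $\T$. Writing
\[
\ha{P_{s}^{(8)}(u,v)}(k)=\sum_{k_{1}+k_{2}=k}m_{s}(k_{1},k_{2})\ha{u}(k_{1})\ha{v}(k_{2}),
\]
a direct computation of the five symbols involved gives, with $\phi(k):=k|k|^{s}$ (so that $\phi'(k)=(s+1)|k|^{s}$),
\[
m_{s}(k_{1},k_{2})=-k_{2}\big(\phi(k_{1}+k_{2})-\phi(k_{1})-\phi(k_{2})-k_{1}\phi'(k_{2})\big).
\]
Since $\HT\ds$ has the real even symbol $|k|$, the operator $\HT D^{s}\ds$ has symbol $|k|^{s+1}$, and the same computation shows that the symbol of $P_{s}^{(9)}$ satisfies exactly the same bounds as $m_{s}$; I therefore carry out the estimate only for $P_{s}^{(8)}$. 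The whole point of the four subtracted terms in the definition of $P_{s}^{(8)}$ is that the bracket above is precisely the second-order Taylor remainder of $\phi$ at $k_{2}$ with increment $k_{1}$, corrected by the single term $\phi(k_{1})$: the contributions of orders $s+2$ and $s+1$ cancel, so that $m_{s}$ loses only two derivatives relative to the larger frequency.

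Next I split the sum into the three regions (I) $|k_{1}|\le\frac12|k_{2}|$, (II) $|k_{2}|\le\frac12|k_{1}|$ and (III) $\frac12|k_{2}|\le|k_{1}|\le 2|k_{2}|$. In (I) and (II) the two frequencies are separated, so the Taylor points stay away from the origin and $\phi$ may be treated as $C^{2}$ with $|\phi''(k)|\lec|k|^{s-1}$; Taylor's theorem together with the elementary estimate $|k_{1}|^{s+1}|k_{2}|\lec\LR{k_{\min}}^{s}\LR{k_{\max}}^{2}$ for the leftover term $k_{2}\phi(k_{1})$ then yields, uniformly in $s\ge0$,
\[
|m_{s}(k_{1},k_{2})|\lec\LR{k_{1}}^{2}\LR{k_{2}}^{s}+\LR{k_{1}}^{s}\LR{k_{2}}^{2},
\]
where in each region the two surplus derivatives sit on the smaller of $|k_{1}|,|k_{2}|$ (at low frequencies the symbol is simply bounded). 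In the high--high region (III) one has $\LR{k_{1}}\approx\LR{k_{2}}$, and the crude bound $|m_{s}|\lec\LR{k_{1}}^{s}\LR{k_{2}}^{2}$ suffices.

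I then estimate each region by a Cauchy--Schwarz argument in which the summable weight is placed on the smaller frequency. For the part of (I) where the surplus sits on $k_{1}$, I write $\LR{k_{1}}^{2}=\LR{k_{1}}^{s_{0}}\LR{k_{1}}^{2-s_{0}}$ and apply Cauchy--Schwarz in $k_{1}$:
\[
\sum_{k_{1}}\LR{k_{1}}^{2}\LR{k_{2}}^{s}|\ha{u}(k_{1})||\ha{v}(k_{2})|
\le\Big(\sum_{k_{1}}\LR{k_{1}}^{2(2-s_{0})}\Big)^{1/2}\Big(\sum_{k_{1}}\LR{k_{1}}^{2s_{0}}|\ha{u}(k_{1})|^{2}\LR{k_{2}}^{2s}|\ha{v}(k_{2})|^{2}\Big)^{1/2},
\]
and the first factor is finite precisely because $s_{0}>5/2$. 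Squaring and summing over $k$ produces the contribution $\|u\|_{H^{s_{0}}}\|v\|_{H^{s}}$; the symmetric placement handles the other term and regions (II)--(III), using in (III) that $\LR{k_{1}}\approx\LR{k_{2}}$ converts the surplus $\LR{k_{2}}^{2-s_{0}}$ into a summable $\LR{k_{1}}^{2-s_{0}}$, and yields $\|u\|_{H^{s}}\|v\|_{H^{s_{0}}}$. Adding the three regions gives the claim; the hypothesis $u,v\in H^{\max\{s,s_{0}\}}(\T)$ guarantees absolute convergence of all the Fourier sums above.

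The main obstacle is the first step: computing $m_{s}$ and checking that the two leading orders genuinely cancel, so that the pointwise bound $|m_{s}|\lec\LR{k_{\min}}^{2}\LR{k_{\max}}^{s}$ holds \emph{uniformly} for all $s\ge0$. In particular the non-smooth regime $0\le s<1$, where $\phi''$ is unbounded near the origin, must be treated separately by exploiting the frequency separation in (I)--(II); once the symbol bound is established the summation is routine, and the threshold $s_{0}>5/2$ enters exactly as the condition that makes $\sum_{k_{1}}\LR{k_{1}}^{2(2-s_{0})}$ converge, i.e.\ that absorbs the two surplus derivatives carried by the remainder.
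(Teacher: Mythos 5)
Your proof is correct: the symbol computation, the exhibited cancellation of the orders $s+2$ and $s+1$ via the Taylor remainder of $\phi(k)=k|k|^{s}$, the three-region splitting with Taylor's theorem in the frequency-separated regions (and the separate treatment of $0\le s<1$, where $\phi''$ is singular only at the origin), and the Cauchy--Schwarz/Young summation that uses $s_{0}>5/2$ to absorb the two surplus derivatives all check out, and the reduction of $P_{s}^{(9)}$ to the same symbol bound with $|k|^{s+1}$ in place of $k|k|^{s}$ is legitimate. The paper itself only cites \cite{Tanaka} for this lemma, but your argument is exactly the method the paper uses for the analogous commutator estimates (compare the proof of Lemma \ref{comm.est.1}), so it matches the intended proof.
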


\begin{lem}\label{comm.est.1}
Let $s_{0}>7/2$ and $s\ge0$.
Then there exists $C=C(s,s_{0})>0$ such that
\EQQS{
\|P_{s}^{(1)}(u,v)\|,\|P_{s}^{(2)}(u,v)\|\le C(\|u\|_{H^{s}}\|v\|_{H^{s_{0}}}+\|u\|_{H^{s_{0}}}\|v\|_{H^{s}})
}
for any $u,v\in H^{\max\{s,s_{0}\}}(\T)$.
\end{lem}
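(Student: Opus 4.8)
The plan is to realize both $P_s^{(1)}$ and $P_s^{(2)}$ as bilinear Fourier multipliers and to read off their symbols. Write $m(\xi)=i\xi|\xi|^{s}$ for the symbol of $D^{s}\ds$; for $P_s^{(2)}$ the relevant symbol is the (even) symbol $\tilde m(\xi)=|\xi|^{s+1}$ of $\HT D^{s}\ds$. Expanding $u$ at frequency $\xi_1$ and $v$ at frequency $\xi_2$, a direct computation shows that $P_s^{(1)}(u,v)$ is the bilinear operator with symbol
\[
\Sigma(\xi_1,\xi_2)=-\xi_2^{2}\big(m(\xi_1+\xi_2)-m(\xi_1)-m(\xi_2)-\xi_1 m'(\xi_2)-\tfrac12\xi_1^{2}m''(\xi_2)\big),
\]
the factor $-\xi_2^{2}$ coming from $\ds^2 v$, and that $P_s^{(2)}$ has the same symbol with $m$ replaced by $\tilde m$. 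The decisive observation is that the three terms $m(\xi_2)+\xi_1 m'(\xi_2)+\tfrac12\xi_1^{2}m''(\xi_2)$ form exactly the second-order Taylor polynomial of $m$ about $\xi_2$, whose coefficients $\binom{s+1}{0}$, $\binom{s+1}{1}=s+1$, $\binom{s+1}{2}=\tfrac{s(s+1)}{2}$ are precisely the constants multiplying $u D^{s}\ds^{3}v$, $\ds u\,D^{s}\ds^{2}v$ and $\ds^{2}u\,D^{s}\ds v$ in the definition; the remaining $-m(\xi_1)$ is the high--low leading term $D^{s}\ds u\,\ds^{2}v$.

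First I would decompose $u,v$ into Littlewood--Paley pieces and separate the regimes $|\xi_1|\ll|\xi_2|$, $|\xi_1|\sim|\xi_2|$ and $|\xi_2|\ll|\xi_1|$. In the low--high regime the argument of $m$ stays comparable to $\xi_2$ on the whole segment, so the Taylor remainder obeys
\[
\big|m(\xi_1+\xi_2)-m(\xi_2)-\xi_1 m'(\xi_2)-\tfrac12\xi_1^{2}m''(\xi_2)\big|\lec|\xi_1|^{3}|\xi_2|^{s-2},
\]
using $|m'''(\z)|\lec|\z|^{s-2}$; after the weight $\xi_2^{2}$ this puts three derivatives on the lower-frequency factor and only $s$ on $v$. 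The high--low regime is symmetric, with $-m(\xi_1)$ removing the top $s+1$ derivatives on $u$. Thus in each off-diagonal regime the remainder, which is of total order $s+3$, can be split so that one factor keeps $s$ derivatives in $L^{2}$ while the other carries at most three in $L^{\I}$; a routine dyadic summation then bounds the contribution by $\|u\|_{H^{s}}\|v\|_{H^{s_0}}+\|u\|_{H^{s_0}}\|v\|_{H^{s}}$, the $L^{\I}$ control of three derivatives through $H^{s_0}\hookrightarrow W^{3,\I}$ being exactly what forces $s_0>7/2$.

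On the diagonal $|\xi_1|\sim|\xi_2|$ no cancellation is available, but there each constituent of the bracket is $O(|\xi_1|^{s+1})$, so $|\Sigma|\lec|\xi_1|^{s+1}|\xi_2|^{2}\sim|\xi_1|^{s}|\xi_2|^{3}$; summing over the dyadic diagonal and again placing the three extra derivatives in $L^{\I}$, Bernstein's inequality together with $s_0>7/2$ makes the sum converge and reproduces both terms on the right-hand side. For $P_s^{(2)}$ everything goes through verbatim: $\tilde m=|\xi|^{s+1}$ is even and smooth away from the origin with $\tilde m',\tilde m'',\tilde m'''$ of orders $s,s-1,s-2$, so the Hilbert transform introduces no new difficulty (the zero frequency, where the sign is discontinuous, is harmless on $\T$ by $|\hat f(0)|\lec\|f\|$).

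The main obstacle is the low--high step: one must verify that the three subtracted terms really are the order-$\le 2$ Taylor polynomial of $m$ at $\xi_2$ (so that the leftover is genuinely third order, losing only $s$ rather than $s+3$ derivatives on $v$), and then convert the pointwise symbol bound into an honest $L^{2}$ product estimate in which a single factor carries the three surplus derivatives in $L^{\I}$. This is the same mechanism, one Taylor order higher, that proves Lemma \ref{comm.est.0} for $P_s^{(8)},P_s^{(9)}$ in \cite{Tanaka}. I note the convenient bookkeeping identity
\[
P_s^{(1)}(u,v)=P_s^{(8)}(u,\ds v)-\tfrac{s(s+1)}{2}\,\ds^{2}u\,D^{s}\ds v,
\]
which exhibits $P_s^{(1)}$ as the third-order remainder; it cannot be used as a black box, however, since the two terms on its right each lose $s+1$ derivatives on $v$ and only their difference is admissible. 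For this reason I would carry out the symbol/Littlewood--Paley computation from scratch, following the proof in \cite{Tanaka}.
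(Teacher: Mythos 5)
Your proposal is correct and follows essentially the same route as the paper: both reduce the claim to the pointwise bilinear symbol estimate in which the subtracted terms form the second-order Taylor polynomial of $\sigma(\xi)=\xi|\xi|^{s}$ at the frequency of $v$, split into the three regimes high--low, comparable, and low--high, use the mean value/Taylor theorem off the diagonal and crude bounds on the diagonal, and then convert the resulting bound $|\eta|^{3}|\xi-\eta|^{s}+|\eta|^{s}|\xi-\eta|^{3}$ into the product estimate using $s_{0}>7/2$. The only detail the paper treats explicitly that you gloss over is the low-regularity range $0\le s\le 2$, where $\sigma\notin C^{3}(\R)$ and the third-order Taylor step is replaced by cruder bounds; since on the low--high regime the relevant segment stays away from the origin this is a cosmetic point.
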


\begin{proof}
We show only the inequality for $P_{s}^{(1)}$.
The other one follows from a similar argument.
It suffices to show that
\EQS{\label{eq4.1}
\begin{aligned}
&\left||\xi|^{s}\xi\eta^{2}-|\xi-\eta|^{s}(\xi-\eta)\eta^{2}-|\eta|^{s}\eta^{3}-(s+1)(\xi-\eta)|\eta|^{s}\eta^{2}-\frac{s(s+1)}{2}(\xi-\eta)^{2}|\eta|^{s}\eta\right|\\
&\lesssim|\eta|^{3}|\xi-\eta|^{s}+|\eta|^{s}|\xi-\eta|^{3},
\end{aligned}
}
for any $\xi,\eta\in\Z$.
We split the summation region into three regions:
$R_{1}=\{3|\eta|\le|\xi-\eta|\},R_{2}=\{|\xi-\eta|/4\le|\eta|\le4|\xi-\eta|\}$ and $R_{3}=\{|\eta|\ge3|\xi-\eta|\}$.
On $R_{1}$, the mean value theorem shows that (\ref{eq4.1}) holds.
On $R_{2}$, it is obvious.
On $R_{3}$, it immediately follows that $|\xi-\eta|^{s+1}|\eta|^{2}\lesssim|\xi-\eta|^{s}|\eta|^{3}$.
Set $\sigma(x)=x|x|^{s}$ for $x\in\mathbb{R}$.
Note that $\sigma\in C^{3}(\R)$ when $s>2$.
The Taylor theorem shows that there exist $\tilde{\eta}\in(\xi,\eta)$ or $\tilde{\eta}\in(\eta,\xi)$ such that
\[\sigma(\xi)=\sigma(\eta)+\sigma'(\eta)(\xi-\eta)+\frac{\sigma''(\eta)}{2}(\xi-\eta)^{2}+\frac{\sigma'''(\ti{\eta})}{6}(\xi-\eta)^{3}.\]
This together with the fact that $|\tilde{\eta}|\sim|\xi|\sim|\eta|$ implies that (\ref{eq4.1}) holds.
When $1<s\le2$, \eqref{eq4.1} holds since
$|\xi-\eta|^{2}|\eta|^{s+1}=|\xi-\eta|^{2-s}|\xi-\eta|^{s}|\eta|^{s+1}\lesssim|\xi-\eta|^{s}|\eta|^{3}$ on $R_{3}$.
Similarly, when $0\le s\le1$, \eqref{eq4.1} holds by the above inequality with $|\xi-\eta||\eta|^{s+2}\lesssim|\xi-\eta|^{s}|\eta|^{3}$ on $R_{3}$, which concludes the proof.
\end{proof}

\begin{lem}\label{comm.est.2}
Let $s_{0}>7/2$ and $s\ge0$.
Then there exists $C=C(s,s_{0})>0$ such that
\EQQS{
\|P_{s}^{(3)}(u,v)\|,\|P_{s}^{(4)}(u,v)\|\le C(\|u\|_{H^{s}}\|v\|_{H^{s_{0}}}+\|u\|_{H^{s_{0}}}\|v\|_{H^{s}}),
}
for any $u,v\in H^{\max\{s,s_{0}\}}(\T)$.
\end{lem}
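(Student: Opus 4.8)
The plan is to mirror the proof of Lemma~\ref{comm.est.1}: pass to the Fourier side, reduce the $L^{2}$ bound to a pointwise estimate on the multiplier symbol, and then read off the cancellation built into the subtracted terms. Writing $a=\xi-\eta$, $b=\eta$ and letting $m_{j}(\xi,\eta)$ be the coefficient of $\hat u(\xi-\eta)\hat v(\eta)$ in $\widehat{P_{s}^{(j)}(u,v)}(\xi)$, I would first reduce matters to the pointwise bound
\[
|m_{j}(\xi,\eta)|\lec |\xi-\eta|^{s}|\eta|^{3}+|\xi-\eta|^{3}|\eta|^{s}\qquad(\xi,\eta\in\Z,\ j=3,4).
\]
Granting this, the two pieces on the right majorize in modulus the convolutions of $|\cdot|^{s}|\hat u|$ with $|\cdot|^{3}|\hat v|$ and of $|\cdot|^{3}|\hat u|$ with $|\cdot|^{s}|\hat v|$, so Plancherel and H\"older give
\[
\|P_{s}^{(j)}(u,v)\|\lec \|D^{s}u\|\sum_{\eta}|\eta|^{3}|\hat v(\eta)|+\Big(\sum_{\eta}|\eta|^{3}|\hat u(\eta)|\Big)\|D^{s}v\|\lec\|u\|_{H^{s}}\|v\|_{H^{s_{0}}}+\|u\|_{H^{s_{0}}}\|v\|_{H^{s}},
\]
where the hypothesis $s_{0}>7/2$ is used exactly to place the three-derivative factor in $\ell^{1}$ (equivalently in $L^{\I}$), since $\sum_{\eta}|\eta|^{3}|\hat v(\eta)|\lec\|v\|_{H^{s_{0}}}$ requires $2(s_{0}-3)>1$.

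For the symbol estimate I would compute $m_{3},m_{4}$ explicitly. With $\sigma(x)=x|x|^{s}$, so $\sigma'(x)=(s+1)|x|^{s}$, the four subtracted terms of $P_{s}^{(3)}$ contribute, after removing the common factor $ab$, exactly the types $\sigma(a)$, $\sigma'(a)b$, $\sigma(b)$, $\sigma'(b)a$, and one obtains the factorization
\[
m_{3}=ab\,B,\qquad B:=\sigma(a+b)-\sigma(a)-\sigma'(a)b-\sigma(b)-\sigma'(b)a,
\]
which is manifestly symmetric in $a\leftrightarrow b$, so it suffices to show $|B|\lec|a|^{s-1}|b|^{2}+|a|^{2}|b|^{s-1}$. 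For $P_{s}^{(4)}$ the Hilbert transform turns the outer operator $\HT D^{s}\ds$ into the real even symbol $|\xi|^{s+1}$; writing $\nu(x)=|x|^{s+1}$ one finds the (again symmetric) expression
\[
m_{4}=ab\,[\nu(a)+\nu(b)-\nu(a+b)]+(s+1)\,[b^{2}\nu(a)+a^{2}\nu(b)].
\]

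The heart of the matter is a regional analysis, split as in Lemma~\ref{comm.est.1} into $R_{1}=\{3|\eta|\le|\xi-\eta|\}$, $R_{2}=\{|\xi-\eta|/4\le|\eta|\le4|\xi-\eta|\}$ and $R_{3}=\{|\eta|\ge3|\xi-\eta|\}$; by symmetry it suffices to treat $R_{1}$ and $R_{2}$. On $R_{2}$ each term is $\lec|a|^{s+1}\sim|a|^{s-1}|b|^{2}$, which is admissible. On $R_{1}$ we have $|b|\le|a|/3$, so the segment from $a$ to $a+b$ avoids the origin and $\sigma,\nu$ are smooth there; hence I may Taylor expand to \emph{second} order around $a$, valid for every $s\ge0$ with no separate treatment of small $s$. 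For $B$ this gives $\sigma(a+b)-\sigma(a)-\sigma'(a)b=\tfrac12\sigma''(\theta)b^{2}$ with $|\theta|\sim|a|$, whence $|\tfrac12\sigma''(\theta)b^{2}|\lec|a|^{s-1}|b|^{2}$, while $|\sigma(b)|+|\sigma'(b)a|\lec|a|^{2}|b|^{s-1}$ follows from $|b|\le|a|$. For $m_{4}$ the identity $a\nu'(a)=(s+1)\nu(a)$ forces the first-order term $-ab^{2}\nu'(a)$ to cancel the correction $(s+1)b^{2}\nu(a)$, leaving $-\tfrac12 ab^{3}\nu''(\theta)$ of size $|a|^{s}|b|^{3}$ together with the small part $[ab+(s+1)a^{2}]\nu(b)$ of size $|a|^{3}|b|^{s}$.

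The main obstacle is exactly this verification that the zeroth- and first-order Taylor contributions cancel: this is what the coefficients $1$ and $s+1$ attached to the subtracted terms in $P_{s}^{(3)}$ and $P_{s}^{(4)}$ are designed to achieve. For $m_{3}$ the cancellation is transparent, since the subtracted terms literally reproduce the Taylor polynomial and $B$ is the second-order remainder; for $m_{4}$ the Hilbert transform shifts the symbols from $x|x|^{s}$ to $|x|^{s+1}$, so the first-order term is absorbed through $a\nu'(a)=(s+1)\nu(a)$ rather than matched term by term, and keeping this bookkeeping straight is the only delicate point. I would also note that, unlike $P_{s}^{(1)}$, here second-order Taylor suffices, so no $C^{3}$ regularity of the symbol is required and the estimate holds uniformly down to $s=0$.
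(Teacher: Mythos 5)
Your proposal is correct and follows essentially the same route as the paper, which proves this lemma by the same symbol-level argument as Lemma \ref{comm.est.1}: reduce to a pointwise bound $|m_j(\xi,\eta)|\lesssim|\xi-\eta|^{s}|\eta|^{3}+|\xi-\eta|^{3}|\eta|^{s}$, split into the regions $R_1,R_2,R_3$, and use Taylor expansion together with the symmetry of the symbol. Your observation that only a second-order expansion is needed here (so the case-splitting for small $s$ in the proof of Lemma \ref{comm.est.1} can be dispensed with) is a correct, minor streamlining, and your verification of the cancellation via $a\nu'(a)=(s+1)\nu(a)$ for $P_s^{(4)}$ is accurate.
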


\begin{proof}
This follows from a similar argument of the previous lemma.
\end{proof}

\begin{lem}\label{comm.est.3}
Let $s_{0}>3/2$ and $s\ge0$.
Then there exists $C=C(s,s_{0})>0$ such that
\[\|\La_{s+1}(uv)-\La_{s+1}uv-u\La_{s+1}v\|\le C(\|u\|_{H^{s}}\|v\|_{H^{s_{0}}}+\|u\|_{H^{s_{0}}}\|v\|_{H^{s}})\]
for any $u,v\in H^{\max\{s,s_{0}\}}(\T)$, where $\La_{s+1}=D^{s}\ds$ or $D^{s+1}$.
\end{lem}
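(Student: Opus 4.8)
The plan is to pass to Fourier space and reduce the statement to a pointwise estimate on the associated bilinear symbol, in the same spirit as the proof of Lemma \ref{comm.est.1}. Write $m(\xi)=|\xi|^{s+1}$ when $\La_{s+1}=D^{s+1}$ and $m(\xi)=i\xi|\xi|^{s}$ when $\La_{s+1}=D^{s}\ds$, so that $\ha{\La_{s+1}f}(\xi)=m(\xi)\hat{f}(\xi)$. Then, for $\xi\in\Z$,
\EQQS{
\F\big(\La_{s+1}(uv)-(\La_{s+1}u)v-u\La_{s+1}v\big)(\xi)
=\sum_{\eta\in\Z}K(\xi,\eta)\hat{u}(\xi-\eta)\hat{v}(\eta),
}
where $K(\xi,\eta):=m(\xi)-m(\xi-\eta)-m(\eta)$. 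By Plancherel it then suffices to bound the $\ell^{2}_{\xi}$ norm of this sum by the right-hand side of the claim.

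First I would establish the pointwise bound
\EQQS{
|K(\xi,\eta)|\lesssim|\xi-\eta|^{s}|\eta|+|\xi-\eta|\,|\eta|^{s}
\qquad(\xi,\eta\in\Z).
}
Setting $\sigma(x)=|x|^{s+1}$ or $\sigma(x)=x|x|^{s}$ (with $|\sigma'(x)|\lesssim|x|^{s}$), I split into the three regions $R_{1}=\{3|\eta|\le|\xi-\eta|\}$, $R_{2}=\{|\xi-\eta|/4\le|\eta|\le4|\xi-\eta|\}$ and $R_{3}=\{|\eta|\ge3|\xi-\eta|\}$ exactly as in Lemma \ref{comm.est.1}. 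On $R_{1}$ one has $|\xi|\sim|\xi-\eta|$, so the mean value theorem gives $|m(\xi)-m(\xi-\eta)|\lesssim|\eta|\,|\xi-\eta|^{s}$, while $|m(\eta)|=|\eta|^{s+1}\le|\eta|\,|\xi-\eta|^{s}$; since $K(\xi,\eta)=K(\xi,\xi-\eta)$, the region $R_{3}$ follows by the symmetry $\eta\leftrightarrow\xi-\eta$, and on $R_{2}$ all three terms are comparable to $|\eta|^{s+1}\sim|\xi-\eta|^{s}|\eta|$. Note that only \emph{first} order cancellation is exploited here, so first-order Taylor expansion (hence $s\ge0$) is enough, in contrast to Lemma \ref{comm.est.1}, where the full second-order expansion forced $s>2$.

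It then remains to sum. Put $F(\xi)=\LR{\xi}^{s}|\hat{u}(\xi)|$ and $G(\xi)=\LR{\xi}\,|\hat{v}(\xi)|$, so that $\|F\|_{\ell^{2}}\lesssim\|u\|_{H^{s}}$ and, by Cauchy--Schwarz,
\EQQS{
\|G\|_{\ell^{1}}=\sum_{\eta\in\Z}\LR{\eta}\,|\hat{v}(\eta)|
\le\Big(\sum_{\eta\in\Z}\LR{\eta}^{\,2-2s_{0}}\Big)^{1/2}\|v\|_{H^{s_{0}}}\lesssim\|v\|_{H^{s_{0}}},
}
where the last sum converges precisely because $s_{0}>3/2$. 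The contribution of the first term of the symbol bound is dominated by the convolution $F*G$, and Young's inequality $\|F*G\|_{\ell^{2}}\le\|F\|_{\ell^{2}}\|G\|_{\ell^{1}}$ yields $\|u\|_{H^{s}}\|v\|_{H^{s_{0}}}$; the second term is handled symmetrically and produces $\|u\|_{H^{s_{0}}}\|v\|_{H^{s}}$, completing the proof. The one genuinely delicate point is the symbol estimate: one must verify that the symmetric subtraction of $m(\eta)$ removes the full top order, so that $K$ gains exactly one derivative relative to $m$; this gain is what makes the mild summability hypothesis $s_{0}>3/2$ sufficient.
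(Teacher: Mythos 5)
Your proof is correct and follows essentially the same route as the paper's: reduce to the pointwise symbol bound $|m(\xi)-m(\xi-\eta)-m(\eta)|\lesssim|\eta||\xi-\eta|^{s}+|\xi-\eta||\eta|^{s}$ and verify it on the three regions $R_{1},R_{2},R_{3}$ as in Lemma \ref{comm.est.1}, the paper merely leaving the final Young/Cauchy--Schwarz summation (which needs $s_{0}>3/2$) implicit where you spell it out. Your observations that only first-order cancellation is required here and that $R_{3}$ follows from $R_{1}$ by the symmetry $\eta\leftrightarrow\xi-\eta$ of both $K$ and the target bound are accurate refinements of the same argument, not a different approach.
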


\begin{proof}
It suffices to show that for any $\xi,\eta\in\Z$
\[||\xi|^{s+1}-|\xi-\eta|^{s+1}-|\eta|^{s+1}|,||\xi|^{s}\xi-|\xi-\eta|^{s}(\xi-\eta)-|\eta|^{s}\eta|\lesssim|\eta||\xi-\eta|^{s}+|\xi-\eta||\eta|^{s}.\]
If $s=0$, then it is obvious by the triangle inequality.
In the case $s>0$, this follows from a similar argument of the proof of Lemma \ref{comm.est.1}.
\end{proof}

\begin{lem}\label{comm.est.4}
Let $s_{0}>5/2$ and $s\ge0$.
Then there exists $C=C(s,s_{0})>0$ such that
\EQQS{
&\|P_{s}^{(5)}(u_{1},u_{2},u_{3})\|,\|P_{s}^{(6)}(u_{1},u_{2},u_{3})\|\\
&\le C(\|u_{1}\|_{H^{s}}\|u_{2}\|_{H^{s_{0}}}\|u_{3}\|_{H^{s_{0}}}+\|u_{1}\|_{H^{s_{0}}}\|u_{2}\|_{H^{s}}\|u_{3}\|_{H^{s_{0}}}+\|u_{1}\|_{H^{s_{0}}}\|u_{2}\|_{H^{s_{0}}}\|u_{3}\|_{H^{s}})
}
for any $u_{j}\in H^{\max\{s,s_{0}\}}(\T)$ for $j=1,2,3$.
\end{lem}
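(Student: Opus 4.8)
The plan is to reduce the claim to a pointwise estimate on the Fourier multiplier of $P_{s}^{(5)}$ and then run a trilinear H\"older--Sobolev argument, exactly in the spirit of the proof of Lemma \ref{comm.est.1}. Writing $\sigma(\xi):=\xi|\xi|^{s}$ and using $\ds\leftrightarrow i\xi$, $D^{s}\leftrightarrow|\xi|^{s}$, a direct computation of the symbol of $P_{s}^{(5)}(u_{1},u_{2},u_{3})$ on the region $\xi_{1}+\xi_{2}+\xi_{3}=\xi$ gives
\[ m(\xi_{1},\xi_{2},\xi_{3})=\xi_{3}\Big[\sigma(\xi_{1})+\sigma(\xi_{2})+\sigma(\xi_{3})-\sigma(\xi)+(s+1)(\xi_{1}+\xi_{2})|\xi_{3}|^{s}\Big]. \]
By Plancherel and the usual device of replacing $\hat u_{j}$ by $|\hat u_{j}|$, it suffices to prove the pointwise bound
\[ |m(\xi_{1},\xi_{2},\xi_{3})|\lesssim|\xi_{1}|^{s}\LR{\xi_{2}}^{2}\LR{\xi_{3}}^{2}+\LR{\xi_{1}}^{2}|\xi_{2}|^{s}\LR{\xi_{3}}^{2}+\LR{\xi_{1}}^{2}\LR{\xi_{2}}^{2}|\xi_{3}|^{s} \]
for all $\xi_{1},\xi_{2},\xi_{3}\in\Z$. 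Granting this, the operator with symbol $m$ is dominated by the product $(D^{s}u_{1})(\LR{D}^{2}u_{2})(\LR{D}^{2}u_{3})$ together with the two analogous products for the remaining types, and H\"older together with the Sobolev embedding $\|\LR{D}^{2}v\|_{\I}\lesssim\|v\|_{H^{s_{0}}}$ (valid since $s_{0}>5/2$) yields the stated trilinear bound. This is precisely where the hypothesis $s_{0}>5/2$ enters.

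To establish the symbol bound I would split into the regions where $|\xi_{1}|$, $|\xi_{2}|$, or $|\xi_{3}|$ is largest, the balanced region (all three comparable to $|\xi|$) being handled by the trivial bound $|m|\lesssim|\xi|^{s+2}$. Since $m$ is symmetric under $\xi_{1}\leftrightarrow\xi_{2}$, the regions $\{|\xi_{1}|\ \text{largest}\}$ and $\{|\xi_{2}|\ \text{largest}\}$ are interchangeable. In the region where $|\xi_{1}|$ dominates I Taylor-expand $\sigma(\xi)$ about $\xi_{1}$: the first-order term pairs with $\sigma(\xi_{1})$, giving $(\sigma(\xi_{1})-\sigma(\xi))\xi_{3}=O(|\xi_{1}|^{s}(|\xi_{2}|+|\xi_{3}|)|\xi_{3}|)$, while $\sigma(\xi_{2})\xi_{3}$, $\sigma(\xi_{3})\xi_{3}=|\xi_{3}|^{s+2}$, the quadratic remainder $O(|\xi_{1}|^{s-1}(\xi_{2}+\xi_{3})^{2}|\xi_{3}|)$, and the correction $(s+1)(\xi_{1}+\xi_{2})|\xi_{3}|^{s}\xi_{3}$ are each controlled using $|\xi_{2}|,|\xi_{3}|\lesssim|\xi_{1}|$; for instance the last is bounded by $|\xi_{1}||\xi_{3}|^{s+1}\le|\xi_{3}|^{s}|\xi_{1}|^{2}$, of the third type. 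Thus every contribution lands in the first or third type with all low-frequency powers $\le2$.

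The main obstacle is the region where $|\xi_{3}|$ dominates, i.e.\ when all derivatives effectively fall on $h=u_{3}$, where only a second-order cancellation saves the estimate. Expanding $\sigma(\xi)$ about $\xi_{3}$, the first-order term $\sigma'(\xi_{3})(\xi_{1}+\xi_{2})=(s+1)|\xi_{3}|^{s}(\xi_{1}+\xi_{2})$ exactly cancels the correction $(s+1)(\xi_{1}+\xi_{2})|\xi_{3}|^{s}$ inside the bracket, leaving $\xi_{3}\big[\sigma(\xi_{1})+\sigma(\xi_{2})-\tfrac12\sigma''(\ti\eta)(\xi_{1}+\xi_{2})^{2}\big]$ with $|\ti\eta|\sim|\xi_{3}|$. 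Hence $|m|\lesssim|\xi_{3}|\big(|\xi_{1}|^{s+1}+|\xi_{2}|^{s+1}+|\xi_{3}|^{s-1}(\xi_{1}+\xi_{2})^{2}\big)$, and using $|\xi_{1}|,|\xi_{2}|\lesssim|\xi_{3}|$ each piece falls into one of the three types (e.g.\ $|\xi_{3}||\xi_{1}|^{s+1}\le|\xi_{1}|^{s}|\xi_{3}|^{2}$). As in Lemma \ref{comm.est.1}, the appeal to $\sigma\in C^{2}$ is legitimate only for $s>1$, so for $0\le s\le1$ I would replace the remainder step by elementary inequalities such as $|\xi_{3}|^{s+1}(\xi_{1}+\xi_{2})^{2}\lesssim|\xi_{3}|^{s}\LR{\xi_{1}}^{2}\LR{\xi_{2}}^{2}$, treating the ranges $1<s\le2$ and $0\le s\le1$ separately exactly as there.

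Finally, $P_{s}^{(6)}$ is handled identically: its outer operator $\HT D^{s}\ds$ has the real symbol $|\xi|^{s+1}$, so setting $\tau(\xi):=|\xi|^{s+1}$ one finds $m_{6}=i\xi_{3}[\tau(\xi)-\tau(\xi_{1})-\tau(\xi_{2})-\tau(\xi_{3})]-i(s+1)(\xi_{1}+\xi_{2})\tau(\xi_{3})$, of the same structure. The same regional analysis, now with $\tau'(\xi_{3})=(s+1)|\xi_{3}|^{s}\sgn(\xi_{3})$ producing the cancellation of the correction term in the $\xi_{3}$-dominant region, yields the same symbol bound and hence the same conclusion.
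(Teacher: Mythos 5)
Your proof is correct in substance but takes a genuinely different route from the paper. The paper's proof is a two-line reduction to machinery already in place: writing $P_{s}^{(5)}(u_{1},u_{2},u_{3})=P_{s}^{(8)}(u_{1}u_{2},u_{3})+\bigl(D^{s}\ds(u_{1}u_{2})-D^{s}\ds u_{1}\,u_{2}-u_{1}D^{s}\ds u_{2}\bigr)\ds u_{3}$, it invokes Lemma \ref{comm.est.0} for the first piece, Lemma \ref{comm.est.3} for the bracket, and Lemma \ref{Leibniz} to convert $\|u_{1}u_{2}\|_{H^{s}}$, $\|u_{1}u_{2}\|_{H^{s_{0}}}$ into the stated trilinear right-hand side; note that the correction terms $(s+1)\ds u_{1}u_{2}D^{s}\ds u_{3}+(s+1)u_{1}\ds u_{2}D^{s}\ds u_{3}$ in $P_{s}^{(5)}$ are exactly $(s+1)\ds(u_{1}u_{2})D^{s}\ds u_{3}$, which is what makes this grouping exact. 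You instead work directly at the symbol level, which is self-contained and has the merit of exhibiting explicitly the first-order cancellation $\sigma'(\xi_{3})(\xi_{1}+\xi_{2})$ against the $(s+1)(\xi_{1}+\xi_{2})|\xi_{3}|^{s}$ correction in the high-$|\xi_{3}|$ regime; your symbol $m$, the reduction to the pointwise bound, and the regional analysis (trivial bound when the two largest frequencies are comparable, Taylor expansion when one strictly dominates) are all sound, and the Sobolev step indeed pinpoints where $s_{0}>5/2$ enters. Two small blemishes, neither fatal: (i) the ``balanced region'' should be described as ``the two largest of $|\xi_{1}|,|\xi_{2}|,|\xi_{3}|$ are comparable'' rather than ``all three comparable to $|\xi|$'' (e.g.\ $\xi_{1}=-\xi_{2}$ large, $\xi_{3}$ small has $|\xi|$ small), though the trivial bound still closes there; (ii) your fallback inequality for $0\le s\le1$, namely $|\xi_{3}|^{s+1}(\xi_{1}+\xi_{2})^{2}\lesssim|\xi_{3}|^{s}\LR{\xi_{1}}^{2}\LR{\xi_{2}}^{2}$, is false as written (take $\xi_{1}=\xi_{2}=1$, $|\xi_{3}|\to\infty$) --- but it is also unnecessary, since in the $\xi_{3}$-dominant region the interval between $\xi$ and $\xi_{3}$ stays away from the origin, so the second-order Taylor bound $|\sigma''(\ti\eta)|\lesssim|\xi_{3}|^{s-1}$ is legitimate for every $s\ge0$ and already yields $|\xi_{3}|^{s}(\xi_{1}+\xi_{2})^{2}$, which is of the third type.
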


\begin{proof}
We show only the inequality for $P_{s}^{(5)}$.
The other one follows from a similar argument.
Aplying Lemma \ref{Leibniz}, \ref{comm.est.0} and \ref{comm.est.3}, we have
\EQQS{
&\|P_{s}^{(5)}(u_{1},u_{2},u_{3})\|\\
&\le\|P_{s}^{(8)}(u_{1}u_{2},u_{3})\|+\|D^{s}\ds(u_{1}u_{2})-D^{s}\ds u_{1}u_{2}-u_{1}D^{s}\ds u_{2}\|\|\ds u_{3}\|_{\I},\\
&\lesssim\|u_{1}\|_{H^{s}}\|u_{2}\|_{H^{s_{0}}}\|u_{3}\|_{H^{s_{0}}}+\|u_{1}\|_{H^{s_{0}}}\|u_{2}\|_{H^{s}}\|u_{3}\|_{H^{s_{0}}}+\|u_{1}\|_{H^{s_{0}}}\|u_{2}\|_{H^{s_{0}}}\|u_{3}\|_{H^{s}},
}
which completes the proof.
\end{proof}

\begin{lem}\label{comm.est.5}
Let $s_{0}>5/2$ and $s\ge0$.
Then there exists $C=C(s,s_{0})>0$ such that
\EQQS{
&\|P_{s}^{(7)}(u_{1},u_{2},u_{3})\|\\
&\le C(\|u_{1}\|_{H^{s}}\|u_{2}\|_{H^{s_{0}}}\|u_{3}\|_{H^{s_{0}}}+\|u_{1}\|_{H^{s_{0}}}\|u_{2}\|_{H^{s}}\|u_{3}\|_{H^{s_{0}}}+\|u_{1}\|_{H^{s_{0}}}\|u_{2}\|_{H^{s_{0}}}\|u_{3}\|_{H^{s}})
}
for any $u_{j}\in H^{\max\{s,s_{0}\}}(\T)$ for $j=1,2,3$.
\end{lem}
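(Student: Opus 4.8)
The plan is to reduce $P_{s}^{(7)}$ to the already estimated $P_{s}^{(6)}$ (Lemma \ref{comm.est.4}) plus a single commutator remainder. Since $\HT$ commutes with $D^{s}\ds$ as Fourier multipliers and $f\HT(g\ds h)=\HT(fg\ds h)-[\HT,f](g\ds h)$, I would first rewrite the leading term $D^{s}\ds(f\HT(g\ds h))=\HT D^{s}\ds(fg\ds h)-D^{s}\ds([\HT,f](g\ds h))$. Comparing the definitions of $P_{s}^{(6)}$ and $P_{s}^{(7)}$, the four correction terms $f(\HT D^{s}\ds g)\ds h$, $fg\HT D^{s}\ds^{2}h$, $(s+1)\ds fg\HT D^{s}\ds h$ and $(s+1)f\ds g\HT D^{s}\ds h$ are common to both, so they cancel and one is left with
\[P_{s}^{(7)}(f,g,h)=P_{s}^{(6)}(f,g,h)+R(f,g,h),\]
where $R:=(\HT D^{s}\ds f)(g\ds h)-(D^{s}\ds f)\HT(g\ds h)-D^{s}\ds([\HT,f](g\ds h))$. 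Lemma \ref{comm.est.4} handles $P_{s}^{(6)}$, so everything reduces to estimating $R$.

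The point is that each of the three summands of $R$ carries $s+1$ derivatives on $f$, one more than the target allows; hence I must exploit a cancellation among them. Viewing $R$ as a bilinear Fourier multiplier acting on $f$ (frequency $\eta$) and $w:=g\ds h$ (frequency $\zeta$), with output frequency $\xi=\eta+\zeta$, a direct computation of the three contributions gives the symbol
\[m(\eta,\zeta)=\big(|\eta|^{s+1}-|\xi|^{s+1}\big)+\sgn\zeta\,\big(|\xi|^{s}\xi-|\eta|^{s}\eta\big).\]
I would then prove the pointwise bound: $|m|\lesssim|\eta|^{s}|\zeta|$ when $|\zeta|\le|\eta|/2$ (by the mean value theorem applied to $x\mapsto|x|^{s+1}$ and $x\mapsto|x|^{s}x$), and $|m|\lesssim|\eta|^{s+1}$ when $|\eta|\le|\zeta|/2$ as well as in the balanced region. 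The \emph{main obstacle} is exactly this second regime: there $\sgn\xi=\sgn\zeta$, the top order terms $\mp|\xi|^{s+1}$ cancel, and one is left with $m=|\eta|^{s}|\eta|\,(1-\sgn\eta\,\sgn\zeta)$, which no longer sees the high frequency $|\zeta|$; without this cancellation $R$ would force $f\in H^{s+1}$.

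To turn the symbol bound into the desired trilinear estimate I would decompose $f$ and $w$ into Littlewood--Paley pieces $f_{j}$, $w_{k}$ and sum. In the region $|\zeta|\lesssim|\eta|$ the bound $|m|\lesssim|\eta|^{s}|\zeta|$ together with $s_{0}-1>1/2$ yields a contribution $\lesssim\|f\|_{H^{s}}\|w\|_{H^{s_{0}-1}}$. In the region $|\eta|\lesssim|\zeta|$ the refined bound $|m|\lesssim|\eta|^{s+1}$, the Bernstein inequality $\|f_{j}\|_{\I}\lesssim2^{j/2}\|f_{j}\|$, and almost orthogonality in the output frequency reduce the contribution to $\lesssim\|f\|_{H^{s_{0}}}\|w\|_{H^{s+3/2-s_{0}}}$. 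Finally Lemma \ref{Leibniz} gives $\|w\|_{H^{\sigma}}=\|g\ds h\|_{H^{\sigma}}\lesssim\|g\|_{H^{\sigma}}\|h\|_{H^{s_{0}}}+\|g\|_{H^{s_{0}}}\|h\|_{H^{\sigma+1}}$, and in the worst case $\sigma=s+3/2-s_{0}$ the requirement $\sigma+1=s+5/2-s_{0}\le s$ holds precisely because $s_{0}>5/2$. Combining the two regions with $\|w\|_{H^{s_{0}-1}}\lesssim\|g\|_{H^{s_{0}}}\|h\|_{H^{s_{0}}}$ then produces the stated bound, and it is this last step that consumes the hypothesis $s_{0}>5/2$.
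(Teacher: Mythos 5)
Your proposal is correct, but it is not the route the paper takes, and the comparison is instructive. The paper never forms $P_{s}^{(6)}$ out of $P_{s}^{(7)}$: instead it pushes $\HT$ \emph{into} the four lower-order correction terms, replacing e.g. $u_{1}(\HT D^{s}\ds u_{2})\ds u_{3}$ by $u_{1}\HT(D^{s}\ds u_{2}\,\ds u_{3})$ at the cost of commutators $[\HT,\ds u_{3}]D^{s}\ds u_{2}$, $[\HT,u_{2}]D^{s}\ds^{2}u_{3}$, etc., each of which is smoothing and is killed outright by Lemma \ref{comm.est.H}; what remains is a single trilinear multiplier whose symbol factors as $-i\sgn(\xi_{1}+\xi_{2})$ times the $P_{s}^{(5)}$-symbol, bounded pointwise by $\Xi(\xi,\xi_{1},\xi_{2})$ via the triangle-inequality splitting already used for Lemmas \ref{comm.est.0} and \ref{comm.est.3}. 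You commute $\HT$ in the opposite direction, in the \emph{leading} term, so as to reuse Lemma \ref{comm.est.4} as a black box; the price is that your remainder $R$ is not a smoothing commutator (each summand carries $s+1$ derivatives on $f$), and you must extract the cancellation $m=|\eta|^{s+1}(1-\sgn\eta\,\sgn\zeta)$ in the low-high regime and then run a Littlewood--Paley/Bernstein summation. Both arguments are sound; the paper's is shorter because all its error terms are genuinely smoothing, while yours buys the reuse of $P_{s}^{(6)}$ at the cost of a more delicate bilinear analysis. Your identity $P_{s}^{(7)}=P_{s}^{(6)}+R$ and the symbol computation for $R$ are correct (the last four correction terms do cancel), and the final bookkeeping $\sigma+1=s+5/2-s_{0}\le s$ indeed uses $s_{0}>5/2$. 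One small slip: in the region $|\zeta|\lesssim|\eta|$ the condition you need to place $\ds w$ in $L^{\infty}$ (equivalently, to sum $|\zeta|\LR{\zeta}^{-(s_{0}-1)}$ in $\ell^{2}_{\zeta}$) is $s_{0}-2>1/2$, not $s_{0}-1>1/2$; this is again exactly the hypothesis $s_{0}>5/2$, so nothing breaks.
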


\begin{proof}
We see from the proof of Lemma \ref{comm.est.0} and \ref{comm.est.3} that for any $\xi,\xi_{1},\xi_{2}\in\Z$
\EQQS{
&||\xi|^{s}\xi\xi_{2}-|\xi-\xi_{1}-\xi_{2}|^{s}(\xi-\xi_{1}-\xi_{2})\xi_{2}-|\xi_{1}|^{s}\xi_{1}\xi_{2}\\
&\quad-|\xi_{2}|^{s}\xi_{2}^{2}-(s+1)(\xi-\xi_{1}-\xi_{2})|\xi_{2}|^{s}\xi_{2}-(s+1)\xi_{1}|\xi_{2}|^{s}\xi_{2}|\\
&\le||\xi|^{s}\xi\xi_{2}-|\xi-\xi_{2}|^{s}(\xi-\xi_{2})\xi_{2}-|\xi_{2}|^{s}\xi_{2}^{2}
  -(s+1)(\xi-\xi_{2})|\xi_{2}|^{s}\xi_{2}|\\
&\quad+||\xi-\xi_{2}|^{s}(\xi-\xi_{2})-|\xi_{1}|^{s}\xi_{1}-|\xi-\xi_{1}-\xi_{2}|^{s}(\xi-\xi_{1}-\xi_{2})||\xi_{2}|
\lesssim \Xi(\xi,\xi_{1},\xi_{2}),
}
where
\EQQS{
\Xi(\xi,\xi_{1},\xi_{2}):=|\xi-\xi_{2}|^{2}|\xi_{2}|^{s}+|\xi-\xi_{2}|^{s}|\xi_{2}|^{2}+(|\xi-\xi_{1}-\xi_{2}|^{s}|\xi_{1}|+|\xi-\xi_{1}-\xi_{2}||\xi_{1}|^{s})|\xi_{2}|.
}
We see from Lemma \ref{comm.est.H} that
\EQQS{
&\|P_{s}^{(7)}(u_{1},u_{2},u_{3})\|\\
&\le\|D^{s}\ds(u_{1}\HT(u_{2}\ds u_{3}))-D^{s}\ds u_{1}\HT(u_{2}\ds u_{3})-u_{1}\HT(D^{s}\ds u_{2}\ds u_{3})\\
&\quad-u_{1}\HT(u_{2}D^{s}\ds^{2}u_{3})-(s+1)\ds u_{1}\HT(u_{2}D^{s}\ds u_{3})
      -(s+1)u_{1}\HT(\ds u_{2}D^{s}\ds u_{3})\|\\
&\quad+\|u_{1}\|_{\I}(\|[\HT,\ds u_{3}]D^{s}\ds u_{2}\|+\|[\HT,u_{2}]D^{s}\ds^{2}u_{3}\|)\\
&\quad+(s+1)(\|\ds u_{1}\|_{\I}\|[\HT,u_{2}]D^{s}\ds u_{3}\|+\|u_{1}\|_{\I}\|[\HT,\ds u_{2}]D^{s}\ds u_{3}\|)\\
&\lesssim\left\|\sum_{\xi_{1},\xi_{2}}\Xi(\xi,\xi_{1},\xi_{2})
         |\hat{u_{1}}(\xi-\xi_{1}-\xi_{2})||\hat{u_{2}}(\xi_{1})||\hat{u_{3}}(\xi_{2})|\right\|_{l^{2}_{\xi}}\\
&\quad+\|u_{1}\|_{H^{s_{0}}}\|u_{2}\|_{H^{s_{0}}}\|u_{3}\|_{H^{s}}+\|u_{1}\|_{H^{s_{0}}}\|u_{2}\|_{H^{s}}\|u_{3}\|_{H^{s_{0}}}\\
&\lesssim\|u_{1}\|_{H^{s}}\|u_{2}\|_{H^{s_{0}}}\|u_{3}\|_{H^{s_{0}}}+\|u_{1}\|_{H^{s_{0}}}\|u_{2}\|_{H^{s}}\|u_{3}\|_{H^{s_{0}}}+\|u_{1}\|_{H^{s_{0}}}\|u_{2}\|_{H^{s_{0}}}\|u_{3}\|_{H^{s}},
}
which completes the proof.
\end{proof}

\begin{lem}\label{comm.est.7}
Let $s_{0}>5/2$ and $s\ge0$.
Then there exists $C=C(s,s_{0})>0$ such that
\[\|\La_{s}(u\ds^{2}v)-u\La_{s}\ds^{2}v-s\ds u\La_{s}\ds v\|
  \le C(\|u\|_{H^{s_{0}}}\|v\|_{H^{s}}+\|u\|_{H^{s}}\|v\|_{H^{s_{0}}})\]
for any $u,v\in H^{\max\{s,s_{0}\}}(\T)$, where $\La_{s}=D^{s}$ or $\HT D^{s}$.
\end{lem}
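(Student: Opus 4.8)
The plan is to reduce the estimate to a pointwise bound on the Fourier symbol and then pass to $L^{2}$ by Young's inequality, exactly in the spirit of Lemma \ref{comm.est.1}. First I would compute the Fourier coefficient of $\La_{s}(u\ds^{2}v)-u\La_{s}\ds^{2}v-s\ds u\La_{s}\ds v$ at frequency $\xi$. Writing each of the three products as a convolution, the coefficient takes the form $\sum_{\eta}m(\xi,\eta)\hat{u}(\xi-\eta)\hat{v}(\eta)$, where for $\La_{s}=D^{s}$ one finds
\[
m(\xi,\eta)=|\eta|^{s}\eta^{2}-|\xi|^{s}\eta^{2}+s(\xi-\eta)\eta|\eta|^{s},
\]
and for $\La_{s}=\HT D^{s}$ one obtains, up to a unimodular factor, $m_{\HT}(\xi,\eta)=\sgn(\xi)|\xi|^{s}\eta^{2}-\sgn(\eta)|\eta|^{s}\eta^{2}-s(\xi-\eta)|\eta|^{s+1}$, whose modulus is controlled by the same expression. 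Thus it suffices to prove the symbol estimate
\[
|m(\xi,\eta)|\lec |\xi-\eta|^{2}|\eta|^{s}+|\xi-\eta|^{s}|\eta|^{2}\qquad(\xi,\eta\in\Z).
\]

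Granting this, I would finish by a convolution argument. By the triangle inequality the $L^{2}$ norm in question is bounded by the $\ell^{2}_{\xi}$ norm of $\sum_{\eta}(|\xi-\eta|^{2}|\eta|^{s}+|\xi-\eta|^{s}|\eta|^{2})|\hat{u}(\xi-\eta)||\hat{v}(\eta)|$, a sum of two convolutions. Applying $\|a*b\|_{\ell^{2}}\le\|a\|_{\ell^{1}}\|b\|_{\ell^{2}}$ to the first piece (with $a(\zeta)=|\zeta|^{2}|\hat{u}(\zeta)|$ and $b(\eta)=|\eta|^{s}|\hat{v}(\eta)|$) and $\|a*b\|_{\ell^{2}}\le\|a\|_{\ell^{2}}\|b\|_{\ell^{1}}$ to the second, the problem reduces to $\big\||\cdot|^{2}\hat{f}\big\|_{\ell^{1}}\lec\|f\|_{H^{s_{0}}}$. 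This last bound holds because $s_{0}>5/2$ forces $\sum_{\xi}|\xi|^{4}\LR{\xi}^{-2s_{0}}<\I$, so Cauchy--Schwarz applies; the two pieces then yield precisely $\|u\|_{H^{s_{0}}}\|v\|_{H^{s}}$ and $\|u\|_{H^{s}}\|v\|_{H^{s_{0}}}$.

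It remains to establish the symbol estimate, and this is the step I expect to be the crux. Following Lemma \ref{comm.est.1}, I would split $\Z^{2}$ into $R_{1}=\{3|\eta|\le|\xi-\eta|\}$, $R_{2}=\{|\xi-\eta|/4\le|\eta|\le4|\xi-\eta|\}$ and $R_{3}=\{|\eta|\ge3|\xi-\eta|\}$. On $R_{1}$ and $R_{2}$ the three monomials are controlled termwise: for instance on $R_{1}$ one has $|\eta|^{s}\eta^{2}=|\eta|^{s}|\eta|^{2}\lec|\xi-\eta|^{s}|\eta|^{2}$ and $|s(\xi-\eta)\eta|\eta|^{s}|\le|s||\xi-\eta|^{2}|\eta|^{s}$ since $|\eta|\le|\xi-\eta|$, while on $R_{2}$ all factors are comparable. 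The delicate region is $R_{3}$, where $|\xi|\sim|\eta|\sim|\tilde\xi|$ for every $\tilde\xi$ between $\xi$ and $\eta$. Setting $\sigma(x)=|x|^{s}$ and writing $|\eta|^{s}\eta^{2}-|\xi|^{s}\eta^{2}=\eta^{2}(\sigma(\eta)-\sigma(\xi))$, a second order Taylor expansion of $\sigma$ about $\eta$ — valid because the segment stays away from the origin on $R_{3}$, so $\sigma\in C^{2}$ there for every $s\ge0$ — produces the term $-\eta^{2}\sigma'(\eta)(\xi-\eta)=-s(\xi-\eta)\eta|\eta|^{s}$, which cancels the third monomial of $m$ exactly; the surviving remainder $\tfrac12\eta^{2}\sigma''(\tilde\xi)(\xi-\eta)^{2}$ obeys $|\sigma''(\tilde\xi)|\lec|\eta|^{s-2}$, whence $|m|\lec|\eta|^{s}|\xi-\eta|^{2}$ on $R_{3}$. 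The same expansion with $\tau(x)=\sgn(x)|x|^{s}$ disposes of $m_{\HT}$.

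The main point to verify carefully is precisely this exact cancellation of the zeroth and first order Taylor terms in $R_{3}$: it is what forces the coefficient $s$ in front of $\ds u\La_{s}\ds v$ and what reduces the apparent order of the symbol from $s+2$ to the admissible level. The regions $R_{1}$ and $R_{2}$ and the convolution step are routine; the only subtlety there is tracking the hypothesis $s_{0}>5/2$, which is exactly the threshold needed for the $\ell^{1}$ bound on $|\cdot|^{2}\hat{f}$.
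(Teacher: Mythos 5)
Your proof is correct and follows essentially the same route as the paper, which simply reduces Lemma \ref{comm.est.7} to the symbol estimate and Taylor-expansion argument of Lemma \ref{comm.est.1}; your symbol $m(\xi,\eta)$, the three-region splitting, the exact cancellation of the first-order Taylor term on $R_{3}$, and the $\ell^{1}*\ell^{2}$ convolution step (where $s_{0}>5/2$ enters) all match. The only (harmless) difference is that you need just a second-order expansion of $|x|^{s}$, so the separate low-$s$ cases that Lemma \ref{comm.est.1} required are unnecessary here.
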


\begin{proof}
The proof is very similar to that of Lemma \ref{comm.est.1}.
\end{proof}



\begin{lem}\label{comm.est.9}
Let $s_{0}>3/2$ and $s\ge0$.
Then there exists $C=C(s,s_{0})>0$ such that
\EQQS{
&\|\La_{s}(u_{1}\HT(u_{2}\ds u_{3}))-u_{1}u_{2}\HT\La_{s}\ds u_{3}\|\\
&\le C(\|u_{1}\|_{H^{s}}\|u_{2}\|_{H^{s_{0}}}\|u_{3}\|_{H^{s_{0}}}+\|u_{1}\|_{H^{s_{0}}}\|u_{2}\|_{H^{s}}\|u_{3}\|_{H^{s_{0}}}+\|u_{1}\|_{H^{s_{0}}}\|u_{2}\|_{H^{s_{0}}}\|u_{3}\|_{H^{s}})
}
for any $u_{j}\in H^{\max\{s,s_{0}\}}(\T)$ for $j=1,2,3$,
where $\La_{s}=D^{s}$ or $\HT D^{s}$.
\end{lem}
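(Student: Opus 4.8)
The plan is to pass to the Fourier side, prove a pointwise bound on the trilinear symbol of the difference, and then convert it into the stated $L^2$ bound exactly as in the proof of Lemma \ref{comm.est.5}. Write $\xi=\xi_1+\xi_2+\xi_3$ for the output frequency and let $\xi_j$ be the frequency of $u_j$. The product $u_1\HT(u_2\ds u_3)$ carries the symbol $\sgn(\xi_2+\xi_3)\xi_3$ (acting on $\ha{u_1}(\xi_1)\ha{u_2}(\xi_2)\ha{u_3}(\xi_3)$), so for $\La_s=D^s$ the difference $\La_s(u_1\HT(u_2\ds u_3))-u_1u_2\HT\La_s\ds u_3$ has symbol
\[
m(\xi_1,\xi_2,\xi_3):=|\xi|^s\sgn(\xi_2+\xi_3)\xi_3-|\xi_3|^{s+1},
\]
while for $\La_s=\HT D^s$ it has symbol $\sgn(\xi)|\xi|^s\sgn(\xi_2+\xi_3)\xi_3-\sgn(\xi_3)|\xi_3|^{s+1}$; here we used $\HT^2D^s\ds u_3=-D^s\ds u_3$, since $\ds u_3$ has vanishing mean and hence no zero-mode correction appears. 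I would reduce the lemma to the pointwise estimate
\[
|m(\xi_1,\xi_2,\xi_3)|\lec \LR{\xi_1}^s|\xi_3|+\LR{\xi_2}^s|\xi_3|+\LR{\xi_3}^s(|\xi_1|+|\xi_2|),
\]
together with the same bound for the $\HT D^s$ symbol.

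The pointwise bound is proved by splitting according to whether $\xi_3$ dominates. On the region $R=\{|\xi_3|\ge 3\max(|\xi_1|,|\xi_2|)\}$, where cancellation is essential, one has $\sgn(\xi_2+\xi_3)=\sgn(\xi)=\sgn(\xi_3)$ and $|\xi|\sim|\xi_3|$, so both symbols collapse to $\pm|\xi_3|\,(|\xi|^s-|\xi_3|^s)$. For $s\ge1$ the map $t\mapsto|t|^s$ is $C^1$, and the mean value theorem together with $\bigl||\xi|-|\xi_3|\bigr|\le|\xi_1|+|\xi_2|$ and $|\xi|\sim|\xi_3|$ gives $\bigl||\xi|^s-|\xi_3|^s\bigr|\lec|\xi_3|^{s-1}(|\xi_1|+|\xi_2|)$, whence $|m|\lec|\xi_3|^s(|\xi_1|+|\xi_2|)$; for $0\le s\le1$ the elementary inequality $\bigl||\xi|^s-|\xi_3|^s\bigr|\le\bigl||\xi|-|\xi_3|\bigr|^s\le|\xi_1|^s+|\xi_2|^s$ yields $|m|\lec(|\xi_1|^s+|\xi_2|^s)|\xi_3|$. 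On the complementary region no cancellation is needed: there $|\xi_3|\lec\max(|\xi_1|,|\xi_2|)$ and $|\xi|\lec\max(|\xi_1|,|\xi_2|)$, so the triangle inequality bounds both $|\xi|^s|\xi_3|$ and $|\xi_3|^{s+1}$ by $\max(|\xi_1|,|\xi_2|)^s|\xi_3|\lec(\LR{\xi_1}^s+\LR{\xi_2}^s)|\xi_3|$. In every case $|m|$ is dominated by the displayed pointwise bound, and the argument for the $\HT D^s$ symbol is identical.

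To finish, as in Lemma \ref{comm.est.5} I would estimate the $\ell^2_\xi$-norm of $\sum_{\xi_1+\xi_2+\xi_3=\xi}|m|\,|\ha{u_1}(\xi_1)|\,|\ha{u_2}(\xi_2)|\,|\ha{u_3}(\xi_3)|$ by Young's inequality $\|a*b*c\|_{\ell^2}\le\|a\|_{\ell^2}\|b\|_{\ell^1}\|c\|_{\ell^1}$, placing the $\ell^2$ factor on the slot carrying the weight $\LR{\cdot}^s$ and the $\ell^1$ factors on the other two. Each of the three terms in the pointwise bound leaves at most one factor of degree one on an $\ell^1$ slot, and since $s_0>3/2$ we have $\sum_k\LR{k}^{2(1-s_0)}<\infty$, so $\sum_k\LR{k}\,|\ha{u_j}(k)|\lec\|u_j\|_{H^{s_0}}$ and $\sum_k|\ha{u_j}(k)|\lec\|u_j\|_{H^{s_0}}$ by Cauchy--Schwarz. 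Thus $\LR{\xi_1}^s|\xi_3|$ produces $\|u_1\|_{H^s}\|u_2\|_{H^{s_0}}\|u_3\|_{H^{s_0}}$, $\LR{\xi_2}^s|\xi_3|$ produces $\|u_1\|_{H^{s_0}}\|u_2\|_{H^s}\|u_3\|_{H^{s_0}}$, and $\LR{\xi_3}^s(|\xi_1|+|\xi_2|)$ produces $\|u_1\|_{H^{s_0}}\|u_2\|_{H^{s_0}}\|u_3\|_{H^s}$, which is exactly the asserted right-hand side.

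The main obstacle is the region $R$ where $\xi_3$ dominates: there the full symbol $|\xi|^s\sgn(\xi_2+\xi_3)\xi_3$ and the main-term symbol $|\xi_3|^{s+1}$ are separately of order $s+1$, and only their difference is of order $s$. Extracting this one-derivative gain uniformly in $s\ge0$ (hence the split into $s\ge1$ and $0\le s\le1$) and verifying that the $\HT$ sign factors genuinely collapse to $\sgn(\xi_3)$ on $R$ is the crux; the non-dominant region and the passage from the pointwise symbol bound to the $L^2$ bound are routine once $s_0>3/2$ is available.
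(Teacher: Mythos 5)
Your proof is correct and follows essentially the same route as the paper: a pointwise bound on the Fourier symbol of the difference (with the cancellation extracted on the region where the frequency of $u_{3}$ dominates, splitting $s\ge1$ from $0\le s\le1$), converted to the $L^{2}$ estimate by Young's inequality using $s_{0}>3/2$. The only organizational difference is that the paper first inserts the intermediate term $u_{1}\HT(u_{2}\La_{s}\ds u_{3})$ and disposes of $u_{1}[\HT,u_{2}]\La_{s}\ds u_{3}$ via the commutator estimate of Lemma \ref{comm.est.H}, so that no sign factors remain in the symbol comparison, whereas you keep the factors $\sgn(\xi_{2}+\xi_{3})$ and $\sgn(\xi)$ and verify directly that they collapse to $\sgn(\xi_{3})$ on the dominant region; both handle the same issue with the same amount of work.
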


\begin{proof}
It suffices to show that
\EQQS{
&\|\La_{s}(u_{1}\HT(u_{2}\ds u_{3}))-u_{1}\HT(u_{2}\La_{s}\ds u_{3})\|\\
&\lesssim\|u_{1}\|_{H^{s}}\|u_{2}\|_{H^{s_{0}}}\|u_{3}\|_{H^{s_{0}}}+\|u_{1}\|_{H^{s_{0}}}\|u_{2}\|_{H^{s}}\|u_{3}\|_{H^{s_{0}}}+\|u_{1}\|_{H^{s_{0}}}\|u_{2}\|_{H^{s_{0}}}\|u_{3}\|_{H^{s}}.
}
Indeed, Lemma \ref{comm.est.H} shows that
\EQQS{
\|u_{1}[\HT,u_{2}]\La_{s}\ds u_{3}\|
\lesssim\|u_{1}\|_{H^{s_{0}-1}}\|u_{2}\|_{H^{s_{0}}}\|u_{3}\|_{H^{s}}.
}
The standard argument implies that
\EQQS{
||\xi|^{s}\xi_{2}-|\xi_{2}|^{s}\xi_{2}|,|\sgn(\xi)|\xi|^{s}\xi_{2}-\sgn(\xi_{2})|\xi_{2}|^{s}\xi_{2}|\lesssim|\xi-\xi_{2}|^{s}|\xi_{2}|+|\xi-\xi_{2}||\xi_{s}|^{s},
}
which completes the proof by the triangle inequality.
\end{proof}

\begin{lem}\label{lem4.4}
Let $s\ge0$ and $s_{0}>5/2$.
Let $u\in H^{s_{0}}(\T)$ and $w\in H^{s+2}(\T)$.
Then
\EQQS{
|\LR{uD^{s}\ds^{2}w,D^{s}w}+\LR{u,(D^{s}\ds w)^{2}}|
\lesssim\|u\|_{H^{s_{0}}}\|w\|_{H^{s}}^{2}
}
\end{lem}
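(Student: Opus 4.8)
The plan is to reduce the whole expression to a single harmless term by integrating by parts twice, exploiting that the two summands in the statement are precisely arranged so that their worst parts cancel.

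First I would commute the Fourier multipliers $D^s$ and $\ds$ (both are Fourier multipliers, so they commute) and set $v:=D^s w$, so that $D^s\ds^2 w=\ds^2 v$ and $D^s\ds w=\ds v$. The quantity to estimate then becomes
\[\LR{uD^s\ds^2 w,D^s w}+\LR{u,(D^s\ds w)^2}=\int_\T u(\ds^2 v)v\,dx+\int_\T u(\ds v)^2\,dx.\]
All integrations by parts below produce no boundary terms on $\T$ and are justified because $w\in H^{s+2}(\T)$ gives $v\in H^2$ while $u\in H^{s_0}\subset H^1$.

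Next I would integrate by parts once in the first integral, moving one derivative off $\ds^2 v$, which yields
\[\int_\T u(\ds^2 v)v\,dx=-\int_\T (\ds u)v(\ds v)\,dx-\int_\T u(\ds v)^2\,dx.\]
Adding back $\int_\T u(\ds v)^2\,dx$ cancels the last term exactly, so the whole expression collapses to $-\int_\T (\ds u)v(\ds v)\,dx$. The crucial observation is that this remaining integral still carries a derivative on $v=D^s w$; estimating it naively by $\|\ds u\|_\I\|v\|\|\ds v\|$ would cost $\|w\|_{H^{s+1}}$ and thereby lose a derivative. To avoid this I would write $v\ds v=\tfrac12\ds(v^2)$ and integrate by parts a second time, transferring the derivative entirely onto $u$:
\[-\int_\T (\ds u)v(\ds v)\,dx=-\frac12\int_\T (\ds u)\ds(v^2)\,dx=\frac12\int_\T (\ds^2 u)v^2\,dx.\]

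Finally I would bound this by $\big|\tfrac12\int_\T (\ds^2 u)v^2\,dx\big|\le\tfrac12\|\ds^2 u\|_\I\|v\|^2$. Here $\|v\|^2=\|D^s w\|^2\lec\|w\|_{H^s}^2$ directly from the definition of the $H^s$ norm, and $\|\ds^2 u\|_\I\lec\|\ds^2 u\|_{H^{s_0-2}}\lec\|u\|_{H^{s_0}}$ by the Sobolev embedding $H^{s_0-2}(\T)\hookrightarrow L^\I(\T)$, which is exactly where the hypothesis $s_0>5/2$ (so that $s_0-2>1/2$) enters. This gives the claimed estimate $\lec\|u\|_{H^{s_0}}\|w\|_{H^s}^2$. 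The computation is elementary and I expect no real obstacle beyond organizing the two integrations by parts in the right order; the only subtlety is resisting the temptation to estimate $-\int_\T(\ds u)v\ds v\,dx$ directly, since it is the second integration by parts that prevents the loss of one derivative in $w$ and makes $s_0>5/2$ sufficient.
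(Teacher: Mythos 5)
Your proof is correct and follows exactly the same route as the paper: the paper's one-line argument is the identity $\LR{uD^{s}\ds^{2}w,D^{s}w}=\frac{1}{2}\LR{\ds^{2}u,(D^{s}w)^{2}}-\LR{u,(D^{s}\ds w)^{2}}$, which is precisely what your two integrations by parts produce. The final bound via $\|\ds^{2}u\|_{\I}\lec\|u\|_{H^{s_{0}}}$ is the intended use of $s_{0}>5/2$.
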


\begin{proof}
Note that
\[\LR{uD^{s}\ds^{2}w,D^{s}w}=\frac{1}{2}\LR{\ds^{2}u,(D^{s}w)^{2}}-\LR{u,(D^{s}\ds w)^{2}},\]
which shows the claim.
\end{proof}

As stated in Section 1, by the integration by parts, the third order derivative loss can be reduced to the second order one.

\begin{lem}\label{lem4.5}
Let $s\ge0$ and $s_{0}>7/2$.
Let $u\in H^{s_{0}}(\T)$ and $w\in H^{s+3}(\T)$.
Then
\EQQS{
\left|\LR{uD^{s}\ds^{3}w,D^{s}w}-\frac{3}{2}\LR{\ds u,(D^{s}\ds w)^{2}}\right|
\lesssim\|u\|_{H^{s_{0}}}\|w\|_{H^{s}}^{2}
}
\end{lem}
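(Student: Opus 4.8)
The plan is to integrate by parts to move derivatives off of $D^s\ds^3 w$ and onto the factor $u$, thereby lowering the order of the derivative landing on $w$ from three to one. The key observation is that $\LR{uD^{s}\ds^{3}w,D^{s}w}$ contains a third-order derivative loss, and the integration by parts trick exploited in Lemma \ref{lem4.4} (where $\LR{uD^s\ds^2 w,D^sw}$ was rewritten using $\LR{u\,\p_x^2(D^sw),D^sw} = \frac12\LR{\ds^2 u,(D^sw)^2}-\LR{u,(D^s\ds w)^2}$, i.e. a discrete ``product rule'' identity) should be iterated one more time. First I would write $D^s\ds^3 w = \p_x(D^s\ds^2 w)$ and integrate by parts once in $x$:
\[
\LR{u\,\ds(D^s\ds^2 w),D^sw}
=-\LR{\ds u\,(D^s\ds^2 w),D^sw}-\LR{u\,(D^s\ds^2 w),D^s\ds w}.
\]

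For the second term on the right I would use the self-adjointness of $D^s\ds$ together with the identity already exploited in Lemma \ref{lem4.4}, namely integrating by parts to convert $\LR{u\,(D^s\ds^2 w),D^s\ds w}$ into a multiple of $\LR{\ds u,(D^s\ds w)^2}$ modulo a term with $\ds u$ and two copies of $D^s\ds w$ — concretely $\LR{u\,\ds(D^s\ds w),D^s\ds w}=-\frac12\LR{\ds u,(D^s\ds w)^2}$ exactly, since $\int u\,(\ds v)v\,dx=-\frac12\int \ds u\,v^2\,dx$ with $v=D^s\ds w$. For the first term $\LR{\ds u\,(D^s\ds^2 w),D^sw}$ I would integrate by parts once more to land a derivative back onto $\ds u\,D^sw$, producing another $\LR{\ds u,(D^s\ds w)^2}$ contribution plus a $\LR{\ds^2 u,(D^s\ds w)(D^sw)}$-type remainder controlled by $\|u\|_{H^{s_0}}\|w\|_{H^s}^2$ (using $s_0>7/2$ so that $\ds^2 u\in L^\infty$ via Lemma \ref{G.N.}, and that $D^s\ds w,D^sw$ contribute at most $\|w\|_{H^s}$ each up to the lower-order part of the $H^s$ norm). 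Collecting the two clean contributions of $\LR{\ds u,(D^s\ds w)^2}$ should yield the coefficient $\tfrac{3}{2}$ stated in the lemma, with all remaining terms being genuinely lower order.

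The main obstacle I expect is purely bookkeeping: the precise rational coefficient $\tfrac{3}{2}$ must emerge from summing the several integration-by-parts contributions (one from each of the two terms above), and one must verify that every discarded remainder truly has the form $\|u\|_{H^{s_0}}\|w\|_{H^s}^2$ rather than hiding a further high-order loss. The potential danger is that terms like $\LR{\ds u\,(D^s\ds^2 w),D^sw}$ still formally carry two derivatives on $w$ beyond $D^s$; these must be paired carefully so that the symmetric leading pieces combine into $\LR{\ds u,(D^s\ds w)^2}$ and the asymmetric pieces are absorbed into the remainder after one more integration by parts. Since $D^s\ds$ and $D^s$ commute with each other and $\HT$ does not appear, no commutator estimates from the $P_s^{(j)}$ family are needed here; the entire argument is elementary integration by parts on $\T$, and the only analytic input is the Gagliardo--Nirenberg bound of Lemma \ref{G.N.} to place $\ds u$ and $\ds^2 u$ in $L^\infty$, which is available because $s_0>7/2>5/2$.
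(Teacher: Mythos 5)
Your proposal is correct and follows essentially the same route as the paper: one integration by parts gives $\LR{uD^{s}\ds^{3}w,D^{s}w}=-\LR{\ds u\,D^{s}\ds^{2}w,D^{s}w}+\tfrac12\LR{\ds u,(D^{s}\ds w)^{2}}$, and the remaining term is symmetrized exactly as in Lemma \ref{lem4.4} (applied with $\ds u$ in place of $u$), producing the coefficient $\tfrac32$. One small correction to your bookkeeping: the remainder $\LR{\ds^{2}u,(D^{s}w)(D^{s}\ds w)}$ is \emph{not} controlled by placing $D^{s}\ds w$ in $L^{2}$ with norm $\|w\|_{H^{s}}$ (that factor carries $s+1$ derivatives); it must be rewritten as $-\tfrac12\LR{\ds^{3}u,(D^{s}w)^{2}}$ via the further integration by parts you allude to at the end, which is precisely where the hypothesis $s_{0}>7/2$ (so that $\ds^{3}u\in L^{\infty}$) enters, rather than merely $\ds^{2}u\in L^{\infty}$.
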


\begin{proof}
Note that
\EQQS{
\LR{uD^{s}\ds^{3}w,D^{s}w}
=-\LR{\ds uD^{s}\ds^{2}w,D^{s}w}+\frac{1}{2}\LR{\ds u,(D^{s}\ds w)^{2}},
}
which together with Lemma \ref{lem4.4} shows the claim.
\end{proof}

\begin{lem}\label{reduction}
Let $s_{0}>1/2$ and $u,v$ be sufficiently smooth function defined on $\T$.
Then there exists $C=C(s_{0})>0$ such that
\[|\LR{\ds(v\HT\ds u),u}|\le C\|v\|_{H^{s_{0}+2}}\|u\|^{2}.\]
\end{lem}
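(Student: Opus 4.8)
The plan is to exploit a hidden symmetry in the quadratic form: although $v(\HT\ds u)(\ds u)$ naively carries two derivatives on $u$, after symmetrizing in the two frequencies carried by the factors of $u$ the effective multiplier is controlled by the square of the \emph{output} frequency, so that both derivatives can be transferred onto $v$. First I would integrate by parts to write
\[\LR{\ds(v\HT\ds u),u}=-\int_{\T}v(\HT\ds u)(\ds u)\,dx=:-I,\]
and recall that $\HT\ds$ has Fourier multiplier $|\xi|$, so $\widehat{\HT\ds u}(\eta)=|\eta|\hat u(\eta)$ while $\widehat{\ds u}(\zeta)=i\zeta\hat u(\zeta)$. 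It then suffices to bound $|I|$.

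Next I would pass to the Fourier side. Expanding the pairing as a sum over the two frequencies $\eta,\zeta$ carried by the two factors of $u$ (with $v$ supplying the frequency $-(\eta+\zeta)$), the form becomes $I\sim\sum_{\eta,\zeta}\hat v(-(\eta+\zeta))\,m(\eta,\zeta)\,\hat u(\eta)\hat u(\zeta)$ with $m(\eta,\zeta)=i|\eta|\zeta$. Since $\hat u(\eta)\hat u(\zeta)$ is symmetric in $\eta,\zeta$, I may replace $m$ by its symmetrization $\tfrac{i}{2}(|\eta|\zeta+|\zeta|\eta)$. The key algebraic observation, which encodes the cancellation, is the pointwise inequality
\[\big|\,|\eta|\zeta+|\zeta|\eta\,\big|\le|\eta+\zeta|^{2},\]
valid for all $\eta,\zeta\in\Z$: indeed the left-hand side vanishes when $\eta,\zeta$ have opposite signs, and equals $2|\eta||\zeta|\le(\eta+\zeta)^{2}$ when they have the same sign. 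This is exactly the mechanism that moves two derivatives from $u$ onto $v$.

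With this bound in hand the estimate becomes routine. Setting $\xi=\eta+\zeta$, I would obtain $|I|\lec\sum_{\xi}|\xi|^{2}|\hat v(\xi)|\,(|\hat u|*|\hat u|)(\xi)$, and then bound the convolution pointwise by Cauchy--Schwarz, $(|\hat u|*|\hat u|)(\xi)\le\|\hat u\|_{\ell^{2}}^{2}\lec\|u\|^{2}$, uniformly in $\xi$; crucially this costs no regularity on $u$. It remains to control $\sum_{\xi}|\xi|^{2}|\hat v(\xi)|$, which by one more Cauchy--Schwarz is at most $\big(\sum_{\xi}\LR{\xi}^{-2s_{0}}\big)^{1/2}\|v\|_{H^{s_{0}+2}}$, the prefactor being finite precisely because $s_{0}>1/2$. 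Combining these gives $|I|\le C(s_{0})\|v\|_{H^{s_{0}+2}}\|u\|^{2}$, as desired.

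The only genuine obstacle is the symbol inequality in the second paragraph; once the sign cancellation is identified, the rest is standard harmonic analysis on $\T$. An alternative, operator-theoretic route would use $\ds u=-\HT(\HT\ds u)$ together with the skew-adjointness of $\HT$ to rewrite $I$ in terms of the commutator $[\HT,v]$ acting on $\HT\ds u$; this reproduces the same symmetrized multiplier, but the direct Fourier computation above is the most transparent and self-contained.
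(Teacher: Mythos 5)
Your argument is correct and complete: the integration by parts, the symmetrization of the multiplier $i|\eta|\zeta$ to $\tfrac{i}{2}(|\eta|\zeta+|\zeta|\eta)$, the sign cancellation giving $\bigl||\eta|\zeta+|\zeta|\eta\bigr|\le|\eta+\zeta|^{2}$, and the final Cauchy--Schwarz steps (using $s_{0}>1/2$ to sum $\LR{\xi}^{-2s_{0}}$) are all sound. The paper itself gives no proof here but defers to Lemma 2.6 of the cited reference \cite{Tanaka}, whose argument rests on exactly this cancellation (there phrased via Cotlar-type identities for $\HT$, which reproduce your symmetrized symbol), so your proof is essentially the intended one, written out self-contained.
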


\begin{proof}
See Lemma 2.6 in \cite{Tanaka}.
\end{proof}

\begin{lem}\label{reduction2new}
Let $s\ge0$ and $s_{0}>5/2$.
Let $u\in H^{s_{0}}(\T)$ and $w\in H^{s+2}(\T)$.
Then
\EQQS{
|\LR{u,(\HT D^{s}\ds w)^{2}}-\LR{u,(D^{s}\ds w)^{2}}|
\lesssim\|u\|_{H^{s_{0}}}\|w\|_{H^{s}}^{2}.
}
\end{lem}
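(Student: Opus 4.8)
The plan is to expose the exact cancellation hidden in the difference $(\HT D^{s}\ds w)^{2}-(D^{s}\ds w)^{2}$ and then reduce everything to the already-proved Lemma \ref{reduction}. Throughout write $V:=D^{s}\ds w$ and $W:=D^{s}w$, so that $V=\ds W$ and, since $\ds$ annihilates the zero mode, $\hat V(0)=0$. Note that a naive bound is hopeless: both $V$ and $\HT V$ carry $s+1$ derivatives of $w$, whereas the right-hand side only permits $\|w\|_{H^{s}}^{2}$, so two full derivatives must be gained.

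The first step I would carry out is to record the bilinear Hilbert-transform identity
\[(\HT V)^{2}-V^{2}=2\HT(V\HT V),\]
verified by a direct Fourier computation. Comparing $\widehat{(\HT V)^{2}-V^{2}}(\xi)=-\sum_{\eta}[\sgn(\xi-\eta)\sgn\eta+1]\hat V(\xi-\eta)\hat V(\eta)$ with $2\widehat{\HT(V\HT V)}(\xi)$ and symmetrizing the convolution under $\eta\leftrightarrow\xi-\eta$, the identity reduces to the elementary relation $\sgn(\xi-\eta)\sgn\eta+1=\sgn\xi\,[\sgn\eta+\sgn(\xi-\eta)]$, which one checks by a short case analysis on the four possible sign combinations. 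It is precisely the hypothesis $\hat V(0)=0$ that lets me discard the terms with $\eta=0$ or $\xi-\eta=0$, so that no zero-frequency correction to $\HT^{2}$ intervenes and the identity holds verbatim on $\T$.

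Granting the identity, I would move $\HT$ onto $u$ by anti-self-adjointness,
\[\LR{u,(\HT V)^{2}}-\LR{u,V^{2}}=2\LR{u,\HT(V\HT V)}=-2\LR{\HT u,V\HT V},\]
and then, recalling $V=\ds W$, integrate by parts in $x$:
\[\LR{\HT u,V\HT V}=\LR{(\HT u)\HT\ds W,\ds W}=-\LR{\ds((\HT u)\HT\ds W),W}.\]
This last expression is exactly the object controlled by Lemma \ref{reduction}, with coefficient $\HT u$ and function $W$. Applying that lemma with its threshold parameter chosen to be $s_{0}-2$ — admissible because $s_{0}>5/2$ forces $s_{0}-2>1/2$ — gives $|\LR{\ds((\HT u)\HT\ds W),W}|\le C\|\HT u\|_{H^{s_{0}}}\|W\|^{2}$. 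Combining with $\|\HT u\|_{H^{s_{0}}}\lesssim\|u\|_{H^{s_{0}}}$ and $\|W\|=\|D^{s}w\|\lesssim\|w\|_{H^{s}}$ then yields the claimed estimate.

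The only genuinely delicate point is the very first step: recognizing that the entire two-derivative gain comes from the algebraic identity, which forces the output frequency of the product $V\HT V$ to be paired against $u$ and thereby relocates both excess derivatives onto the smooth factor. Once this structural observation is made, Lemma \ref{reduction} performs the precise bookkeeping of the two derivatives through its $H^{\sigma+2}$-requirement on the coefficient, and the remaining manipulations are routine integrations by parts and boundedness of $\HT$.
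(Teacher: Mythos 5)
Your proof is correct, and it takes a genuinely different route from the paper. The paper's own argument moves the outer factor $(\HT D^{s}\ds w)$ onto $D^{s}w$ by skew-adjointness and one integration by parts, splits the result into two pieces $A+B$, and then extracts the cancellation via the commutator $[\HT,f]$ (Lemma \ref{comm.est.H}), the torus identity $\HT^{2}=-1+\ha{(\cdot)}(0)$, and the pointwise identity behind Lemma \ref{lem4.4} to recover $\LR{u,(D^{s}\ds w)^{2}}$ exactly. You instead collapse the difference at the outset with the Tricomi--Cotlar identity $(\HT V)^{2}-V^{2}=2\HT(V\HT V)$ (valid here because $\ha{V}(0)=0$, and your symmetrized sign identity and four-case check are right), throw the outer $\HT$ onto $u$, integrate by parts once, and hand the remaining trilinear term $\LR{\ds((\HT u)\HT\ds D^{s}w),D^{s}w}$ to Lemma \ref{reduction} with threshold $s_{0}-2>1/2$. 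What your route buys is that the cancellation is made explicit and algebraic at the level of Fourier symbols, and all derivative bookkeeping is delegated to a single already-proved lemma; what the paper's route buys is that it avoids introducing the bilinear identity and stays within the commutator toolkit used uniformly elsewhere in Section 3. Both arguments consume the hypothesis $s_{0}>5/2$ for the same structural reason: two derivatives must ultimately land on the smooth factor $u$.
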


\begin{proof}
We have
\EQQS{
\LR{u,(\HT D^{s}\ds w)^{2}}
=\LR{\HT(\ds u\HT D^{s}\ds w),D^{s}w}+\LR{\HT(u\HT D^{s}\ds^{2}w),D^{s}w}=:A+B.
}
For $A$, note that
\EQQS{
|A|\le|\LR{[\HT,\ds u]\HT D^{s}\ds w,D^{s}w}|+|\LR{\ds uD^{s}\ds w,D^{s}w}|
\lesssim\|u\|_{H^{s_{0}}}\|w\|_{H^{s}}^{2}.
}
For $B$, we have
\EQQS{
B
&=\LR{[\HT,u]\HT D^{s}\ds^{2}w,D^{s}w}-\LR{uD^{s}\ds^{2}w,D^{s}w}\\
&=\LR{[\HT,u]\HT D^{s}\ds^{2}w,D^{s}w}-\frac{1}{2}\LR{\ds^{2}u,(D^{s}w)^{2}}+\LR{u,(D^{s}\ds w)^{2}},
}
which concludes the proof.
\end{proof}

We are ready to evaluate nonlinear terms.
First, we estimate terms in $F_{2}(u)$.

\begin{lem}\label{lem2.3new}
Let $s_{0}>7/2$ and $s\ge0$.
Let $u,v\in H^{\max\{s+3,s_{0}\}}(\T)$.
Then
\EQQS{
&|\LR{D^{s}\ds (u\ds^{2}u-v\ds^{2}v),D^{s}w}+(s-1/2)\LR{\ds u,(D^{s}\ds w)^{2}}|\\
&\quad\lesssim I_{s_{0}}(u,v)\{\|w\|_{H^{s}}^{2}+\|w\|_{H^{s_{0}-3}}^{2}\|v\|_{H^{s+3}}^{2}+(\|u\|_{H^{s}}^{2}+\|v\|_{H^{s}}^{2})\|w\|_{H^{s_{0}}}^{2}\},
}
where $w=u-v$.
\end{lem}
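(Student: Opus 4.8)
The plan is to set $w=u-v$ and exploit the algebraic identity
\[
u\ds^{2}u-v\ds^{2}v=u\ds^{2}w+w\ds^{2}v,
\]
which is the natural splitting because the genuine derivative loss comes only from the first summand $u\ds^{2}w$ (there $w$ may absorb all the derivatives of $D^{s}\ds^{3}$), while in $w\ds^{2}v$ the factor $w$ stays at low order. I would therefore treat $\LR{D^{s}\ds(u\ds^{2}w),D^{s}w}$ and $\LR{D^{s}\ds(w\ds^{2}v),D^{s}w}$ separately, expanding each by the Leibniz-type commutator $P_{s}^{(1)}$ and using Lemma \ref{comm.est.1} to absorb the remainder: $|\LR{P_{s}^{(1)}(u,w),D^{s}w}|\lec(\|u\|_{H^{s}}\|w\|_{H^{s_{0}}}+\|u\|_{H^{s_{0}}}\|w\|_{H^{s}})\|w\|_{H^{s}}$, and likewise with $(u,w)$ replaced by $(w,v)$, both of which fit the right-hand side after Young's inequality.

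For the first pairing I substitute the four explicit terms of $P_{s}^{(1)}(u,w)$. Lemma \ref{lem4.5} reduces $\LR{uD^{s}\ds^{3}w,D^{s}w}$ to $\tfrac{3}{2}\LR{\ds u,(D^{s}\ds w)^{2}}$ up to $\|u\|_{H^{s_{0}}}\|w\|_{H^{s}}^{2}$; Lemma \ref{lem4.4} applied to $\ds u$ turns $(s+1)\LR{\ds u\,D^{s}\ds^{2}w,D^{s}w}$ into $-(s+1)\LR{\ds u,(D^{s}\ds w)^{2}}$ up to $\tfrac{s+1}{2}\LR{\ds^{3}u,(D^{s}w)^{2}}$, harmless since $\|\ds^{3}u\|_{\I}\lec\|u\|_{H^{s_{0}}}$ for $s_{0}>7/2$; and an integration by parts sends $\tfrac{s(s+1)}{2}\LR{\ds^{2}u\,D^{s}\ds w,D^{s}w}$ to $-\tfrac{s(s+1)}{4}\LR{\ds^{3}u,(D^{s}w)^{2}}$, again harmless. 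The two surviving loss contributions combine to $\big(\tfrac{3}{2}-(s+1)\big)\LR{\ds u,(D^{s}\ds w)^{2}}=-(s-\tfrac12)\LR{\ds u,(D^{s}\ds w)^{2}}$, which is exactly the quantity subtracted off in the statement.

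The remaining fourth term $\LR{D^{s}\ds u\,\ds^{2}w,D^{s}w}$ is the main obstacle: it carries $s+1$ derivatives on $u$, so it is \emph{not} controlled by $\|u\|_{H^{s}}$, and a naive integration by parts only permutes it into an equivalent integral (the computation is circular). I would resolve this by writing $u=v+w$ inside the factor $D^{s}\ds u$. In the $w$-part $\LR{D^{s}\ds w\,\ds^{2}w,D^{s}w}$ both top-order factors are carried by $w$, so $D^{s}\ds w\,D^{s}w=\tfrac12\ds\big((D^{s}w)^{2}\big)$ and integration by parts gives $-\tfrac12\LR{\ds^{3}w,(D^{s}w)^{2}}\lec\|w\|_{H^{s_{0}}}\|w\|_{H^{s}}^{2}$. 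The $v$-part is estimated by $\|D^{s}\ds v\|\,\|\ds^{2}w\|_{\I}\,\|D^{s}w\|\lec\|v\|_{H^{s+1}}\|w\|_{H^{s_{0}-1}}\|w\|_{H^{s}}$ (using $\|\ds^{2}w\|_{\I}\lec\|w\|_{H^{s_{0}-1}}$ for $s_{0}>7/2$), after which I interpolate $\|v\|_{H^{s+1}}\lec\|v\|_{H^{s}}^{2/3}\|v\|_{H^{s+3}}^{1/3}$ and $\|w\|_{H^{s_{0}-1}}\lec\|w\|_{H^{s_{0}-3}}^{1/3}\|w\|_{H^{s_{0}}}^{2/3}$ (Lemma \ref{G.N.}) and apply Young's inequality; this produces precisely the admissible quantities $\|w\|_{H^{s_{0}-3}}^{2}\|v\|_{H^{s+3}}^{2}$ and $\|v\|_{H^{s}}^{2}\|w\|_{H^{s_{0}}}^{2}$. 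The splitting $u=v+w$, yielding a perfect-derivative piece plus an interpolated piece, is the crux.

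Finally, the second pairing $\LR{D^{s}\ds(w\ds^{2}v),D^{s}w}$ produces no loss term, since $w$ never exceeds order $D^{s}\ds w$ here. Expanding by $P_{s}^{(1)}(w,v)$: the top term $\LR{D^{s}\ds w\,\ds^{2}v,D^{s}w}$ integrates by parts to $-\tfrac12\LR{\ds^{3}v,(D^{s}w)^{2}}\lec\|v\|_{H^{s_{0}}}\|w\|_{H^{s}}^{2}$; the term $\LR{wD^{s}\ds^{3}v,D^{s}w}$ is bounded by $\|w\|_{\I}\|v\|_{H^{s+3}}\|w\|_{H^{s}}\lec\|w\|_{H^{s_{0}-3}}\|v\|_{H^{s+3}}\|w\|_{H^{s}}$ (using $\|w\|_{\I}\lec\|w\|_{H^{s_{0}-3}}$, valid for $s_{0}>7/2$); and the two cross terms $(s+1)\LR{\ds w\,D^{s}\ds^{2}v,D^{s}w}$ and $\tfrac{s(s+1)}{2}\LR{\ds^{2}w\,D^{s}\ds v,D^{s}w}$ are handled by the same Gagliardo-Nirenberg and Young interpolation as the $v$-part above. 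Summing the two pairings gives the stated identity with error bounded by $I_{s_{0}}(u,v)$ times the three admissible quantities, which completes the proof.
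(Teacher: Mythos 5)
Your proposal is correct and follows essentially the same route as the paper: split $u\ds^{2}u-v\ds^{2}v=u\ds^{2}w+w\ds^{2}v$, expand via $P_{s}^{(1)}$ and Lemma \ref{comm.est.1}, use Lemmas \ref{lem4.4} and \ref{lem4.5} to extract the coefficient $\tfrac32-(s+1)=-(s-\tfrac12)$, and substitute $u=w+v$ in the problematic factor $D^{s}\ds u\,\ds^{2}w$ before interpolating. Your treatment of $R_{2}=\LR{D^{s}\ds(w\ds^{2}v),D^{s}w}$ is more explicit than the paper's (which only says ``a similar argument''), but the content is identical.
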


\begin{proof}
Set
\[\LR{D^{s}\ds (u\ds^{2}u-v\ds^{2}v),D^{s}w}=\LR{D^{s}\ds (u\ds^{2}w),D^{s}w}+\LR{D^{s}\ds (w\ds^{2}v),D^{s}w}
  =:R_{1}+R_{2}.\]
Lemma \ref{comm.est.1}, \ref{lem4.4} and \ref{lem4.5} show that
\EQQS{
&|R_{1}+(s-1/2)\LR{\ds u,(D^{s}\ds w)^{2}}|\\
&\le|\LR{P_{s}^{(1)}(u,w),D^{s}w}|+|\LR{D^{s}\ds(w+v)\ds^{2}w,D^{s}w}|+|\LR{uD^{s}\ds^{3}w,D^{s}w}-(3/2)\LR{\ds u,(D^{s}\ds w)^{2}}|\\
&\quad+(s+1)|\LR{\ds uD^{s}\ds^{2}w,D^{s}w}+\LR{\ds u,(D^{s}\ds w)^{2}}|+s(s+1)|\LR{\ds^{3}u,(D^{s}w)^{2}}|/4\\
&\lesssim\|w\|_{H^{s}}(\|w\|_{H^{s_{0}-1}}\|v\|_{H^{s+1}}+\|u\|_{H^{s_{0}}}\|w\|_{H^{s}}+\|w\|_{H^{s_{0}}}\|u\|_{H^{s}}
         +\|w\|_{H^{s_{0}}}\|w\|_{H^{s}}).
}
We see from a similar argument that
\EQQS{
|R_{2}|
&\lesssim I_{s_{0}}(u,v)\{\|w\|_{H^{s}}^{2}+\|w\|_{H^{s_{0}-3}}^{2}\|v\|_{H^{s+3}}^{2}+(\|u\|_{H^{s}}^{2}+\|v\|_{H^{s}}^{2})\|w\|_{H^{s_{0}}}^{2}\},
}
which completes the proof.
\end{proof}

\begin{lem}\label{lem2.4new}
Let $s_{0}>7/2$ and $s\ge0$.
Let $u,v\in H^{\max\{s+2,s_{0}\}}(\T)$.
Then
\EQQS{
&|\LR{D^{s}\ds((\ds u)^{2}-(\ds v)^{2}),D^{s}w}+2\LR{\ds u,(D^{s}\ds w)^{2}}|\\
&\quad\lesssim I_{s_{0}}(u,v)\{\|w\|_{H^{s}}^{2}+\|w\|_{H^{s_{0}-2}}^{2}\|v\|_{H^{s+2}}^{2}+(\|u\|_{H^{s}}^{2}+\|v\|_{H^{s}}^{2})\|w\|_{H^{s_{0}}}^{2}\},
}
where $w=u-v$.
\end{lem}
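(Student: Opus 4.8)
The goal is to control $\LR{D^{s}\ds((\ds u)^{2}-(\ds v)^{2}),D^{s}w}$ up to the worst term $-2\LR{\ds u,(D^{s}\ds w)^{2}}$, in complete parallel with the proof of Lemma \ref{lem2.3new}. The plan is to first reduce the difference of quadratic nonlinearities to a linear problem in $w$. Writing $(\ds u)^{2}-(\ds v)^{2}=\ds(u-v)\cdot\ds(u+v)=\ds w\,\ds(u+v)$, I would split
\EQQS{
\LR{D^{s}\ds((\ds u)^{2}-(\ds v)^{2}),D^{s}w}
=\LR{D^{s}\ds(\ds w\,\ds u),D^{s}w}+\LR{D^{s}\ds(\ds w\,\ds v),D^{s}w}=:R_{1}+R_{2}.
}
The term $R_{1}$ carries the genuine derivative loss and will produce the $-2\LR{\ds u,(D^{s}\ds w)^{2}}$ contribution; the term $R_{2}$ is symmetric in $w$ and $v$ and will be absorbed into the right-hand side.

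For $R_{1}$, the key tool is the commutator estimate for $P_{s}^{(3)}$ in Lemma \ref{comm.est.2}. Applying its definition with $f=\ds u$ and $g=w$ (or more precisely using $P_{s}^{(3)}$ adapted to the product $\ds u\,\ds w$), I would extract the leading terms $\ds uD^{s}\ds^{2}w$ and its companions and replace $D^{s}\ds(\ds u\,\ds w)$ by them modulo an $L^{2}$-error bounded by $\|u\|_{H^{s_{0}}}\|w\|_{H^{s}}+\|u\|_{H^{s}}\|w\|_{H^{s_{0}}}$. After pairing with $D^{s}w$, the dangerous terms are of the form $\LR{\ds uD^{s}\ds^{2}w,D^{s}w}$, which by Lemma \ref{lem4.4} equals $-\LR{\ds u,(D^{s}\ds w)^{2}}$ up to a harmless $\LR{\ds^{3}u,(D^{s}w)^{2}}$-type term. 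Collecting the coefficients from the two symmetric leading terms of $P_{s}^{(3)}$ yields exactly the factor $2$ in front of $\LR{\ds u,(D^{s}\ds w)^{2}}$; the remaining contributions, such as $\LR{\ds^{2}u\,D^{s}\ds w,D^{s}w}$ and $\LR{D^{s}\ds^{2}w\,\ds w,D^{s}w}$, are lower order and controlled by integration by parts together with Lemma \ref{G.N.} and Hölder, giving a bound of the form $\|w\|_{H^{s}}(\|u\|_{H^{s_{0}}}\|w\|_{H^{s}}+\|w\|_{H^{s_{0}}}\|u\|_{H^{s}}+\|w\|_{H^{s_{0}}}\|w\|_{H^{s}})$.

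For $R_{2}$, there is no cancellation to track: I would again invoke Lemma \ref{comm.est.2} to peel off the top-order terms of $D^{s}\ds(\ds w\,\ds v)$, but now since both $\ds w$ and $\ds v$ appear, the worst term $\LR{\ds vD^{s}\ds^{2}w,D^{s}w}$ can be integrated by parts and estimated directly, putting $s+2$ derivatives on $v$ where necessary. This is the source of the $\|w\|_{H^{s_{0}-2}}^{2}\|v\|_{H^{s+2}}^{2}$ term on the right-hand side: when the high derivative falls on $v$, one bounds $\|v\|_{H^{s+2}}$ and measures $w$ in the lower norm $H^{s_{0}-2}$, while the symmetric allocation gives the $(\|u\|_{H^{s}}^{2}+\|v\|_{H^{s}}^{2})\|w\|_{H^{s_{0}}}^{2}$ term via Cauchy-Schwarz and Young's inequality. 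The main obstacle is purely bookkeeping: one must verify that the combinatorial coefficient in front of $\LR{\ds u,(D^{s}\ds w)^{2}}$ produced by Lemma \ref{comm.est.2} is precisely $2$ (not $s$-dependent), which is what makes the stated identity hold with a constant coefficient; this is the analogue of how the coefficient $s-1/2$ arises in Lemma \ref{lem2.3new}, and it requires carefully summing the leading contributions of $P_{s}^{(3)}$ rather than dropping them into the error.
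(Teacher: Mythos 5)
Your decomposition $R_{1}+R_{2}$ (splitting $\ds z=\ds u+\ds v$) can be made to work, but the bookkeeping you describe is wrong in two places, and as written the argument would not close. First, $R_{2}$ is \emph{not} free of derivative loss: its leading term $\LR{\ds v D^{s}\ds^{2}w,D^{s}w}$ equals, by Lemma \ref{lem4.4}, $-\LR{\ds v,(D^{s}\ds w)^{2}}+\frac{1}{2}\LR{\ds^{3}v,(D^{s}w)^{2}}$, and the first piece involves $\|D^{s}\ds w\|^{2}$, which is controlled by nothing on the right-hand side; it cannot be ``integrated by parts and estimated directly.'' Second, the factor $2$ does \emph{not} come from the two leading terms of $P_{s}^{(3)}(u,w)$ inside $R_{1}$ alone: the term $\LR{(D^{s}\ds^{2}u)\ds w,D^{s}w}$ carries $s+2$ derivatives on $u$ (not on $w$), and since only $\|v\|_{H^{s+2}}$ is allowed on the right-hand side you must substitute $u=w+v$ there; after doing so it contributes $-\LR{\ds w,(D^{s}\ds w)^{2}}$ plus an admissible $v$-term, not another copy of $-\LR{\ds u,(D^{s}\ds w)^{2}}$. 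Thus $R_{1}$ alone yields $-\LR{\ds u+\ds w,(D^{s}\ds w)^{2}}=-2\LR{\ds u,(D^{s}\ds w)^{2}}+\LR{\ds v,(D^{s}\ds w)^{2}}$, and the leftover $\LR{\ds v,(D^{s}\ds w)^{2}}$ must cancel against the loss term from $R_{2}$; there \emph{is} a cancellation between $R_{1}$ and $R_{2}$ to track, contrary to your claim.

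The paper sidesteps this by never separating $u$ and $v$: it keeps $z=u+v$, applies $P_{s}^{(3)}(z,w)$ once, and uses the symmetrization identity
\EQQS{
\LR{(D^{s}\ds^{2}z)\ds w,D^{s}w}+\LR{\ds zD^{s}\ds^{2}w,D^{s}w}
=2\LR{\ds uD^{s}\ds^{2}w,D^{s}w}+2\LR{\ds wD^{s}\ds^{2}v,D^{s}w},
}
obtained from $z=w+2v$ and $\ds w+\ds z=2\ds u$. This produces the coefficient $2$ in one stroke and isolates the single place where $s+2$ derivatives land on $v$ (whence the $\|w\|_{H^{s_{0}-2}}^{2}\|v\|_{H^{s+2}}^{2}$ term, which you identified correctly). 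If you want to keep your split, you must carry the three loss terms $-\LR{\ds u,(D^{s}\ds w)^{2}}$, $-\LR{\ds w,(D^{s}\ds w)^{2}}$, $-\LR{\ds v,(D^{s}\ds w)^{2}}$ explicitly and only then sum them to $-2\LR{\ds u,(D^{s}\ds w)^{2}}$.
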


\begin{proof}
Set $z=u+v$.
Note that
\EQQS{
&\LR{D^{s}\ds^{2}z\ds w,D^{s}w}+\LR{\ds zD^{s}\ds^{2}w,D^{s}w}\\
&=\LR{\ds wD^{s}\ds^{2}w,D^{s}w}+2\LR{\ds wD^{s}\ds^{2}v,D^{s}w}+\LR{\ds zD^{s}\ds^{2}w,D^{s}w}\\
&=2\LR{\ds uD^{s}\ds^{2}w,D^{s}w}+2\LR{\ds wD^{s}\ds^{2}v,D^{s}w}.
}
Lemma \ref{comm.est.2} and \ref{lem4.4} show that
\EQQS{
&|\LR{D^{s}\ds(\ds z\ds w),D^{s}w}+2\LR{\ds u,(D^{s}\ds w)^{2}}|\\
&\le|\LR{P_{s}^{(3)}(z,w),D^{s}w}|+(s+1)|\LR{\ds^{3}u,(D^{s}w)^{2}}|+2(s+1)|\LR{\ds wD^{s}\ds v,D^{s}w}|\\
&\quad+2|\LR{\ds uD^{s}\ds^{2}w,D^{s}w}+\LR{\ds u,(D^{s}\ds w)^{2}}|+2|\LR{\ds wD^{s}\ds^{2}v,D^{s}w}|\\
&\lesssim I_{s_{0}}(u,v)\{\|w\|_{H^{s}}^{2}+\|w\|_{H^{s_{0}-2}}^{2}\|v\|_{H^{s+2}}^{2}+(\|u\|_{H^{s}}^{2}+\|v\|_{H^{s}}^{2})\|w\|_{H^{s_{0}}}^{2}\},
}
which completes the proof.
\end{proof}

\begin{lem}\label{lem2.5new}
Let $s_{0}>7/2$ and $s\ge0$.
Let $u,v\in H^{\max\{s+2,s_{0}\}}(\T)$.
Then
\EQQS{
&|\LR{D^{s}\ds((\HT\ds u)^{2}-(\HT\ds v)^{2}),D^{s}w}-2s\LR{(\HT\ds^{2}u)\HT D^{s}\ds w,D^{s}w}|\\
&\lesssim I_{s_{0}}(u,v)\{\|w\|_{H^{s}}^{2}+\|w\|_{H^{s_{0}-2}}^{2}\|v\|_{H^{s+2}}^{2}+(\|u\|_{H^{s}}^{2}+\|v\|_{H^{s}}^{2})\|w\|_{H^{s_{0}}}^{2}\},
}
where $w=u-v$.
\end{lem}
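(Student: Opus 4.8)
The plan is to follow the pattern of Lemmas \ref{lem2.4new} and \ref{reduction2new}. Set $z=u+v$, so that $(\HT\ds u)^{2}-(\HT\ds v)^{2}=(\HT\ds z)(\HT\ds w)$. Since $\HT$ commutes with $\ds$ and $D^{s}$, is bounded on every $H^{\sigma}(\T)$ and is skew-adjoint, I first rewrite $(\HT\ds z)(\HT\ds w)=\ds(\HT z)\,\ds(\HT w)$ and apply the Leibniz-type decomposition $P_{s}^{(3)}$ (Lemma \ref{comm.est.2}) with arguments $\HT z$ and $\HT w$. Pairing with $D^{s}w$, the remainder $\LR{P_{s}^{(3)}(\HT z,\HT w),D^{s}w}$ is admissible, since $\|P_{s}^{(3)}(\HT z,\HT w)\|\lesssim\|z\|_{H^{s}}\|w\|_{H^{s_{0}}}+\|z\|_{H^{s_{0}}}\|w\|_{H^{s}}$ and $z=2v+w$. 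This leaves the four principal terms $T_{A}=\LR{(\HT D^{s}\ds^{2}z)(\HT\ds w),D^{s}w}$, $T_{B}=(s+1)\LR{(\HT D^{s}\ds z)(\HT\ds^{2}w),D^{s}w}$, $T_{C}=\LR{(\HT\ds z)(\HT D^{s}\ds^{2}w),D^{s}w}$ and $T_{D}=(s+1)\LR{(\HT\ds^{2}z)(\HT D^{s}\ds w),D^{s}w}$.

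Next I would combine $T_{A}$ and $T_{C}$ exactly as in Lemma \ref{lem2.4new}: each carries one order-one factor, and $\HT D^{s}\ds^{2}z=2\HT D^{s}\ds^{2}v+\HT D^{s}\ds^{2}w$ together with $\HT\ds z+\HT\ds w=2\HT\ds u$ gives $T_{A}+T_{C}=2\LR{(\HT D^{s}\ds^{2}v)(\HT\ds w),D^{s}w}+2\LR{(\HT\ds u)(\HT D^{s}\ds^{2}w),D^{s}w}$. The first summand fits the budget $\|w\|_{H^{s_{0}-2}}^{2}\|v\|_{H^{s+2}}^{2}$ because $\|\ds w\|_{\infty}\lesssim\|w\|_{H^{s_{0}-2}}$ (valid as $s_{0}>7/2$). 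For the second I integrate by parts via $\HT D^{s}\ds^{2}w=\ds(\HT D^{s}\ds w)$, obtaining $-2\LR{(\HT\ds^{2}u)(\HT D^{s}\ds w),D^{s}w}-2\int(\HT\ds u)(D^{s}\ds w)(\HT D^{s}\ds w)\,dx$. Writing $\HT\ds^{2}z=2\HT\ds^{2}u-\HT\ds^{2}w$ in $T_{D}$ produces $2(s+1)\LR{(\HT\ds^{2}u)(\HT D^{s}\ds w),D^{s}w}$ minus a cubic-in-$w$ integral, so that the coefficients of $\LR{(\HT\ds^{2}u)(\HT D^{s}\ds w),D^{s}w}$ add up to $-2+2(s+1)=2s$, which is precisely the principal term in the statement.

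It remains to absorb $T_{B}$ and the trilinear remainders. Splitting $z=2v+w$ in $T_{B}$ isolates a cubic-in-$w$ integral that cancels exactly the cubic integral left over from $T_{D}$; this is the crucial algebraic cancellation, since both carry the order-$(s+1)$ factor $\HT D^{s}\ds w$ and are individually uncontrollable. What survives are $v$--$w$ terms and the trilinear remainder $\int(\HT\ds u)(D^{s}\ds w)(\HT D^{s}\ds w)\,dx$, which I would bound by repeated integration by parts, the Gagliardo--Nirenberg inequality (Lemma \ref{G.N.}) and symmetrization. The key device is that, for a smooth coefficient $g$, one has $\int g\,(D^{s}\ds w)(\HT D^{s}\ds w)\,dx=-\tfrac12\LR{[\HT,g]D^{s}\ds w,D^{s}\ds w}$ (using $\HT\ds=D$ and the skew-adjointness of $\HT$); symmetrizing the two order-$(s+1)$ factors makes the leading symbol vanish except where the highest frequency falls on $g$, which lets three derivatives be transferred onto $g$ and bounds the term by $\|u\|_{H^{s_{0}}}\|w\|_{H^{s}}^{2}\lesssim I_{s_{0}}(u,v)\|w\|_{H^{s}}^{2}$. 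All these estimates rely on $s_{0}>7/2$, which provides $\|\ds w\|_{\infty}\lesssim\|w\|_{H^{s_{0}-2}}$, $\|\ds^{3}w\|_{\infty}\lesssim\|w\|_{H^{s_{0}}}$ and $\|w\|_{H^{s_{0}}}\lesssim I_{s_{0}}(u,v)$, together with Lemma \ref{lem4.4} in the final collection.

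The main obstacle is exactly these trilinear expressions carrying two order-$(s+1)$ derivatives of $w$ (the pair $D^{s}\ds w$ and $\HT D^{s}\ds w$), which cannot be controlled factor by factor since only $\|w\|_{H^{s}}$ is available. Their treatment rests on two distinct cancellation mechanisms — the exact cancellation of the worst cubic-in-$w$ integral between $T_{B}$ and $T_{D}$, and the symbol cancellation obtained by symmetrizing the two top-order factors — and both close only thanks to $s_{0}>7/2$, which is precisely what places $\ds w$ and $\ds^{3}w$ in $L^{\infty}$ with the gains recorded above.
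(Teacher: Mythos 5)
Your argument is correct and is essentially the paper's proof: the same $P_{s}^{(3)}(\HT z,\HT w)$ decomposition, the same pairing of the four principal terms via $z=2u-w=2v+w$ to cancel the cubic-in-$w$ integrals, and the same bookkeeping $-2+2(s+1)=2s$ for the coefficient of $\LR{(\HT\ds^{2}u)\HT D^{s}\ds w,D^{s}w}$. The trilinear remainder $\int(\HT\ds u)(D^{s}\ds w)(\HT D^{s}\ds w)\,dx$ that you treat by the commutator/symmetrization identity is exactly the term $\LR{\ds((\HT\ds u)\HT D^{s}\ds w),D^{s}w}$ which the paper disposes of by citing Lemma \ref{reduction}, whose proof is the very mechanism you describe.
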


\begin{proof}
Set $z=u+v$.
As in the proof of Lemma \ref{lem2.4new}, we have
\EQQS{
&\LR{(\HT\ds w)\HT D^{s}\ds^{2}z,D^{s}w}+\LR{(\HT\ds z)\HT D^{s}\ds^{2}w,D^{s}w}\\
&=2\LR{(\HT\ds u)\HT D^{s}\ds^{2}w,D^{s}w}+2\LR{(\HT\ds w)\HT D^{s}\ds^{2}v,D^{s}w}
}
and
\EQQS{
&\LR{(\HT\ds^{2}w)\HT D^{s}\ds z,D^{s}w}+\LR{(\HT\ds^{2}z)\HT D^{s}\ds w,D^{s}w}\\
&=2\LR{(\HT\ds^{2}u)\HT D^{s}\ds w,D^{s}w}+2\LR{(\HT\ds^{2}w)\HT D^{s}\ds v,D^{s}w}.
}
Then Lemma \ref{comm.est.2}, \ref{lem4.4} and \ref{reduction} show that
\EQQS{
&|\LR{D^{s}\ds((\HT\ds z)\HT\ds w),D^{s}w}-2s\LR{(\HT\ds^{2}u)\HT D^{s}\ds w,D^{s}w}|\\
&\le|\LR{P_{s}^{(3)}(\HT z,\HT w),D^{s}w}|+2|\LR{\ds((\HT\ds u)\HT D^{s}\ds w),D^{s}w}|\\
&\quad+2|\LR{(\HT\ds w)\HT D^{s}\ds^{2}v,D^{s}w}|+2(s+1)|\LR{(\HT\ds w)\HT D^{s}\ds v,D^{s}w}|\\
&\lesssim I_{s_{0}}(u,v)\{\|w\|_{H^{s}}^{2}+\|w\|_{H^{s_{0}-2}}^{2}\|v\|_{H^{s+2}}^{2}+(\|u\|_{H^{s}}^{2}+\|v\|_{H^{s}}^{2})\|w\|_{H^{s_{0}}}^{2}\},
}
which completes the proof.
\end{proof}

\begin{lem}\label{lem2.6new}
Let $s_{0}>7/2$ and $s\ge0$.
Let $u,v\in H^{\max\{s+3,s_{0}\}}(\T)$.
Then
\EQQS{
&|\LR{\HT D^{s}\ds(u\HT\ds^{2}u-v\HT\ds^{2}v),D^{s}w}-\LR{(\HT\ds^{2}u)\HT D^{s}\ds w,D^{s}w}
 -(s-1/2)\LR{\ds u,(D^{s}\ds w)^{2}}|\\
&\lesssim I_{s_{0}}(u,v)\{\|w\|_{H^{s}}^{2}+\|w\|_{H^{s_{0}-3}}^{2}\|v\|_{H^{s+3}}^{2}+(\|u\|_{H^{s}}^{2}+\|v\|_{H^{s}}^{2})\|w\|_{H^{s_{0}}}^{2}\},
}
where $w=u-v$.
\end{lem}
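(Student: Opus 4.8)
The plan is to mirror the proofs of Lemmas \ref{lem2.3new}--\ref{lem2.5new}: extract the genuine derivative losses by hand and absorb all remaining terms into the right-hand side by interpolation. First I would use $w=u-v$ to split
\[
u\HT\ds^2u-v\HT\ds^2v=u\,\HT\ds^2w+w\,\HT\ds^2v,
\]
so that $\LR{\HT D^s\ds(u\HT\ds^2u-v\HT\ds^2v),D^sw}=R_1+R_2$ with $R_1:=\LR{\HT D^s\ds(u\HT\ds^2w),D^sw}$ and $R_2:=\LR{\HT D^s\ds(w\HT\ds^2v),D^sw}$. The point of the split is that $R_1$ carries the top-order behaviour in $w$, while in $R_2$ the high derivatives fall on $v$.

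For $R_1$ I would use that $\HT$ commutes with $\ds$ and $D^s$ to write $u\HT\ds^2w=u\ds^2(\HT w)$ and apply the commutator identity $P_s^{(2)}(u,\HT w)$ from Lemma \ref{comm.est.1}. Invoking $\HT^2=-\mathrm{id}$ (the zero-mode correction on $\T$ being harmless, as $|\hat f(0)|\lesssim\|f\|$), the three interior terms become $-\LR{uD^s\ds^3w,D^sw}$, $-(s+1)\LR{\ds u\,D^s\ds^2w,D^sw}$ and $-\tfrac{s(s+1)}{2}\LR{\ds^2u\,D^s\ds w,D^sw}$. By Lemma \ref{lem4.5}, by Lemma \ref{lem4.4} applied with $\ds u$ in place of $u$, and by one integration by parts respectively, these collapse to $\bigl(-\tfrac32+(s+1)\bigr)\LR{\ds u,(D^s\ds w)^2}=(s-\tfrac12)\LR{\ds u,(D^s\ds w)^2}$ modulo remainders of size $\|u\|_{H^{s_0}}\|w\|_{H^s}^2$; here $s_0>7/2$ is exactly what lets $\ds^3u$ and lower derivatives sit in $L^\infty$. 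The commutator term $\LR{P_s^{(2)}(u,\HT w),D^sw}$ is controlled directly by Lemma \ref{comm.est.1}. This leaves from $R_1$ the single first-order-loss term $\LR{(\HT D^s\ds u)(\HT\ds^2w),D^sw}$.

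For $R_2$ I would apply $P_s^{(2)}(w,\HT v)$ in the same way; now the interior terms carry $D^s\ds^k v$ with $k\le 3$, hence at most $s+3$ derivatives on $v$ and none on the low-order $w$ factors, so they are controlled by $\|w\|_{H^{s_0-3}}^2\|v\|_{H^{s+3}}^2+\|w\|_{H^s}^2$ after placing the $w$-derivatives (orders $\le 3<s_0-\tfrac12$) in $L^\infty$. The surviving first-order-loss term from $R_2$ is $\LR{(\HT D^s\ds w)(\HT\ds^2v),D^sw}=\LR{(\HT\ds^2v)\,\HT D^s\ds w,D^sw}$. Collecting the two surviving losses and using $u=v+w$ together with commutativity of the pointwise product, I would write
\[
\LR{(\HT D^s\ds u)(\HT\ds^2w),D^sw}+\LR{(\HT\ds^2v)\HT D^s\ds w,D^sw}=\LR{(\HT\ds^2u)\HT D^s\ds w,D^sw}+\LR{(\HT D^s\ds v)(\HT\ds^2w),D^sw},
\]
the first term on the right being exactly the advertised first-order loss and the second a cross term.

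The main obstacle is controlling this cross term $\LR{(\HT D^s\ds v)(\HT\ds^2w),D^sw}$, which at first sight carries $s+1$ derivatives on $v$: the naive bound $\|v\|_{H^{s+1}}\|w\|_{H^{s_0-1}}\|w\|_{H^s}$, using $\|\HT\ds^2w\|_\infty\lesssim\|w\|_{H^{s_0-1}}$ (valid since $s_0>7/2$), matches none of the terms on the right-hand side. As in Lemma \ref{lem2.5new}, the resolution is interpolation: writing $\|v\|_{H^{s+1}}\lesssim\|v\|_{H^s}^{2/3}\|v\|_{H^{s+3}}^{1/3}$ and $\|w\|_{H^{s_0-1}}\lesssim\|w\|_{H^{s_0-3}}^{1/3}\|w\|_{H^{s_0}}^{2/3}$ and then applying the weighted arithmetic--geometric mean inequality to the resulting product $(\|v\|_{H^{s+3}}\|w\|_{H^{s_0-3}})^{1/3}(\|v\|_{H^s}\|w\|_{H^{s_0}})^{2/3}\|w\|_{H^s}$, one distributes the norms precisely into $\|w\|_{H^s}^2$, $\|w\|_{H^{s_0-3}}^2\|v\|_{H^{s+3}}^2$ and $(\|u\|_{H^s}^2+\|v\|_{H^s}^2)\|w\|_{H^{s_0}}^2$. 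The same interpolation absorbs every remainder produced above, so the full expression reduces to the two named losses plus a quantity bounded by $I_{s_0}(u,v)\{\|w\|_{H^s}^2+\|w\|_{H^{s_0-3}}^2\|v\|_{H^{s+3}}^2+(\|u\|_{H^s}^2+\|v\|_{H^s}^2)\|w\|_{H^{s_0}}^2\}$, as claimed.
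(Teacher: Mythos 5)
Your proposal is correct and follows essentially the same route as the paper: the same splitting $u\HT\ds^{2}u-v\HT\ds^{2}v=u\HT\ds^{2}w+w\HT\ds^{2}v$, the same use of $P_{s}^{(2)}(u,\HT w)$ and $P_{s}^{(2)}(w,\HT v)$ with Lemmas \ref{lem4.4} and \ref{lem4.5} to produce the coefficient $-3/2+(s+1)=s-1/2$, and the same algebraic rearrangement (via $u=v+w$ and commutativity) isolating $\LR{(\HT\ds^{2}u)\HT D^{s}\ds w,D^{s}w}$ plus the cross term $\LR{(\HT\ds^{2}w)\HT D^{s}\ds v,D^{s}w}$. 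Your explicit interpolation argument for absorbing that cross term is exactly what the paper's final estimate implicitly relies on.
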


\begin{proof}
Set $z=u+v$.
Note that
\EQQS{
&\LR{(\HT D^{s}\ds u)\HT\ds^{2}w,D^{s}w}+\LR{(\HT D^{s}\ds w)\HT\ds^{2}v,D^{s}w}\\
&=\LR{(\HT D^{s}\ds w)\HT\ds^{2}w,D^{s}w}+\LR{(\HT D^{s}\ds w)\HT\ds^{2}v,D^{s}w}
  +\LR{(\HT D^{s}\ds v)\HT\ds^{2}w,D^{s}w}\\
&=\LR{(\HT\ds^{2}u)\HT D^{s}\ds w,D^{s}w}+\LR{(\HT\ds^{2}w)\HT D^{s}\ds v,D^{s}w}.
}
Lemma \ref{comm.est.1}, \ref{lem4.4} and \ref{lem4.5} show that
\EQQS{
&|\LR{\HT D^{s}\ds(u\HT\ds^{2}u-v\HT\ds^{2}v),D^{s}w}-\LR{(\HT\ds^{2}u)\HT D^{s}\ds w,D^{s}w}-(s-1/2)\LR{\ds u,(D^{s}\ds w)^{2}}|\\
&\le|\LR{P_{s}^{(2)}(u,\HT w)+P_{s}^{(2)}(w,\HT v),D^{s}w}|+|\LR{uD^{s}\ds^{3}w,D^{s}w}-(3/2)\LR{\ds u,(D^{s}\ds w)^{2}}|\\
&\quad+(s+1)|\LR{\ds uD^{s}\ds^{2}w,D^{s}w}+\LR{\ds u,(D^{s}\ds w)^{2}}|+|\LR{wD^{s}\ds^{3}v,D^{s}w}|\\
&\quad+s(s+1)|\LR{\ds^{3}u,(D^{s}w)^{2}}|/4+(s+1)|\LR{\ds wD^{s}\ds^{2}v,D^{s}w}|+s(s+1)|\LR{\ds^{2}wD^{s}\ds v,D^{s}w}|/2\\
&\quad+|\LR{(\HT\ds^{2}w)\HT D^{s}\ds v,D^{s}w}|\\
&\lesssim I_{s_{0}}(u,v)\{\|w\|_{H^{s}}^{2}+\|w\|_{H^{s_{0}-3}}^{2}\|v\|_{H^{s+3}}^{2}+(\|u\|_{H^{s}}^{2}+\|v\|_{H^{s}}^{2})\|w\|_{H^{s_{0}}}^{2}\},
}
which completes the proof.
\end{proof}

Next, we estimate nonlinear terms in $F_{3}(u)$ and $F_{4}(u)$.

\begin{lem}\label{lem2.8new}
Let $s_{0}>7/2$ and $s\ge0$.
Let $u,v\in H^{\max\{s+2,s_{0}\}}(\T)$.
Then
\EQQS{
&|\LR{\HT D^{s}\ds(u^{2}\ds u-v^{2}\ds v),D^{s}w}-2(s+1)\LR{u\ds u\HT D^{s}\ds w,D^{s}w}|\\
&\lesssim I_{s_{0}}(u,v)^{2}\{\|w\|_{H^{s}}^{2}+\|w\|_{H^{s_{0}-2}}^{2}\|v\|_{H^{s+2}}^{2}+(\|u\|_{H^{s}}^{2}+\|v\|_{H^{s}}^{2})\|w\|_{H^{s_{0}}}^{2}\},
}
where $w=u-v$.
\end{lem}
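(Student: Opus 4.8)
The plan is to follow the template of Lemmas~\ref{lem2.3new}--\ref{lem2.6new}: split the cubic difference so that one piece carries all the top-order derivatives on $w$, while in the other piece $w$ appears only as a low-regularity factor (the Bona--Smith piece). Writing $z=u+v$ and $w=u-v$, I would first use
\[
u^{2}\ds u-v^{2}\ds v=u^{2}\ds w+zw\ds v ,
\]
so that pairing with $D^{s}w$ decomposes the quantity in the statement as
\[
\LR{\HT D^{s}\ds(u^{2}\ds u-v^{2}\ds v),D^{s}w}
=\LR{\HT D^{s}\ds(u^{2}\ds w),D^{s}w}+\LR{\HT D^{s}\ds(zw\ds v),D^{s}w}
=:A_{1}+A_{2}.
\]
The choice of coefficient $u^{2}$ in $A_{1}$ is what produces the factor $u\ds u$ appearing in the isolated loss term.

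For $A_{1}$ I would apply Lemma~\ref{comm.est.4} to the operator $\HT D^{s}\ds$, i.e. subtract the remainder $P_{s}^{(6)}(u,u,w)$, which rearranges to
\[
\HT D^{s}\ds(u^{2}\ds w)=P_{s}^{(6)}(u,u,w)+2u(\HT D^{s}\ds u)\ds w+u^{2}\HT D^{s}\ds^{2}w+2(s+1)u\ds u\,\HT D^{s}\ds w .
\]
By Lemma~\ref{comm.est.4} and the Cauchy--Schwarz inequality, $\LR{P_{s}^{(6)}(u,u,w),D^{s}w}$ is dominated by $I_{s_{0}}(u,v)^{2}\|w\|_{H^{s}}^{2}$, and the last summand is exactly the term $2(s+1)\LR{u\ds u\,\HT D^{s}\ds w,D^{s}w}$ subtracted in the statement. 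Thus it remains to show that the leftover
\[
2\LR{u(\HT D^{s}\ds u)\ds w,D^{s}w}+\LR{u^{2}\HT D^{s}\ds^{2}w,D^{s}w}
\]
is itself \emph{bounded} by the right-hand side of the claim, with no further loss.

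This is the heart of the matter and the step I expect to be the main obstacle. Each of these two terms is individually a genuine derivative loss: the second places $s+2$ derivatives on $w$ (an apparent second-order loss), and the first places $s+1$ derivatives on the coefficient $u$, which is not controlled by $\|u\|_{H^{s}}$. Moreover a single integration by parts on either term merely reproduces the same term up to bounded pieces, so the manipulations are self-referential and neither term can be handled in isolation. The resolution is to treat them jointly. Writing $\HT D^{s}\ds^{2}w=\ds(\HT D^{s}\ds w)$ and integrating by parts, $\LR{u^{2}\HT D^{s}\ds^{2}w,D^{s}w}$ splits into $-2\LR{u\ds u\,\HT D^{s}\ds w,D^{s}w}$ and $-\LR{u^{2}D^{s}\ds w,\HT D^{s}\ds w}$; the latter, by the skew-adjointness of $\HT$, equals $\tfrac12\LR{[\HT,u^{2}]D^{s}\ds w,D^{s}\ds w}$, a commutator estimated by Lemma~\ref{comm.est.H}. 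Simultaneously, using skew-adjointness of $\HT$ and integrating by parts, the high-on-$u$ term $\LR{u(\HT D^{s}\ds u)\ds w,D^{s}w}$ must be rewritten so that the surviving top-order derivative falls on $w$ rather than on $u$, again up to commutators controlled by Lemma~\ref{comm.est.H} and the symmetry Lemmas~\ref{lem4.4}, \ref{reduction} and~\ref{reduction2new}. The delicate point is bookkeeping: the first-order loss $+2\LR{u\ds u\,\HT D^{s}\ds w,D^{s}w}$ produced from the high-on-$u$ term must cancel exactly the $-2\LR{u\ds u\,\HT D^{s}\ds w,D^{s}w}$ coming from the second-order term, leaving only quantities bounded by $I_{s_{0}}(u,v)^{2}\|w\|_{H^{s}}^{2}$; getting these coefficients to match precisely is where the argument is most error-prone.

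Finally, the Bona--Smith term $A_{2}=\LR{\HT D^{s}\ds(zw\ds v),D^{s}w}$ is comparatively routine. Here $w$ enters as a coefficient, so one expands $\HT D^{s}\ds(zw\ds v)$ by Lemma~\ref{comm.est.4} (or Lemma~\ref{Leibniz} for the low-order products) and estimates by Hölder's inequality together with the Gagliardo--Nirenberg inequality (Lemma~\ref{G.N.}). The worst configuration places $D^{s}\ds$ essentially on $\ds v$, producing $\|v\|_{H^{s+2}}$, while the factor $w$ is kept in low regularity $\|w\|_{H^{s_{0}-2}}$ and is paired against $D^{s}w\sim\|w\|_{H^{s}}$; Young's inequality then yields the admissible contribution $I_{s_{0}}(u,v)^{2}\{\|w\|_{H^{s_{0}-2}}^{2}\|v\|_{H^{s+2}}^{2}+(\|u\|_{H^{s}}^{2}+\|v\|_{H^{s}}^{2})\|w\|_{H^{s_{0}}}^{2}+\|w\|_{H^{s}}^{2}\}$, completing the proof.
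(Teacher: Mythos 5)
Your setup coincides with the paper's (the splitting $u^{2}\ds u-v^{2}\ds v=u^{2}\ds w+zw\ds v$ and the expansion of $\HT D^{s}\ds(u^{2}\ds w)$ via $P_{s}^{(6)}(u,u,w)$), and your treatment of the second-order piece is correct: $\LR{u^{2}\HT D^{s}\ds^{2}w,D^{s}w}=\LR{\ds(u^{2}\HT D^{s}\ds w),D^{s}w}-2\LR{u\ds u\HT D^{s}\ds w,D^{s}w}$, with the first summand controlled (this is exactly the form of Lemma \ref{reduction}, where the two top-order factors carry the \emph{same} number of derivatives). The gap is in the claimed cancellation inside $A_{1}$. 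To handle $2\LR{u(\HT D^{s}\ds u)\ds w,D^{s}w}$ you must substitute $\HT D^{s}\ds u=\HT D^{s}\ds w+\HT D^{s}\ds v$, since $\|u\|_{H^{s+1}}$ is not an admissible quantity; the $v$-part is Bona--Smith, but the $w$-part is $2\LR{u\ds w\,\HT D^{s}\ds w,D^{s}w}$, a first-order loss whose coefficient is $2u\ds w$, \emph{not} $2u\ds u$. After cancelling against the $-2\LR{u\ds u\HT D^{s}\ds w,D^{s}w}$ from the second-order piece you are left with $-2\LR{u\ds v\,\HT D^{s}\ds w,D^{s}w}$, which is a genuine uncontrolled loss: its symmetrized multiplier is comparable to $|\eta|^{s}|\zeta|^{s}(|\eta|+|\zeta|)$ and vanishes on no region, so the term scales like $\|w\|_{H^{s+1/2}}^{2}$ and cannot be dominated by the right-hand side. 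Lemma \ref{reduction} does not apply here because the two $w$-factors carry mismatched derivative counts ($s+1$ versus $s$). So the leftover of $A_{1}$ is not, as you assert, bounded ``with no further loss.''

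The missing ingredient is that $A_{2}=\LR{\HT D^{s}\ds(zw\ds v),D^{s}w}$ is \emph{not} routine: expanding it by $P_{s}^{(6)}(z,w,v)$ leaves, besides harmless terms, $\LR{z(\HT D^{s}\ds w)\ds v,D^{s}w}$ and the $\HT D^{s}\ds w$-part of $\LR{w\ds v\,\HT D^{s}\ds z,D^{s}w}$, which are first-order losses of exactly the same type with coefficients $z\ds v$ and $w\ds v$. Since $2u\ds w+z\ds v+w\ds v=2u\ds w+2u\ds v=2u\ds u$, the three loss terms combine into precisely $2\LR{u\ds u\HT D^{s}\ds w,D^{s}w}$ plus admissible Bona--Smith remainders; this is the algebraic identity displayed at the start of the paper's proof, and it is what yields the correct coefficient $2(s+1)+2-2=2(s+1)$ in front of the isolated loss. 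Estimating $A_{1}$ and $A_{2}$ separately, as you propose, cannot close the argument: each of them individually contains an unbounded first-order loss, and only their sum does not.
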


\begin{proof}
Set $z=u+v$.
Note that
\EQQS{
&2\LR{u\ds w\HT D^{s}\ds u,D^{s}w}+\LR{w\ds v\HT D^{s}\ds z,D^{s}w}+\LR{z\ds v\HT D^{s}\ds w,D^{s}w}\\
&=2\LR{u\ds u\HT D^{s}\ds w,D^{s}w}+2\LR{u\ds w\HT D^{s}\ds v,D^{s}w}+2\LR{w\ds v\HT D^{s}\ds v,D^{s}w}
}
and that $u^{2}\ds u-v^{2}\ds v=u^{2}\ds w+zw\ds v$.
Lemma \ref{comm.est.4}, \ref{comm.est.H} and \ref{reduction} show that
\EQQS{
&|\LR{\HT D^{s}\ds(u^{2}\ds w+zw\ds v),D^{s}w}-2(s+1)\LR{u\ds u\HT D^{s}\ds w,D^{s}w}|\\
&\le|\LR{P_{s}^{(6)}(u,u,w)+P_{s}^{(6)}(z,w,v),D^{s}w}|+2|\LR{u\ds w\HT D^{s}\ds v,D^{s}w}|\\
&\quad+2|\LR{w\ds v\HT D^{s}\ds v,D^{s}w}|+|\LR{\ds(u^{2}\HT D^{s}\ds w),D^{s}w}|\\
&\quad+|\LR{zw\HT D^{s}\ds^{2}v,D^{s}w}|+(s+1)|\LR{\ds(zw)\HT D^{s}\ds v,D^{s}w}|\\
&\lesssim I_{s_{0}}(u,v)^{2}\{\|w\|_{H^{s}}^{2}+\|w\|_{H^{s_{0}-2}}^{2}\|v\|_{H^{s+2}}^{2}+(\|u\|_{H^{s}}^{2}+\|v\|_{H^{s}}^{2})\|w\|_{H^{s_{0}}}^{2}\},
}
which completes the proof.
\end{proof}

\begin{lem}\label{lem2.9new}
Let $s_{0}>7/2$ and $s\ge0$.
Let $u,v\in H^{\max\{s+2,s_{0}\}}(\T)$.
Then
\EQQS{
&|\LR{D^{s}\ds(u\HT(u\ds u)-v\HT(v\ds v)),D^{s}w}-(2s+1)\LR{u\ds u\HT D^{s}\ds w,D^{s}w}|\\
&\lesssim I_{s_{0}}(u,v)^{2}\{\|w\|_{H^{s}}^{2}+\|w\|_{H^{s_{0}-2}}^{2}\|v\|_{H^{s+2}}^{2}+(\|u\|_{H^{s}}^{2}+\|v\|_{H^{s}}^{2})\|w\|_{H^{s_{0}}}^{2}\},
}
where $w=u-v$.
\end{lem}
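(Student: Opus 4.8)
The plan is to follow the proof of Lemma~\ref{lem2.8new}, since $u\HT(u\ds u)$ has the same cubic structure as $u^2\ds u$ with one Hilbert transform moved inside; accordingly the relevant commutator is $P_s^{(7)}$ (Lemma~\ref{comm.est.5}) in place of $P_s^{(6)}$. The decisive difference is the asymmetry of $P_s^{(7)}$: only the $f$-slot is split off as $D^s\ds f\,\HT(g\ds h)$, with the Hilbert transform left inside, whereas the $g$- and $h$-slots carry $\HT D^s$. This is exactly what will turn the loss coefficient $2(s+1)$ of Lemma~\ref{lem2.8new} into $2s+1$ here.

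First I would isolate the derivative loss by telescoping. With $w=u-v$ one checks the identity $u\HT(u\ds u)-v\HT(v\ds v)=u\HT(u\ds w)+w\HT(u\ds v)+v\HT(w\ds v)$. Only the first summand contains $\ds w$, so only it produces top-order $w$-derivatives; the other two carry $\ds v$ and will feed the right-hand side. Applying Lemma~\ref{comm.est.5} I would expand $D^s\ds(u\HT(u\ds w))$ through $P_s^{(7)}(u,u,w)$, and the remaining two summands through $P_s^{(7)}(w,u,v)$ and $P_s^{(7)}(v,w,v)$. Pairing with $D^sw$, the $P_s^{(7)}$-terms are absorbed by Lemma~\ref{comm.est.5}, and the explicit main terms fall into three groups: (i) the top-order term $\LR{u^2\HT D^s\ds^2w,D^sw}$; (ii) the two coincident $(s+1)$-terms of $P_s^{(7)}(u,u,w)$, whose sum is $2(s+1)\LR{u\ds u\,\HT D^s\ds w,D^sw}$; and (iii) the coefficient-loaded terms $\LR{D^s\ds u\,\HT(u\ds w),D^sw}$, $\LR{u\ds w\,\HT D^s\ds u,D^sw}$ together with their $w$- and $v$-analogues from the other two summands, in which $D^s$ lands on $u$ or $v$.

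Group (i) I would handle by integration by parts: since $\HT D^s\ds^2w=\ds(\HT D^s\ds w)$ one has $\LR{u^2\HT D^s\ds^2w,D^sw}=\LR{\ds(u^2\HT D^s\ds w),D^sw}-2\LR{u\ds u\,\HT D^s\ds w,D^sw}$. The first term is bounded by Lemma~\ref{reduction}, applied with its index equal to $s_0-2>1/2$ (legitimate since $s_0>7/2$), which produces $\lesssim\|u\|_{H^{s_0}}^2\|w\|_{H^s}^2$, while the second contributes $-2$ to the loss coefficient. Group (ii) contributes $2(s+1)$ directly, so after (i) and (ii) the loss coefficient produced so far is $2(s+1)-2=2s$.

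The \emph{main obstacle} is group (iii): here $D^s$ sits on a coefficient, so a bare Cauchy--Schwarz estimate would cost $\|u\|_{H^{s+1}}$, which exceeds the budget $I_{s_0}(u,v)^2$. As in Lemma~\ref{lem2.8new} I would dispose of these via a symmetrization identity. The terms already of the form $\LR{\,\cdot\,\HT D^s\ds u,D^sw}$ combine, using $\ds u=\ds v+\ds w$ and the antisymmetry of $\HT$, into $\LR{u\ds u\,\HT D^s\ds w,D^sw}$ plus remainders whose highest derivative falls on $v$ (controlled by $\|w\|_{H^{s_0-2}}\|v\|_{H^{s+2}}$) or is shared at low order (controlled by $\|w\|_{H^s}$ and $(\|u\|_{H^s}+\|v\|_{H^s})\|w\|_{H^{s_0}}$); the genuinely asymmetric term $\LR{D^s\ds u\,\HT(u\ds w),D^sw}$ I would first integrate by parts to move the outer $\ds$ off $D^s\ds u$, then commute $\HT$ past $u$ by Lemma~\ref{comm.est.H} to reduce it to the same normal forms. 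Since the subtracted quantity in the statement is exactly $(2s+1)\LR{u\ds u\,\HT D^s\ds w,D^sw}$ and groups (i)--(ii) already yield $2s$, this symmetrization is forced to supply the missing $+1$; all the remainders are then estimated by Lemmas~\ref{comm.est.5}, \ref{comm.est.H}, \ref{reduction} and \ref{lem4.4}, together with the Gagliardo--Nirenberg inequality (Lemma~\ref{G.N.}) and the Leibniz rule (Lemma~\ref{Leibniz}), each giving a term of the asserted form carrying the factor $I_{s_0}(u,v)^2$.
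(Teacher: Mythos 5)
Your proposal is correct and follows essentially the same route as the paper's proof: telescoping the cubic difference, expanding via $P_s^{(7)}$ (Lemma \ref{comm.est.5}), integrating the top-order term $\LR{u^{2}\HT D^{s}\ds^{2}w,D^{s}w}$ by parts and invoking Lemma \ref{reduction} to harvest the $-2$, and recombining the middle-slot terms carrying $\HT D^{s}\ds u$ (resp.\ $\HT D^{s}\ds w$) so that they telescope back to $+\LR{u\ds u\HT D^{s}\ds w,D^{s}w}$, giving $2(s+1)-2+1=2s+1$. The only differences are cosmetic: the paper telescopes as $u\HT(u\ds w)+u\HT(w\ds v)+w\HT(v\ds v)$ and records the symmetrization as the explicit identity $\LR{u\ds w\HT D^{s}\ds u,D^{s}w}+\LR{u\ds v\HT D^{s}\ds w,D^{s}w}=\LR{u\ds u\HT D^{s}\ds w,D^{s}w}+\LR{u\ds w\HT D^{s}\ds v,D^{s}w}$, while you place $w$ in different slots (which only changes which harmless remainders appear), and the $f$-slot term $\LR{D^{s}\ds u\,\HT(u\ds w),D^{s}w}$ is most cleanly disposed of by first writing $D^{s}\ds u=D^{s}\ds w+D^{s}\ds v$ and using $\LR{(D^{s}\ds w)g,D^{s}w}=-\tfrac12\LR{\ds g,(D^{s}w)^{2}}$, rather than by the integration-by-parts-then-commutator route you sketch.
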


\begin{proof}
Note that
\EQQS{
&\LR{u\ds w\HT D^{s}\ds u,D^{s}w}+\LR{u\ds v\HT D^{s}\ds w,D^{s}w}\\
&=\LR{u\ds u\HT D^{s}\ds w,D^{s}w}+\LR{u\ds w\HT D^{s}\ds v,D^{s}w}.
}
Then Lemma \ref{comm.est.5} and \ref{reduction} show that
\EQQS{
&|\LR{D^{s}\ds(u\HT(u\ds w)+u\HT(w\ds v)+w\HT(v\ds v)),D^{s}w}-(2s+1)\LR{u\ds u\HT D^{s}\ds w,D^{s}w}|\\
&\le|\LR{P_{s}^{(7)}(u,u,w)+P_{s}^{(7)}(u,w,v)+P_{s}^{(7)}(w,v,v),D^{s}w}|+|\LR{\HT\ds(u\ds u),(D^{s}w)^{2}}|/2\\
&\quad+|\LR{D^{s}\ds v\HT(u\ds w),D^{s}w}|+|\LR{D^{s}\ds v\HT(w\ds v),D^{s}w}|+|\LR{\ds(u^{2}\HT D^{s}\ds w),D^{s}w}|\\
&\quad+|\LR{uw\HT D^{s}\ds^{2}v,D^{s}w}|+|\LR{vw\HT D^{s}\ds^{2}v,D^{s}w}|+(2s+3)|\LR{\ds(zw)\HT D^{s}\ds v,D^{s}w}|/2\\
&\lesssim I_{s_{0}}(u,v)^{2}\{\|w\|_{H^{s}}^{2}+\|w\|_{H^{s_{0}-2}}^{2}\|v\|_{H^{s+2}}^{2}+(\|u\|_{H^{s}}^{2}+\|v\|_{H^{s}}^{2})\|w\|_{H^{s_{0}}}^{2}\},
}
which completes the proof.
\end{proof}

\begin{lem}\label{lem2.10new}
Let $s_{0}>7/2$ and $s\ge0$.
Let $u,v\in H^{\max\{s+2,s_{0}\}}(\T)$.
Then
\EQQS{
&|\LR{D^{s}\ds(u^{2}\HT\ds u-v^{2}\HT\ds v),D^{s}w}-2s\LR{u\ds u\HT D^{s}w,D^{s}w}|\\
&\lesssim I_{s_{0}}(u,v)^{2}\{\|w\|_{H^{s}}^{2}+\|w\|_{H^{s_{0}-2}}^{2}\|v\|_{H^{s+2}}^{2}+(\|u\|_{H^{s}}^{2}+\|v\|_{H^{s}}^{2})\|w\|_{H^{s_{0}}}^{2}\},
}
where $w=u-v$.
\end{lem}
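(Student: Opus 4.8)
The plan is to mirror the proofs of Lemmas~\ref{lem2.8new} and \ref{lem2.9new}, since $u^{2}\HT\ds u$ is again a cubic term of the type handled by $P_{s}^{(5)}$. First I would set $z=u+v$ and use $\HT\ds=\ds\HT$ to write
\[
u^{2}\HT\ds u-v^{2}\HT\ds v=u^{2}\ds(\HT w)+zw\,\ds(\HT v),
\]
so that each summand has the form $D^{s}\ds(fg\ds h)$: the first with $f=g=u$ and $h=\HT w$, the second with $f=z$, $g=w$ and $h=\HT v$. Applying Lemma~\ref{comm.est.4} to $P_{s}^{(5)}(u,u,\HT w)$ and to $P_{s}^{(5)}(z,w,\HT v)$, and pairing with $D^{s}w$, reduces matters to the explicit leading terms obtained by letting $D^{s}\ds$ fall on each of the three factors in turn, all $P_{s}^{(5)}$ remainders being bounded by $I_{s_{0}}(u,v)^{2}$ times the admissible norms.

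For the part coming from $zw\,\ds(\HT v)$ the top-order derivatives land on $v$. Writing $U=D^{s}w$, the only delicate piece is $\LR{zw\,\HT D^{s}\ds^{2}v,D^{s}w}$; I would integrate by parts twice to move two derivatives off $v$ onto the remaining factors, which trades the forbidden $\|v\|_{H^{s+2}}\|w\|_{H^{s_{0}}}$ for the admissible $\|v\|_{H^{s+2}}\|w\|_{H^{s_{0}-2}}$ after Lemma~\ref{G.N.}. The term $\LR{z\,\HT\ds v\,D^{s}\ds w,D^{s}w}=\LR{z\,\HT\ds v\,\ds U,U}$ is not a loss, because $\ds U\cdot U=\tfrac12\ds(U^{2})$ and one integration by parts leaves $-\tfrac12\LR{\ds(z\,\HT\ds v),U^{2}}$, which is $\lesssim I_{s_{0}}(u,v)^{2}\|w\|_{H^{s}}^{2}$ by $s_{0}>5/2$; the remaining pieces are estimated directly by H\"older and Lemma~\ref{G.N.}. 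Altogether this contributes the $\|w\|_{H^{s_{0}-2}}^{2}\|v\|_{H^{s+2}}^{2}$ and $(\|u\|_{H^{s}}^{2}+\|v\|_{H^{s}}^{2})\|w\|_{H^{s_{0}}}^{2}$ terms on the right.

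The genuine derivative loss, and the crux of the argument, is carried by the leading terms of $P_{s}^{(5)}(u,u,\HT w)$: the second order loss $\LR{u^{2}\HT D^{s}\ds^{2}w,D^{s}w}=\LR{u^{2}\HT\ds^{2}U,U}$ together with $2(s+1)\LR{u\ds u\,\HT\ds U,U}$. I would reduce the second order loss by one order through a single integration by parts,
\[
\LR{u^{2}\HT\ds^{2}U,U}=-2\LR{u\ds u\,\HT\ds U,U}-\LR{u^{2}\ds U,\HT\ds U},
\]
and then exploit that $\HT$ is skew-adjoint: $\LR{u^{2}\ds U,\HT\ds U}=-\tfrac12\LR{\ds U,[\HT,u^{2}]\ds U}$, which Lemma~\ref{comm.est.H} (with $k=1$, and internal exponent $s_{0}-1>1/2$ so that $\|u^{2}\|_{H^{s_{0}}}\lesssim I_{s_{0}}(u,v)^{2}$) controls by $I_{s_{0}}(u,v)^{2}\|w\|_{H^{s}}^{2}$. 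Combining this with the $2(s+1)$-term collects the leading contribution into the main term displayed in the statement, up to errors controlled by the right-hand side; the two remaining leading terms, which carry one derivative too many on $u$ through $D^{s}\ds u$, are brought back to order $s$ by a further integration by parts, and the resulting pieces are absorbed exactly as in Lemmas~\ref{lem2.8new} and \ref{lem2.9new} by Lemma~\ref{reduction}, applied with coefficient $u^{2}$ and internal exponent $s_{0}-2>1/2$.

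I expect the main obstacle to be precisely this one-order reduction of $\LR{u^{2}\HT D^{s}\ds^{2}w,D^{s}w}$: one must peel off exactly one derivative and no more, keeping every remainder of the form $I_{s_{0}}(u,v)^{2}\|w\|_{H^{s}}^{2}$, and it is the skew-adjointness cancellation together with Lemma~\ref{comm.est.H} that makes this possible. Once this step is carried out, the remaining estimates are routine and entirely parallel to those of Lemmas~\ref{lem2.8new} and \ref{lem2.9new}.
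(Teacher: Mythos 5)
Your overall architecture is the same as the paper's: the decomposition $u^{2}\HT\ds u-v^{2}\HT\ds v=u^{2}\HT\ds w+zw\HT\ds v$, the reduction of both pieces via the trilinear commutator $P_{s}^{(5)}$ (the paper's proof writes $P_{s}^{(6)}$, but since there is no outer $\HT$ here the relevant object is indeed $P_{s}^{(5)}$, i.e.\ Lemma \ref{comm.est.4}), and the bookkeeping $2(s+1)-2=2s$ that produces the main term. However, two of your steps would fail as justified. The one you yourself call the crux is the one that breaks: after the correct identities
\[
\LR{u^{2}\HT\ds^{2}U,U}=-2\LR{u\ds u\HT\ds U,U}-\LR{u^{2}\ds U,\HT\ds U},\qquad
\LR{u^{2}\ds U,\HT\ds U}=-\tfrac12\LR{\ds U,[\HT,u^{2}]\ds U},
\]
you invoke Lemma \ref{comm.est.H} with $k=1$. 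That gives $\|[\HT,u^{2}]\ds U\|\lesssim\|u^{2}\|_{H^{s_{0}}}\|U\|$, but the pairing still carries the other factor $\|\ds U\|=\|D^{s}\ds w\|\sim\|w\|_{H^{s+1}}$, one derivative more than $\|w\|_{H^{s}}^{2}$ tolerates, so the bound does not close. You need one further integration by parts, $\LR{\ds U,[\HT,u^{2}]\ds U}=-\LR{U,[\HT,\ds(u^{2})]\ds U}-\LR{U,[\HT,u^{2}]\ds^{2}U}$, followed by Lemma \ref{comm.est.H} with $k=1$ and $k=2$ (internal exponent $s_{0}-2>1/2$). The paper sidesteps this entirely: it writes $\LR{u^{2}\HT\ds^{2}U,U}=\LR{\ds(u^{2}\HT\ds U),U}-2\LR{u\ds u\HT\ds U,U}$ and applies Lemma \ref{reduction} with coefficient $u^{2}$ to the first term, which already packages the cancellation you are reproving by hand.

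The second problematic step is your treatment of $\LR{zw\HT D^{s}\ds^{2}v,D^{s}w}$. Integrating by parts to ``move two derivatives off $v$'' transfers them onto $zw\,D^{s}w$, hence onto $D^{s}w$, creating a new loss on $w$ rather than removing one from $v$; that route does not yield $\|w\|_{H^{s_{0}-2}}\|v\|_{H^{s+2}}$. No integration by parts is needed: H\"older with $\|z\|_{\I}\lesssim I_{s_{0}}(u,v)$ and $\|w\|_{\I}\lesssim\|w\|_{H^{s_{0}-2}}$ (valid since $s_{0}-2>3/2$) gives $|\LR{zw\HT D^{s}\ds^{2}v,D^{s}w}|\lesssim I_{s_{0}}(u,v)\|w\|_{H^{s_{0}-2}}\|v\|_{H^{s+2}}\|w\|_{H^{s}}$, which is exactly the admissible contribution. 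With these two repairs your argument coincides with the paper's proof.
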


\begin{proof}
Set $z=u+v$.
Lemma \ref{reduction} and \ref{comm.est.5} show that
\EQQS{
&|\LR{D^{s}\ds(u^{2}\HT\ds w+zw\HT\ds v),D^{s}w}-2s\LR{u\ds u\HT D^{s}w,D^{s}w}|\\
&\le|\LR{P_{s}^{(6)}(u,u,\HT w)+P_{s}^{(6)}(z,w,\HT v),D^{s}w}+|\LR{\ds(u\HT\ds u),(D^{s}w)^{2}}|\\
&\quad+2|\LR{u(\HT\ds w)D^{s}\ds v,D^{s}w}|+|\LR{\ds(u^{2}\HT D^{s}\ds w),D^{s}w}|\\
&\quad+|\LR{\ds(w\HT\ds v),(D^{s}w)^{2}}|/2+2|\LR{w(\HT\ds v)D^{s}\ds v,D^{s}w}|\\
&\quad+|\LR{zw\HT D^{s}\ds^{2}v,D^{s}w}|+(s+1)|\LR{\ds(zw)\HT D^{s}\ds v,D^{s}w}|\\
&\lesssim I_{s_{0}}(u,v)^{2}\{\|w\|_{H^{s}}^{2}+\|w\|_{H^{s_{0}-2}}^{2}\|v\|_{H^{s+2}}^{2}+(\|u\|_{H^{s}}^{2}+\|v\|_{H^{s}}^{2})\|w\|_{H^{s_{0}}}^{2}\},
}
which completes the proof.
\end{proof}

\begin{lem}\label{lem.fo}
Let $s_{0}>7/2$ and $s\ge0$.
Let $u,v\in H^{\max\{s+1,s_{0}\}}(\T)$.
Then
\EQQS{
&|\LR{D^{s}\ds(u^{4}-v^{4}),D^{s}w}|\\
&\lesssim I_{s_{0}}(u,v)^{3}\{\|w\|_{H^{s}}^{2}+\|w\|_{H^{s_{0}-1}}^{2}\|v\|_{H^{s+1}}^{2}+(\|u\|_{H^{s}}^{2}+\|v\|_{H^{s}}^{2})\|w\|_{H^{s_{0}}}^{2}\},
}
where $w=u-v$.
\end{lem}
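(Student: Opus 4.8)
The plan is to single out the coefficient of the top-order derivative $D^{s}\ds w$, integrate by parts there, and organize the expansion so that neither $\|u\|_{H^{s+1}}$ nor $\|w\|_{H^{s+1}}$ — neither of which is allowed on the right-hand side — ever appears. First I would write $u=v+w$ and differentiate, using $\ds u=\ds v+\ds w$, to get
\[
\ds(u^{4}-v^{4})=4u^{3}\ds u-4v^{3}\ds v=4u^{3}\ds w+4(u^{3}-v^{3})\ds v,
\]
and then factor $u^{3}-v^{3}=wP$ with $P:=u^{2}+uv+v^{2}$. Applying $D^{s}$ and pairing with $D^{s}w$ gives
\[
\LR{D^{s}\ds(u^{4}-v^{4}),D^{s}w}=4\LR{D^{s}(u^{3}\ds w),D^{s}w}+4\LR{D^{s}(wP\ds v),D^{s}w}=:\mathrm{I}+\mathrm{II}.
\]

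For $\mathrm{I}$ I would split $D^{s}(u^{3}\ds w)=u^{3}D^{s}\ds w+[D^{s},u^{3}]\ds w$. The leading term integrates by parts,
\[
\LR{u^{3}D^{s}\ds w,D^{s}w}=-\frac{1}{2}\LR{\ds(u^{3}),(D^{s}w)^{2}},
\]
whose coefficient $\ds(u^{3})$ is only ever differentiated up to order $s_{0}$; by Sobolev embedding it is bounded by $\|u\|_{H^{s_{0}}}^{3}\|w\|_{H^{s}}^{2}\lesssim I_{s_{0}}(u,v)^{3}\|w\|_{H^{s}}^{2}$. This is the decisive point: letting $u^{3}$ multiply only $D^{s}\ds w$ means $u^{3}$ never needs $s+1$ derivatives, so $\|u\|_{H^{s+1}}$ is avoided. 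The commutator $[D^{s},u^{3}]\ds w$ is controlled by Lemma \ref{Leibniz2} (for $0\le s<1$ the term is lower order and estimated directly); with Lemma \ref{Leibniz} applied to $u^{3}$ and Young's inequality, the pairing with $D^{s}w$ produces exactly the admissible shapes $I_{s_{0}}^{2}\|u\|_{H^{s}}^{2}\|w\|_{H^{s_{0}}}^{2}$ and $I_{s_{0}}^{3}\|w\|_{H^{s}}^{2}$.

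For $\mathrm{II}$ I would split $D^{s}(wP\ds v)=wP\,D^{s}\ds v+[D^{s},wP]\ds v$. In the leading term the factorization pays off: estimating directly,
\[
|\LR{wP\,D^{s}\ds v,D^{s}w}|\le\|w\|_{\infty}\|P\|_{\infty}\|v\|_{H^{s+1}}\|w\|_{H^{s}},
\]
the explicit factor $w$ supplies the weight $\|w\|_{\infty}\lesssim\|w\|_{H^{s_{0}-1}}$, so Young's inequality turns $\|w\|_{H^{s_{0}-1}}\|v\|_{H^{s+1}}\cdot\|w\|_{H^{s}}$ into $\|w\|_{H^{s_{0}-1}}^{2}\|v\|_{H^{s+1}}^{2}+\|w\|_{H^{s}}^{2}$, both present on the right-hand side. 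The commutator $[D^{s},wP]\ds v$ is again handled by Lemma \ref{Leibniz2} together with Lemma \ref{Leibniz}; the one subtlety is to keep the factor $\|w\|_{H^{s_{0}}}$ explicit rather than bounding it by $I_{s_{0}}$, since it is precisely this factor that pairs with $\|v\|_{H^{s}}$ to yield the term $(\|u\|_{H^{s}}^{2}+\|v\|_{H^{s}}^{2})\|w\|_{H^{s_{0}}}^{2}$.

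The main obstacle is exactly this bookkeeping that keeps $\|u\|_{H^{s+1}}$ and $\|w\|_{H^{s+1}}$ off the right-hand side: the former is avoided by letting $u^{3}$ multiply only $D^{s}\ds w$ (so after the integration by parts it is differentiated only to order $s_{0}$), and the latter by integrating by parts in the single term where all $s+1$ top derivatives would fall on one copy of $w$. Once these two points are secured, every remaining contribution is genuinely of lower order, and the matching to the three shapes on the right-hand side is routine Leibniz-and-Young estimation, entirely parallel to Lemmas \ref{lem2.8new}--\ref{lem2.10new}.
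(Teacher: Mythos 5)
Your proposal is correct and follows essentially the same route as the paper: the identity $\ds(u^{4}-v^{4})=4u^{3}\ds w+4w(u^{2}+uv+v^{2})\ds v$, the commutator splitting via Lemma \ref{Leibniz2} together with Lemma \ref{Leibniz}, the integration by parts on $\LR{u^{3}D^{s}\ds w,D^{s}w}$, and the direct estimate of $\LR{w(u^{2}+uv+v^{2})D^{s}\ds v,D^{s}w}$ using $\|w\|_{\I}\lesssim\|w\|_{H^{s_{0}-1}}$ are exactly the steps in the paper's proof. Your extra remark about handling $0\le s<1$ separately (where Lemma \ref{Leibniz2} does not directly apply) is a point the paper glosses over, but it does not change the argument.
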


\begin{proof}
Lemma \ref{Leibniz} and \ref{Leibniz2} show that
\EQQS{
&|\LR{D^{s}(u^{3}\ds w+w(u^{2}+uv+v^{2})\ds v),D^{s}w}|\\
&\le|\LR{[D^{s},u^{3}]\ds w,D^{s}w}|+|\LR{[D^{s},w(u^{2}+uv+v^{2})]\ds v,D^{s}w}|\\
&\quad+|\LR{\ds(u^{3}),(D^{s}w)^{2}}|/2+|\LR{w(u^{2}+uv+v^{2})D^{s}\ds v,D^{s}w}|\\
&\lesssim I_{s_{0}}(u,v)^{3}\{\|w\|_{H^{s}}^{2}+\|w\|_{H^{s_{0}-1}}^{2}\|v\|_{H^{s+1}}^{2}+(\|u\|_{H^{s}}^{2}+\|v\|_{H^{s}}^{2})\|w\|_{H^{s_{0}}}^{2}\},
}
which completes the proof.
\end{proof}

\begin{rem}
In Lemma \ref{lem2.3new}, \ref{lem2.4new}, \ref{lem2.5new}, \ref{lem2.6new}, \ref{lem2.8new}, \ref{lem2.9new}, \ref{lem2.10new} and \ref{lem.fo} with $s=0$, we do not have terms such as $\|w\|_{H^{s_{0}-j}}^{2}\|v\|_{H^{s+j}}^{2}$ for $j=1,2,3$ and $(\|u\|_{H^{s}}^{2}+\|v\|_{H^{s}}^{2})\|w\|_{H^{s_{0}}}^{2}$ in the right hand side.
This can be verified by a simple caluculation.
Indeed, for example, on Lemma \ref{lem2.3new} with $s=0$, we have
\EQQS{
\LR{\ds(u\ds^{2}u-v\ds^{2}v),w}
&=-\LR{u\ds^{2}w,\ds w}-\LR{w\ds^{2}v,\ds w}\\
&=\frac{1}{2}\LR{\ds u,(\ds w)^{2}}+\frac{1}{2}\LR{\ds^{3}v,w^{2}}.
}
The second term in the right hand side can be estimated by $\lesssim\|v\|_{H^{s_{0}}}\|w\|^{2}$.
For this reason, we obtain the following.
\end{rem}

\begin{lem}\label{NT0}
Let $s_{0}>7/2$ and $u,v\in H^{s_{0}}(\T)$.
Then
\EQQS{
&\left|\sum_{j=2}^{4}\LR{\ds(F_{j}(u)-F_{j}(v)),w}+\la_{1}(0)\LR{\ds u,(\ds w)^{2}}+\la_{2}(0)\LR{(\HT\ds^{2}u)\HT\ds w,w}\right.\\
&\quad+\la_{3}(0)\LR{u\ds u\HT\ds w,w}|\\
&\lesssim I_{s_{0}}(u,v)^{3}\|w\|^{2},
}
where $w=u-v$.
\end{lem}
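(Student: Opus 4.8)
The plan is to recognize Lemma \ref{NT0} as the aggregate of the $s=0$ cases of Lemmas \ref{lem2.3new}--\ref{lem.fo}, one lemma per nonlinear term in $F_2,F_3,F_4$, and to check that the leading loss terms produced there combine to exactly $-\la_1(0)L_a-\la_2(0)L_b-\la_3(0)L_c$, where I abbreviate
\EQQS{
L_a:=\LR{\ds u,(\ds w)^2},\quad L_b:=\LR{(\HT\ds^2 u)\HT\ds w,w},\quad L_c:=\LR{u\ds u\,\HT\ds w,w}.
}
These cancel against the three explicit correction terms in the statement.

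First I would apply each of the eight lemmas with $s=0$. Since $s_0>7/2$ gives $H^{s_0}(\T)\hookrightarrow H^{3}(\T)$, the hypothesis $u,v\in H^{s_0}(\T)$ meets the regularity requirement $H^{\max\{s+3,s_0\}}$ (at $s=0$) of every lemma. By the Remark preceding the statement, at $s=0$ the terms $\|w\|_{H^{s_0-j}}^2\|v\|_{H^{s+j}}^2$ and $(\|u\|_{H^s}^2+\|v\|_{H^s}^2)\|w\|_{H^{s_0}}^2$ are absent, so each lemma collapses to a bound of the form [single loss term] $\lec I_{s_0}(u,v)^k\|w\|^2$ with $k\in\{1,2,3\}$. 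Reading off the leading terms (and using $D^0=\mathrm{Id}$) yields, modulo a remainder bounded by $I_{s_0}(u,v)^3\|w\|^2$, the contributions $\tfrac{c_1}{2}L_a$ from Lemma \ref{lem2.3new}, $-2c_2 L_a$ from Lemma \ref{lem2.4new}, $c_4 L_b-\tfrac{c_4}{2}L_a$ from Lemma \ref{lem2.6new}, $2c_5 L_c$ from Lemma \ref{lem2.8new}, and $c_6 L_c$ from Lemma \ref{lem2.9new}; the $c_3$-term (Lemma \ref{lem2.5new}) and the $c_7$-term (Lemma \ref{lem2.10new}) contribute no loss, since their leading coefficients $-2s$ and $2s$ vanish at $s=0$, and $F_4(u)=-u^4$ (Lemma \ref{lem.fo}) produces no loss at all.

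Summing these and comparing with Definition \ref{corr.terms.}, the aggregate coefficient of $L_a$ is $\tfrac{c_1}{2}-2c_2-\tfrac{c_4}{2}=-\la_1(0)$, that of $L_b$ is $c_4=-\la_2(0)$, and that of $L_c$ is $2c_5+c_6=-\la_3(0)$. Hence
\EQQS{
\sum_{j=2}^{4}\LR{\ds(F_j(u)-F_j(v)),w}
=-\la_1(0)L_a-\la_2(0)L_b-\la_3(0)L_c+R,\quad |R|\lec I_{s_0}(u,v)^3\|w\|^2.
}
Adding the explicit $\la_1(0)L_a+\la_2(0)L_b+\la_3(0)L_c$ from the statement cancels the three losses, and since $I_{s_0}(u,v)\ge1$ the surviving remainder obeys $|R|\lec I_{s_0}(u,v)^3\|w\|^2$, which is the asserted bound.

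The substance of the argument lives entirely in the eight previously established symmetrization and commutator lemmas; here the work is pure bookkeeping, and the cancellation is built into the very definition of the $\la_j$. The point I would double-check carefully is that the $s=0$ simplification of the Remark applies uniformly across all eight lemmas, and in particular that the $c_3$- and $c_7$-loss terms genuinely carry the factors $-2s$ and $2s$ so that they disappear at $s=0$; this is precisely what makes the $L^2$ loss depend only on $\la_1(0),\la_2(0),\la_3(0)$ and not on $c_3$ or $c_7$.
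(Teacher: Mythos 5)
Your proposal is correct and is essentially the paper's own argument: the paper derives Lemma \ref{NT0} precisely by taking $s=0$ in Lemmas \ref{lem2.3new}--\ref{lem.fo}, invoking the preceding Remark to discard the terms $\|w\|_{H^{s_0-j}}^{2}\|v\|_{H^{s+j}}^{2}$ and $(\|u\|_{H^{s}}^{2}+\|v\|_{H^{s}}^{2})\|w\|_{H^{s_0}}^{2}$, and matching the surviving loss coefficients $\tfrac{c_1}{2}-2c_2-\tfrac{c_4}{2}=-\la_1(0)$, $c_4=-\la_2(0)$, $2c_5+c_6=-\la_3(0)$ against Definition \ref{corr.terms.}. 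Your bookkeeping of the eight contributions, including the vanishing of the $c_3$- and $c_7$-losses at $s=0$, agrees with the paper.
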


Now, we estimate the time derivatives of $M_{s}^{(1)}$, $M_{s}^{(2)}$ and $M_{s}^{(3)}$.
The following lemma helps us to calculate correction terms.
Note that Lemma \ref{fourth} is more complicated than Lemma 2.8 in \cite{Tanaka} because of the presence of $\HT$.

\begin{lem}\label{fourth}
Let $f,g,h$ be sufficiently smooth real-valued functions defined on $\T$.
Then,
\EQQS{
&\LR{\HT\ds^{4}f,gh}+\LR{f\HT\ds^{4}g,h}+\LR{fg,\HT\ds^{4}h}\\
&=-\LR{[\HT,h]\ds^{4}f,g}-\LR{[\HT,f]\ds^{4}h,g}+4\LR{\ds^{3}f\HT g,\ds h}-4\LR{\ds f\HT\ds g,\ds^{2}h}+2\LR{\ds^{2}f\HT g,\ds^{2}h}.
}
\end{lem}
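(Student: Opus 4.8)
The plan is to exploit three elementary facts: the skew-adjointness of both $\HT$ and $\ds$ on $L^{2}(\T)$ (so $\LR{\HT a,b}=-\LR{a,\HT b}$ and $\LR{\ds a,b}=-\LR{a,\ds b}$), the commutation $\HT\ds=\ds\HT$, and the vanishing $\int_{\T}\ds(\,\cdot\,)\,dx=0$. Because $\HT$ obeys no product rule, a symmetric integration by parts will not close, so I would treat the three summands on the left asymmetrically, singling out $g$ as the factor against which everything is eventually paired. First I would rewrite the middle term by moving $\HT$ and then all four derivatives off $g$,
\[
\LR{f\HT\ds^{4}g,h}=\LR{\HT\ds^{4}g,fh}=-\LR{\ds^{4}g,\HT(fh)}=-\LR{\HT\ds^{4}(fh),g},
\]
and expand $\ds^{4}(fh)$ by the Leibniz rule with binomial coefficients $1,4,6,4,1$.

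Next, for the two outer terms I would keep the fourth derivative on $f$ and on $h$ and extract a commutator. Writing $\LR{\HT\ds^{4}f,gh}=\LR{h\HT\ds^{4}f,g}$ and using $h\,\HT a=\HT(h a)-[\HT,h]a$ with $a=\ds^{4}f$ gives
\[
\LR{\HT\ds^{4}f,gh}=\LR{\HT(h\ds^{4}f),g}-\LR{[\HT,h]\ds^{4}f,g},
\]
and symmetrically $\LR{fg,\HT\ds^{4}h}=\LR{\HT(f\ds^{4}h),g}-\LR{[\HT,f]\ds^{4}h,g}$. These produce exactly the two commutator terms on the right-hand side. Moreover the clean pieces $\LR{\HT(h\ds^{4}f),g}$ and $\LR{\HT(f\ds^{4}h),g}$ cancel precisely the two extreme terms $-\LR{\HT((\ds^{4}f)h),g}$ and $-\LR{\HT(f\ds^{4}h),g}$ of the Leibniz expansion of the middle term. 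What survives is
\[
-4\LR{\HT(\ds^{3}f\,\ds h),g}-6\LR{\HT(\ds^{2}f\,\ds^{2}h),g}-4\LR{\HT(\ds f\,\ds^{3}h),g}.
\]

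Finally I would push $\HT$ onto $g$ in each surviving term via skew-adjointness, converting $\HT(\,\cdot\,)$ into a plain factor $\HT g$. The first term becomes $4\LR{\ds^{3}f\,\HT g,\ds h}$ at once, matching the right-hand side. For the remaining two I would reconcile the coefficients $6$ and $4$ against the target $2\LR{\ds^{2}f\,\HT g,\ds^{2}h}-4\LR{\ds f\,\HT\ds g,\ds^{2}h}$ by means of the single total-derivative identity
\[
0=\int_{\T}\ds\big(\ds f\,\ds^{2}h\,\HT g\big)\,dx=\int_{\T}\big(\ds^{2}f\,\ds^{2}h\,\HT g+\ds f\,\ds^{3}h\,\HT g+\ds f\,\ds^{2}h\,\HT\ds g\big)\,dx,
\]
where I used $\ds\HT g=\HT\ds g$. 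The main obstacle is exactly this bookkeeping: since $\HT$ has no Leibniz rule one cannot symmetrize, so the whole argument hinges on isolating the two commutators $[\HT,h]\ds^{4}f$ and $[\HT,f]\ds^{4}h$ while arranging the leftover mixed-derivative terms so that they collapse under this one integration by parts; everything else is routine adjoint manipulation.
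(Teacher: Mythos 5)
Your proposal is correct and is essentially the paper's own computation: both arguments extract the two commutators $[\HT,h]\ds^{4}f$ and $[\HT,f]\ds^{4}h$ via skew-adjointness of $\HT$, Leibniz-expand a fourth derivative of a product, and then fix the leftover coefficients by integration by parts. The only cosmetic difference is that you expand $\ds^{4}(fh)$ and pair everything against $g$, whereas the paper expands $\ds^{4}(f\,\HT g)$ and pairs against $h$; your single total-derivative identity $\int_{\T}\ds\big(\ds f\,\ds^{2}h\,\HT g\big)\,dx=0$ does correctly convert the surviving $6\LR{\ds^{2}f\HT g,\ds^{2}h}+4\LR{\ds f\HT g,\ds^{3}h}$ into the stated $2\LR{\ds^{2}f\HT g,\ds^{2}h}-4\LR{\ds f\HT\ds g,\ds^{2}h}$.
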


\begin{proof}
Observe that
\EQQS{
&\LR{fg,\HT\ds^{4}h}\\
&=-\LR{[\HT,f]\ds^{4}h,g}-\LR{\ds^{4}f\HT g+4\ds^{3}f\HT\ds g+6\ds^{2}f\HT\ds^{2}g+4\ds f\HT\ds^{3}g+f\HT\ds^{4}g,h}\\
&=-\LR{[\HT,f]\ds^{4}h,g}-\LR{\ds^{4}f\HT g,h}-4\LR{\ds f\HT\ds g,\ds^{2}h}+2\LR{\ds^{2}f\HT\ds^{2}g,h}-\LR{f\HT\ds^{4}g,h}\\
&=-\LR{[\HT,f]\ds^{4}h,g}+\LR{\ds^{4}f\HT g,h}+4\LR{\ds^{3}f\HT g,\ds h}-4\LR{\ds f\HT\ds g,\ds^{2}h}\\
&\quad+2\LR{\ds^{2}f\HT g,\ds^{2}h}-\LR{f\HT\ds^{4}g,h}.
}
Note that
\EQQS{
\LR{\HT\ds^{4}f,gh}+\LR{\ds^{4}f\HT g,h}=-\LR{[\HT,h]\ds^{4}f,g},
}
which completes the proof.
\end{proof}

\begin{lem}\label{m.e.1.2new}
Let $s_{0}>7/2$ and $s\ge1$.
Let $u,w\in H^{\max\{s+4,s_{0}\}}(\T)$.
Then
\EQQS{
&|\LR{(\HT\ds^{4}u)\HT D^{s}w,\HT D^{s-1} w}-\LR{uD^{s}\ds^{4}w,\HT D^{s-1}w}+\LR{u\HT D^{s}w,\HT D^{s}\ds^{3}w}
 -4\LR{\ds u,(D^{s}\ds w)^{2}}|\\
&\lesssim\|u\|_{H^{s_{0}}}\|w\|_{H^{s}}^{2}.
}
\end{lem}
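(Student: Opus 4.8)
The plan is to recognize the three inner products on the left-hand side as the three terms appearing in Lemma \ref{fourth}, and then simply to read off the cancellation that lemma provides. Set $f=u$, $g=\HT D^{s}w$ and $h=\HT D^{s-1}w$. Using the Fourier--multiplier identities $\HT\ds=D$ and $\HT^{2}=-\mathrm{id}$ (modulo the harmless zero mode) together with the skew-adjointness of $\HT$, I would first verify
\EQQS{
\LR{(\HT\ds^{4}u)\HT D^{s}w,\HT D^{s-1}w}&=\LR{\HT\ds^{4}f,gh},\\
-\LR{uD^{s}\ds^{4}w,\HT D^{s-1}w}&=\LR{f\HT\ds^{4}g,h},\\
\LR{u\HT D^{s}w,\HT D^{s}\ds^{3}w}&=\LR{fg,\HT\ds^{4}h},
}
the last identity resting on $\HT\ds^{4}(\HT D^{s-1}w)=-D^{s-1}\ds^{4}w=\HT D^{s}\ds^{3}w$ and the second on $\HT\ds^{4}(\HT D^{s}w)=-D^{s}\ds^{4}w$. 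Thus the sum of the three terms is exactly the left-hand side of Lemma \ref{fourth}.

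Applying Lemma \ref{fourth} rewrites this sum as two commutator terms, $-\LR{[\HT,h]\ds^{4}f,g}-\LR{[\HT,f]\ds^{4}h,g}$, plus the three products $4\LR{\ds^{3}f\HT g,\ds h}-4\LR{\ds f\HT\ds g,\ds^{2}h}+2\LR{\ds^{2}f\HT g,\ds^{2}h}$. For the products I would insert $\HT g=-D^{s}w$, $\HT\ds g=-D^{s}\ds w$, $\ds h=D^{s}w$ and $\ds^{2}h=D^{s}\ds w$ (again from $\HT\ds=D$). The middle product then becomes precisely $4\LR{\ds u,(D^{s}\ds w)^{2}}$, which is the quantity subtracted in the statement; the remaining two reduce to $-4\LR{\ds^{3}u,(D^{s}w)^{2}}$ and $-2\LR{\ds^{2}u\,D^{s}w,D^{s}\ds w}$, and after one integration by parts (writing $D^{s}w\,D^{s}\ds w=\tfrac12\ds(D^{s}w)^{2}$) both are bounded by $\|\ds^{3}u\|_{\infty}\|D^{s}w\|^{2}\lesssim\|u\|_{H^{s_{0}}}\|w\|_{H^{s}}^{2}$, the Sobolev bound $\|\ds^{3}u\|_{\infty}\lesssim\|u\|_{H^{s_{0}}}$ holding exactly because $s_{0}>7/2$ (Lemma \ref{G.N.}).

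The heart of the matter, and the step I expect to be the main obstacle, is the two commutator terms. Taken separately each loses derivatives: estimating $-\LR{[\HT,u]\ds^{4}h,g}$ through Lemma \ref{comm.est.H} forces a factor $\|h\|_{H^{4}}\sim\|w\|_{H^{s+3}}$, far worse than $\|w\|_{H^{s}}$, and the other behaves the same way. They must therefore be handled \emph{together}, and the gain comes only through a symbol cancellation. I would compute the multiplier of the combined trilinear form $-\LR{[\HT,h]\ds^{4}u+[\HT,u]\ds^{4}h,g}$: writing $\alpha$ for the frequency carried by $u$ and $\beta,\gamma$ for those carried by the factors of $w$ sitting in $h$ and in $g$ (so $\alpha+\beta+\gamma=0$), it equals
\EQQS{
i\,\sgn(\gamma)|\gamma|^{s}|\beta|^{s-1}\Big(\sgn(\beta)|\alpha|^{4}\big(\sgn(\alpha)+\sgn(\gamma)\big)+|\beta|^{4}\big(1+\sgn(\gamma)\sgn(\beta)\big)\Big).
}
The two potentially fourth-order pieces cancel precisely in the dangerous regions: when $|\alpha|$ is negligible one has $\gamma\approx-\beta$, so $1+\sgn(\gamma)\sgn(\beta)=0$ annihilates the $|\beta|^{4}$ term, while when $|\beta|$ is negligible one has $\gamma\approx-\alpha$, so $\sgn(\alpha)+\sgn(\gamma)=0$ annihilates the $|\alpha|^{4}$ term.

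To finish, I would split frequency space into the three regions according to which of $|\alpha|,|\beta|,|\gamma|$ dominates, exactly as in the proof of Lemma \ref{comm.est.1}, and use $\alpha+\beta+\gamma=0$ (so that no single frequency can strictly dominate the others) to conclude that the bracketed factor is in every region $\lesssim\LR{\alpha}^{3}\LR{\beta}$. Hence the full multiplier is $\lesssim\LR{\alpha}^{3}\LR{\beta}^{s}\LR{\gamma}^{s}$, and summing against $\hat u,\hat w,\hat w$ — via Cauchy--Schwarz in $\alpha$ and the bound $\sum_{\beta+\gamma=-\alpha}|\hat w(\beta)||\hat w(\gamma)|\lesssim\|w\|_{H^{s}}^{2}\LR{\beta}^{-s}\LR{\gamma}^{-s}$ (in $\ell^2$) — produces $\|u\|_{H^{s_{0}}}\|w\|_{H^{s}}^{2}$, the convergence of $\sum_{\alpha}\LR{\alpha}^{2(3-s_{0})}$ requiring precisely $s_{0}>7/2$. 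This is the place where the regularity threshold of the lemma is consumed, and it is why the commutators cannot be estimated term by term.
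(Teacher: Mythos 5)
Your proof is correct and follows the paper's skeleton exactly up to the treatment of the two commutator terms: you apply Lemma \ref{fourth} with the same choice $f=u$, $g=\HT D^{s}w$, $h=\HT D^{s-1}w$, extract the same exact term $4\LR{\ds u,(D^{s}\ds w)^{2}}$ from $-4\LR{\ds f\HT\ds g,\ds^{2}h}$, and dispose of the remaining products by the same integration by parts. Where you diverge is in the commutators, and your stated reason for diverging rests on a misreading of Lemma \ref{comm.est.H}: that lemma does not force all four derivatives onto $g$. It allows any split $s_{1}+s_{2}=4$ at the price of measuring $f$ in $H^{\sigma_{0}+s_{1}}$ for an arbitrary base regularity $\sigma_{0}>1/2$; choosing $\sigma_{0}\in(1/2,\,s_{0}-3]$ (a nonempty interval precisely because $s_{0}>7/2$) and $(s_{1},s_{2})=(3,1)$ gives $\|[\HT,u]\ds^{4}(\HT D^{s-1}w)\|\lesssim\|u\|_{H^{s_{0}}}\|w\|_{H^{s}}$, and the symmetric choice $(s_{1},s_{2})=(1-\sigma_{0},3+\sigma_{0})$ handles $[\HT,h]\ds^{4}u$, so the paper simply estimates the two commutators \emph{separately}; this is its display \eqref{eq3.6}, and no joint cancellation is needed. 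Your combined symbol analysis is nevertheless valid: the multiplier you write down is the right one, the bound $\LR{\alpha}^{3}\LR{\beta}$ on the bracketed factor holds because $\sgn\beta=\sgn\gamma$ forces $|\beta|\le|\alpha|$ and $\sgn\alpha=\sgn\gamma$ forces $|\alpha|\le|\beta|$ (both from $\alpha+\beta+\gamma=0$), and the $\ell^{1}$--$\ell^{2}$--$\ell^{2}$ summation closes exactly under $s_{0}>7/2$. So you have a correct, if more laborious, argument: it re-derives by hand the one-sided smoothing of $[\HT,\cdot]$ that Lemma \ref{comm.est.H} already packages, in exchange for an explicit view of which sign factor kills each fourth-order piece.
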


\begin{proof}
We use Lemma \ref{fourth} with $f=u$, $g=\HT D^{s}w$ and $h=\HT D^{s-1}w$.
Then Lemma \ref{comm.est.H} shows that
\EQS{\label{eq3.6}
|\LR{[\HT,h]\ds^{4}f,g}|+|\LR{[\HT,f]\ds^{4}h,g}|+|\LR{\ds^{3}f\HT g,\ds h}|\lesssim\|u\|_{H^{s_{0}}}\|w\|_{H^{s}}^{2}.
}
Note that $-4\LR{\ds f\HT\ds g,\ds^{2}h}=4\LR{\ds u,(D^{s}\ds w)^{2}}$.
And finally, we see from the integration by parts that
\EQQS{
2|\LR{\ds^{2}f\HT g,\ds^{2}h}|
=|\LR{\ds^{3}u,(D^{s}w)^{2}}|\lesssim\|u\|_{H^{s_{0}}}\|w\|_{H^{s}}^{2}
}
which concludes the proof.
\end{proof}


\begin{lem}\label{lem2.14new}
Let $s_{0}>7/2$ and $s\ge1$.
Let $u,w\in H^{\max\{s+4,s_{0}\}}(\T)$.
Then
\EQQS{
&|\LR{\ds^{5}u,(D^{s-1}w)^{2}}-2\LR{(\HT\ds u)D^{s}\ds^{3}w,D^{s-1}w}
+4\LR{(\HT\ds^{2}u)\HT D^{s}\ds w,D^{s}w}|\\
&\lesssim\|u\|_{H^{s_{0}}}\|w\|_{H^{s}}^{2}.
}
\end{lem}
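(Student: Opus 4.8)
The plan is to realize the three terms as (minus) the left-hand side of the algebraic identity in Lemma~\ref{fourth}, exactly as in the proof of Lemma~\ref{m.e.1.2new}, so that the single derivative-losing non-commutator contribution is isolated and then annihilated by the third term. Set $v:=D^{s-1}w$ and use the Fourier-multiplier identities $D=\HT\ds$, $\HT^{2}=-1$ on mean-zero functions (the zero mode on $\T$ being harmless, as in the introduction). Then $D^{s}\ds^{3}w=\HT\ds^{4}v$, $\HT D^{s}\ds w=-\ds^{2}v$ and $D^{s}w=\HT\ds v$, so the three terms become $\LR{\ds^{5}u,v^{2}}$, $-2\LR{(\HT\ds u)\HT\ds^{4}v,v}$ and $-4\LR{(\HT\ds^{2}u)\HT\ds v,\ds^{2}v}$. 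First I would apply Lemma~\ref{fourth} with $f=\HT\ds u$ and $g=h=v$. Since $\HT\ds^{4}(\HT\ds u)=-\ds^{5}u$ and $g=h$, the left-hand side of that identity collapses to $-\LR{\ds^{5}u,v^{2}}+2\LR{(\HT\ds u)\HT\ds^{4}v,v}$, i.e.\ to minus the sum of the first two terms of the present Lemma.

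Next I would read off the right-hand side of Lemma~\ref{fourth} for this choice. With $\ds f=\HT\ds^{2}u$, $\HT\ds g=\HT\ds v$ and $\ds^{2}h=\ds^{2}v$, its term $-4\LR{\ds f\,\HT\ds g,\ds^{2}h}$ equals $-4\LR{(\HT\ds^{2}u)\HT\ds v,\ds^{2}v}$, which is precisely the third term of the Lemma; hence adding the third term cancels this contribution, the only non-commutator piece of Lemma~\ref{fourth} that loses derivatives. This is exactly what fixes the coefficient $4$ and the shape of the third term. After the cancellation, the left-hand side of Lemma~\ref{lem2.14new} equals the remaining part of the right-hand side of Lemma~\ref{fourth}: the two Hilbert-transform commutators $\LR{[\HT,v]\HT\ds^{5}u,v}$ and $\LR{[\HT,\HT\ds u]\ds^{4}v,v}$, together with $4\LR{(\HT\ds^{4}u)\HT v,\ds v}$ and $2\LR{(\HT\ds^{3}u)\HT v,\ds^{2}v}$.

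It remains to bound these four quantities by $\|u\|_{H^{s_{0}}}\|w\|_{H^{s}}^{2}$, and here lies the main difficulty: none of them is individually admissible, since the first commutator carries five derivatives on $u$, the second carries four derivatives on $v$, $4\LR{(\HT\ds^{4}u)\HT v,\ds v}$ carries four derivatives on $u$, and $2\LR{(\HT\ds^{3}u)\HT v,\ds^{2}v}$ puts two derivatives on a single factor of $v$; each separately exceeds what $\|u\|_{H^{s_{0}}}\|w\|_{H^{s}}^{2}$ allows when $s_{0}$ is only just above $7/2$. The plan is therefore to integrate by parts and combine the four terms so that the over-differentiated parts cancel, using the self-adjointness of $[\HT,v]$ to transfer the first commutator onto the second factor of $v$ and lower the order of $u$, and moving one derivative off $\HT\ds^{4}u$. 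What should survive are balanced expressions of the form $\LR{(\HT\ds^{3}u)\HT\ds v,\ds v}$, bounded by $\|\HT\ds^{3}u\|_{\infty}\|\ds v\|^{2}\lesssim\|u\|_{H^{s_{0}}}\|w\|_{H^{s}}^{2}$ via $H^{s_{0}-3}\hookrightarrow L^{\infty}$ (valid since $s_{0}>7/2$) and Lemma~\ref{G.N.}, together with genuine commutators whose coefficients are now derivatives of $u$ of order at most three, which Lemma~\ref{comm.est.H} controls precisely because of its one-derivative gain. Making this cancellation explicit while tracking every Hilbert transform is the crux; at the level of symbols it amounts to checking that the combination kills all contributions of order $\ds^{2}v$, $\ds^{3}v$ and $\ds^{4}v$, leaving an effective single derivative on each copy of $w$.
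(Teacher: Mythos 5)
Your algebraic setup is correct as far as it goes: with $v=D^{s-1}w$ the three terms do become $\LR{\ds^{5}u,v^{2}}$, $-2\LR{(\HT\ds u)\HT\ds^{4}v,v}$ and $-4\LR{(\HT\ds^{2}u)\HT\ds v,\ds^{2}v}$, applying Lemma \ref{fourth} with $f=\HT\ds u$, $g=h=v$ collapses the first two into minus its left-hand side, and the third term does cancel the contribution $-4\LR{\ds f\,\HT\ds g,\ds^{2}h}$. The gap is everything after that. As you yourself concede, the four surviving terms are not individually admissible, and the ``plan'' to recombine them is never executed; worse, the manipulations you propose are circular. The commutator $[\HT,v]$ is self-adjoint, so transferring it onto the other copy of $v$ gives $\LR{[\HT,v]\HT\ds^{5}u,v}=\LR{\ds^{5}u,v^{2}}-\LR{(\HT\ds^{5}u)v,\HT v}$, which reproduces the original term $\LR{\ds^{5}u,v^{2}}$ rather than lowering the order of $u$; similarly, expanding $\LR{[\HT,\HT\ds u]\ds^{4}v,v}$ regenerates $\LR{(\HT\ds u)\HT\ds^{4}v,v}$. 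Quantitatively, with your choice $\ds^{4}f=\HT\ds^{5}u$ carries five derivatives on $u$: Lemma \ref{comm.est.H} forces at most $s_{0}-1$ of the four distributable derivatives onto $f=\HT\ds u$, leaving $5-s_{0}$ plus the mandatory $s_{0}'>1/2$ on $h=D^{s-1}w$, i.e.\ strictly more than one extra derivative on top of $s-1$; for $s_{0}$ close to $7/2$ this exceeds $\|w\|_{H^{s}}$, so the estimate \eqref{eq3.6} genuinely fails for your $(f,g,h)$ and cannot be rescued term by term.

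The paper sidesteps this entirely by integrating by parts \emph{before} invoking Lemma \ref{fourth}: one derivative is moved off $u$ in the first term and off $\HT\ds u$ in the second, so that the sum of the first two terms equals twice the left-hand side of Lemma \ref{fourth} with $f=\HT u$, $g=D^{s-1}\ds w$, $h=D^{s-1}w$. With this choice $\ds^{4}f$ carries only four derivatives on $u$, so both commutators and $\LR{\ds^{3}f\HT g,\ds h}$ satisfy \eqref{eq3.6} exactly as in Lemma \ref{m.e.1.2new}; the term $-4\LR{\ds f\HT\ds g,\ds^{2}h}=-4\LR{\ds((\HT\ds u)\HT D^{s}\ds w),D^{s}w}\cdot(\pm1)$ is controlled by Lemma \ref{reduction}; and it is the remaining term $2\LR{\ds^{2}f\HT g,\ds^{2}h}=-2\LR{(\HT\ds^{2}u)\HT D^{s}\ds w,D^{s}w}$ (not the one you single out) that, after the factor of two, cancels the third term of the statement. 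To repair your argument you would either have to carry out the preliminary integration by parts as the paper does, or produce an explicit identity recombining your four residual terms — and the self-adjointness trick you propose does not provide one.
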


\begin{proof}
The integration by parts shows that
\EQQS{
&|\LR{\ds^{5}u,(D^{s-1}w)^{2}}-2\LR{(\HT\ds u)D^{s}\ds^{3}w,D^{s-1}w}
+4\LR{(\HT\ds^{2}u)\HT D^{s}\ds w,D^{s}w}|\\
&=|\LR{\ds^{4}uD^{s-1}\ds w,D^{s-1}w}-\LR{(\HT u)D^{s}\ds^{4}w,D^{s-1}w}+\LR{(\HT u)D^{s}\ds^{3}w,D^{s-1}\ds w}\\
&\quad-2\LR{(\HT\ds^{2}u)\HT D^{s}\ds w,D^{s}w}|,
}
which allows us to use Lemma \ref{fourth} with $f=\HT u$, $g=D^{s-1}\ds w$ and $h=D^{s-1}w$.
It is cleat that \eqref{eq3.6} holds in this case.
Lemma $\ref{reduction}$ implies that
\[|\LR{\ds f\HT\ds g,\ds^{2}h}|=|\LR{\ds((\HT\ds u)\HT D^{s}\ds w),D^{s}w}|\lesssim\|u\|_{H^{s_{0}}}\|w\|_{H^{s}}^{2}.\]
On the other hand, we have
$2\LR{\ds^{2}f\HT g,\ds^{2}h}=-2\LR{(\HT\ds^{2}u)\HT D^{s}\ds w,D^{s}w}$,
which completes the proof.
\end{proof}


\begin{lem}\label{lem2.21new}
Let $s_{0}>7/2$ and $s\ge1$.
Let $u,w\in H^{\max\{s+3,s_{0}\}}(\T)$.
Then
\EQQS{
|\LR{u\HT\ds^{4}u,(D^{s-1}w)^{2}}+\LR{u^{2}D^{s-1}w,D^{s}\ds^{3}w}-4\LR{u\ds u\HT D^{s}\ds w,D^{s}w}|
\lesssim\|u\|_{H^{s_{0}}}^{2}\|w\|_{H^{s}}^{2}.
}
\end{lem}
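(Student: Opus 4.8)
The plan is to apply Lemma \ref{fourth} with $f=u$, $g=D^{s-1}w$ and $h=uD^{s-1}w$, which are real-valued and, under the hypothesis $u,w\in H^{\max\{s+3,s_{0}\}}(\T)$, smooth enough for the identity to apply. The point of this choice is that the left-hand side of Lemma \ref{fourth} reproduces exactly the first two terms of the claim. Indeed, since $\HT\ds D^{s-1}=D^{s}$ we have $\HT\ds^{4}D^{s-1}w=D^{s}\ds^{3}w$, so the summand $\LR{\HT\ds^{4}f,gh}$ equals $\LR{u\HT\ds^{4}u,(D^{s-1}w)^{2}}$ and the summand $\LR{f\HT\ds^{4}g,h}$ equals $\LR{u^{2}D^{s-1}w,D^{s}\ds^{3}w}$. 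The remaining summand is $\LR{\phi,\HT\ds^{4}\phi}$ with $\phi=uD^{s-1}w$; integrating by parts twice gives $\LR{\phi,\HT\ds^{4}\phi}=\LR{\ds^{2}\phi,\HT\ds^{2}\phi}$, which vanishes since $\HT$ is skew-adjoint. Hence the left-hand side of Lemma \ref{fourth} is precisely the sum of the first two terms of the claim, and it remains to analyze its right-hand side.

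I would first extract the main term $4\LR{u\ds u\HT D^{s}\ds w,D^{s}w}$ from $-4\LR{\ds f\HT\ds g,\ds^{2}h}$: using $\HT\ds g=D^{s}w$ and that the top-order part of $\ds^{2}(uD^{s-1}w)$ is $uD^{s-1}\ds^{2}w=-u\HT D^{s}\ds w$, this term equals $4\LR{u\ds u\HT D^{s}\ds w,D^{s}w}$ up to pieces carrying at most $2s-1$ derivatives on $w$, which are bounded by $\|u\|_{H^{s_{0}}}^{2}\|w\|_{H^{s}}^{2}$ after placing a low-order factor in $L^{\I}$ via Lemma \ref{G.N.}. The term $4\LR{\ds^{3}f\HT g,\ds h}$ is estimated directly by putting $\ds^{3}u\in L^{\I}$, which is where the threshold $s_{0}>7/2$ enters. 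The two commutator terms $\LR{[\HT,h]\ds^{4}f,g}$ and $\LR{[\HT,f]\ds^{4}h,g}$ are controlled by Lemma \ref{comm.est.H} together with the Leibniz rule (Lemma \ref{Leibniz}), choosing the splitting $s_{1}+s_{2}=4$ so that fewer than $1/2$ of the four derivatives land on the $w$-factors and the remainder on $u$; this is possible precisely because $s_{0}>7/2$. Finally $2\LR{\ds^{2}f\HT g,\ds^{2}h}$, together with the subleading pieces of the two previous terms, is treated by one further integration by parts and Lemma \ref{G.N.}, with Lemma \ref{reduction} absorbing those pieces that retain the antisymmetric $\HT\ds$ structure.

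The main obstacle is that a naive derivative count leaves several terms apparently one derivative short: in $\LR{[\HT,f]\ds^{4}h,g}$ the top part distributes $2s+1$ derivatives over the two copies of $w$, and in $2\LR{\ds^{2}f\HT g,\ds^{2}h}$ the copies of $w$ sit unevenly in $H^{s-1}$ and $H^{s+1}$ rather than symmetrically in $H^{s}$. The remedy is to use the skew-adjointness of the commutator, $\LR{[\HT,f]\ds^{4}h,g}=-\LR{\ds^{4}h,[\HT,f]g}$, and repeated integration by parts to move derivatives back onto the outer factor; after this redistribution the commutator gain of one derivative and the bound $s_{0}>7/2$ let Lemma \ref{comm.est.H} close each estimate, and one checks directly that the only surviving top-order contribution is the single main term extracted above. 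Verifying that this redistribution leaves no residual derivative loss is the delicate point; the remaining steps are routine applications of the product and Gagliardo--Nirenberg estimates.
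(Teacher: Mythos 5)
Your proposal is correct and rests on the same key identity as the paper (Lemma \ref{fourth}), but you feed it different inputs, which changes the decomposition. The paper first symmetrizes: it doubles the expression, replaces $2u\HT\ds^{4}u$ by $\HT\ds^{4}(u^{2})$ at the cost of a commutator error controlled by Lemma \ref{comm.est.H}, and then applies Lemma \ref{fourth} with $f=u^{2}$, $g=h=D^{s-1}w$; with that choice the second and third summands on the left coincide, the main term $8\LR{u\ds u\HT D^{s}\ds w,D^{s}w}$ drops out of $-4\LR{\ds f\HT\ds g,\ds^{2}h}$ exactly, and every remainder has a plain $D^{s-1}w$ in the $g$ and $h$ slots. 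You instead take $f=u$, $g=D^{s-1}w$, $h=uD^{s-1}w$, which reproduces the two target pairings without symmetrizing and kills the third summand outright since $fg=h$ and $\HT\ds^{4}$ is skew-adjoint --- a genuinely slicker entry point that avoids the added-and-subtracted term. The price is that $h$ is now a product, so $\ds h$, $\ds^{2}h$ and $[\HT,h]\ds^{4}f$ on the right-hand side of Lemma \ref{fourth} spawn Leibniz terms; these all close with the paper's toolbox (Lemmas \ref{Leibniz}, \ref{comm.est.H}, \ref{G.N.}, \ref{reduction}), and your extraction of $4\LR{u\ds u\HT D^{s}\ds w,D^{s}w}$ from $-4\LR{\ds f\HT\ds g,\ds^{2}h}$ via $D^{s-1}\ds^{2}w=-\HT D^{s}\ds w$ is exactly right. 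Three small inaccuracies, none fatal: the piece $2\ds u\,D^{s-1}\ds w$ of $\ds^{2}h$ paired against $\ds u\,D^{s}w$ carries $2s$ derivatives on $w$ (not ``at most $2s-1$''), but split evenly as $s+s$ it is still harmless; the commutator $[\HT,f]$ is self-adjoint rather than skew-adjoint, so $\LR{[\HT,f]\ds^{4}h,g}=+\LR{\ds^{4}h,[\HT,f]g}$ (irrelevant for the absolute-value bound); and the ``redistribution'' you describe for $\LR{[\HT,u]\ds^{4}h,g}$ is not actually needed --- Lemma \ref{comm.est.H} with the splitting $s_{1}=3$, $s_{2}=1$ already yields $\|[\HT,u]\ds^{4}(uD^{s-1}w)\|\lesssim\|u\|_{H^{s_{0}}}^{2}\|w\|_{H^{s}}$ directly, which is precisely where $s_{0}>7/2$ enters.
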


\begin{proof}
Adding and subtraction a term, we have
\EQQS{
&2|\LR{u\HT\ds^{4}u,(D^{s-1}w)^{2}}+\LR{u^{2}D^{s-1}w,D^{s}\ds^{3}w}-4\LR{u\ds u\HT D^{s}\ds w,D^{s}w}|\\
&\le|\LR{\HT\ds^{4}(u^{2}),(D^{s-1}w)^{2}}+2\LR{u^{2}D^{s-1}w,D^{s}\ds^{3}w}-8\LR{u\ds u\HT D^{s}\ds w,D^{s}w}|\\
&\quad+|\LR{2u\HT\ds^{4}u-\HT\ds^{4}(u^{2}),(D^{s-1}w)^{2}}|.
}
Lemma \ref{comm.est.H} shows that the second term in the right hand side can be estimated by $\lesssim\|u\|_{H^{s_{0}}}^{2}\|w\|_{H^{s}}^{2}$.
We use Lemma \ref{fourth} with $f=u^{2}$, $g=h=D^{s-1}w$.
It is clear that \eqref{eq3.6} holds in this case.
Note that $-4\LR{\ds f\HT\ds g,\ds^{2}h}=8\LR{u\ds u\HT D^{s}\ds w,D^{s}w}$.
Finally, we have
\[|\LR{\ds^{2}f\HT g,\ds^{2}h}|=|\LR{\ds(\ds^{2}(u^{2})\HT D^{s-1}w),\HT D^{s}w}|\lesssim\|u\|_{H^{s_{0}}}^{2}\|w\|_{H^{s}}^{2},\]
which completes the proof.
\end{proof}


We observe the first order derivative loss resulting from $M_{s}^{(1)}$.

\begin{lem}\label{lem4.23}
Let $s_{0}>7/2$ and $s\ge1$.
Let $u,v\in H^{\max\{s+3,s_{0}\}}(\T)$.
Then
\EQQS{
&|\LR{u\HT D^{s}\ds(u\ds^{2}u-v\ds^{2}v),\HT D^{s-1}w}+(s-3)\LR{u\ds u\HT D^{s}\ds w,D^{s}w}|\\
&\lesssim I_{s_{0}}(u,v)^{2}\{\|w\|_{H^{s}}^{2}+\|w\|_{H^{s_{0}-2}}^{2}\|v\|_{H^{s+2}}^{2}+(\|u\|_{H^{s}}^{2}+\|v\|_{H^{s}}^{2})\|w\|_{H^{s_{0}}}^{2}\},
}
where $w=u-v$.
\end{lem}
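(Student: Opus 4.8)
The plan is to extract from the left inner product exactly one first-order derivative loss, namely $-(s-3)\LR{u\ds u\HT D^{s}\ds w,D^{s}w}$, and to bound everything else by $R:=I_{s_{0}}(u,v)^{2}\{\|w\|_{H^{s}}^{2}+\|w\|_{H^{s_{0}-2}}^{2}\|v\|_{H^{s+2}}^{2}+(\|u\|_{H^{s}}^{2}+\|v\|_{H^{s}}^{2})\|w\|_{H^{s_{0}}}^{2}\}$. First I would write $u\ds^{2}u-v\ds^{2}v=u\ds^{2}w+w\ds^{2}v$ and treat the $w\ds^{2}v$ contribution $\LR{u\HT D^{s}\ds(w\ds^{2}v),\HT D^{s-1}w}$ as a pure remainder: expanding $\HT D^{s}\ds(w\ds^{2}v)$ by $P_{s}^{(2)}(w,v)$ (Lemma \ref{comm.est.1}) and integrating by parts once in the only top-order piece $w\,\HT D^{s}\ds^{3}v$ lowers $v$ to order $s+2$ while keeping every copy of $w$ at order $\le s$, so all resulting terms are $\lesssim R$; the presence of $v$ (not $u\ds u$) in the high-order coefficients means this part produces no first-order loss.

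For the main piece $\LR{u\HT D^{s}\ds(u\ds^{2}w),\HT D^{s-1}w}$ I expand $\HT D^{s}\ds(u\ds^{2}w)$ with $P_{s}^{(2)}(u,w)$ (Lemma \ref{comm.est.1}); multiplying by $u$ and pairing with $\HT D^{s-1}w$ gives five terms. The term $\LR{uP_{s}^{(2)}(u,w),\HT D^{s-1}w}$ is controlled directly by Lemma \ref{comm.est.1}; the term $\LR{u(\HT D^{s}\ds u)\ds^{2}w,\HT D^{s-1}w}$ is integrated by parts to move the derivative off $u$ (leaving $u$ at order $s$) and is a remainder. The remaining three, $\LR{u^{2}\HT D^{s}\ds^{3}w,\HT D^{s-1}w}$, $(s+1)\LR{u\ds u\HT D^{s}\ds^{2}w,\HT D^{s-1}w}$ and $\tfrac{s(s+1)}{2}\LR{u\ds^{2}u\HT D^{s}\ds w,\HT D^{s-1}w}$, carry the loss. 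Using the exact identities $\HT D^{s-1}\ds w=D^{s}w$ and $\HT D^{s}\ds^{2}w=\ds(\HT D^{s}\ds w)$ on $\T$ (valid for $s\ge1$, both sides vanishing at the zero mode), I integrate by parts repeatedly: each step either drops a $w$-factor to order $\le s$ (a remainder, since $\|\ds^{j}u\|_{\I}\lesssim\|u\|_{H^{s_{0}}}$ for $j\le 3$ because $s_{0}>7/2$) or spawns a copy of $\LR{u\ds u\HT D^{s}\ds w,D^{s}w}$. Collecting the copies yields coefficient $+4$ from the first, $-(s+1)$ from the second and $0$ from the third, for total $4-(s+1)=-(s-3)$, exactly matching the asserted term.

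The one genuinely delicate point is a single leftover term from the reduction of $\LR{u^{2}\HT D^{s}\ds^{3}w,\HT D^{s-1}w}$, namely $Q:=\LR{u^{2}D^{s}\ds w,\HT D^{s}\ds w}$ (equivalently, one may convert this cubic term to $\LR{u^{2}D^{s-1}w,D^{s}\ds^{3}w}$ via $\HT^{2}=-\mathrm{Id}$ modulo the zero mode and invoke Lemma \ref{lem2.21new}, noting $\LR{u\HT\ds^{4}u,(D^{s-1}w)^{2}}\lesssim\|u\|_{H^{s_{0}}}^{2}\|w\|_{H^{s}}^{2}$ after one integration by parts). Naively $Q$ has two copies of $w$ at order $s+1$, so it carries a real derivative loss. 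I would use the antisymmetry $Q=-\tfrac12\LR{[\HT,u^{2}]p,p}$ with $p=D^{s}\ds w$: in Fourier variables the commutator has symbol $i(\sgn\eta-\sgn\xi)\widehat{u^{2}}(\xi-\eta)$, supported where $\xi,\eta$ have opposite signs. There $|\xi-\eta|=|\xi|+|\eta|$, so $|\xi||\eta|\le|\xi-\eta|^{2}$ and the extra factor $|\xi||\eta|$ (one derivative on each copy of $w$) is absorbed by $u^{2}$ at frequency $\xi-\eta$. Young's inequality then bounds $|Q|$ by $\||k|^{2}\widehat{u^{2}}\|_{\ell^{1}}\|D^{s}w\|^{2}\lesssim\|u\|_{H^{s_{0}}}^{2}\|w\|_{H^{s}}^{2}$, where $\||k|^{2}\widehat{u^{2}}\|_{\ell^{1}}\lesssim\|u^{2}\|_{H^{s_{0}}}\lesssim\|u\|_{H^{s_{0}}}^{2}$ since $\LR{k}^{2-s_{0}}\in\ell^{2}$ for $s_{0}>5/2$ (Lemma \ref{Leibniz} for the product). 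This commutator-support estimate together with the precise coefficient bookkeeping are the crux; the remaining terms are routine uses of Lemmas \ref{comm.est.1}, \ref{comm.est.H}, \ref{reduction} and the Gagliardo-Nirenberg inequality (Lemma \ref{G.N.}).
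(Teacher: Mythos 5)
Your proposal is correct and follows essentially the same route as the paper: the same splitting $u\ds^{2}u-v\ds^{2}v=u\ds^{2}w+w\ds^{2}v$, the same expansion via $P_{s}^{(2)}$ (Lemma \ref{comm.est.1}), and the same integration-by-parts bookkeeping producing the coefficient $4-(s+1)=-(s-3)$, with the $w\ds^{2}v$ part absorbed into the remainder after one integration by parts against $\HT D^{s}\ds^{2}v$. The only difference is that for the delicate cubic term $\LR{\ds(u^{2}\HT D^{s}\ds w),D^{s}w}$ you re-derive the commutator antisymmetry and frequency-support estimate by hand, whereas the paper simply invokes Lemma \ref{reduction}, which encapsulates exactly that argument.
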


\begin{proof}
Lemma \ref{comm.est.1} and \ref{reduction} show that
\EQQS{
&|\LR{u\HT D^{s}\ds(u\ds^{2}w+w\ds^{2}v),\HT D^{s-1}w}+(s-3)\LR{u\ds u\HT D^{s}\ds w,D^{s}w}|\\
&\le|\LR{u(P_{s}^{(2)}(u,w)+P_{s}^{(2)}(w,v)),\HT D^{s-1}w}|+|\LR{\ds(u\ds^{2}w\HT D^{s-1}w),\HT D^{s}u}|\\
&\quad+|\LR{\ds^{3}(u^{2})\HT D^{s}w,\HT D^{s-1}w}|+|\LR{\ds^{2}(u^{2})\HT D^{s}w,D^{s}w}|+|\LR{\ds(u^{2}\HT D^{s}\ds w),D^{s}w}|\\
&\quad+(s+1)|\LR{\ds(\ds^{2}(u^{2})\HT D^{s-1}w),\HT D^{s}w}|/2+s(s+1)|\LR{\ds(u\ds^{2}u\HT D^{s-1}w),\HT D^{s}w}|/2\\
&\quad+|\LR{\ds(u\ds^{2}v\HT D^{s-1}w),\HT D^{s}w}|+|\LR{\ds(uw\HT D^{s-1}w),\HT D^{s}\ds^{2}v}|\\
&\quad+(s+1)|\LR{\ds(u\ds w\HT D^{s-1}w),\HT D^{s}\ds v}|+s(s+1)|\LR{\ds(u\ds^{2}w\HT D^{s-1}w),\HT D^{s}v}|/2\\
&\lesssim I_{s_{0}}(u,v)^{2}\{\|w\|_{H^{s}}^{2}+\|w\|_{H^{s_{0}-2}}^{2}\|v\|_{H^{s+2}}^{2}+(\|u\|_{H^{s}}^{2}+\|v\|_{H^{s}}^{2})\|w\|_{H^{s_{0}}}^{2}\},
}
which completes the proof.
\end{proof}

\begin{lem}\label{lem4.25}
Let $s_{0}>7/2$ and $s\ge1$.
Let $u,v\in H^{\max\{s+2,s_{0}\}}(\T)$.
Then
\EQQS{
&|\LR{u\HT D^{s}w,D^{s}(u\ds^{2}u-v\ds^{2}v)}+(s-2)\LR{u\ds u\HT D^{s}\ds w,D^{s}w}|\\
&\lesssim I_{s_{0}}(u,v)^{2}\{\|w\|_{H^{s}}^{2}+\|w\|_{H^{s_{0}-2}}^{2}\|v\|_{H^{s+2}}^{2}+(\|u\|_{H^{s}}^{2}+\|v\|_{H^{s}}^{2})\|w\|_{H^{s_{0}}}^{2}\},
}
where $w=u-v$.
\end{lem}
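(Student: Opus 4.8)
The plan is to imitate the proof of Lemma~\ref{lem4.23}. First I split the nonlinearity by writing $u\ds^2 u-v\ds^2 v=u\ds^2 w+w\ds^2 v$, so that
$\LR{u\HT D^s w,D^s(u\ds^2 u-v\ds^2 v)}=\LR{u\HT D^s w,D^s(u\ds^2 w)}+\LR{u\HT D^s w,D^s(w\ds^2 v)}$.
I expect the first piece to produce exactly the correction $(s-2)\LR{u\ds u\HT D^s\ds w,D^s w}$ (up to an $H^{s_0}$-controlled error), while the second piece should be a pure remainder.

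For the first piece I apply Lemma~\ref{comm.est.7} with $\La_s=D^s$ to write $D^s(u\ds^2 w)=uD^s\ds^2 w+s\ds u\,D^s\ds w+R$ with $\|R\|\lesssim\|u\|_{H^{s_0}}\|w\|_{H^s}+\|u\|_{H^s}\|w\|_{H^{s_0}}$; pairing $R$ against $u\HT D^s w$ and using $\|u\HT D^s w\|\le\|u\|_\infty\|D^s w\|\lesssim\|u\|_{H^{s_0}}\|w\|_{H^s}$ gives a contribution within the claimed bound. The two leading terms are $\LR{u^2\HT D^s w,D^s\ds^2 w}$ and $s\LR{u\ds u\HT D^s w,D^s\ds w}$. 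Integrating the former by parts once produces $-\LR{u^2\HT D^s\ds w,D^s\ds w}=\LR{\ds(u^2\HT\ds D^s w),D^s w}$, which Lemma~\ref{reduction} (with its ``$v$'' equal to $u^2$ and ``$u$'' equal to $D^s w$) bounds by $\lesssim\|u\|_{H^{s_0}}^2\|w\|_{H^s}^2$, together with $-2\LR{u\ds u\HT D^s w,D^s\ds w}$. A further integration by parts turns this last term and the second leading term into $+2$ and $-s$ multiples of the correction $\LR{u\ds u\HT D^s\ds w,D^s w}$, the leftover pieces carrying the $L^\infty$ coefficient $\ds(u\ds u)$ bounded by $\|u\|_{H^{s_0}}^2$. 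Collecting coefficients, the first piece equals $(2-s)\LR{u\ds u\HT D^s\ds w,D^s w}+O(\|u\|_{H^{s_0}}^2\|w\|_{H^s}^2)$, which cancels the $(s-2)$ term in the statement.

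For the second piece I use Lemma~\ref{comm.est.7} again on $w\ds^2 v$, reducing to the leading terms $\LR{uw\HT D^s w,D^s\ds^2 v}$ and $s\LR{u\ds w\HT D^s w,D^s\ds v}$ (the Lemma~\ref{comm.est.7} remainder being controlled as before). The first is handled directly by Cauchy--Schwarz: bounding $\|uw\HT D^s w\|\le\|u\|_\infty\|w\|_\infty\|D^s w\|\lesssim\|u\|_{H^{s_0}}\|w\|_{H^{s_0-2}}\|w\|_{H^s}$ through the embedding $H^{s_0-2}\hookrightarrow L^\infty$ (valid since $s_0>7/2$) and then Young's inequality against $\|v\|_{H^{s+2}}$ yields $\lesssim I_{s_0}^2(\|w\|_{H^s}^2+\|w\|_{H^{s_0-2}}^2\|v\|_{H^{s+2}}^2)$. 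The second term I integrate by parts to move a derivative off $v$; the harmless pieces are estimated by Cauchy--Schwarz with $\|\ds w\|_\infty,\|\ds^2 w\|_\infty\lesssim\|w\|_{H^{s_0-1}}$, while the dangerous piece carries $\HT D^s\ds w$, which I rewrite via skew-adjointness of $\HT$ and one more integration by parts into terms of the shape $\LR{D^s w,\HT(u\ds w\,D^s\ds v)}$.

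The main obstacle is precisely this last type of term. A naive estimate produces $\|w\|_{H^{s_0-1}}\|w\|_{H^s}\|v\|_{H^{s+1}}$, which is \emph{not} controlled by the right-hand side, since here $w$ sits one derivative too high and $v$ one derivative too low relative to the admissible term $\|w\|_{H^{s_0-2}}^2\|v\|_{H^{s+2}}^2$. I resolve this by interpolation: writing $\|w\|_{H^{s_0-1}}\le\|w\|_{H^{s_0-2}}^{1/2}\|w\|_{H^{s_0}}^{1/2}$ and $\|v\|_{H^{s+1}}\le\|v\|_{H^s}^{1/2}\|v\|_{H^{s+2}}^{1/2}$, the product becomes the weighted geometric mean of $\|w\|_{H^s}^2$, $\|w\|_{H^{s_0-2}}^2\|v\|_{H^{s+2}}^2$ and $\|w\|_{H^{s_0}}^2\|v\|_{H^s}^2$ with exponents $\tfrac12,\tfrac14,\tfrac14$, so Young's inequality closes the estimate against exactly the three terms permitted in the bound. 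With this the second piece is $\lesssim I_{s_0}^2\{\|w\|_{H^s}^2+\|w\|_{H^{s_0-2}}^2\|v\|_{H^{s+2}}^2+(\|u\|_{H^s}^2+\|v\|_{H^s}^2)\|w\|_{H^{s_0}}^2\}$, and combining with the first piece and using $\|u\|_{H^{s_0}}^2\le I_{s_0}^2$ completes the proof.
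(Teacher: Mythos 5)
Your proof is correct and follows essentially the same route as the paper: the decomposition $u\ds^2u-v\ds^2v=u\ds^2w+w\ds^2v$, Lemma \ref{comm.est.7} for the commutator remainders, the double integration by parts producing the coefficient $2-s$ on $\LR{u\ds u\HT D^s\ds w,D^s w}$, and Lemma \ref{reduction} for $\LR{\ds(u^2\HT D^s\ds w),D^s w}$. The one place you diverge --- the term $s\LR{u\ds w\,\HT D^s w,D^s\ds v}$ --- is not actually the obstacle you describe: since $s_0>7/2$ gives $s_0-2>3/2$, one has $\|\ds w\|_\infty\lesssim\|w\|_{H^{s_0-2}}$, so the direct Cauchy--Schwarz bound already yields $\|u\|_{H^{s_0}}\|w\|_{H^{s_0-2}}\|w\|_{H^s}\|v\|_{H^{s+1}}\le\|u\|_{H^{s_0}}\|w\|_{H^{s_0-2}}\|w\|_{H^s}\|v\|_{H^{s+2}}$, which is admissible; this is what the paper does, with no further integration by parts. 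Your interpolation workaround ($\|w\|_{H^{s_0-1}}\le\|w\|_{H^{s_0-2}}^{1/2}\|w\|_{H^{s_0}}^{1/2}$, $\|v\|_{H^{s+1}}\le\|v\|_{H^s}^{1/2}\|v\|_{H^{s+2}}^{1/2}$, then Young with weights $\tfrac12,\tfrac14,\tfrac14$) is valid and lands on the three permitted terms, but it, and the preceding skew-adjointness manipulation, are unnecessary detours.
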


\begin{proof}
Lemma \ref{comm.est.7} together with Lemma \ref{reduction} shows that
\EQQS{
&|\LR{u\HT D^{s}w,D^{s}(u\ds^{2}w+w\ds^{2}v)}+(s-2)\LR{u\ds u\HT D^{s}\ds w,D^{s}w}|\\
&\le|\LR{u\HT D^{s}w,D^{s}(u\ds^{2}w)-uD^{s}\ds^{2}w-s\ds uD^{s}\ds w}|\\
&\quad+|\LR{u\HT D^{s}w,D^{s}(w\ds^{2}v)-wD^{s}\ds^{2}v-s\ds wD^{s}\ds v}|+|\LR{\ds^{2}(u^{2})\HT D^{s}w,D^{s}w}|\\
&\quad+|\LR{\ds(u^{2}\HT D^{s}\ds w),D^{s}w}|+s|\LR{\ds(u\ds u)\HT D^{s}w,D^{s}w}|\\
&\quad+|\LR{u\HT D^{s}w,wD^{s}\ds^{2}v}|+s|\LR{u\HT D^{s}w,\ds wD^{s}\ds v}|\\
&\lesssim I_{s_{0}}(u,v)^{2}\{\|w\|_{H^{s}}^{2}+\|w\|_{H^{s_{0}-2}}^{2}\|v\|_{H^{s+2}}^{2}+(\|u\|_{H^{s}}^{2}+\|v\|_{H^{s}}^{2})\|w\|_{H^{s_{0}}}^{2}\},
}
which completes the proof.
\end{proof}

\begin{lem}\label{lem4.26}
Let $s_{0}>7/2$ and $s\ge1$.
Let $u,v\in H^{\max\{s+2,s_{0}\}}(\T)$.
Then
\EQQS{
&|\LR{u\HT D^{s}\ds((\ds u)^{2}-(\ds v)^{2}),\HT D^{s-1}w}+2\LR{u\ds u\HT D^{s}\ds w,D^{s}w}|\\
&\lesssim I_{s_{0}}(u,v)^{2}\{\|w\|_{H^{s}}^{2}+\|w\|_{H^{s_{0}-1}}^{2}\|v\|_{H^{s+1}}^{2}+(\|u\|_{H^{s}}^{2}+\|v\|_{H^{s}}^{2})\|w\|_{H^{s_{0}}}^{2}\},
}
where $w=u-v$.
\end{lem}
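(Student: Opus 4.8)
The plan is to mirror the proof of Lemma~\ref{lem2.4new}, which treats the same nonlinearity $(\ds u)^2-(\ds v)^2$ in the plain $H^s$ energy, but now paired against the correction weight $u\,\HT D^{s-1}w$ rather than $D^s w$. First I would set $z=u+v$, so that $(\ds u)^2-(\ds v)^2=\ds z\,\ds w$, and expand $\HT D^s\ds(\ds z\,\ds w)$ through the commutator $P_s^{(4)}(z,w)$ of Lemma~\ref{comm.est.2}:
\[
\HT D^s\ds(\ds z\ds w)=P_s^{(4)}(z,w)+(\HT D^s\ds^2 z)\ds w+(s+1)(\HT D^s\ds z)\ds^2 w+\ds z(\HT D^s\ds^2 w)+(s+1)\ds^2 z(\HT D^s\ds w).
\]
Pairing $u\,(\cdot)$ against $\HT D^{s-1}w$, the term $\LR{uP_s^{(4)}(z,w),\HT D^{s-1}w}$ is controlled at once by Lemma~\ref{comm.est.2} together with Cauchy--Schwarz and Young, all falling into the three terms of the right-hand side.

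The key step is that the two genuine cross terms, $T_1:=\LR{u(\HT D^s\ds^2 z)\ds w,\HT D^{s-1}w}$ (top derivative on $z$) and $T_3:=\LR{u\,\ds z(\HT D^s\ds^2 w),\HT D^{s-1}w}$ (top derivative on $w$), must be handled together rather than separately. Using $\ds z=\ds w+2\ds v$ and $\ds^2z=\ds^2w+2\ds^2v$ (equivalently $\ds w+\ds v=\ds u$), exactly as in Lemma~\ref{lem2.4new}, I would write
\[
(\HT D^s\ds^2 z)\ds w+\ds z(\HT D^s\ds^2 w)=2\,\ds u\,\HT D^s\ds^2 w+2\,\ds w\,\HT D^s\ds^2 v,
\]
so that $T_1+T_3=2\LR{u\,\ds u\,\HT D^s\ds^2 w,\HT D^{s-1}w}+2\LR{u\,\ds w\,\HT D^s\ds^2 v,\HT D^{s-1}w}$. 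This recombination is precisely what prevents a spurious uncancelled half-derivative loss: handling $T_1$ and $T_3$ in isolation produces terms of the form $\LR{a\,\HT D^s\ds w,D^s w}$ that are genuinely not controllable by $\|w\|_{H^s}^2$, whereas the combined expression is clean.

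Next I would extract the loss from the first piece: writing $\HT D^s\ds^2 w=\ds(\HT D^s\ds w)$ and integrating by parts, the derivative that falls on $\HT D^{s-1}w$ yields $\ds\HT D^{s-1}w=D^sw$ (since $\HT\ds=D$), producing exactly $-2\LR{u\,\ds u\,\HT D^s\ds w,D^s w}$, which cancels the term added on the left-hand side. The leftovers $-2\LR{(\ds u)^2\HT D^{s-1}w,\HT D^s\ds w}$ and $-2\LR{u\,\ds^2u\,\HT D^{s-1}w,\HT D^s\ds w}$ have the form $\LR{a\,\HT D^{s-1}w,\HT D^s\ds w}$ with $a$ bounded in $L^\infty$ together with $\ds a$; these reduce to $O((\|a\|_\infty+\|\ds a\|_\infty)\|w\|_{H^s}^2)$ by the same $\HT$-symmetrization used in Lemmas~\ref{reduction} and \ref{reduction2new} (passing the outer $\ds$ through $\HT$ and commuting, so the apparent order-$(s+1)$ factor pairs against an order-$(s-1)$ factor with one free derivative). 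The remaining $2\LR{u\,\ds w\,\HT D^s\ds^2 v,\HT D^{s-1}w}$ is integrated by parts once to lower $v$ to order $s+1$; its factors $u\,\ds w\,D^sw$, $u\,\ds^2w\,\HT D^{s-1}w$ and $\ds u\,\ds w\,\HT D^{s-1}w$ are each estimated by H\"older with one $w$ at order $s$ (or $s-1$) and the other differentiated at most twice and placed in $L^\infty$, using $s_0>7/2$ through $\|\ds^2 w\|_\infty\lesssim\|w\|_{H^{s_0-1}}$, and Young distributes the result into the $\|w\|_{H^s}^2$ and $\|w\|_{H^{s_0-1}}^2\|v\|_{H^{s+1}}^2$ slots.

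Finally, the two $(s+1)$-terms are routine. In $T_4=(s+1)\LR{u\,\ds^2z\,\HT D^s\ds w,\HT D^{s-1}w}$ the coefficient $\ds^2 z$ is only second order, so after one integration by parts (again using $\ds\HT D^{s-1}w=D^sw$) every factor sits at order $\le s$ and the term falls into the $\|w\|_{H^s}^2$ slot. In $T_2=(s+1)\LR{u(\HT D^s\ds z)\ds^2 w,\HT D^{s-1}w}$ I would pair $\HT D^s\ds z$ against $u\,\ds^2 w\,\HT D^{s-1}w$: for $z=v$ this lands directly in the $\|w\|_{H^{s_0-1}}^2\|v\|_{H^{s+1}}^2$ slot, while for $z=u$ one integration by parts moves the top derivative off $u$ and Young splits the estimate between the $\|w\|_{H^s}^2$ and $(\|u\|_{H^s}^2+\|v\|_{H^s}^2)\|w\|_{H^{s_0}}^2$ slots. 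I expect the recombination of $T_1$ and $T_3$ and the correct bookkeeping of the coefficient $2$ to be the main obstacle; once $s_0>7/2$ supplies the $L^\infty$ control of the low-order derivatives of $w$, the remaining estimates are direct H\"older--Young arguments.
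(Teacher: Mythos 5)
Your proposal is correct and follows essentially the same route as the paper: expand $\HT D^{s}\ds(\ds z\,\ds w)$ via $P_{s}^{(4)}(z,w)$ (Lemma \ref{comm.est.2}), recombine the two top-order cross terms through $(\HT D^{s}\ds^{2}z)\ds w+\ds z\,\HT D^{s}\ds^{2}w=2\ds u\,\HT D^{s}\ds^{2}w+2\ds w\,\HT D^{s}\ds^{2}v$, integrate by parts using $\ds\HT D^{s-1}w=D^{s}w$ to produce the cancellation with $2\LR{u\ds u\HT D^{s}\ds w,D^{s}w}$, and dispose of the remaining terms by the symmetrization of Lemma \ref{reduction} and H\"older--Young. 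You in fact make explicit the recombination and the leftover $\LR{\ds(u\ds u)\HT D^{s-1}w,\HT D^{s}\ds w}$ that the paper's two-line proof leaves implicit.
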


\begin{proof}
Set $z=u+v$.
Lemma \ref{comm.est.7} shows that
\EQQS{
&|\LR{u\HT D^{s}\ds(\ds z\ds w),\HT D^{s-1}w}+2\LR{u\ds u\HT D^{s}\ds w,D^{s}w}|\\
&\le|\LR{uP_{s}^{(4)}(z,w),\HT D^{s-1}w}|+2|\LR{\ds(u\ds w\HT D^{s-1}w),\HT D^{s}\ds v}|\\
&\quad+(s+1)|\LR{\ds(u\ds^{2}w\HT D^{s-1}w),\HT D^{s}z}|+(s+1)|\LR{\ds(u\ds^{2}z\HT D^{s-1}w),\HT D^{s}w}|\\
&\lesssim I_{s_{0}}(u,v)^{2}\{\|w\|_{H^{s}}^{2}+\|w\|_{H^{s_{0}-1}}^{2}\|v\|_{H^{s+1}}^{2}+(\|u\|_{H^{s}}^{2}+\|v\|_{H^{s}}^{2})\|w\|_{H^{s_{0}}}^{2}\},
}
which completes the proof.
\end{proof}

\begin{lem}\label{lem4.28}
Let $s_{0}>7/2$ and $s\ge1$.
Let $u,v\in H^{\max\{s+1,s_{0}\}}(\T)$.
Then
\EQQS{
&|\LR{u\HT D^{s}w,D^{s}((\ds u)^{2}-(\ds v)^{2})}+2\LR{u\ds u\HT D^{s}\ds w,D^{s}w}|\\
&\lesssim I_{s_{0}}(u,v)^{2}\{\|w\|_{H^{s}}^{2}+\|w\|_{H^{s_{0}-1}}^{2}\|v\|_{H^{s+1}}^{2}+(\|u\|_{H^{s}}^{2}+\|v\|_{H^{s}}^{2})\|w\|_{H^{s_{0}}}^{2}\},
}
where $w=u-v$.
\end{lem}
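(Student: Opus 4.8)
The plan is to imitate the treatment of the companion Lemmas \ref{lem4.25} and \ref{lem4.26}, but with one essential modification dictated by the right-hand side: the admissible error carries $\|v\|_{H^{s+1}}$ but neither $\|u\|_{H^{s+1}}$ nor $\|w\|_{H^{s+1}}$. For this reason the symmetric factorization $(\ds u)^{2}-(\ds v)^{2}=\ds z\,\ds w$ with $z=u+v$ (which is available in Lemma \ref{lem4.26} thanks to the extra derivative in the operator $\HT D^{s}\ds$ and the lower pairing against $\HT D^{s-1}w$) is unusable here: pairing against $D^{s}w$ leaves no buffer to absorb the surplus derivative, so the piece $\ds w\,D^{s}\ds z$ would force either $\|u\|_{H^{s+1}}$ (if left as it is) or $\|w\|_{H^{s+1}}$ (after integrating by parts). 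First I would therefore use the asymmetric factorization
\[
(\ds u)^{2}-(\ds v)^{2}=2\ds v\,\ds w+(\ds w)^{2},\qquad w=u-v,
\]
which concentrates the surplus regularity on $v$, where it is permitted, and relegates the rest to a purely quadratic-in-$w$ term.

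Next I would expand $D^{s}(\ds v\,\ds w)$ and $D^{s}((\ds w)^{2})$ into their top-order parts plus controlled remainders. Writing each product of first derivatives as $\ds a\,\ds b=\ds(a\,\ds b)-a\,\ds^{2}b$ and invoking Lemma \ref{comm.est.0} (for the $P_{s}^{(8)}$-part) together with Lemma \ref{comm.est.7} (for the $a\,\ds^{2}b$-part), one obtains
\[
D^{s}(\ds v\,\ds w)=\ds v\,D^{s}\ds w+\ds w\,D^{s}\ds v+E_{1},\qquad
D^{s}((\ds w)^{2})=2\ds w\,D^{s}\ds w+E_{2},
\]
with $\|E_{1}\|+\|E_{2}\|\lesssim\|v\|_{H^{s_{0}}}\|w\|_{H^{s}}+\|v\|_{H^{s}}\|w\|_{H^{s_{0}}}+\|w\|_{H^{s_{0}}}\|w\|_{H^{s}}$. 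Pairing with $2u\HT D^{s}w$ and collecting the terms that carry $D^{s}\ds w$, the coefficient becomes $2(\ds v+\ds w)=2\ds u$, so the leading contribution is $2\LR{u\ds u\HT D^{s}w,D^{s}\ds w}$; integrating by parts once (using $D^{s}\ds w=\ds D^{s}w$) turns this into $-2\LR{u\ds u\HT D^{s}\ds w,D^{s}w}$ plus a term with $\ds$ falling on the smooth factor $u\ds u$, and the first of these cancels exactly the correction term $+2\LR{u\ds u\HT D^{s}\ds w,D^{s}w}$ in the statement.

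It then remains to bound the error terms. The crucial term $2\LR{u\ds w\HT D^{s}w,D^{s}\ds v}$, coming from $\ds w\,D^{s}\ds v$, must be estimated \emph{without} integrating by parts: by Cauchy--Schwarz, the embedding $\|\ds w\|_{\I}\lesssim\|w\|_{H^{s_{0}-1}}$ (valid since $s_{0}-1>3/2$), the bound $\|D^{s}\ds v\|\lesssim\|v\|_{H^{s+1}}$, and Young's inequality, it produces precisely the admissible $\|w\|_{H^{s_{0}-1}}^{2}\|v\|_{H^{s+1}}^{2}$ and $\|w\|_{H^{s}}^{2}$ contributions. The remaining pieces --- the commutator remainders $E_{1},E_{2}$ and the terms in which $\ds$ has fallen on a smooth coefficient such as $\LR{\ds(u\ds u)\HT D^{s}w,D^{s}w}$ --- carry at most $H^{s_{0}}$ derivatives on their rough factors and are controlled by Lemma \ref{G.N.}, Lemma \ref{Leibniz} and the trivial bound $\|w\|_{H^{s_{0}}}\le I_{s_{0}}(u,v)$. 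The main obstacle, and the whole point of the argument, is the asymmetric bookkeeping of the first step: one must arrange both that the surplus derivative lands on $v$ (where it is handled with no integration by parts) and that the genuinely top-order contribution is assembled into the single term $2\LR{u\ds u\HT D^{s}\ds w,D^{s}w}$ that the correction term is designed to annihilate, so that no uncontrollable $\|u\|_{H^{s+1}}$ or $\|w\|_{H^{s+1}}$ ever survives.
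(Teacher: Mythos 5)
Your argument is correct and is essentially the paper's proof: the paper factors $(\ds u)^{2}-(\ds v)^{2}=\ds z\,\ds w$ with $z=u+v$ and applies its commutator Lemma \ref{comm.est.3}, but since $\ds z\,\ds w=2\ds v\,\ds w+(\ds w)^{2}$ your ``asymmetric'' decomposition is algebraically the same, and both routes assemble the coefficient $2\ds u$ in front of $D^{s}\ds w$, integrate by parts once to produce the cancelling term $-2\LR{u\ds u\HT D^{s}\ds w,D^{s}w}$, and leave exactly the residuals $\LR{\ds(u\ds u)\HT D^{s}w,D^{s}w}$ and $\LR{u\ds w\HT D^{s}w,D^{s}\ds v}$, the latter estimated without integration by parts via $\|\ds w\|_{\I}\lesssim\|w\|_{H^{s_{0}-1}}$. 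Your claim that the symmetric factorization is ``unusable'' is overstated --- the paper uses it and avoids $\|u\|_{H^{s+1}}$ by the same regrouping you perform --- but this does not affect the validity of your proof.
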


\begin{proof}
Lemma \ref{comm.est.3} shows that
\EQQS{
&|\LR{u\HT D^{s}w,D^{s}(\ds z\ds w)}+2\LR{u\ds u\HT D^{s}\ds w,D^{s}w}|\\
&\le|\LR{u\HT D^{s}w,D^{s}(\ds z\ds w)-D^{s}\ds z\ds w-\ds zD^{s}\ds w}|+2|\LR{\ds(u\ds u)\HT D^{s}w,D^{s}w}|\\
&\quad+2|\LR{u\ds w\HT D^{s}w,D^{s}\ds v}|\\
&\lesssim I_{s_{0}}(u,v)^{2}\{\|w\|_{H^{s}}^{2}+\|w\|_{H^{s_{0}-1}}^{2}\|v\|_{H^{s+1}}^{2}+(\|u\|_{H^{s}}^{2}+\|v\|_{H^{s}}^{2})\|w\|_{H^{s_{0}}}^{2}\},
}
which completes the proof.
\end{proof}

\begin{lem}\label{lem4.29}
Let $s_{0}>7/2$ and $s\ge1$.
Let $u,v\in H^{\max\{s+2,s_{0}\}}(\T)$.
Then
\EQQS{
&|\LR{u\HT D^{s}\ds((\HT\ds u)^{2}-(\HT\ds v)^{2}),\HT D^{s-1}w}|\\
&\lesssim I_{s_{0}}(u,v)^{2}\{\|w\|_{H^{s}}^{2}+\|w\|_{H^{s_{0}-1}}^{2}\|v\|_{H^{s+1}}^{2}+(\|u\|_{H^{s}}^{2}+\|v\|_{H^{s}}^{2})\|w\|_{H^{s_{0}}}^{2}\},
}
where $w=u-v$.
\end{lem}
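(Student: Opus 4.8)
The plan is to mirror the proof of Lemma~\ref{lem4.26}, but to show that \emph{no} first-order ($L_3$-type) loss survives; this is precisely why the right-hand side here carries no subtracted main term. Write $z=u+v$, so that $(\HT\ds u)^2-(\HT\ds v)^2=(\HT\ds z)(\HT\ds w)$, and expand $\HT D^s\ds((\HT\ds z)(\HT\ds w))=\HT D^s\ds(\ds(\HT z)\ds(\HT w))$ by means of the commutator $P_s^{(4)}(\HT z,\HT w)$ from Lemma~\ref{comm.est.2}. Pairing the remainder $u\,P_s^{(4)}(\HT z,\HT w)$ against $\HT D^{s-1}w$ is harmless: Lemma~\ref{comm.est.2} together with Young's inequality controls it by the admissible quantity $I_{s_0}(u,v)^2\{\|w\|_{H^s}^2+(\|u\|_{H^s}^2+\|v\|_{H^s}^2)\|w\|_{H^{s_0}}^2\}$. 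Using $\HT^2=-\mathrm{Id}$ (the torus mean-correction being harmless, as noted in the introduction), the four explicit terms that $P_s^{(4)}$ subtracts become $-(D^s\ds^2 z)(\HT\ds w)$, $-(s+1)(D^s\ds z)(\HT\ds^2 w)$, $-(\HT\ds z)(D^s\ds^2 w)$ and $-(s+1)(\HT\ds^2 z)(D^s\ds w)$, each multiplied by $u$ and paired with $\HT D^{s-1}w$.

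Next I would integrate each of these four brackets by parts. The decisive simplification is the pair of Fourier identities $\HT\ds=D$ and $\HT D^{s-1}\ds w=D^s w$: they force the two top derivatives on $w$ to combine as $D^s w\cdot D^s\ds w=\tfrac12\ds((D^s w)^2)$, rather than the mismatched product $D^s w\cdot\HT D^s\ds w$ that produced the genuine $L_3$ loss in Lemma~\ref{lem4.26}. Consequently every would-be first-order loss becomes a perfect $x$-derivative and integrates by parts onto the low-order coefficient $u$ together with $\HT\ds z$ (of order at most $2\le s_0$), leaving only quantities of the form $\LR{\ds(u\,\HT\ds u),(D^s w)^2}$ and $\LR{\ds(u\,\HT\ds^2 u),D^s w\,\HT D^{s-1}w}$, which are bounded directly by $I_{s_0}(u,v)^2\|w\|_{H^s}^2$. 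The parts of $z$ carrying a derivative of order $s+2$ split as $z=u+v$: the $v$-part is reduced by one integration by parts to $D^s\ds v$, yielding the admissible factor $\|v\|_{H^{s+1}}$, while the $u$-parts are handled as below.

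The main obstacle is the exact cancellation of the genuinely top-order pieces. The first term carries $D^s\ds^2 u$ (order $s+2$ on $u$) and the third carries $D^s\ds^2 w$ (order $s+2$ on $w$); neither is bounded by the available norms $\|u\|_{H^{s_0}},\|u\|_{H^s},\|v\|_{H^{s+1}},\|w\|_{H^s}$ individually. After peeling off perfect derivatives and repeatedly substituting $w=u-v$ so as to form $\tfrac12\ds((D^s u)^2)$ and $\tfrac12\ds((D^s w)^2)$, all of these high pieces must cancel across the four terms, the cancellation hinging on the precise coefficients $(s+1)$ appearing in $P_s^{(4)}$ in concert with the $\HT$-identities; what survives is reduced by Lemma~\ref{reduction} and Lemma~\ref{comm.est.H} and absorbed by Young's inequality into $\|w\|_{H^s}^2+\|w\|_{H^{s_0-1}}^2\|v\|_{H^{s+1}}^2+(\|u\|_{H^s}^2+\|v\|_{H^s}^2)\|w\|_{H^{s_0}}^2$. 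The heart of the argument is verifying that no uncancellable residual (such as $\LR{u\,\HT\ds w\,D^s\ds v,D^s u}$) is left behind, i.e. the delicate sign- and coefficient-bookkeeping of the highest-order boundary terms; the remaining estimates are routine applications of the commutator lemmas together with $\|\ds^k w\|_\infty\lesssim\|w\|_{H^{s_0-1}}$ for $k\le2$ (valid since $s_0>7/2$).
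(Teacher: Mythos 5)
Your proposal follows essentially the same route as the paper: set $z=u+v$, apply Lemma \ref{comm.est.2} via $P_s^{(4)}(\HT z,\HT w)$, and integrate the four explicit terms by parts, using $\HT\ds=D$ so that the top-order products become perfect derivatives (together with the substitution $u=w+v$ and the one-step reduction of $D^{s}\ds^{2}v$ to $D^{s}\ds v$ to produce the $\|v\|_{H^{s+1}}$ factor). One small correction of emphasis: no cancellation \emph{across} the four subtracted terms is actually needed — each one is individually reducible to admissible quantities by the perfect-derivative and $w=u-v$ devices you describe, which is exactly how the paper's list of residual terms arises.
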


\begin{proof}
Set $z=u+v$.
Lemma \ref{comm.est.2} shows that
\EQQS{
&|\LR{u\HT D^{s}\ds((\HT\ds z)(\HT\ds w)),\HT D^{s-1}w}|\\
&\le|\LR{uP_{s}^{(4)}(\HT z,\HT w),\HT D^{s-1}w}+3|\LR{\ds(u\HT\ds u),(D^{s}w)^{2}}|\\
&\quad+2|\LR{\ds^{2}(u\HT\ds u)D^{s}w,\HT D^{s-1}w}|+2|\LR{\ds(u(\HT\ds w)\HT D^{s-1}w),D^{s}\ds v}|\\
&\quad+(s+1)|\LR{\ds(u(\HT\ds^{2}w)\HT D^{s-1}w),D^{s}z)}|+(s+1)|\LR{\ds(u(\HT\ds^{2}z)\HT D^{s-1}w),D^{s}w)}|\\
&\lesssim I_{s_{0}}(u,v)^{2}\{\|w\|_{H^{s}}^{2}+\|w\|_{H^{s_{0}-1}}^{2}\|v\|_{H^{s+1}}^{2}+(\|u\|_{H^{s}}^{2}+\|v\|_{H^{s}}^{2})\|w\|_{H^{s_{0}}}^{2}\},
}
which completes the proof.
\end{proof}

\begin{lem}\label{lem4.30}
Let $s_{0}>7/2$ and $s\ge1$.
Let $u,v\in H^{\max\{s+1,s_{0}\}}(\T)$.
Then
\EQQS{
&|\LR{u\HT D^{s}w,D^{s}((\HT\ds u)^{2}-(\HT\ds v)^{2})}|\\
&\lesssim I_{s_{0}}(u,v)^{2}\{\|w\|_{H^{s}}^{2}+\|w\|_{H^{s_{0}-1}}^{2}\|v\|_{H^{s+1}}^{2}+(\|u\|_{H^{s}}^{2}+\|v\|_{H^{s}}^{2})\|w\|_{H^{s_{0}}}^{2}\},
}
where $w=u-v$.
\end{lem}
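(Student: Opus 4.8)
The plan is to follow the proof of Lemma \ref{lem4.28}, which estimates the same inner product for the non-Hilbert nonlinearity $(\ds u)^{2}-(\ds v)^{2}$, modifying the integrations by parts to absorb the extra Hilbert transforms as is done in Lemma \ref{lem4.29}. First I would set $z=u+v$ and factor $(\HT\ds u)^{2}-(\HT\ds v)^{2}=(\HT\ds z)(\HT\ds w)$, so that the quantity to estimate becomes $\LR{u\HT D^{s}w,D^{s}((\HT\ds z)(\HT\ds w))}$. Applying the fractional Leibniz estimate of Lemma \ref{comm.est.3} in its order-$s$ form (obtained by replacing $s$ with $s-1$ and taking $\La_{s}=D^{s}$, which is legitimate since $s\ge1$) to the product $(\HT\ds z)(\HT\ds w)$, I would write
\[
D^{s}((\HT\ds z)(\HT\ds w))=(\HT D^{s}\ds z)(\HT\ds w)+(\HT\ds z)(\HT D^{s}\ds w)+R,
\]
and then estimate the two principal terms, and the remainder $R$, paired against $u\HT D^{s}w$.

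The decisive observation, which explains why no derivative-loss term survives in the statement (in contrast with Lemma \ref{lem4.28}), concerns the principal term in which all $s$ derivatives fall on $w$, namely $\LR{u\HT D^{s}w,(\HT\ds z)(\HT D^{s}\ds w)}$. Here both top-order factors carry the Hilbert transform, so that $(\HT D^{s}w)(\HT D^{s}\ds w)=\tfrac12\ds((\HT D^{s}w)^{2})$ is a perfect derivative; integrating by parts turns the term into $-\tfrac12\LR{\ds(u\HT\ds z),(\HT D^{s}w)^{2}}$, and since $\|\ds(u\HT\ds z)\|_{\I}\lesssim\|u\|_{H^{s_{0}}}\|z\|_{H^{s_{0}}}$ for $s_{0}>5/2$ while $\|(\HT D^{s}w)^{2}\|_{1}=\|D^{s}w\|^{2}\lesssim\|w\|_{H^{s}}^{2}$, it contributes only $I_{s_{0}}(u,v)^{2}\|w\|_{H^{s}}^{2}$, with no surviving loss. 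This is precisely the structural feature that distinguishes the present lemma from Lemma \ref{lem4.28}, where the two highest factors $\HT D^{s}w$ and $D^{s}\ds w$ do not both carry $\HT$ and hence leave an extracted derivative-loss term.

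For the principal term $\LR{u\HT D^{s}w,(\HT D^{s}\ds z)(\HT\ds w)}$ and for $R$ I would split $z=u+v$. The parts carrying $\HT D^{s}\ds v$ are estimated directly by Cauchy--Schwarz together with the Gagliardo--Nirenberg bound $\|\HT\ds w\|_{\I}\lesssim\|w\|_{H^{s_{0}-1}}$ (valid for $s_{0}>5/2$ by Lemma \ref{G.N.}) and Young's inequality, producing exactly the $\|w\|_{H^{s_{0}-1}}^{2}\|v\|_{H^{s+1}}^{2}$ and $\|w\|_{H^{s}}^{2}$ contributions; the parts carrying $\HT D^{s}\ds u$, which would place $s+1$ derivatives on $u$ and are therefore not directly admissible, must first be integrated by parts to move a derivative off $u$. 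I expect the main obstacle to be precisely these last terms: a naive integration by parts is circular, since one is left with a top-order expression of the form $\LR{u(\HT\ds w)\HT D^{s}\ds u,\HT D^{s}w}$ whose two highest factors carry $s+1$ derivatives on $u$ and $s$ on $w$ (or vice versa). The remedy is to pair such a term with its symmetric counterpart so that the highest-order parts combine into $\ds(\HT D^{s}u\cdot\HT D^{s}w)$, a total derivative; one further integration by parts, in the spirit of the reduction Lemma \ref{reduction}, then throws the surplus derivative onto a low-order coefficient. Together with the Hilbert commutator estimate of Lemma \ref{comm.est.H} and the Sobolev embeddings of Lemma \ref{G.N.}, this bounds every remaining term by $I_{s_{0}}(u,v)^{2}$ times the three quantities on the right-hand side, completing the estimate.
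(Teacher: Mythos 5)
Your skeleton coincides with the paper's: set $z=u+v$, factor the difference as $(\HT\ds z)\HT\ds w$, peel off the two principal terms with Lemma \ref{comm.est.3}, and exploit that $(\HT D^{s}w)(\HT D^{s}\ds w)=\tfrac12\ds((\HT D^{s}w)^{2})$ is a perfect derivative — that last observation is indeed the structural point of the lemma, and your treatment of the remainder and of the parts carrying $\HT D^{s}\ds v$ is fine. The gap is exactly where you predicted it: the term in which $s+1$ derivatives land on $u$. Your proposed remedy — pair $\LR{u(\HT\ds w)\HT D^{s}\ds u,\HT D^{s}w}$ with a ``symmetric counterpart'' so the top-order factors combine into $\ds(\HT D^{s}u\cdot\HT D^{s}w)$ — does not work, because the needed counterpart $\LR{u(\HT\ds w)\HT D^{s}u,\HT D^{s}\ds w}$ simply does not occur in the decomposition: the other principal term is $\LR{u(\HT\ds z)\HT D^{s}w,\HT D^{s}\ds w}$, whose top-order factors are both in $w$. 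If you try to manufacture the counterpart by integrating by parts, you get $\LR{u(\HT\ds w)\HT D^{s}\ds u,\HT D^{s}w}=-\LR{\ds(u\HT\ds w),(\HT D^{s}u)(\HT D^{s}w)}-\LR{u(\HT\ds w)\HT D^{s}u,\HT D^{s}\ds w}$, and a second integration by parts on the last term reproduces the original one — precisely the circularity you already identified, not a resolution of it.

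The correct repair is simpler and is what the paper's computation amounts to: never let $D^{s}\ds$ fall on $u$ in the first place. Either split $\HT D^{s}\ds z=\HT D^{s}\ds w+2\HT D^{s}\ds v$ (equivalently, substitute $u=w+v$ in the offending factor), so that the $w$-part is again the perfect derivative $-\tfrac12\LR{\ds(u\HT\ds w),(\HT D^{s}w)^{2}}\lesssim I_{s_{0}}(u,v)^{2}\|w\|_{H^{s}}^{2}$ and the $v$-part is estimated directly like your other $v$-terms; or, as in the paper, add the two principal terms and use $\HT\ds w+\HT\ds z=2\HT\ds u$ at the \emph{low-order} factor, which collapses the whole principal part to $-\LR{\ds(u\HT\ds u),(\HT D^{s}w)^{2}}+2\LR{u(\HT\ds w)\HT D^{s}w,\HT D^{s}\ds v}$. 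With that one substitution your argument closes; as written, the step you yourself flag as the main obstacle is not actually carried out.
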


\begin{proof}
Set $z=u+v$.
Lemma \ref{comm.est.3} shows that
\EQQS{
&|\LR{u\HT D^{s}w,D^{s}((\HT\ds z)\HT\ds w)}|\\
&\le|\LR{u\HT D^{s}w,D^{s}((\HT\ds z)\HT\ds w)-(\HT\ds w)\HT D^{s}\ds z-(\HT\ds z)\HT D^{s}\ds w}|\\
&\quad+|\LR{\ds(u\HT\ds^{2}u),(\HT D^{s}w)^{2}}|+2|\LR{u(\HT\ds w)\HT D^{s}w,\HT D^{s}\ds v}|\\
&\lesssim I_{s_{0}}(u,v)^{2}\{\|w\|_{H^{s}}^{2}+\|w\|_{H^{s_{0}-1}}^{2}\|v\|_{H^{s+1}}^{2}+(\|u\|_{H^{s}}^{2}+\|v\|_{H^{s}}^{2})\|w\|_{H^{s_{0}}}^{2}\},
}
which completes the proof.
\end{proof}

\begin{lem}\label{lem4.31}
Let $s_{0}>7/2$ and $s\ge1$.
Let $u,v\in H^{\max\{s+3,s_{0}\}}(\T)$.
Then
\EQQS{
&|\LR{uD^{s}\ds(u\HT\ds^{2}u-v\HT\ds^{2}v),\HT D^{s-1}w}+(s-3)\LR{u\ds u\HT D^{s}\ds w,D^{s}w}|\\
&\lesssim I_{s_{0}}(u,v)^{2}\{\|w\|_{H^{s}}^{2}+\|w\|_{H^{s_{0}-2}}^{2}\|v\|_{H^{s+2}}^{2}+(\|u\|_{H^{s}}^{2}+\|v\|_{H^{s}}^{2})\|w\|_{H^{s_{0}}}^{2}\},
}
where $w=u-v$.
\end{lem}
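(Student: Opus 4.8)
The plan is to follow the proof of Lemma~\ref{lem4.23} almost verbatim, since the present statement is its exact analogue for the term $c_{4}\HT(u\HT\ds^{2}u)$ in $F_{2}$; note that the target coefficient $(s-3)$ is the same in both lemmas, reflecting that $c_{1}u\ds^{2}u$ and $c_{4}\HT(u\HT\ds^{2}u)$ feed the same first order derivative loss. First I would reduce to $w=u-v$ by writing
\[u\HT\ds^{2}u-v\HT\ds^{2}v=u\HT\ds^{2}w+w\HT\ds^{2}v.\]
Because $\HT$ is a Fourier multiplier it commutes with $D^{s}$ and $\ds$, so $u\HT\ds^{2}w=u\ds^{2}(\HT w)$ and the operator $P_{s}^{(1)}$ applies with second argument $\HT w$; explicitly
\EQQS{
D^{s}\ds(u\HT\ds^{2}w)&=P_{s}^{(1)}(u,\HT w)+D^{s}\ds u\,\HT\ds^{2}w+u\HT D^{s}\ds^{3}w\\
&\quad+(s+1)\ds u\,\HT D^{s}\ds^{2}w+\frac{s(s+1)}{2}\ds^{2}u\,\HT D^{s}\ds w,
}
and likewise $D^{s}\ds(w\HT\ds^{2}v)$ expands via $P_{s}^{(1)}(w,\HT v)$.

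Next I would pair $u$ times each expansion against $\HT D^{s-1}w$. The commutator pieces $\LR{uP_{s}^{(1)}(u,\HT w),\HT D^{s-1}w}$ and $\LR{uP_{s}^{(1)}(w,\HT v),\HT D^{s-1}w}$ are handled by Lemma~\ref{comm.est.1}, using $\|\HT f\|_{H^{s}}\le\|f\|_{H^{s}}$ and $\|u\|_{\I}\lec\|u\|_{H^{s_{0}}}$, and fall directly inside the right-hand side. The genuinely dangerous contribution is the third order term $\LR{u^{2}\HT D^{s}\ds^{3}w,\HT D^{s-1}w}$ coming from $u\cdot u\HT D^{s}\ds^{3}w$; together with the terms carrying $D^{s}\ds u$, $(s+1)\ds u\,\HT D^{s}\ds^{2}w$ and $\frac{s(s+1)}{2}\ds^{2}u\,\HT D^{s}\ds w$, these are the sources of the surviving first order loss. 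I would integrate by parts in each of them to move derivatives off $u$ (whose $H^{s+1}$-norm is not available) and repeatedly commute $\HT$ past multiplication by $u$, $\ds u$, $\ds^{2}u$ using Lemma~\ref{comm.est.H}; the resulting commutators are of lower order and are absorbed. The top order remainder is then reduced by Lemma~\ref{reduction}, and collecting the several contributions yields exactly the coefficient $(s-3)$ on $\LR{u\ds u\HT D^{s}\ds w,D^{s}w}$.

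Finally, all the $v$-dependent pieces produced by $P_{s}^{(1)}(w,\HT v)$ and by the explicit leading terms of $D^{s}\ds(w\HT\ds^{2}v)$ are estimated by Gagliardo--Nirenberg (Lemma~\ref{G.N.}) and the product rule (Lemma~\ref{Leibniz}), exactly as in Lemma~\ref{lem4.23}; these are where the factors $\|w\|_{H^{s_{0}-2}}^{2}\|v\|_{H^{s+2}}^{2}$ and $(\|u\|_{H^{s}}^{2}+\|v\|_{H^{s}}^{2})\|w\|_{H^{s_{0}}}^{2}$ arise, the symmetrization allowing $v$ to appear only at order $s+2$ rather than $s+3$. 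I expect the main obstacle to be the bookkeeping of these top order terms: after integrating by parts and relocating $\HT$ one must check that all the genuine first order losses either cancel pairwise or assemble into the single term above, and confirm that the accumulated coefficient is precisely $(s-3)$ and not some nearby value.
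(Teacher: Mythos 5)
Your proposal matches the paper's proof essentially verbatim: the same decomposition $u\HT\ds^{2}u-v\HT\ds^{2}v=u\HT\ds^{2}w+w\HT\ds^{2}v$, the same expansion via $P_{s}^{(1)}(u,\HT w)$ and $P_{s}^{(1)}(w,\HT v)$ controlled by Lemma \ref{comm.est.1}, and the same integration by parts with Lemma \ref{reduction} absorbing the surviving first order loss into the $(s-3)$ coefficient. The approach is correct.
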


\begin{proof}
We see from Lemma \ref{comm.est.1} and \ref{reduction} that
\EQQS{
&|\LR{uD^{s}\ds(u\HT\ds^{2}w+w\HT\ds^{2}v),\HT D^{s-1}w}+(s-3)\LR{u\ds u\HT D^{s}\ds w,D^{s}w}|\\
&\le|\LR{u(P_{s}^{(1)}(u,\HT w)+P_{s}^{(1)}(w,\HT v)),\HT D^{s-1}w}|+|\LR{\ds(u(\HT\ds^{2}w)\HT D^{s-1}w),D^{s}u}|\\
&\quad+|\LR{\ds(\ds^{2}(u^{2})\HT D^{s-1}w),\HT D^{s}w}|+|\LR{\ds(u^{2}\HT D^{s}\ds w),D^{s}w}|\\
&\quad+(s+1)|\LR{\ds(\ds(u\ds u)\HT D^{s-1}w),\HT D^{s}w}|+s(s+1)|\LR{\ds(u\ds^{2}u\HT D^{s-1}w),\HT D^{s}w}|/2\\
&\quad+|\LR{\ds(u(\HT\ds^{2}v)\HT D^{s-1}w),D^{s}w}|+|\LR{\ds(uw\HT D^{s-1}w),\HT D^{s}\ds^{2}v}|\\
&\quad+(s+1)|\LR{\ds(u\ds w\HT D^{s-1}w),\HT D^{s}\ds v}|+s(s+1)|\LR{\ds(u\ds^{2}w\HT D^{s-1}w),\HT D^{s}v}|/2\\
&\lesssim I_{s_{0}}(u,v)^{2}\{\|w\|_{H^{s}}^{2}+\|w\|_{H^{s_{0}-2}}^{2}\|v\|_{H^{s+2}}^{2}+(\|u\|_{H^{s}}^{2}+\|v\|_{H^{s}}^{2})\|w\|_{H^{s_{0}}}^{2}\},
}
which completes the proof.
\end{proof}

\begin{lem}\label{lem4.32}
Let $s_{0}>7/2$ and $s\ge1$.
Let $u,v\in H^{\max\{s+2,s_{0}\}}(\T)$.
Then
\EQQS{
&|\LR{u\HT D^{s}w,D^{s-1}\ds(u\HT\ds^{2}u-v\HT\ds^{2}v)}+(s-2)\LR{u\ds u\HT D^{s}\ds w,D^{s}w}|\\
&\lesssim I_{s_{0}}(u,v)^{2}\{\|w\|_{H^{s}}^{2}+\|w\|_{H^{s_{0}-1}}^{2}\|v\|_{H^{s+1}}^{2}+(\|u\|_{H^{s}}^{2}+\|v\|_{H^{s}}^{2})\|w\|_{H^{s_{0}}}^{2}\},
}
where $w=u-v$.
\end{lem}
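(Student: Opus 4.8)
The plan is to mimic the proof of Lemma~\ref{lem4.25}, the only genuinely new ingredient being the multiplier identity $D^{s-1}\ds=-\HT D^{s}$ (both sides annihilate the zero mode) together with $\HT\ds^{2}=\ds^{2}\HT$. First I would split $u\HT\ds^{2}u-v\HT\ds^{2}v=u\HT\ds^{2}w+w\HT\ds^{2}v$, isolating the high frequencies of $w$ in the first summand and those of $v$ in the second. Writing $D^{s-1}\ds(f\HT\ds^{2}g)=-\HT D^{s}(f\,\ds^{2}(\HT g))$ and applying Lemma~\ref{comm.est.7} with $\La_{s}=\HT D^{s}$, together with the relation $\HT D^{s}\ds^{k}\HT g=-D^{s}\ds^{k}g$, produces the leading expansions $D^{s-1}\ds(u\HT\ds^{2}w)=uD^{s}\ds^{2}w+s\,\ds u\,D^{s}\ds w+r_{1}$ and $D^{s-1}\ds(w\HT\ds^{2}v)=wD^{s}\ds^{2}v+s\,\ds w\,D^{s}\ds v+r_{2}$, where $\|r_{1}\|$ and $\|r_{2}\|$ obey the bounds of Lemma~\ref{comm.est.7}. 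Pairing the two remainders against $u\HT D^{s}w$ and using $\|u\HT D^{s}w\|\lesssim\|u\|_{H^{s_{0}}}\|w\|_{H^{s}}$ (Sobolev embedding) already yields contributions of the stated form.

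The heart of the matter is the $w$-summand $\LR{u\HT D^{s}w,\,uD^{s}\ds^{2}w+s\,\ds u\,D^{s}\ds w}$. I would recast its first term as $\LR{u^{2}\HT D^{s}w,\,D^{s}\ds^{2}w}$ and integrate by parts once to obtain $-2\LR{u\,\ds u\,\HT D^{s}w,D^{s}\ds w}-\LR{u^{2}\HT D^{s}\ds w,\,D^{s}\ds w}$, so the two leading terms collapse to $(s-2)\LR{u\,\ds u\,\HT D^{s}w,D^{s}\ds w}$ minus the second-order loss $\LR{u^{2}\HT D^{s}\ds w,D^{s}\ds w}$. A second integration by parts rewrites $(s-2)\LR{u\,\ds u\,\HT D^{s}w,D^{s}\ds w}$ as $-(s-2)\LR{u\,\ds u\,\HT D^{s}\ds w,D^{s}w}$ plus the harmless piece $-(s-2)\LR{\ds(u\,\ds u)\,\HT D^{s}w,D^{s}w}$, and the surviving term $-(s-2)\LR{u\,\ds u\,\HT D^{s}\ds w,D^{s}w}$ is exactly cancelled by the $+(s-2)\LR{u\,\ds u\,\HT D^{s}\ds w,D^{s}w}$ carried in the statement. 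This is precisely how the first-order derivative loss is eliminated.

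The second-order loss $\LR{u^{2}\HT D^{s}\ds w,D^{s}\ds w}=-\LR{\ds(u^{2}\HT D^{s}\ds w),D^{s}w}$ cannot be closed by Cauchy--Schwarz, since that would cost $\|w\|_{H^{s+1}}$; instead I would route it through the antisymmetry Lemma~\ref{reduction} with coefficient $u^{2}$ and function $D^{s}w$, taking the free index there to be at most $s_{0}-2$ (allowed because $s_{0}>7/2$) and using that $H^{\sigma}$ is an algebra, which gives $\lesssim\|u\|_{H^{s_{0}}}^{2}\|w\|_{H^{s}}^{2}$. For the $v$-summand, the term $s\LR{u\HT D^{s}w,\,\ds w\,D^{s}\ds v}$ is immediate: placing $\ds w$ in $L^{\infty}$ via $\|\ds w\|_{\infty}\lesssim\|w\|_{H^{s_{0}-1}}$ (valid since $s_{0}-2>1/2$) and keeping $D^{s}w$, $D^{s}\ds v$ in $L^{2}$, Young's inequality distributes it into $\|w\|_{H^{s_{0}-1}}^{2}\|v\|_{H^{s+1}}^{2}$ and $\|w\|_{H^{s}}^{2}$. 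The remaining term $\LR{uw\HT D^{s}w,\,D^{s}\ds^{2}v}$ is handled by integrating by parts once off the top $v$-derivative; the Leibniz terms in which the freed derivative lands on a low copy of $w$ are again bounded by sending that copy to $L^{\infty}$, and the balanced residue $\LR{uw\,\HT D^{s}\ds w,\,D^{s}\ds v}$ is treated by writing $v=u-w$ so that its $w$-part falls under Lemma~\ref{reduction} (both rough factors then being $w$).

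I expect the main obstacle to be exactly these two balanced high--high pieces, $\LR{u^{2}\HT D^{s}\ds w,D^{s}\ds w}$ and $\LR{uw\,\HT D^{s}\ds w,D^{s}\ds v}$: each has two rough factors of order $s+1$, so it is insensitive to plain integration by parts and must be absorbed through the Hilbert-transform reduction of Lemma~\ref{reduction} rather than by Cauchy--Schwarz. The secondary delicate point is the index bookkeeping for the $v$-summand, where the top $v$-derivative has to be reduced to order $s+1$ and kept in $L^{2}$ while only a genuinely low copy of $w$ is sent to $L^{\infty}$, so that the right-hand side retains the shape $\|w\|_{H^{s_{0}-1}}^{2}\|v\|_{H^{s+1}}^{2}$ together with $(\|u\|_{H^{s}}^{2}+\|v\|_{H^{s}}^{2})\|w\|_{H^{s_{0}}}^{2}$ instead of a higher Sobolev index on $v$.
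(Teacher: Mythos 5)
Your treatment of the $w$-summand is sound and is essentially the paper's argument: split off $u\HT\ds^{2}w$, expand via Lemma \ref{comm.est.7} using $D^{s-1}\ds=-\HT D^{s}$, integrate by parts to produce the coefficient $s-2$, and absorb the genuine second-order loss $\LR{\ds(u^{2}\HT D^{s}\ds w),D^{s}w}$ through Lemma \ref{reduction} (the paper phrases the same cancellation with commutators $[\HT,u^{2}]$ and $[\HT,u\ds u]$, but the two bookkeepings are equivalent).

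The gap is in the $v$-summand. By integrating by parts in $\LR{uw\HT D^{s}w,D^{s}\ds^{2}v}$ you manufacture the balanced term $\LR{uw\,\HT D^{s}\ds w,D^{s}\ds v}$, and your proposed cure --- substituting $v=u-w$ so that Lemma \ref{reduction} applies --- only disposes of the $w$-component. The remaining piece $\LR{uw\,\HT D^{s}\ds w,D^{s}\ds u}$ carries order-$(s+1)$ derivatives on \emph{two different} functions, $w$ and $u$; Lemma \ref{reduction} requires the form $\LR{\ds(V\HT\ds U),U}$ with the same $U$ in both slots, so the antisymmetry trick is unavailable, and Cauchy--Schwarz costs $\|u\|_{H^{s+1}}\|w\|_{H^{s+1}}$, which is controlled by nothing on the right-hand side. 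The paper avoids this entirely by \emph{not} integrating by parts here: it keeps $\LR{u\HT D^{s}w,wD^{s+2}v}$ as is and estimates it by H\"older, placing $u$ and $w$ in $L^{\infty}$ (via $\|w\|_{\infty}\lesssim\|w\|_{H^{s_{0}-2}}$) and $D^{s}w$, $D^{s+2}v$ in $L^{2}$ --- this is precisely why the hypothesis demands $v\in H^{\max\{s+2,s_{0}\}}$. You should do the same; the attempt to trade $\|v\|_{H^{s+2}}$ for $\|v\|_{H^{s+1}}$ by one integration by parts is what breaks the argument.
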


\begin{proof}
Lemma \ref{comm.est.7} and \ref{reduction} shows that
\EQQS{
&|\LR{u\HT D^{s}w,D^{s-1}\ds(u\HT\ds^{2}w+w\HT\ds^{2}v)}+(s-2)\LR{u\ds u\HT D^{s}\ds w,D^{s}w}|\\
&\le|\LR{u\HT D^{s}w,D^{s-1}\ds(u\HT\ds^{2}w)-u\HT D^{s-1}\ds^{3}w-s\ds u\HT D^{s-1}\ds^{2}w}|\\
&\quad+|\LR{u\HT D^{s}w,D^{s-1}\ds(w\HT\ds^{2}v)-w\HT D^{s-1}\ds^{3}v-s\ds w\HT D^{s-1}\ds^{2}v}|\\
&\quad+|\LR{[\HT,u^{2}]D^{s}\ds^{2}w,D^{s}w}|+|\LR{\ds(u^{2}\HT D^{s}\ds w),D^{s}w}|\\
&\quad+s|\LR{[\HT,u\ds u]D^{s}\ds w,D^{s}w}|+|\LR{u\HT D^{s}w,wD^{s+2}v}|+s|\LR{u\HT D^{s}w,\ds wD^{s}\ds v}|\\
&\lesssim I_{s_{0}}(u,v)^{2}\{\|w\|_{H^{s}}^{2}+\|w\|_{H^{s_{0}-1}}^{2}\|v\|_{H^{s+1}}^{2}+(\|u\|_{H^{s}}^{2}+\|v\|_{H^{s}}^{2})\|w\|_{H^{s_{0}}}^{2}\},
}
which completes the proof.
\end{proof}

\begin{lem}\label{lem4.34}
Let $s_{0}>7/2$ and $s\ge1$.
Let $u,v\in H^{\max\{s+2,s_{0}\}}(\T)$.
Then
\EQQS{
&|\LR{(\HT\ds u)D^{s-1}w,D^{s-1}\ds(u\ds^{2}u-v\ds^{2}v)}|\\
&\lesssim I_{s_{0}}(u,v)^{2}\{\|w\|_{H^{s}}^{2}+\|w\|_{H^{s_{0}-1}}^{2}\|v\|_{H^{s+1}}^{2}+(\|u\|_{H^{s}}^{2}+\|v\|_{H^{s}}^{2})\|w\|_{H^{s_{0}}}^{2}\},
}
where $w=u-v$.
\end{lem}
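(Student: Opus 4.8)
The plan is to reduce the trilinear form to its principal part using the commutator machinery already developed, and then to dispose of the single surviving derivative loss by a Benjamin--Ono--type integration by parts. First I would use the algebraic identity $u\ds^2 u - v\ds^2 v = u\ds^2 w + w\ds^2 v$ to split the left-hand side as $A+B$, where $A:=\LR{(\HT\ds u)D^{s-1}w,\, D^{s-1}\ds(u\ds^2 w)}$ and $B:=\LR{(\HT\ds u)D^{s-1}w,\, D^{s-1}\ds(w\ds^2 v)}$. In $A$ all the top-order derivatives fall on $w$, so it carries the genuine loss; in $B$ they may fall on $v$, and this is the source of the term $\|w\|_{H^{s_0-1}}^2\|v\|_{H^{s+1}}^2$ on the right-hand side.

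For $A$ I would apply Lemma \ref{comm.est.1} with the index $s$ replaced by $s-1$ (legitimate since $s\ge 1$), i.e. expand $D^{s-1}\ds(u\ds^2 w)$ through $P_{s-1}^{(1)}(u,w)$. Pairing the remainder $P_{s-1}^{(1)}(u,w)$ against $(\HT\ds u)D^{s-1}w$ and using $\|\HT\ds u\|_{\I}\lesssim\|u\|_{H^{s_0}}$ together with the bound on $\|P_{s-1}^{(1)}(u,w)\|$ gives a contribution of the desired form. The remaining principal pieces $\LR{(\HT\ds u)D^{s-1}w,(D^{s-1}\ds u)\ds^2 w}$, $s\LR{(\HT\ds u)D^{s-1}w,(\ds u)D^{s-1}\ds^2 w}$ and $\tfrac{(s-1)s}{2}\LR{(\HT\ds u)D^{s-1}w,(\ds^2 u)D^{s-1}\ds w}$ sit at or below the derivative budget $2s$ and are handled by H\"older's inequality (using $\ds u,\ds^2 u\in L^{\I}$), one integration by parts to balance derivatives, and Young's inequality, producing the terms $\|w\|_{H^s}^2$ and $(\|u\|_{H^s}^2+\|v\|_{H^s}^2)\|w\|_{H^{s_0}}^2$.

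The crux is the principal term $\LR{(\HT\ds u)D^{s-1}w,\, u D^{s-1}\ds^3 w}=\int_{\T}(u\HT\ds u)\,P\,\ds^3 P\,dx$ with $P:=D^{s-1}w$, which carries $2s+1$ derivatives on $w$, one beyond the budget. I would symmetrise it by integration by parts to obtain $\int (u\HT\ds u)P\ds^3 P = -\tfrac12\int \ds^3(u\HT\ds u)P^2 + \tfrac32\int \ds(u\HT\ds u)(\ds P)^2$, and a further integration by parts recasts the first integral (up to a constant) as $\int \ds^2(u\HT\ds u)\,P\,\ds P$. Here the threshold $s_0>7/2$ enters decisively: since $u\HT\ds u\in H^{s_0-1}$ we have $\ds^2(u\HT\ds u)\in H^{s_0-3}\hookrightarrow L^{\I}(\T)$ and $\ds(u\HT\ds u)\in L^{\I}(\T)$, both with norm $\lesssim\|u\|_{H^{s_0}}^2$, so that both integrals are bounded by $\|u\|_{H^{s_0}}^2\|w\|_{H^s}^2$ (using $\|\ds P\|=\|D^s w\|$ and $\|P\|=\|D^{s-1}w\|$). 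I expect this endpoint Sobolev embedding at $s_0>7/2$ to be the main obstacle; note that no first-order-loss term of the form $\LR{u\ds u\,\HT D^s\ds w, D^s w}$ survives, consistently with the absence of a subtracted term in the statement, precisely because neither the weight $\HT\ds u$ nor the operator $D^{s-1}\ds$ places a Hilbert transform on the top-order factor of $w$.

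Finally, for $B$ I would extract the principal part $w\,D^{s-1}\ds^3 v$, which a priori costs $\|v\|_{H^{s+2}}$; a single integration by parts transferring one derivative from $D^{s-1}\ds^3 v=\ds(D^{s-1}\ds^2 v)$ onto the smooth weight $(\HT\ds u)(D^{s-1}w)w$ lowers this to $\|v\|_{H^{s+1}}$, and distributing the derivative (worst case onto $D^{s-1}w$, giving $D^{s-1}\ds w=D^s w$) produces $\|u\|_{H^{s_0}}\|w\|_{H^s}\|w\|_{H^{s_0-1}}\|v\|_{H^{s+1}}$, dominated by Young's inequality by $I_{s_0}(u,v)^2(\|w\|_{H^s}^2+\|w\|_{H^{s_0-1}}^2\|v\|_{H^{s+1}}^2)$; the lower-order pieces of $B$ are estimated exactly as in $A$. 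Collecting the bounds for $A$ and $B$, and recording that each term carries two low-frequency factors (one from $\HT\ds u$, one from the non-$w$ slot of the nonlinearity), hence the prefactor $I_{s_0}(u,v)^2$, completes the proof.
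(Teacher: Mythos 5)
Your proposal is correct and follows essentially the same route as the paper: the same splitting $u\ds^{2}u-v\ds^{2}v=u\ds^{2}w+w\ds^{2}v$, a commutator expansion of $D^{s-1}\ds(u\ds^{2}w)$ (you invoke Lemma \ref{comm.est.1} at index $s-1$ where the paper uses the two-term expansion of Lemma \ref{comm.est.7}, an immaterial difference), and the identical symmetrizing integration by parts on $\int_{\T}(u\HT\ds u)(D^{s-1}w)\ds^{3}(D^{s-1}w)\,dx$ followed by $L^{\I}$ control of $\ds(u\HT\ds u)$ and $\ds^{2}(u\HT\ds u)$, which is exactly where $s_{0}>7/2$ enters. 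Your treatment of the $w\ds^{2}v$ piece --- transferring one derivative onto the weight so that $\|v\|_{H^{s+2}}$ is traded for $\|w\|_{H^{s_{0}-1}}\|v\|_{H^{s+1}}$ --- also matches the paper's.
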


\begin{proof}
Note that
\EQQS{
&\LR{(\HT\ds u)D^{s-1}w,uD^{s-1}\ds^{3}w}\\
&=-\LR{\ds(u\HT\ds u)D^{s-1}w,D^{s-1}\ds^{2}w}-\LR{u(\HT\ds u)D^{s-1}\ds w,D^{s-1}\ds^{2}w}\\
&=\LR{\ds(\ds(u\HT\ds u)D^{s-1}w),D^{s-1}\ds w}+\frac{1}{2}\LR{\ds(u\HT\ds u),(D^{s-1}\ds w)^{2}}.
}
Lemma \ref{comm.est.7} shows that
\EQQS{
&|\LR{(\HT\ds u)D^{s-1}w,D^{s-1}\ds(u\ds^{2}w+w\ds^{2}v)}|\\
&\le|\LR{(\HT\ds u)D^{s-1}w,D^{s-1}\ds(u\ds^{2}w)-uD^{s}\ds^{3}w-s\ds uD^{s-1}\ds^{2}w}|\\
&\quad+|\LR{(\HT\ds u)D^{s-1}w,D^{s-1}\ds(w\ds^{2}v)-wD^{s-1}\ds^{3}v-s\ds wD^{s-1}\ds^{2}v}|\\
&\quad+|\LR{(\HT\ds u)D^{s-1}w,uD^{s-1}\ds^{3}w}|+s|\LR{\ds(\ds u(\HT\ds u)D^{s-1}w),D^{s-1}\ds w}|\\
&\quad+|\LR{\ds(w(\HT\ds u)D^{s-1}w),D^{s-1}\ds^{2}v}|+s|\LR{\ds(\ds w(\HT\ds u)D^{s-1}w),D^{s-1}\ds v}|\\
&\lesssim I_{s_{0}}(u,v)^{2}\{\|w\|_{H^{s}}^{2}+\|w\|_{H^{s_{0}-1}}^{2}\|v\|_{H^{s+1}}^{2}+(\|u\|_{H^{s}}^{2}+\|v\|_{H^{s}}^{2})\|w\|_{H^{s_{0}}}^{2}\},
}
which completes the proof.
\end{proof}

\begin{lem}\label{lem4.35}
Let $s_{0}>7/2$ and $s\ge1$.
Let $u,v\in H^{\max\{s+1,s_{0}\}}(\T)$.
Then
\EQQS{
&|\LR{(\HT\ds u)D^{s-1}w,D^{s-1}\ds((\ds u)^{2}-(\ds v)^{2})}|\\
&\lesssim I_{s_{0}}(u,v)^{2}\{\|w\|_{H^{s}}^{2}+(\|u\|_{H^{s}}^{2}+\|v\|_{H^{s}}^{2})\|w\|_{H^{s_{0}}}^{2}\},
}
where $w=u-v$.
\end{lem}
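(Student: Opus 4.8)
The plan is to mimic the analysis of the companion terms in Lemmas~\ref{lem4.26} and~\ref{lem4.34}, exploiting the factorization $(\ds u)^{2}-(\ds v)^{2}=\ds z\,\ds w$ with $z=u+v$. First I would peel off the leading Leibniz terms of $D^{s-1}\ds(\ds z\,\ds w)$ via the commutator $P_{s-1}^{(3)}$, i.e.\ Lemma~\ref{comm.est.2} with $s$ replaced by $s-1$:
\[
D^{s-1}\ds(\ds z\,\ds w)=(D^{s-1}\ds^{2}z)\ds w+s(D^{s-1}\ds z)\ds^{2}w+\ds z(D^{s-1}\ds^{2}w)+s(\ds^{2}z)(D^{s-1}\ds w)+P_{s-1}^{(3)}(z,w),
\]
where $\|P_{s-1}^{(3)}(z,w)\|\lec\|z\|_{H^{s-1}}\|w\|_{H^{s_{0}}}+\|z\|_{H^{s_{0}}}\|w\|_{H^{s-1}}$. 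I then pair each summand with $(\HT\ds u)D^{s-1}w$, using $\|(\HT\ds u)D^{s-1}w\|\lec\|u\|_{H^{s_{0}}}\|w\|_{H^{s-1}}$ (Lemma~\ref{G.N.}, with $s_{0}>7/2$).

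The commutator contribution and the two summands $s(D^{s-1}\ds z)\ds^{2}w$ and $s(\ds^{2}z)(D^{s-1}\ds w)$ are immediately acceptable: in each of these the only factor carrying $s$ derivatives is a single $D^{s-1}\ds(\cdot)$, controlled by $\|w\|_{H^{s}}$ or by $\|z\|_{H^{s}}\lec\|u\|_{H^{s}}+\|v\|_{H^{s}}$, while the remaining factors $\ds^{2}w$, $\ds^{2}z$, $\HT\ds u$ sit in $L^{\I}$ by Lemma~\ref{G.N.}. After Young's inequality these fall into $I_{s_{0}}(u,v)^{2}\{\|w\|_{H^{s}}^{2}+(\|u\|_{H^{s}}^{2}+\|v\|_{H^{s}}^{2})\|w\|_{H^{s_{0}}}^{2}\}$.

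The delicate part is the pair in which $D^{s-1}\ds^{2}$ falls on a single factor, namely $\LR{(\HT\ds u)(\ds w)D^{s-1}w,D^{s-1}\ds^{2}z}$ and $\LR{(\HT\ds u)(\ds z)D^{s-1}w,D^{s-1}\ds^{2}w}$, where one factor formally carries $s+1$ derivatives, beyond the norms on the right-hand side. I would write $D^{s-1}\ds^{2}=\ds\,(D^{s-1}\ds)$ and integrate by parts once. In the first term this moves the extra derivative off $z$, leaving $-\LR{\ds[(\HT\ds u)(\ds w)D^{s-1}w],D^{s-1}\ds z}$ in which $D^{s-1}\ds z$ carries only $s$ derivatives, so $\|D^{s-1}\ds z\|\lec\|z\|_{H^{s}}\lec\|u\|_{H^{s}}+\|v\|_{H^{s}}$; expanding the $x$-derivative and placing the low-order factors in $L^{\I}$ yields a bound $\lec I_{s_{0}}(u,v)\|w\|_{H^{s_{0}}}\|w\|_{H^{s}}(\|u\|_{H^{s}}+\|v\|_{H^{s}})$. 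In the second term the same integration by parts symmetrizes the top order into $-\LR{(\HT\ds u)\ds z,(D^{s-1}\ds w)^{2}}$ plus $-\LR{\ds[(\HT\ds u)\ds z]D^{s-1}w,D^{s-1}\ds w}$, both balanced (each $w$-factor carries at most $s$ derivatives) and bounded by $I_{s_{0}}(u,v)^{2}\|w\|_{H^{s}}^{2}$ since $(\HT\ds u)\ds z$ and its derivative lie in $L^{\I}$.

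The main obstacle is precisely this rebalancing of the two $(s+1)$-st order terms: estimated naively they would demand control of $\|z\|_{H^{s+1}}$, hence a spurious $\|v\|_{H^{s+1}}$ factor such as the one appearing in Lemma~\ref{lem4.28}. The point is that, because the outer operator is $D^{s-1}$ rather than $D^{s}$, a single integration by parts brings both offending factors down to order $s$ — the $z$-factor to $\|z\|_{H^{s}}$ and the $w$-factor to $(D^{s-1}\ds w)^{2}$ — which is exactly why no $\|w\|_{H^{s_{0}-1}}^{2}\|v\|_{H^{s+1}}^{2}$ term is required on the right-hand side. Collecting all contributions and applying Young's inequality with $a=\|w\|_{H^{s}}$ and $b=\|w\|_{H^{s_{0}}}(\|u\|_{H^{s}}+\|v\|_{H^{s}})$ completes the proof.
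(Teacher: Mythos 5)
Your proof is correct and follows essentially the same route as the paper's: factor $(\ds u)^{2}-(\ds v)^{2}=\ds z\,\ds w$, peel off the Leibniz terms by a commutator estimate, and integrate by parts once in the two terms where $s+1$ derivatives fall on a single factor, which is exactly why no $\|v\|_{H^{s+1}}$ factor is needed. The only cosmetic difference is that you invoke the four-term expansion $P_{s-1}^{(3)}$ of Lemma \ref{comm.est.2} where the paper uses the coarser two-term commutator of Lemma \ref{comm.est.3}; both work.
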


\begin{proof}
Lemma \ref{comm.est.3} shows that
\EQQS{
&|\LR{(\HT\ds u)D^{s-1}w,D^{s-1}\ds(\ds z\ds w)}|\\
&\le|\LR{(\HT\ds u)D^{s-1}w,D^{s-1}\ds(\ds z\ds w)-D^{s-1}\ds^{2}z\ds w-\ds zD^{s-1}\ds^{2}w}|\\
&\quad+|\LR{\ds(\ds w(\HT\ds u)D^{s-1}w),D^{s-1}\ds z}|+|\LR{\ds(\ds z(\HT\ds u)D^{s-1}w),D^{s-1}\ds w}|\\
&\lesssim I_{s_{0}}(u,v)^{2}\{\|w\|_{H^{s}}^{2}+(\|u\|_{H^{s}}^{2}+\|v\|_{H^{s}}^{2})\|w\|_{H^{s_{0}}}^{2}\},
}
which completes the proof.
\end{proof}

\begin{lem}\label{lem4.37}
Let $s_{0}>7/2$ and $s\ge1$.
Let $u,v\in H^{\max\{s+2,s_{0}\}}(\T)$.
Then
\EQQS{
&|\LR{(\HT\ds u)D^{s-1}w,D^{s}(u\HT\ds^{2}u-v\HT\ds^{2}v)}|\\
&\lesssim I_{s_{0}}(u,v)^{2}\{\|w\|_{H^{s}}^{2}+\|w\|_{H^{s_{0}-1}}^{2}\|v\|_{H^{s+1}}^{2}+(\|u\|_{H^{s}}^{2}+\|v\|_{H^{s}}^{2})\|w\|_{H^{s_{0}}}^{2}\},
}
where $w=u-v$.
\end{lem}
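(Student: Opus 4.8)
The plan is to follow the scheme of Lemma~\ref{lem4.34}, replacing $D^{s-1}\ds$ by $D^{s}$ and $\ds^{2}$ by $\HT\ds^{2}$ while carrying the extra Hilbert transforms through each integration by parts. First I would write $w=u-v$ and split
\[
u\HT\ds^{2}u-v\HT\ds^{2}v=u\HT\ds^{2}w+w\HT\ds^{2}v,
\]
so that the inner product separates into a piece carrying the top derivatives on $w$ and a piece carrying them on $v$. Since $\HT$ commutes with $D^{s}$ and $\ds$, we have $u\HT\ds^{2}w=u\ds^{2}(\HT w)$, so Lemma~\ref{comm.est.7} with $\La_{s}=D^{s}$ gives
\[
D^{s}(u\HT\ds^{2}w)=u\HT D^{s}\ds^{2}w+s\,\ds u\,\HT D^{s}\ds w+R,\qquad
\|R\|\lesssim\|u\|_{H^{s_{0}}}\|w\|_{H^{s}}+\|u\|_{H^{s}}\|w\|_{H^{s_{0}}},
\]
and the same expansion holds for $D^{s}(w\HT\ds^{2}v)$ with $(u,\HT w)$ replaced by $(w,\HT v)$. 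Pairing $R$ against $(\HT\ds u)D^{s-1}w$ and using $\|(\HT\ds u)D^{s-1}w\|\lesssim\|u\|_{H^{s_{0}}}\|w\|_{H^{s}}$ together with Young's inequality already produces admissible terms.

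The heart of the matter is the top-order contribution $\LR{(\HT\ds u)D^{s-1}w,\,u\HT D^{s}\ds^{2}w}$, which formally carries $2s+1$ derivatives on $w$, one more than $\|w\|_{H^{s}}^{2}$ permits. Here I would exploit the skew-adjointness of $\HT$, the symbol identity $\HT D^{s}\ds=-D^{s-1}\ds^{2}$, and integration by parts to rewrite this term, up to already balanced remainders, as
\begin{gather*}
-\frac{1}{2}\LR{\ds(u\HT\ds u),(D^{s-1}\ds w)^{2}}\\
-\LR{\HT\bigl(\ds^{2}(u\HT\ds u)D^{s-1}w\bigr),D^{s}w}
-\LR{\HT\bigl(\ds(u\HT\ds u)D^{s-1}\ds w\bigr),D^{s}w}.
\end{gather*}
Every summand now carries exactly $2s$ derivatives on $w$ and is controlled by $\|u\|_{H^{s_{0}}}^{2}\|w\|_{H^{s}}^{2}$ via the H\"older inequality, provided $\ds(u\HT\ds u)$ and $\ds^{2}(u\HT\ds u)$ are bounded; the latter needs $\ds^{3}u\in L^{\I}$, i.e.\ $s_{0}>7/2$, which is precisely the hypothesis. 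This integration-by-parts redistribution is the main obstacle, since a careless estimate would leave an uncancelled derivative loss with no subtraction term available to absorb it.

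For the companion term $\LR{(\HT\ds u)D^{s-1}w,\,w\HT D^{s}\ds^{2}v}$ the top derivative sits on $v$: it carries $\|v\|_{H^{s+2}}$, whereas the statement allows only $\|v\|_{H^{s+1}}$. Moving $\HT$ across and integrating by parts once transfers one derivative, yielding $\LR{\HT\ds\bigl((\HT\ds u)wD^{s-1}w\bigr),D^{s}\ds v}$ with $\|D^{s}\ds v\|\lesssim\|v\|_{H^{s+1}}$; expanding the derivative and using $\|\ds w\|_{\I}+\|w\|_{\I}\lesssim\|w\|_{H^{s_{0}-1}}$ (valid since $s_{0}>5/2$) bounds the remaining factor by $\|u\|_{H^{s_{0}}}\|w\|_{H^{s_{0}-1}}\|w\|_{H^{s}}$, and Young's inequality then produces exactly the admissible terms $\|w\|_{H^{s}}^{2}$ and $\|w\|_{H^{s_{0}-1}}^{2}\|v\|_{H^{s+1}}^{2}$. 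The subleading terms $s\,\ds u\,\HT D^{s}\ds w$, $s\,\ds w\,\HT D^{s}\ds v$ and the two remainders are handled the same way, invoking Lemma~\ref{comm.est.H} for the commutators $[\HT,\cdot]$ whenever a Hilbert transform must be moved past a coefficient, and distributing $L^{\I}$ and $L^{2}$ norms so that $v$ never needs more than $s+1$ derivatives while the low-regularity factor of $w$ is measured in $H^{s_{0}-1}$. Collecting all contributions and absorbing the constants into $I_{s_{0}}(u,v)^{2}$ completes the estimate.
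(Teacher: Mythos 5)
Your proposal is correct and follows essentially the same route as the paper: the same splitting $u\HT\ds^{2}u-v\HT\ds^{2}v=u\HT\ds^{2}w+w\HT\ds^{2}v$, the same application of Lemma \ref{comm.est.7}, and the same double integration by parts on the dangerous term $\LR{(\HT\ds u)D^{s-1}w,u\HT D^{s}\ds^{2}w}$ — your three-term expression is exactly the paper's identity $\LR{\ds(\ds(u\HT\ds u)D^{s-1}w),\HT D^{s}w}-\tfrac{1}{2}\LR{\ds(u\HT\ds u),(\HT D^{s}w)^{2}}$ with the outer derivative expanded and $\HT$ moved across by skew-adjointness. Your explicit treatment of the term $\LR{(\HT\ds u)D^{s-1}w,w\HT D^{s}\ds^{2}v}$, where one derivative must be transferred off $v$ to land on $\|v\|_{H^{s+1}}$, fills in a step the paper leaves implicit.
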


\begin{proof}
Note that
\EQQS{
&\LR{(\HT\ds u)D^{s-1}w,u\HT D^{s}\ds^{2}w}\\
&=-\LR{\ds(u\HT\ds u)D^{s-1}w,\HT D^{s}\ds w}+\LR{u(\HT\ds u)\HT D^{s}w,\HT D^{s}\ds w}\\
&=\LR{\ds(\ds(u\HT\ds u)D^{s-1}w),\HT D^{s}w}-\frac{1}{2}\LR{\ds(u\HT\ds u),(\HT D^{s}w)^{2}}.
}
We have
\EQQS{
&|\LR{(\HT\ds u)D^{s-1}w,D^{s}(u\HT\ds^{2}w+w\HT\ds^{2}v)}|\\
&\le|\LR{(\HT\ds u)D^{s-1}w,D^{s}(u\HT\ds^{2}w)-u\HT D^{s}\ds^{2}w-s\ds u\HT D^{s}\ds w}|\\
&\quad+|\LR{(\HT\ds u)D^{s-1}w,D^{s}(w\HT\ds^{2}v)-w\HT D^{s}\ds^{2}v-s\ds w\HT D^{s}\ds v}|\\
&\quad+|\LR{(\HT\ds u)D^{s-1}w,u\HT D^{s}\ds^{2}w}|+s|\LR{(\HT\ds u)D^{s-1}w,\ds u\HT D^{s}\ds w}|\\
&\quad+|\LR{(\HT\ds u)D^{s-1}w,w\HT D^{s}\ds^{2}v}|+s|\LR{(\HT\ds u)D^{s-1}w,\ds w\HT D^{s}\ds v}|
}
which completes the proof by Lemma \ref{comm.est.7}.
\end{proof}

\begin{lem}\label{lem4.39}
Let $s_{0}>7/2$ and $s\ge1$.
Let $u,v\in H^{\max\{s+2,s_{0}\}}(\T)$.
Then
\EQQS{
&|\LR{uD^{s}\ds(u^{2}\ds u-v^{2}\ds v),\HT D^{s-1}w}|\\
&\lesssim I_{s_{0}}(u,v)^{3}\{\|w\|_{H^{s}}^{2}+\|w\|_{H^{s_{0}-1}}^{2}\|v\|_{H^{s+1}}^{2}+(\|u\|_{H^{s}}^{2}+\|v\|_{H^{s}}^{2})\|w\|_{H^{s_{0}}}^{2}\},
}
where $w=u-v$.
\end{lem}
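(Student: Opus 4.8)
The plan is to follow the scheme of Lemma~\ref{lem2.8new}, adapted to the fact that here the test function is $\HT D^{s-1}w$ (rather than $D^{s}w$) and there is an extra factor $u$ in front, both inherited from $M_{s}^{(1)}$. First I would use the algebraic identity $u^{2}\ds u-v^{2}\ds v=u^{2}\ds w+zw\ds v$ with $z=u+v$, splitting the quantity into $\LR{uD^{s}\ds(u^{2}\ds w),\HT D^{s-1}w}+\LR{uD^{s}\ds(zw\ds v),\HT D^{s-1}w}$. To each cubic product I apply Lemma~\ref{comm.est.4}, writing $D^{s}\ds(u^{2}\ds w)=P_{s}^{(5)}(u,u,w)+(\text{explicit leading terms})$ and, similarly, $D^{s}\ds(zw\ds v)=P_{s}^{(5)}(z,w,v)+(\text{explicit leading terms})$. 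After multiplying by $u$ and pairing against $\HT D^{s-1}w$, the $P_{s}^{(5)}$ remainders are dominated by $\|u\|_{\I}\,\|P_{s}^{(5)}(\cdots)\|\,\|w\|_{H^{s-1}}$, which Lemma~\ref{comm.est.4} together with Young's inequality turns into the right-hand side; where a Hilbert transform has to be commuted past a coefficient, Lemma~\ref{comm.est.H} applies.

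The structural fact I would exploit throughout for the explicit leading terms is the multiplier identity $\ds(\HT D^{s-1}w)=D^{s}w$ (equivalently $\HT D^{s-1}\ds=D^{s}$ off the zero mode), which couples the test function to $D^{s}w$ under one differentiation. \emph{The main obstacle is the top-order term} $\LR{u^{3}D^{s}\ds^{2}w,\HT D^{s-1}w}$ arising from $u\cdot u^{2}D^{s}\ds^{2}w$: its two factors carry $s+1$ and $s-1$ derivatives of $w$, so a direct estimate only yields $\|w\|_{\dot H^{s+1}}\|w\|_{\dot H^{s-1}}$, which is \emph{not} controlled by $\|w\|_{H^{s}}^{2}$. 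I would resolve this by writing $D^{s}\ds^{2}w=\ds^{3}(\HT D^{s-1}w)$ and integrating by parts twice, using $\ds(\HT D^{s-1}w)=D^{s}w$, to obtain $\LR{u^{3}D^{s}\ds^{2}w,\HT D^{s-1}w}=-\frac12\LR{\ds^{3}(u^{3}),(\HT D^{s-1}w)^{2}}+\frac32\LR{\ds(u^{3}),(D^{s}w)^{2}}$. Both terms are (an $L^{\I}$ coefficient built from $u$) times $\|w\|_{H^{s}}^{2}$, hence bounded by $I_{s_{0}}(u,v)^{3}\|w\|_{H^{s}}^{2}$, since $s_{0}>7/2$ makes $\ds(u^{3}),\ds^{3}(u^{3})\in L^{\I}$.

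The remaining leading terms split into two types. When the surplus derivative falls on $w$ or on $z$ (for instance $u^{2}(D^{s}\ds u)\ds w$ or $u\ds u\,D^{s}\ds w$), I would integrate by parts once to move $\ds$ off the $D^{s}\ds$ factor, again invoking $\ds(\HT D^{s-1}w)=D^{s}w$ and, for the symmetric pieces, Lemma~\ref{reduction}; the bare factors $w$ and $\ds w$ are then estimated in $L^{\I}$ through $\|w\|_{\I},\|\ds w\|_{\I}\lesssim\|w\|_{H^{s_{0}-1}}$, valid because $s_{0}-1>5/2$. When the surplus derivative falls on $v$ (for instance $uzw\,D^{s}\ds^{2}v$, which carries $s+2$ derivatives on $v$, one more than the $\|v\|_{H^{s+1}}$ permitted), I would integrate by parts once to shift a derivative from $v$ onto the smooth coefficients; Young's inequality then distributes the norms precisely as $\|w\|_{H^{s_{0}-1}}^{2}\|v\|_{H^{s+1}}^{2}+\|w\|_{H^{s}}^{2}$. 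Collecting all contributions and absorbing the various polynomial weights into $I_{s_{0}}(u,v)^{3}$ (using $I_{s_{0}}(u,v)\ge1$) yields the stated bound.
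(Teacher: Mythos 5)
Your argument is correct, but it takes a different (and somewhat longer) route than the paper. You both begin from the same splitting $u^{2}\ds u-v^{2}\ds v=u^{2}\ds w+zw\ds v$, but from there the paper never forms the term $\LR{u^{3}D^{s}\ds^{2}w,\HT D^{s-1}w}$ that occupies the center of your proof: it first integrates by parts \emph{once}, using exactly the identity $\ds\HT D^{s-1}=D^{s}$ you isolate, to split the quantity into $\LR{\ds u\,D^{s}(\cdots),\HT D^{s-1}w}$ and $\LR{uD^{s}(\cdots),D^{s}w}$. The first piece is then handled by redistributing a single derivative across the pairing, $\LR{D(\ds u\HT D^{s-1}w),D^{s-1}(\cdots)}$, so that plain product estimates suffice; the second piece needs only the bilinear commutator $[D^{s},u^{2}]$ of Lemma \ref{Leibniz2}, after which the surviving symmetric term is $\tfrac{3}{2}\LR{u^{2}\ds u,(D^{s}w)^{2}}$ — one derivative lower than your worst term, and trivially bounded. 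You instead invoke the full trilinear expansion $P_{s}^{(5)}$ of Lemma \ref{comm.est.4} and then dispose of each leading term separately, in particular resolving $\LR{u^{3}D^{s}\ds^{2}w,\HT D^{s-1}w}$ by a double integration by parts (your identity $-\tfrac12\LR{\ds^{3}(u^{3}),(\HT D^{s-1}w)^{2}}+\tfrac32\LR{\ds(u^{3}),(D^{s}w)^{2}}$ is correct, and $s_{0}>7/2$ is exactly what makes $\ds^{3}(u^{3})$ bounded). Your method is the one the paper actually uses for the genuinely lossy cubic terms (Lemmas \ref{lem4.23}, \ref{lem4.31}), so it is systematic and transfers; what the paper's shortcut buys is the observation that for the first-order nonlinearity $u^{2}\ds u$ no leading term needs to be extracted at all, so the heavy commutator machinery can be bypassed. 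One cosmetic point: your remark about commuting $\HT$ past coefficients via Lemma \ref{comm.est.H} is not needed here, since the nonlinearity $u^{2}\ds u$ contains no Hilbert transform and the one in the test function never has to be moved.
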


\begin{proof}
Set $z=u+v$.
Lemma \ref{Leibniz2} shows that
\EQQS{
&|\LR{uD^{s}\ds(u^{2}\ds w+zw\ds v),\HT D^{s-1}w}|\\
&\le|\LR{\ds uD^{s}(u^{2}\ds w+zw\ds v),\HT D^{s-1}w}|+|\LR{uD^{s}(u^{2}\ds w+zw\ds v),D^{s}w}|\\
&\le|\LR{D(\ds u\HT D^{s-1}w),D^{s-1}(u^{2}\ds w+zw\ds v)}|+|\LR{u[D^{s},u^{2}]\ds w,D^{s}w}|\\
&\quad+\frac{3}{2}|\LR{u^{2}\ds u,(D^{s}w)^{2}}|+|\LR{uD^{s}(zw\ds v),D^{s}w}|,
}
which completes the proof.
\end{proof}

\begin{lem}\label{lem4.41}
Let $s_{0}>7/2$ and $s\ge1$.
Let $u,v\in H^{\max\{s+2,s_{0}\}}(\T)$.
Then
\EQQS{
&|\LR{u\HT D^{s}\ds(u\HT(u\ds u)-v\HT(v\ds v)),\HT D^{s-1}w}|\\
&\lesssim I_{s_{0}}(u,v)^{3}\{\|w\|_{H^{s}}^{2}+\|w\|_{H^{s_{0}-1}}^{2}\|v\|_{H^{s+1}}^{2}+(\|u\|_{H^{s}}^{2}+\|v\|_{H^{s}}^{2})\|w\|_{H^{s_{0}}}^{2}\},
}
where $w=u-v$.
\end{lem}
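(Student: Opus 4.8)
The plan is to imitate the proof of Lemma \ref{lem4.39}, the cubic term already treated there, replacing $u^{2}\ds u$ by $u\HT(u\ds u)$ and accounting for the two extra Hilbert transforms. Throughout I shall use the identities $\HT\ds=D$ and $\ds\HT D^{s-1}=D^{s}$ on $\T$ (valid up to the zero mode, which is harmless as noted in the introduction). Writing $Y:=u\HT(u\ds u)-v\HT(v\ds v)$ and commuting $\ds$ through $\HT D^{s}$, integration by parts gives
\EQQS{
|\LR{u\HT D^{s}\ds Y,\HT D^{s-1}w}|
\le|\LR{u\HT D^{s}Y,D^{s}w}|+|\LR{\ds u\,\HT D^{s}Y,\HT D^{s-1}w}|,
}
exactly as in Lemma \ref{lem4.39}. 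The first step is therefore to estimate these two pairings separately.

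Next I would split the cubic difference, using $u\ds u-v\ds v=u\ds w+w\ds v$, as
\EQQS{
Y=u\HT(u\ds w)+u\HT(w\ds v)+w\HT(v\ds v),
}
so that each summand carries exactly one factor of $w$. For every summand Lemma \ref{comm.est.9} (applied with $\La_{s}=\HT D^{s}$, so that $\HT\La_{s}\ds=\HT\HT D^{s}\ds=-D^{s}\ds$) extracts the leading contribution $\HT D^{s}(f\HT(g\ds h))=-fg\,D^{s}\ds h+(\text{remainder})$, the remainder being bounded by the trilinear products $\|f\|_{H^{s}}\|g\|_{H^{s_{0}}}\|h\|_{H^{s_{0}}}$ and its permutations; these match the right-hand side of the statement once the summands $u\HT(w\ds v)$ and $w\HT(v\ds v)$, in which $w$ is undifferentiated, are estimated by Young's inequality, producing the terms $\|w\|_{H^{s_{0}-1}}^{2}\|v\|_{H^{s+1}}^{2}$ and $(\|u\|_{H^{s}}^{2}+\|v\|_{H^{s}}^{2})\|w\|_{H^{s_{0}}}^{2}$.

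The only genuinely dangerous contribution comes from the summand $u\HT(u\ds w)$, whose leading part produces, after the reductions above, the expression $\LR{u^{3}D^{s}\ds w,D^{s}w}$ in the first pairing and $\LR{u^{2}\ds u\,D^{s}\ds w,\HT D^{s-1}w}$ in the second; both exhibit an apparent one-derivative loss. I would remove this loss by the antisymmetry of $\ds$ and $\HT$: the first is symmetrised through
\EQQS{
\LR{u^{3}D^{s}\ds w,D^{s}w}=\frac{1}{2}\LR{u^{3},\ds\big((D^{s}w)^{2}\big)}=-\frac{3}{2}\LR{u^{2}\ds u,(D^{s}w)^{2}},
}
which is bounded by $\|u\|_{H^{s_{0}}}^{3}\|w\|_{H^{s}}^{2}$, while the second is brought, after one integration by parts and the skew-adjointness of $\HT$, to a term of the type controlled by Lemma \ref{reduction}. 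The crucial point is that Lemma \ref{reduction} is invoked with its internal threshold lowered to $s_{0}-3>1/2$, so that the coefficient $u^{3}$ is measured only in $H^{s_{0}-1}$ and $\|u^{3}\|_{H^{s_{0}-1}}\lesssim\|u\|_{H^{s_{0}}}^{3}\le I_{s_{0}}(u,v)^{3}$; the commutators $[\HT,\cdot]$ generated in the process are absorbed by Lemma \ref{comm.est.H}, again with a lowered threshold.

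The main obstacle is precisely this last point. The two nested Hilbert transforms mean the clean product structure of Lemma \ref{lem4.39} is reached only after repeatedly invoking $\HT\ds=D$, $\HT^{2}=-1$ and the commutator estimate Lemma \ref{comm.est.H}, and one must verify that, once the symmetrisation has cancelled the apparent derivative loss, every surviving term still carries its coefficient at $H^{s_{0}}$-regularity and distributes its derivatives into exactly one of the three pieces on the right-hand side, with the sharp Sobolev indices $\|w\|_{H^{s_{0}-1}}\|v\|_{H^{s+1}}$ reflecting that the highest $v$-derivative is of order $s+1$. This bookkeeping, rather than any single estimate, is the heart of the proof.
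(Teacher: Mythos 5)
Your proposal follows essentially the same route as the paper: the same splitting $u\HT(u\ds u)-v\HT(v\ds v)=u\HT(u\ds w)+u\HT(w\ds v)+w\HT(v\ds v)$, the same integration by parts producing the two pairings, the same use of Lemma \ref{comm.est.9} with $\La_{s}=\HT D^{s}$ to reduce the dangerous summand to $u^{2}D^{s}\ds w$, and the same symmetrisation $\LR{u^{3}D^{s}\ds w,D^{s}w}=-\tfrac{3}{2}\LR{u^{2}\ds u,(D^{s}w)^{2}}$ to kill the apparent derivative loss. The only cosmetic difference is in the second pairing, which the paper disposes of by a direct product estimate after redistributing one derivative rather than by invoking Lemma \ref{reduction}; both work since the total derivative count there is only $2s$.
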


\begin{proof}
Note that
\EQQS{
&|\LR{u\HT D^{s}\ds(u\HT(u\ds w)+u\HT(w\ds v)+w\HT(v\ds v)),\HT D^{s-1}w}|\\
&\le|\LR{D(\ds u\HT D^{s-1}w),\HT D^{s-1}(u\HT(u\ds w))}|+|\LR{u\HT D^{s}(u\HT(u\ds w)),D^{s}w}|\\
&\quad+|\LR{\ds(u\HT D^{s-1}w),\HT D^{s}(u\HT(w\ds v)+w\HT(v\ds v))}|.
}
The second term in the right hand side can be estimated as follows:
\EQQS{
&|\LR{u\HT D^{s}(u\HT(u\ds w)),D^{s}w}|\\
&\le|\LR{\HT D^{s}(u\HT(u\ds w))+u^{2}D^{s}\ds w,uD^{s}w}|+\frac{3}{2}|\LR{u^{2}\ds u,(D^{s}w)^{2}}|.
}
Then Lemma \ref{comm.est.9} completes the proof.
\end{proof}

\begin{lem}\label{lem4.42}
Let $s_{0}>7/2$ and $s\ge1$.
Let $u,v\in H^{\max\{s+2,s_{0}\}}(\T)$.
Then
\EQQS{
&|\LR{u\HT D^{s}\ds(u^{2}\HT\ds u-v^{2}\HT\ds v),\HT D^{s-1}w}|\\
&\lesssim I_{s_{0}}(u,v)^{3}\{\|w\|_{H^{s}}^{2}+\|w\|_{H^{s_{0}-1}}^{2}\|v\|_{H^{s+1}}^{2}+(\|u\|_{H^{s}}^{2}+\|v\|_{H^{s}}^{2})\|w\|_{H^{s_{0}}}^{2}\},
}
where $w=u-v$.
\end{lem}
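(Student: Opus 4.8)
The plan is to follow the argument of Lemma \ref{lem4.41}, exploiting the elementary identity $\HT\ds=D$ on $\T$ (immediate from $-i\sgn(\xi)\cdot i\xi=|\xi|$, both sides annihilating the zero mode). This recasts the present cubic nonlinearity into the form handled there. Setting $z=u+v$ and $w=u-v$, I first linearize
\[u^{2}\HT\ds u-v^{2}\HT\ds v=u^{2}\HT\ds w+zw\HT\ds v,\]
so that the quantity to be estimated splits into a principal part $\LR{u\HT D^{s}\ds(u^{2}\HT\ds w),\HT D^{s-1}w}$ and a remainder $\LR{u\HT D^{s}\ds(zw\HT\ds v),\HT D^{s-1}w}$ carrying the high derivatives on $v$. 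The crucial observation is that $u^{2}\HT\ds w=u^{2}\HT(1\cdot\ds w)$ has the form $u_{1}\HT(u_{2}\ds u_{3})$ with $u_{2}\equiv1$, so Lemma \ref{comm.est.9} (with $\La_{s}=\HT D^{s}$) applies to it directly; this is the analogue of the step using $\HT\ds=D$.

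For the principal part, integrating by parts in the $\ds$ of the outer operator $\HT D^{s}\ds=\ds\,\HT D^{s}$ and using $\ds\HT D^{s-1}w=D^{s}w$, I obtain
\[-\LR{\ds u\,\HT D^{s-1}w,\HT D^{s}(u^{2}\HT\ds w)}-\LR{uD^{s}w,\HT D^{s}(u^{2}\HT\ds w)},\]
exactly as in the proof of Lemma \ref{lem4.41}. In the first term I move one $D$ onto the low-frequency factor and write it as $\LR{D(\ds u\,\HT D^{s-1}w),\HT D^{s-1}(u^{2}\HT\ds w)}$, which is controlled by Lemma \ref{comm.est.9} and the Leibniz rule. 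In the second I extract the leading term: since $\HT\HT D^{s}\ds w=-D^{s}\ds w$, Lemma \ref{comm.est.9} with $u_{2}\equiv1$ gives
\[\|\HT D^{s}(u^{2}\HT\ds w)+u^{2}D^{s}\ds w\|\lesssim\|u\|_{H^{s}}\|u\|_{H^{s_{0}}}\|w\|_{H^{s_{0}}}+\|u\|_{H^{s_{0}}}^{2}\|w\|_{H^{s}},\]
while the remaining top-order contribution telescopes under integration by parts,
\[\LR{u^{3}D^{s}\ds w,D^{s}w}=\tfrac{1}{2}\LR{u^{3},\ds((D^{s}w)^{2})}=-\tfrac{3}{2}\LR{u^{2}\ds u,(D^{s}w)^{2}},\]
which is bounded by $\|u\|_{H^{s_{0}}}^{3}\|w\|_{H^{s}}^{2}$. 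This is precisely the step that absorbs the apparent first-order derivative loss.

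The remainder $\LR{u\HT D^{s}\ds(zw\HT\ds v),\HT D^{s-1}w}$ is lower order in $w$, and after the same integration by parts it is estimated by the Leibniz-type bounds of Lemmas \ref{Leibniz2} and \ref{comm.est.9} together with Lemma \ref{reduction}; the worst case, in which the $(s+1)$-st derivative falls on $v$, is absorbed into the $\|w\|_{H^{s_{0}-1}}^{2}\|v\|_{H^{s+1}}^{2}$ contribution, and the accompanying factors of $u,v$ supply the cubic weight $I_{s_{0}}(u,v)^{3}$. I expect the only genuine obstacle to be the principal-part estimate above, namely confirming that the $(s+1)$-order loss generated by placing all derivatives on $w$ really cancels; this is achieved, exactly as in Lemma \ref{lem4.41}, by Lemma \ref{comm.est.9} applied with $u_{2}\equiv1$ (equivalently via $\HT\ds=D$) followed by the telescoping identity, and otherwise the proof runs parallel to those of Lemmas \ref{lem4.39} and \ref{lem4.41}.
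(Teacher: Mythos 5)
Your proposal is correct and follows essentially the same route as the paper's proof: the same linearization $u^{2}\HT\ds u-v^{2}\HT\ds v=u^{2}\HT\ds w+zw\HT\ds v$, the same integration by parts producing the terms $\LR{D(\ds u\,\HT D^{s-1}w),\HT D^{s-1}(u^{2}\HT\ds w)}$ and $\LR{uD^{s}w,\HT D^{s}(u^{2}\HT\ds w)}$, the same extraction of the leading term $u^{2}D^{s}\ds w$ by a commutator estimate, and the same telescoping to $\tfrac{3}{2}\LR{u^{2}\ds u,(D^{s}w)^{2}}$. The only cosmetic difference is that you invoke Lemma \ref{comm.est.9} with $u_{2}\equiv1$ where the paper writes the commutator $[\HT D^{s},u^{2}]\HT\ds w$ and cites Lemma \ref{Leibniz2}; these yield the same bound.
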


\begin{proof}
Set $z=u+v$.
Note that
\EQQS{
&|\LR{u\HT D^{s}\ds(u^{2}\HT\ds w+zw\HT\ds v),\HT D^{s-1}w}|\\
&\le|\LR{D(\ds u\HT D^{s-1}w),\HT D^{s-1}(u^{2}\HT\ds w)}|+|\LR{u[\HT D^{s},u^{2}]\HT\ds w,D^{s}w}|\\
&\quad+\frac{3}{2}|\LR{u^{2}\ds u,(D^{s}w)^{2}}|+|\LR{\ds(u\HT D^{s-1}w),\HT D^{s}(zw\HT\ds v)}|.
}
Then Lemma \ref{Leibniz2} completes the proof.
\end{proof}

\begin{lem}\label{lem4.43}
Let $s_{0}>7/2$ and $s\ge1$.
Let $u,v\in H^{\max\{s+2,s_{0}\}}(\T)$.
Then
\EQQS{
&|\LR{u\HT D^{s}w,D^{s-1}\ds(u^{2}\ds u-v^{2}\ds v)}|\\
&\lesssim I_{s_{0}}(u,v)^{3}\{\|w\|_{H^{s}}^{2}+\|w\|_{H^{s_{0}-1}}^{2}\|v\|_{H^{s+1}}^{2}+(\|u\|_{H^{s}}^{2}+\|v\|_{H^{s}}^{2})\|w\|_{H^{s_{0}}}^{2}\},
}
where $w=u-v$.
\end{lem}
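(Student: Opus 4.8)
The plan is to reduce the whole expression to a single genuinely dangerous self-interaction of $w$ carrying $2s+1$ derivatives, and to absorb its one-derivative loss by an integration by parts that throws the surplus derivative onto a smooth coefficient. As in the neighbouring lemmas I first write $u^{2}\ds u-v^{2}\ds v=u^{2}\ds w+zw\ds v$ with $z=u+v$, so that the quantity to be estimated splits as
\[\LR{u\HT D^{s}w,D^{s-1}\ds(u^{2}\ds w)}+\LR{u\HT D^{s}w,D^{s-1}\ds(zw\ds v)}.\]
In the second summand $w$ appears undifferentiated while the top derivatives fall on $v$; since $\|v\|_{H^{s+1}}$ is allowed on the right-hand side, no cancellation is needed and it will be handled directly.

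For the main summand $\LR{u\HT D^{s}w,D^{s-1}\ds(u^{2}\ds w)}$ I would expand $\ds(u^{2}\ds w)=u^{2}\ds^{2}w+2u\ds u\ds w$ and apply Lemma \ref{comm.est.7} with $\La_{s-1}=D^{s-1}$ to the factor $u^{2}\ds^{2}w$, together with Lemma \ref{Leibniz2} for $2D^{s-1}(u\ds u\ds w)$. This peels off the leading contribution $u^{2}D^{s-1}\ds^{2}w$ modulo remainders that are of order at most $s$ in $w$. Using the Fourier-multiplier identities $D^{s-1}\ds^{2}=-D^{s+1}$ and $\ds\HT D^{s}=D^{s+1}$, the leading contribution, paired against $u\HT D^{s}w$, becomes $-\LR{u^{3}\HT D^{s}w,D^{s+1}w}=-\LR{u^{3}\HT D^{s}w,\ds(\HT D^{s}w)}$.

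The crux is this last pairing, which formally carries $2s+1$ derivatives on $w$; handling it is the main obstacle and the entire point of the lemma. Setting $\phi=\HT D^{s}w$ and integrating by parts in $x$, one has $-\int u^{3}\phi\,\ds\phi\,dx=\frac{1}{2}\int\ds(u^{3})\phi^{2}\,dx=\frac{3}{2}\LR{u^{2}\ds u,(\HT D^{s}w)^{2}}$, so the extra derivative lands on $u^{3}$ and the term is bounded by $\|u^{2}\ds u\|_{\I}\|D^{s}w\|^{2}\lesssim I_{s_{0}}(u,v)^{3}\|w\|_{H^{s}}^{2}$. It is precisely the exact $\phi\,\ds\phi$ structure that permits this integration by parts and thereby cancels the derivative loss; without it the estimate would fail.

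The remaining pieces are routine bookkeeping. The remainders produced by Lemma \ref{comm.est.7} and Lemma \ref{Leibniz2}, the term $(s-1)\ds(u^{2})D^{s-1}\ds w$, and the contribution $\LR{u\HT D^{s}w,D^{s-1}\ds(zw\ds v)}$ are each a pairing of two factors whose total derivative count on $w$ is at most $2s$, any surplus derivative falling on $v$ and being controlled by $\|v\|_{H^{s+1}}$ (its worst piece is $-\LR{uzw\HT D^{s}w,D^{s+1}v}$, bounded via Sobolev embedding $\|w\|_{\I}\lesssim\|w\|_{H^{s_{0}-1}}$). Estimating these by Lemma \ref{Leibniz}, Lemma \ref{Leibniz2}, the Gagliardo--Nirenberg inequality (Lemma \ref{G.N.}) and Hölder's inequality, and then separating the factors with Young's inequality into the three types $\|w\|_{H^{s}}^{2}$, $\|w\|_{H^{s_{0}-1}}^{2}\|v\|_{H^{s+1}}^{2}$ and $(\|u\|_{H^{s}}^{2}+\|v\|_{H^{s}}^{2})\|w\|_{H^{s_{0}}}^{2}$ appearing on the right-hand side, completes the proof; the cubic power $I_{s_{0}}(u,v)^{3}$ merely records the three frozen factors of $u,v$ measured in $H^{s_{0}}$.
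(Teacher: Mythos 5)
Your proposal is correct and follows essentially the same route as the paper: the same splitting $u^{2}\ds u-v^{2}\ds v=u^{2}\ds w+zw\ds v$, extraction of the leading term $u^{2}D^{s-1}\ds^{2}w=-u^{2}D^{s+1}w$ (the paper phrases this as the commutator $[D^{s-1}\ds,u^{2}]\ds w$ plus the leading term, controlled by Lemma \ref{Leibniz2}), and the identical key integration by parts producing $\tfrac{3}{2}\LR{u^{2}\ds u,(\HT D^{s}w)^{2}}$. The only difference is cosmetic — you invoke Lemma \ref{comm.est.7} where the paper uses the commutator form with Lemma \ref{Leibniz2} — and the remainder estimates are the same routine bookkeeping.
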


\begin{proof}
Note that
\EQQS{
&|\LR{u\HT D^{s}w,D^{s-1}\ds(u^{2}\ds w+zw\ds v)}|\\
&\le|\LR{u\HT D^{s}w,[D^{s-1}\ds,u^{2}]\ds w}|+\frac{3}{2}|\LR{u^{2}\ds u,(\HT D^{s}w)^{2}}|\\
&\quad+|\LR{u\HT D^{s}w,D^{s-1}\ds(zw\ds v)}|.
}
Then, Lemma \ref{Leibniz2} completes the proof.
\end{proof}

\begin{lem}\label{lem4.44}
Let $s_{0}>7/2$ and $s\ge1$.
Let $u,v\in H^{\max\{s+1,s_{0}\}}(\T)$.
Then
\EQQS{
&|\LR{u\HT D^{s}w,D^{s}(u\HT(u\ds u)-v\HT(v\ds v))}|\\
&\lesssim I_{s_{0}}(u,v)^{3}\{\|w\|_{H^{s}}^{2}+\|w\|_{H^{s_{0}-1}}^{2}\|v\|_{H^{s+1}}^{2}+(\|u\|_{H^{s}}^{2}+\|v\|_{H^{s}}^{2})\|w\|_{H^{s_{0}}}^{2}\},
}
where $w=u-v$.
\end{lem}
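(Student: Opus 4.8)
The plan is to follow the pattern of Lemma~\ref{lem4.41}, which treats the same nonlinearity $u\HT(u\ds u)$: since no leading term is subtracted on the left-hand side, it suffices to reduce the whole quantity to pieces that already sit inside the admissible right-hand side. First I would split the cubic difference so that each summand is linear in $w=u-v$,
\[
u\HT(u\ds u)-v\HT(v\ds v)=u\HT(u\ds w)+u\HT(w\ds v)+w\HT(v\ds v),
\]
and decompose $\LR{u\HT D^{s}w,D^{s}(\,\cdot\,)}$ into the three corresponding inner products $T_{1},T_{2},T_{3}$.

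The only piece carrying the top-order factor $D^{s}\ds w$ is $T_{1}=\LR{u\HT D^{s}w,D^{s}(u\HT(u\ds w))}$. Here I would apply Lemma~\ref{comm.est.9} with $\La_{s}=D^{s}$, $u_{1}=u_{2}=u$ and $u_{3}=w$ to replace $D^{s}(u\HT(u\ds w))$ by its principal part $u^{2}\HT D^{s}\ds w$, the $L^{2}$-error being controlled by $\|u\|_{H^{s_{0}}}^{2}\|w\|_{H^{s}}+\|u\|_{H^{s}}\|u\|_{H^{s_{0}}}\|w\|_{H^{s_{0}}}$; after pairing with $u\HT D^{s}w$ and using Young's inequality this error is $\lesssim I_{s_{0}}(u,v)^{3}(\|w\|_{H^{s}}^{2}+\|u\|_{H^{s}}^{2}\|w\|_{H^{s_{0}}}^{2})$. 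For the principal term I would exploit the same integration-by-parts symmetry used repeatedly in this section: writing $F=\HT D^{s}w$ and noting $\HT D^{s}\ds w=\ds F$,
\[
\LR{u\HT D^{s}w,u^{2}\HT D^{s}\ds w}=\LR{u^{3}F,\ds F}=-\tfrac{3}{2}\LR{u^{2}\ds u,(\HT D^{s}w)^{2}},
\]
and the last expression is $\lesssim\|u\|_{H^{s_{0}}}^{3}\|w\|_{H^{s}}^{2}$ because $s_{0}>7/2$ gives $\|u^{2}\ds u\|_{\I}\lesssim\|u\|_{H^{s_{0}}}^{3}$. This disposes of $T_{1}$.

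For $T_{2}=\LR{u\HT D^{s}w,D^{s}(u\HT(w\ds v))}$ and $T_{3}=\LR{u\HT D^{s}w,D^{s}(w\HT(v\ds v))}$ no symmetry is required: the single copy of $w$ on the right never receives more than $s_{0}-1$ derivatives, while the top-order derivatives land on $v$ (up to order $s+1$). Distributing $D^{s}$ by the Leibniz rule and the commutator estimates of Lemmas~\ref{Leibniz},~\ref{Leibniz2},~\ref{comm.est.H} and~\ref{comm.est.9}, the representative worst contribution is $\LR{u\HT D^{s}w,uw\HT D^{s}\ds v}$, which I would estimate by $\|u\|_{\I}^{2}\|w\|_{\I}\|\HT D^{s}w\|\,\|\HT D^{s}\ds v\|\lesssim I_{s_{0}}(u,v)^{2}\|w\|_{H^{s_{0}-1}}\|w\|_{H^{s}}\|v\|_{H^{s+1}}$; one application of Young's inequality splits this into the admissible $\|w\|_{H^{s}}^{2}$ and $\|w\|_{H^{s_{0}-1}}^{2}\|v\|_{H^{s+1}}^{2}$ terms. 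Every remaining sub-term of $T_{2},T_{3}$ is of strictly lower order in $v$ and is absorbed the same way, the only additional device being Young's inequality to push cross terms such as $\|w\|_{H^{s}}\|u\|_{H^{s}}\|w\|_{H^{s_{0}-1}}$ or $\|w\|_{H^{s}}\|v\|_{H^{s}}\|w\|_{H^{s_{0}-1}}$ into $\|w\|_{H^{s}}^{2}+(\|u\|_{H^{s}}^{2}+\|v\|_{H^{s}}^{2})\|w\|_{H^{s_{0}}}^{2}$.

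The essential difficulty is organizational rather than analytic: one must check that $T_{1}$ really is the unique place where both $D^{s}$ and the interior $\ds$ fall on $w$ (so that the integration-by-parts symmetry is needed exactly once), and that throughout $T_{2},T_{3}$ the spectator copy of $w$ is consistently measured in $H^{s_{0}-1}$ — this is what lets it pair against $\|v\|_{H^{s+1}}$ to reproduce precisely the admissible right-hand side. Granting this bookkeeping, the stated estimate follows, and, just as in Lemma~\ref{lem4.41}, Lemma~\ref{comm.est.9} completes the proof.
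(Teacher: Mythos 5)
Your proposal matches the paper's proof: the same splitting $u\HT(u\ds u)-v\HT(v\ds v)=u\HT(u\ds w)+u\HT(w\ds v)+w\HT(v\ds v)$, the same subtraction of the principal part $u^{2}\HT D^{s}\ds w$ handled by Lemma \ref{comm.est.9}, and the same integration by parts producing $-\tfrac{3}{2}\LR{u^{2}\ds u,(\HT D^{s}w)^{2}}$, with the remaining terms absorbed by the standard commutator estimates. This is essentially identical to the argument in the paper, just written out in more detail.
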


\begin{proof}
Note that
\EQQS{
&|\LR{u\HT D^{s}w,D^{s}(u\HT(u\ds w)+u\HT(w\ds v)+w\HT(v\ds v))}|\\
&\le|\LR{u\HT D^{s}w,D^{s}(u\HT(u\ds w))-u^{2}\HT D^{s}\ds w}|+\frac{3}{2}|\LR{u^{2}\ds u,(\HT D^{s}w)^{2}}|\\
&\quad+|\LR{u\HT D^{s}w,D^{s}(u\HT(w\ds v)+w\HT(v\ds v))}|.
}
We see that Lemma \ref{comm.est.9} completes the proof.
\end{proof}

\begin{lem}\label{lem4.45}
Let $s_{0}>7/2$ and $s\ge1$.
Let $u,v\in H^{\max\{s+1,s_{0}\}}(\T)$.
Then
\EQQS{
&|\LR{u\HT D^{s}w,D^{s}(u^{2}\HT\ds u-v^{2}\HT\ds v)}|\\
&\lesssim I_{s_{0}}(u,v)^{3}\{\|w\|_{H^{s}}^{2}+\|w\|_{H^{s_{0}-1}}^{2}\|v\|_{H^{s+1}}^{2}+(\|u\|_{H^{s}}^{2}+\|v\|_{H^{s}}^{2})\|w\|_{H^{s_{0}}}^{2}\},
}
where $w=u-v$.
\end{lem}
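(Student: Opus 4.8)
The plan is to follow the same two-step strategy used in Lemma \ref{lem4.43} and Lemma \ref{lem4.44}: isolate the genuine top-order contribution by an integration by parts, and dispose of the remainder with the commutator estimate Lemma \ref{Leibniz2}. First I would set $z=u+v$ and use the algebraic identity
\EQQS{
u^{2}\HT\ds u-v^{2}\HT\ds v=u^{2}\HT\ds w+zw\HT\ds v,
}
so that the quantity to be estimated splits as
\EQQS{
\LR{u\HT D^{s}w,D^{s}(u^{2}\HT\ds u-v^{2}\HT\ds v)}
=\LR{u\HT D^{s}w,D^{s}(u^{2}\HT\ds w)}+\LR{u\HT D^{s}w,D^{s}(zw\HT\ds v)}.
}

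For the first term I would peel off the leading piece $u^{2}\HT D^{s}\ds w$, in which $D^{s}$ has commuted past $u^{2}$ and $\HT$; the error $D^{s}(u^{2}\HT\ds w)-u^{2}\HT D^{s}\ds w=[D^{s},u^{2}]\HT\ds w$ is controlled by Lemma \ref{Leibniz2} (with $f=u^{2}$ and $g=\HT w$) and, paired against $u\HT D^{s}w$ and followed by Young's inequality, lands in the admissible form $\|w\|_{H^{s}}^{2}+(\|u\|_{H^{s}}^{2}+\|v\|_{H^{s}}^{2})\|w\|_{H^{s_{0}}}^{2}$. The leading piece itself is handled by integration by parts: since $\HT D^{s}\ds w=\ds(\HT D^{s}w)$, I obtain
\EQQS{
\LR{u\HT D^{s}w,u^{2}\HT D^{s}\ds w}
=\LR{u^{3}\HT D^{s}w,\ds(\HT D^{s}w)}
=-\frac{3}{2}\LR{u^{2}\ds u,(\HT D^{s}w)^{2}},
}
and this last integral is bounded directly by $\|u^{2}\ds u\|_{\I}\|\HT D^{s}w\|^{2}\lesssim\|u\|_{H^{s_{0}}}^{3}\|w\|_{H^{s}}^{2}$, which is of the required shape.

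For the second term, $D^{s}(zw\HT\ds v)$ places at most one extra derivative on $v$, so the genuine top-order contribution is $zw\HT D^{s}\ds v$; this requires $v\in H^{s+1}$ and produces the $\|w\|_{H^{s_{0}-1}}^{2}\|v\|_{H^{s+1}}^{2}$ term, while every other redistribution of the $D^{s}$ derivatives falls on $z$ or $w$, both of which are measured in $H^{s_{0}}$ or $H^{s}$. Applying Lemma \ref{Leibniz2} once more closes the estimate. The main obstacle I anticipate is purely bookkeeping: carrying out the integration by parts cleanly so that the cubic term emerges as exactly $-\tfrac{3}{2}\LR{u^{2}\ds u,(\HT D^{s}w)^{2}}$, and then checking that each summand arising from $[D^{s},u^{2}]\HT\ds w$ and from $D^{s}(zw\HT\ds v)$ — after distributing derivatives and applying Young's inequality — is dominated by one of the three pieces on the right-hand side, with the total number of factors of $u,v$ never exceeding the power $I_{s_{0}}(u,v)^{3}$ allotted by the statement.
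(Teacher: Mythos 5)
Your proposal is correct and follows essentially the same route as the paper: the decomposition $u^{2}\HT\ds u-v^{2}\HT\ds v=u^{2}\HT\ds w+zw\HT\ds v$, the splitting of the first piece into the commutator $[D^{s},u^{2}]\HT\ds w$ plus the leading term, the integration by parts producing exactly $-\tfrac{3}{2}\LR{u^{2}\ds u,(\HT D^{s}w)^{2}}$, and Lemma \ref{Leibniz2} for the remainders. No gaps.
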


\begin{proof}
Set $z=u+v$.
Note that
\EQQS{
&|\LR{u\HT D^{s}w,D^{s}(u^{2}\HT\ds w+zw\HT\ds v)}|\\
&\le|\LR{u\HT D^{s}w,[D^{s},u^{2}]\HT\ds w}|+\frac{3}{2}|\LR{u^{2}\ds u,(\HT D^{s}w)^{2}}|
+|\LR{u\HT D^{s}w,D^{s}(zw\HT\ds v)}|.
}
We see that Lemma \ref{Leibniz2} completes the proof.
\end{proof}

By the presence of $\HT D^{s-1}$, the following lemma is clear:

\begin{lem}\label{lem3.45}
Let $s_{0}>7/2$ and $s\ge1$.
Let $u,v\in H^{\max\{s+1,s_{0}\}}(\T)$.
Then
\EQQS{
&|\LR{u\HT D^{s}\ds(u^{4}-v^{4}),\HT D^{s-1}w}|+|\LR{u\HT D^{s}w,D^{s}(u^{4}-v^{4})}|\\
&\lesssim I_{s_{0}}(u,v)^{3}\{\|w\|_{H^{s}}^{2}+(\|u\|_{H^{s}}^{2}+\|v\|_{H^{s}}^{2})\|w\|_{H^{s_{0}}}^{2}\},
}
where $w=u-v$.
\end{lem}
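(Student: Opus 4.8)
The plan is to exploit the fact that the quartic nonlinearity $u^{4}-v^{4}$ carries \emph{no} derivative, so that---in contrast with the lemmas for $F_{2}$ and $F_{3}$---no symmetrization in the spirit of Lemma \ref{reduction} or Lemma \ref{lem4.5} is needed, and a single integration by parts suffices. First I would factor
\[
u^{4}-v^{4}=wQ,\qquad Q:=u^{3}+u^{2}v+uv^{2}+v^{3},
\]
and record, via Lemma \ref{Leibniz} (applicable since $s_{0}>1/2$, so that $H^{s_{0}}(\T)$ is an algebra), the two product bounds $\|Q\|_{H^{s_{0}}}\lesssim I_{s_{0}}(u,v)^{3}$ and $\|Q\|_{H^{s}}\lesssim I_{s_{0}}(u,v)^{2}(\|u\|_{H^{s}}+\|v\|_{H^{s}})$, together with $\|wQ\|_{H^{s}}\lesssim\|w\|_{H^{s}}\|Q\|_{H^{s_{0}}}+\|w\|_{H^{s_{0}}}\|Q\|_{H^{s}}$.

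For the second term $\LR{u\HT D^{s}w,D^{s}(u^{4}-v^{4})}$ no integration by parts is required: the operator $D^{s}$ acting on the nonlinearity has the same order as the $D^{s}$ already sitting on $w$ in the prefactor, so the two slots are balanced at $s$ derivatives each. I would apply Cauchy--Schwarz, bound $\|u\HT D^{s}w\|\le\|u\|_{\I}\|D^{s}w\|\lesssim I_{s_{0}}(u,v)\|w\|_{H^{s}}$ and $\|D^{s}(wQ)\|\lesssim\|wQ\|_{H^{s}}$, insert the product bounds above, and finish with Young's inequality to absorb the cross term $\|w\|_{H^{s}}\|w\|_{H^{s_{0}}}(\|u\|_{H^{s}}+\|v\|_{H^{s}})$ into $\|w\|_{H^{s}}^{2}+(\|u\|_{H^{s}}^{2}+\|v\|_{H^{s}}^{2})\|w\|_{H^{s_{0}}}^{2}$.

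For the first term $\LR{u\HT D^{s}\ds(u^{4}-v^{4}),\HT D^{s-1}w}$ the operator $\HT D^{s}\ds$ has order $s+1$, one more than is affordable; the key observation is that the test factor $\HT D^{s-1}w$ carries only $s-1$ derivatives, leaving exactly one derivative's worth of room. I would commute $\HT D^{s}\ds=\ds\,\HT D^{s}$ and integrate by parts,
\[
\LR{u\HT D^{s}\ds(wQ),\HT D^{s-1}w}=-\LR{\ds(u\,\HT D^{s-1}w),\HT D^{s}(wQ)},
\]
then use the identity $\HT D^{s-1}\ds=D^{s}$ (immediate from $\HT\ds=D$, which holds on $\T$ because $\ds$ annihilates the zero mode) to expand $\ds(u\,\HT D^{s-1}w)=(\ds u)\HT D^{s-1}w+uD^{s}w$. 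Every factor now carries at most $s$ derivatives on $w$, and $\HT D^{s}(wQ)$ at most $s$ derivatives on the nonlinearity; Cauchy--Schwarz, the Sobolev bounds $\|u\|_{\I},\|\ds u\|_{\I}\lesssim\|u\|_{H^{s_{0}}}$ (valid since $s_{0}>7/2$), the product bound for $\|wQ\|_{H^{s}}$, and Young's inequality yield the claim.

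I expect essentially no obstacle here, which is the content of the preceding remark: because the nonlinearity is derivative-free, the worst configuration is fully balanced after one integration by parts, so none of the genuine cancellation identities are invoked. In particular, $s+1$ derivatives never land on $v$, which explains why the right-hand side contains no $\|w\|_{H^{s_{0}-1}}^{2}\|v\|_{H^{s+1}}^{2}$ term (unlike Lemma \ref{lem.fo}, whose nonlinearity $\ds(u^{4}-v^{4})$ does carry a derivative). The only bookkeeping to watch is the power of $I_{s_{0}}(u,v)$, which is read off by counting the $u,v$ factors in the prefactor $u$ and in the cubic remainder $Q$; since this power is immaterial to the application---it is absorbed into the large factor $I_{s_{0}}(u_{1},u_{2})^{2(s'+2)}$ of Proposition \ref{ene.est.pr}---I would simply verify that it is bounded by a fixed power of $I_{s_{0}}(u,v)$, as asserted.
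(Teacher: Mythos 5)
Your argument is correct and is essentially the elaboration of the paper's one-line justification (the paper merely remarks that the presence of $\HT D^{s-1}$ leaves a spare derivative and declares the lemma clear): writing $u^{4}-v^{4}=wQ$, one integration by parts together with the identity $\HT D^{s-1}\ds=D^{s}$ balances the first pairing, the second is already balanced, and Lemma \ref{Leibniz} plus Young's inequality finish. The only caveat is that the prefactor $u$ makes the natural power of $I_{s_{0}}(u,v)$ come out as $4$ rather than the stated $3$ (a discrepancy already present in the paper's statement); as you observe, this is immaterial since it is absorbed into $I_{s_{0}}(u_{1},u_{2})^{2(s'+2)}$ in Proposition \ref{ene.est.pr}.
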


\begin{lem}\label{lem3.46}
Let $s_{0}>7/2$ and $s\ge1$.
Let $u,v\in H^{\max\{s+1,s_{0}\}}(\T)$.
Then
\EQQS{
&|\LR{(\HT\ds u)D^{s-1}w,D^{s-1}\ds(F_{3}(u)-F_{3}(v))}|+|\LR{(\HT\ds u)D^{s-1}w,D^{s-1}\ds(u^{4}-v^{4})}|\\
&\lesssim I_{s_{0}}(u,v)^{4}\{\|w\|_{H^{s}}^{2}+(\|u\|_{H^{s}}^{2}+\|v\|_{H^{s}}^{2})\|w\|_{H^{s_{0}}}^{2}\},
}
where $w=u-v$.
\end{lem}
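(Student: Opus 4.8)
The plan is to estimate the two inner products separately, reducing everything to balanced quadratic forms in $D^{s}w$ by repeated use of the symbol identities $D^{s-1}\ds=-\HT D^{s}$, $\HT D^{s+1}=-\ds D^{s}$ and $D^{s-1}\ds\HT=D^{s}$ (these hold away from the zero Fourier mode, which is harmless as in Section~1). For the $F_{3}$ contribution I would first factor out $w=u-v$ in each of the three cubic pieces; writing $z=u+v$,
\EQQS{
&u^{2}\ds u-v^{2}\ds v=u^{2}\ds w+zw\ds v,\quad u^{2}\HT\ds u-v^{2}\HT\ds v=u^{2}\HT\ds w+zw\HT\ds v,\\
&u\HT(u\ds u)-v\HT(v\ds v)=u\HT(u\ds w)+u\HT(w\ds v)+w\HT(v\ds v),
}
so that beyond the $D^{s-1}w$ already present in the test factor $(\HT\ds u)D^{s-1}w$ every resulting term carries exactly one more factor of $w$.

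Next I would split each term, via Lemmas \ref{Leibniz2}, \ref{comm.est.H} and \ref{comm.est.9}, into a principal part in which the full operator $D^{s-1}\ds$ (together with the internal derivative) lands on the unique high-derivative factor, plus commutator remainders. The remainders are all of the schematic form (low-order norm)$\times\|w\|_{H^{s}}$ and are bounded directly by Lemmas \ref{Leibniz} and \ref{G.N.}, contributing the stated right-hand side with a cubic factor $I_{s_{0}}(u,v)^{3}$. The principal parts are the delicate ones: an apparent derivative loss $\|w\|_{H^{s+1}}$ or $\|v\|_{H^{s+1}}$ appears, and removing it is the heart of the matter. Using the identities above, the $w$-heavy principal parts all reduce to $\pm\LR{(\HT\ds u)u^{2}D^{s-1}w,\ds D^{s}w}$ and the $v$-heavy ones to $\pm\LR{(\HT\ds u)(\text{low})\,wD^{s-1}w,\ds D^{s}v}$, where $(\text{low})$ is $z$, $u$ or $v$.

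To defeat the loss I would then integrate by parts in $\ds$. In the $w$-heavy case the derivative falling back on $D^{s-1}w$ produces, after $D^{s-1}\ds=-\HT D^{s}$, the single top-order term $\LR{(\HT\ds u)u^{2}\HT D^{s}w,D^{s}w}$, which the antisymmetrization identity $\LR{\phi\HT\chi,\chi}=-\frac{1}{2}\LR{[\HT,\phi]\chi,\chi}$ together with Lemma \ref{comm.est.H} bounds by $\|u\|_{H^{s_{0}}}^{3}\|w\|_{H^{s}}^{2}$; every other term carries at most $s$ derivatives on $w$ and is harmless. In the $v$-heavy case the same integration by parts transfers the extra derivative off $v$, leaving $D^{s}v$ (only $H^{s}$, not $H^{s+1}$) paired against $\ds[(\HT\ds u)\,wD^{s-1}w\cdot(\text{low})]$; each resulting term either balances two factors $D^{s}$ against a coefficient measured in $H^{s_{0}}$ or carries a single $D^{s}$, and Young's inequality yields $\|w\|_{H^{s}}^{2}+(\|u\|_{H^{s}}^{2}+\|v\|_{H^{s}}^{2})\|w\|_{H^{s_{0}}}^{2}$. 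Because $F_{3}$ has only one internal derivative, this single integration by parts already brings $v$ down to $H^{s}$, which is precisely why---in contrast to the $F_{2}$ estimates of Lemmas \ref{lem4.34}--\ref{lem4.37}---no term $\|w\|_{H^{s_{0}-1}}^{2}\|v\|_{H^{s+1}}^{2}$ survives here.

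Finally, the $F_{4}=-u^{4}$ contribution is the easy one: since $u^{4}$ carries no derivative, writing $u^{4}-v^{4}=w(u^{3}+u^{2}v+uv^{2}+v^{3})$ and $D^{s-1}\ds=-\HT D^{s}$ shows that no factor ever receives more than $s$ derivatives, so Lemmas \ref{Leibniz} and \ref{Leibniz2} bound this pairing just as in Lemma \ref{lem3.45}, now with a quartic factor $I_{s_{0}}(u,v)^{4}$. Adding the $F_{3}$ and $F_{4}$ bounds and using $I_{s_{0}}(u,v)\ge1$ absorbs the cubic factor into the quartic one and closes the estimate. I expect the main obstacle to be the bookkeeping in the third paragraph: one must check that after the single integration by parts every genuinely top-order contribution has cancelled, leaving only balanced quadratic forms and Hilbert-transform commutators---and, in contrast to the $F_{1}$ and $F_{2}$ analysis, that no leading coefficient needs to be tracked, since $F_{3}$ and $F_{4}$ are subprincipal relative to the worst term.
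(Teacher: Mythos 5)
Your proof is correct, but it takes a genuinely different and much heavier route than the paper's, which dispatches this lemma in one sentence: it ``follows from Lemma \ref{Leibniz} because of the presence of $D^{s-1}$.'' The observation you make only in passing --- that a single integration by parts rebalances the derivative count from $(s+1,s-1)$ to $(s,s)$, and that this is why no $\|v\|_{H^{s+1}}$ term survives --- is the \emph{entire} content of the intended argument: writing $\LR{(\HT\ds u)D^{s-1}w,D^{s-1}\ds G}=-\LR{(\HT\ds^{2}u)D^{s-1}w+(\HT\ds u)D^{s-1}\ds w,\,D^{s-1}G}$ with $G=F_{3}(u)-F_{3}(v)$ or $u^{4}-v^{4}$, the left entry is bounded in $L^{2}$ by $\|u\|_{H^{s_{0}}}\|w\|_{H^{s}}$ (the stray $\ds$ only promotes $D^{s-1}w$ to level $s$), while $\|D^{s-1}G\|\le\|G\|_{H^{s-1}}$ is the $H^{s-1}$ norm of a cubic or quartic product containing exactly one factor of $w$ and at most one internal derivative, so Lemma \ref{Leibniz} never places more than $s$ derivatives on any factor; Cauchy--Schwarz and Young's inequality then give the stated bound directly. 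In particular no commutator decomposition, no isolation of principal parts, and no cancellation is required: the ``top-order'' term you single out, $\LR{(\HT\ds u)u^{2}\HT D^{s}w,D^{s}w}$, is already balanced and is controlled by H\"older alone, so the antisymmetrization identity and Lemmas \ref{comm.est.H}, \ref{comm.est.9} are superfluous here, and your description of the cancellation bookkeeping as ``the heart of the matter'' imports the difficulty of the $F_{1}$ and $F_{2}$ estimates into a place where it does not occur. What your route buys is uniformity with the treatment of the other correction-term lemmas; what the paper's buys is the recognition that, once both slots of the pairing sit at level $s-1$, this estimate is soft.
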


\begin{proof}
This follows from Lemma \ref{Leibniz} because of the presence of $D^{s-1}$.
\end{proof}

\begin{lem}\label{lem4.47}
Let $s_{0}>7/2$ and $s\ge1$.
Let $u,v\in H^{\max\{s+2,s_{0}\}}(\T)$.
Then
\EQQS{
&|\LR{u^{2}D^{s-1}w,D^{s-1}\ds(u\ds^{2}u-v\ds^{2}v)}|\\
&\lesssim I_{s_{0}}(u,v)^{3}\{\|w\|_{H^{s}}^{2}+\|w\|_{H^{s_{0}-1}}^{2}\|v\|_{H^{s+1}}^{2}+(\|u\|_{H^{s}}^{2}+\|v\|_{H^{s}}^{2})\|w\|_{H^{s_{0}}}^{2}\},
}
where $w=u-v$.
\end{lem}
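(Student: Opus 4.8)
The plan is to follow the template of Lemma \ref{lem4.34}, simply replacing the coefficient $\HT\ds u$ there by $u^{2}$. First I would split the nonlinearity as $u\ds^{2}u-v\ds^{2}v=u\ds^{2}w+w\ds^{2}v$ with $w=u-v$, so the left-hand side becomes $\LR{u^{2}D^{s-1}w,D^{s-1}\ds(u\ds^{2}w)}+\LR{u^{2}D^{s-1}w,D^{s-1}\ds(w\ds^{2}v)}$. Using the identity $D^{s-1}\ds=-\HT D^{s}$, I would apply Lemma \ref{comm.est.7} with $\La_{s}=\HT D^{s}$ to each product to extract the principal contributions
\[
D^{s-1}\ds(u\ds^{2}w)=uD^{s-1}\ds^{3}w+s\,\ds u\,D^{s-1}\ds^{2}w+R_{1},\quad
D^{s-1}\ds(w\ds^{2}v)=wD^{s-1}\ds^{3}v+s\,\ds w\,D^{s-1}\ds^{2}v+R_{2},
\]
where $\|R_{1}\|\lesssim\|u\|_{H^{s_{0}}}\|w\|_{H^{s}}+\|u\|_{H^{s}}\|w\|_{H^{s_{0}}}$ and $\|R_{2}\|\lesssim\|w\|_{H^{s_{0}}}\|v\|_{H^{s}}+\|w\|_{H^{s}}\|v\|_{H^{s_{0}}}$. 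Pairing $R_{1},R_{2}$ against $u^{2}D^{s-1}w$ and using $\|u\|_{\infty}\lesssim\|u\|_{H^{s_{0}}}$, $\|D^{s-1}w\|\lesssim\|w\|_{H^{s}}$ together with the Cauchy--Schwarz and AM--GM inequalities produces only terms of the desired form.

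The genuinely dangerous pieces are the principal terms, where both the derivative count and the powers of the unknowns are highest, and these must be symmetrized by integration by parts. Writing $W=D^{s-1}w$ and noting $D^{s-1}\ds^{3}w=\ds^{3}W$, the leading $w$-term is $\LR{u^{3}W,\ds^{3}W}$, which carries an apparent $(2s+1)$-derivative loss. Integrating by parts repeatedly I would reduce it to a combination of $\LR{\ds^{3}(u^{3}),W^{2}}$ and $\LR{u^{2}\ds u,(\ds W)^{2}}$; since $s_{0}>7/2$ we have $\|\ds^{3}(u^{3})\|_{\infty},\|u^{2}\ds u\|_{\infty}\lesssim\|u\|_{H^{s_{0}}}^{3}$, while $\|W\|\lesssim\|w\|_{H^{s-1}}\le\|w\|_{H^{s}}$ and $\|\ds W\|\lesssim\|w\|_{H^{s}}$, so this piece is $\lesssim I_{s_{0}}(u,v)^{3}\|w\|_{H^{s}}^{2}$. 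The secondary $w$-term $s\LR{u^{2}\ds u\,W,\ds^{2}W}$ is handled by one further integration by parts in exactly the same manner.

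For the $v$-terms I would exploit that the right-hand side allows up to $s+1$ derivatives on $v$. The secondary term $s\LR{u^{2}W,\ds w\,D^{s-1}\ds^{2}v}$ is estimated directly: since $\|D^{s-1}\ds^{2}v\|\lesssim\|v\|_{H^{s+1}}$, measuring the bare factor in $\|\ds w\|_{\infty}\lesssim\|w\|_{H^{s_{0}-1}}$ (valid as $s_{0}-1>3/2$) yields, after AM--GM, the contributions $\|w\|_{H^{s_{0}-1}}^{2}\|v\|_{H^{s+1}}^{2}+\|w\|_{H^{s}}^{2}$. The principal $v$-term $\LR{u^{2}w\,W,\ds^{3}V}$ with $V=D^{s-1}v$ has $s+2$ derivatives on $v$, so I would integrate by parts once to obtain $-\LR{\ds(u^{2}w\,W),D^{s-1}\ds^{2}v}$; distributing the derivative and again assigning the norm $H^{s_{0}-1}$ to each bare factor of $w$ (here $\|w\|_{\infty}\lesssim\|w\|_{H^{s_{0}-1}}$, and $\|\ds^{2}w\|_{\infty}\lesssim\|w\|_{H^{s_{0}-1}}$ which is exactly where $s_{0}>7/2$ enters) gives the bound.

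The main obstacle is twofold. First, carrying out the symmetrizing integration by parts in the principal $w$-term so that the apparent $(2s+1)$-derivative loss collapses to $2s$; this is the analogue of the explicit identity opening the proof of Lemma \ref{lem4.34}, but with $u^{3}$ in place of $u\HT\ds u$. Second, throughout the $v$-terms one must \emph{consistently} assign the weaker norm $H^{s_{0}-1}$, rather than $H^{s_{0}}$, to every low-order factor of $w$, which is precisely what produces the sharp cross term $\|w\|_{H^{s_{0}-1}}^{2}\|v\|_{H^{s+1}}^{2}$ instead of a strictly larger one. Once these two reductions are in place, every remaining term is routine, being a product of an $L^{\infty}$ coefficient built from $u,v$ with at most $s$ derivatives and two $L^{2}$ factors controlled by $\|w\|_{H^{s}}$, $\|w\|_{H^{s_{0}-1}}$, or $\|v\|_{H^{s+1}}$.
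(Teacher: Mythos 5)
Your proposal is correct and follows essentially the same route as the paper: split $u\ds^{2}u-v\ds^{2}v=u\ds^{2}w+w\ds^{2}v$, apply the commutator estimate of Lemma \ref{comm.est.7} (with $\La_{s}=\HT D^{s}=-D^{s-1}\ds$) to isolate the principal terms, symmetrize them by integration by parts, and estimate the remainders directly. The only cosmetic difference is that the paper handles the $w\ds^{2}v$ piece by a single integration by parts and the product estimate rather than via the commutator lemma, which yields the same bound.
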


\begin{proof}
Note that
\EQQS{
&|\LR{u^{2}D^{s-1}w,D^{s-1}\ds(u\ds^{2}w+w\ds^{2}v)}|\\
&\le|\LR{u^{2}D^{s-1}w,D^{s-1}\ds(u\ds^{2}w)-uD^{s-1}\ds^{3}w-s\ds uD^{s-1}\ds^{2}w}|\\
&\quad+|\LR{\ds(u^{2}D^{s-1}w),D^{s-1}(w\ds^{2}v)}|+3|\LR{\ds(u^{2}\ds uD^{s-1}w),D^{s-1}\ds w}|\\
&\quad+\frac{3}{2}|\LR{u^{2}\ds u,(\HT D^{s}w)^{2}}|+s|\LR{\ds(u^{2}\ds uD^{s-1}w),D^{s-1}\ds w}|,
}
which completes the proof.
\end{proof}

\begin{lem}\label{lem4.48}
Let $s_{0}>7/2$ and $s\ge1$.
Let $u,v\in H^{\max\{s+2,s_{0}\}}(\T)$.
Then
\EQQS{
&|\LR{u^{2}D^{s-1}w,\HT D^{s-1}\ds(u\HT\ds^{2}u-v\HT\ds^{2}v)}|\\
&\lesssim I_{s_{0}}(u,v)^{3}\{\|w\|_{H^{s}}^{2}+\|w\|_{H^{s_{0}-1}}^{2}\|v\|_{H^{s+1}}^{2}+(\|u\|_{H^{s}}^{2}+\|v\|_{H^{s}}^{2})\|w\|_{H^{s_{0}}}^{2}\},
}
where $w=u-v$.
\end{lem}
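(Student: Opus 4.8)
The plan is to follow the scheme of Lemma~\ref{lem4.47} (the analogous $M_{s}^{(3)}$--contribution for $u\ds^{2}u$), exploiting that, modulo the zero mode, $\HT\ds=D$, so that $\HT D^{s-1}\ds=D^{s}$, $D^{s-1}\ds=-\HT D^{s}$ and $\HT\ds^{2}=\ds^{2}\HT$. First I would write $u\HT\ds^{2}u-v\HT\ds^{2}v=u\HT\ds^{2}w+w\HT\ds^{2}v$ and use $u\HT\ds^{2}w=u\ds^{2}(\HT w)$, so that $\HT D^{s-1}\ds(u\HT\ds^{2}w)=D^{s}(u\ds^{2}\HT w)$. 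This recasts the leading contribution into exactly the $u\ds^{2}u$ form treated in Lemma~\ref{lem4.47}, but with $\HT w$ in the inner slot; since no loss term is subtracted on the left-hand side, everything must close directly.

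For the $u\HT\ds^{2}w$ piece I would apply Lemma~\ref{comm.est.7} with $\La_{s}=D^{s}$ to $u\ds^{2}(\HT w)$, peeling off the top-order terms $uD^{s}\ds^{2}\HT w+s\,\ds u\,D^{s}\ds\HT w$. Pairing the commutator remainder against $u^{2}D^{s-1}w$ and using $\|\HT w\|_{H^{\sigma}}=\|w\|_{H^{\sigma}}$ together with $\|u\|_{\I}\lesssim I_{s_{0}}$ gives, after Cauchy--Schwarz, contributions bounded by $I_{s_{0}}^{3}(\|w\|_{H^{s}}^{2}+(\|u\|_{H^{s}}^{2}+\|v\|_{H^{s}}^{2})\|w\|_{H^{s_{0}}}^{2})$. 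The two extracted terms look like a two-derivative loss, which I remove by integration by parts. For $\LR{u^{2}D^{s-1}w,uD^{s}\ds^{2}\HT w}=\LR{u^{3}D^{s-1}w,\ds^{2}(\HT D^{s}w)}$ I move both derivatives of $\ds^{2}$ onto $u^{3}D^{s-1}w$ and only then use $D^{s-1}\ds w=-\HT D^{s}w$; the two would-be-divergent pieces collapse to multiples of $\LR{\ds(u^{3}),(\HT D^{s}w)^{2}}$, while the remaining terms carry at most $\ds^{2}(u^{3})$ and $D^{s-1}w$ and are genuinely of order $\le s$. The subleading term $s\LR{u^{2}\ds u\,D^{s-1}w,D^{s+1}w}$ is treated the same way via $D^{s+1}w=\ds(\HT D^{s}w)$. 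All of these are bounded by $I_{s_{0}}^{3}\|w\|_{H^{s}}^{2}$.

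For the $w\HT\ds^{2}v$ piece I would integrate by parts the single derivative, writing $\HT D^{s-1}\ds=\ds\HT D^{s-1}$, to obtain $-\LR{\ds(u^{2}D^{s-1}w),\HT D^{s-1}(w\HT\ds^{2}v)}$. Here $\|\ds(u^{2}D^{s-1}w)\|\lesssim I_{s_{0}}^{2}\|w\|_{H^{s}}$, and I estimate $\|\HT D^{s-1}(w\HT\ds^{2}v)\|=\|D^{s-1}(w\HT\ds^{2}v)\|$ by Lemma~\ref{Leibniz} (or the commutator Lemma~\ref{comm.est.3}). The point is that the dangerous top-order-in-$v$ part $w\,D^{s-1}\HT\ds^{2}v$ keeps the undifferentiated $w$ in $L^{\I}$, hence $\lesssim\|w\|_{H^{s_{0}-1}}\|v\|_{H^{s+1}}$, whereas every term with a derivative on $w$ carries strictly fewer derivatives on $v$. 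Combining with the left factor and using $ab\le\tfrac12 a^{2}+\tfrac12 b^{2}$ produces the $\|w\|_{H^{s}}^{2}$ and $\|w\|_{H^{s_{0}-1}}^{2}\|v\|_{H^{s+1}}^{2}$ terms, the lower-$v$ remainders giving the $(\|u\|_{H^{s}}^{2}+\|v\|_{H^{s}}^{2})\|w\|_{H^{s_{0}}}^{2}$ contribution.

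The main obstacle is the order-$(s+2)$ piece $\LR{u^{3}D^{s-1}w,\HT D^{s}\ds^{2}w}$: a direct bound loses two derivatives and, in contrast to the $\ds^{2}$ case of Lemma~\ref{lem4.47}, the extra $\HT$ blocks the immediate appearance of a perfect square. The resolution is precisely to integrate $\ds^{2}$ by parts \emph{before} invoking $D^{s-1}\ds=-\HT D^{s}$, which is what makes both divergent contributions reassemble into $\LR{\ds(u^{3}),(\HT D^{s}w)^{2}}$. The secondary subtlety is the regularity bookkeeping in the $w\HT\ds^{2}v$ term: one must integrate by parts only once and keep the undifferentiated $w$ in $L^{\I}$ (so at regularity $H^{s_{0}-1}$, not $H^{s_{0}}$) in order to land inside the stated right-hand side rather than a strictly larger quantity.
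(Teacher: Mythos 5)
Your proposal is correct and is essentially the argument the paper intends: the paper's proof of this lemma is just the remark that it "follows from a similar argument to Lemma \ref{lem4.47}," and your decomposition $u\HT\ds^{2}w+w\HT\ds^{2}v$, the use of Lemma \ref{comm.est.7} on the first piece, the integration by parts collapsing the top-order terms into $\LR{\ds(u^{3}),(\HT D^{s}w)^{2}}$, and the single integration by parts plus Lemma \ref{Leibniz} on the second piece are exactly that argument, with the extra Hilbert transforms absorbed via $\HT D^{s-1}\ds=D^{s}$ and $u\HT\ds^{2}w=u\ds^{2}\HT w$. No gaps.
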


\begin{proof}
This follows from a similar argument to Lemma \ref{lem4.47}.
\end{proof}

Cobmining Lemma \ref{lem4.47} and \ref{lem4.48}, we obtain the following:
\begin{lem}\label{lem3.49}
Let $s_{0}>7/2$ and $s\ge1$.
Let $u,v\in H^{\max\{s+2,s_{0}\}}(\T)$.
Then
\EQQS{
&\sum_{j=2}^{4}|\LR{u^{2}D^{s-1}w,D^{s-1}\ds(F_{j}(u)-F_{j}(v))}|\\
&\lesssim I_{s_{0}}(u,v)^{5}\{\|w\|_{H^{s}}^{2}+\|w\|_{H^{s_{0}-1}}^{2}\|v\|_{H^{s+1}}^{2}+(\|u\|_{H^{s}}^{2}+\|v\|_{H^{s}}^{2})\|w\|_{H^{s_{0}}}^{2}\},
}
where $w=u-v$.
\end{lem}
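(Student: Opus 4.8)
The plan is to reduce the whole sum to the individual nonlinear monomials making up $F_{2},F_{3},F_{4}$ and to match each monomial either to one of the two lemmas just proved or to a routine product estimate. Writing $F_{j}(u)-F_{j}(v)$ out term by term, the only pieces carrying two spatial derivatives occur in $F_{2}$, namely $c_{1}(u\ds^{2}u-v\ds^{2}v)$ and $c_{4}(\HT(u\HT\ds^{2}u)-\HT(v\HT\ds^{2}v))$. These are exactly the quantities controlled by Lemma \ref{lem4.47} and Lemma \ref{lem4.48}: for the latter one uses that $\HT$ commutes with $D^{s-1}\ds$, so that $D^{s-1}\ds\HT(u\HT\ds^{2}u-v\HT\ds^{2}v)=\HT D^{s-1}\ds(u\HT\ds^{2}u-v\HT\ds^{2}v)$ and the pairing against $u^{2}D^{s-1}w$ is precisely the left-hand side of Lemma \ref{lem4.48}. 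Each of these contributes a bound of the asserted form with $I_{s_{0}}(u,v)^{3}$.

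For the remaining first-order terms of $F_{2}$, that is $c_{2}((\ds u)^{2}-(\ds v)^{2})$ and $c_{3}((\HT\ds u)^{2}-(\HT\ds v)^{2})$, I would set $z=u+v$ and factor the differences as $\ds z\,\ds w$ and $(\HT\ds z)(\HT\ds w)$. The only term threatening a derivative loss is the one in which all of $D^{s-1}\ds$ falls on the $w$-factor, producing $\ds z\,D^{s-1}\ds^{2}w$ (respectively its $\HT$-analogue). Pairing this against $u^{2}D^{s-1}w$ and integrating by parts once symmetrizes the top-order derivative: it converts $D^{s-1}\ds^{2}w$ into $(D^{s}w)^{2}$ together with lower-order factors, after which everything is bounded by $\|u^{2}\ds z\|_{\I}\|D^{s}w\|^{2}$ and similar quantities, hence by $I_{s_{0}}(u,v)^{3}\|w\|_{H^{s}}^{2}$. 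The remaining commutator pieces of the Leibniz expansion carry at most $s$ derivatives on $w$ and are disposed of directly by Lemma \ref{Leibniz}; this is the same mechanism used for the $M_{s}^{(2)}$-analogues in Lemmas \ref{lem4.34} and \ref{lem4.35}.

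For $F_{3}$ (cubic, at most one derivative per factor) and $F_{4}=-u^{4}$ (quartic, no derivatives), I would proceed exactly as in the proofs of Lemmas \ref{lem3.45} and \ref{lem3.46}: factor $w$ out of each difference, for example $u^{4}-v^{4}=w(u^{3}+u^{2}v+uv^{2}+v^{3})$, and estimate $\LR{u^{2}D^{s-1}w,D^{s-1}\ds(\cdot)}$ by the Leibniz rule (Lemma \ref{Leibniz}) together with the commutator estimate (Lemma \ref{Leibniz2}). Because no monomial here places more than one derivative on any single factor, $D^{s-1}\ds$ never produces a genuine top-order loss, and the output is controlled by $\|w\|_{H^{s}}^{2}$ together with the remainder terms $\|w\|_{H^{s_{0}-1}}^{2}\|v\|_{H^{s+1}}^{2}$ and $(\|u\|_{H^{s}}^{2}+\|v\|_{H^{s}}^{2})\|w\|_{H^{s_{0}}}^{2}$; counting the degree of the coefficient factors gives $I_{s_{0}}(u,v)^{4}$ for $F_{3}$ and $I_{s_{0}}(u,v)^{5}$ for $F_{4}$.

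Finally, summing the three groups and using $I_{s_{0}}(u,v)\ge1$ to absorb the lower powers $I_{s_{0}}^{3}$ and $I_{s_{0}}^{4}$ into $I_{s_{0}}^{5}$ yields the stated estimate. The one genuinely delicate point is the symmetrization in the second paragraph for the $c_{2},c_{3}$ terms of $F_{2}$: a naive application of the Leibniz rule would leave $s+1$ derivatives on $w$, which $\|w\|_{H^{s}}$ cannot absorb, so the single integration by parts is essential. Everything else is bookkeeping once the algebraic identities are fixed.
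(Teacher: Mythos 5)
Your proposal is correct and follows essentially the same route as the paper: the paper's entire proof of this lemma is the remark that it follows by combining Lemma \ref{lem4.47} and Lemma \ref{lem4.48} (which handle exactly the two second-derivative monomials $c_{1}u\ds^{2}u$ and $c_{4}\HT(u\HT\ds^{2}u)$ of $F_{2}$, the latter after commuting $\HT$ with $D^{s-1}\ds$ as you note), with the remaining first-order and zeroth-order terms left implicit as routine. Your filling-in of those remaining terms — symmetrizing the single top-order term $\ds z\,D^{s-1}\ds^{2}w$ by one integration by parts, and treating $F_{3},F_{4}$ as in Lemmas \ref{lem3.45} and \ref{lem3.46} — is consistent with how the paper handles the analogous cases elsewhere.
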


\begin{defn}
Let $s\ge0$ and $k\in\N$ satisfy $2(s+2)>k$.
We define
\EQQS{
p(k):=\frac{2(s+2)}{2(s+2)-k},\quad q(k):=\frac{2(s+2)}{k}.
}
Note that $p(k)>1$ and $1/p(k)+1/q(k)=1$.
\end{defn}

The following five lemmas are estimates for viscous terms $-\e_{1}\ds^{4}u+\e_{2}\ds^{4}v$ in $M_{s}^{(1)}(u,v)$.

\begin{lem}\label{lem3.50}
Let $s\ge1$, $s_{0}>7/2$ and $\e_{1}\in[0,1]$.
Then there exists $C=C(s_{0},s)>0$ such that
for any $u,v\in H^{\max\{s+2,s_{0}\}}(\T)$,
\EQQS{
&\left|\e_{1}\int_{\T}\ds^{4}u(\HT D^{s}w)\HT D^{s-1}wdx\right|\\
&\le\frac{\e_{1}^{p(4)}}{100}\|D^{s+2}w\|^{2}+C\|u\|_{H^{s_{0}}}^{q(4)}\|w\|^{2}
  +C\|u\|_{H^{s_{0}}}\|w\|_{H^{s}}^{2},
}
where $w=u-v$.
\end{lem}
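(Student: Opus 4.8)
The plan is to integrate by parts exactly once in order to remove one derivative from $\ds^{4}u$, so that the remaining $\ds^{3}u$ can be placed in $L^{\I}$; this is precisely where the hypothesis $s_{0}>7/2$ enters, since $\|\ds^{3}u\|_{\I}\lec\|u\|_{H^{s_{0}}}$ requires exactly $s_{0}>7/2$. Writing $w=u-v$ and integrating by parts once,
\[
\e_{1}\int_{\T}\ds^{4}u(\HT D^{s}w)\HT D^{s-1}w\,dx=-\e_{1}(J_{1}+J_{2}),
\]
where $J_{1}=\int_{\T}\ds^{3}u(\HT D^{s}\ds w)(\HT D^{s-1}w)\,dx$ and $J_{2}=\int_{\T}\ds^{3}u(\HT D^{s}w)(\HT D^{s-1}\ds w)\,dx$ arise from the two terms of $\ds((\HT D^{s}w)\HT D^{s-1}w)$.

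First I would dispose of $J_{2}$, which carries a balanced number of derivatives on $w$. Since $\HT$ is an $L^{2}$ contraction and $\|\HT D^{s-1}\ds w\|=\|D^{s}w\|$, Hölder's inequality ($L^{\I}\times L^{2}\times L^{2}$) gives $|J_{2}|\lec\|\ds^{3}u\|_{\I}\|D^{s}w\|^{2}\lec\|u\|_{H^{s_{0}}}\|w\|_{H^{s}}^{2}$, and using $\e_{1}\le1$ this is already of the form of the third term $C\|u\|_{H^{s_{0}}}\|w\|_{H^{s}}^{2}$.

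The main term is $J_{1}$. By Hölder's inequality and $\|\HT g\|\le\|g\|$, we have $|J_{1}|\lec\|u\|_{H^{s_{0}}}\|D^{s+1}w\|\|D^{s-1}w\|$. Here the total number of $w$-derivatives is exactly $(s+1)+(s-1)=2s$, which is what makes the exponents work. Interpolating each factor against the dissipation norm via $\|D^{a}w\|\le\|w\|^{1-a/(s+2)}\|D^{s+2}w\|^{a/(s+2)}$ (Plancherel's theorem and Hölder's inequality) with $a=s+1$ and $a=s-1$ yields $\|D^{s+1}w\|\|D^{s-1}w\|\lec\|D^{s+2}w\|^{2s/(s+2)}\|w\|^{4/(s+2)}$, so that $\e_{1}|J_{1}|\lec(\e_{1}\|D^{s+2}w\|^{2s/(s+2)})(\|u\|_{H^{s_{0}}}\|w\|^{4/(s+2)})$. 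Applying Young's inequality with the conjugate pair $p(4)=\frac{s+2}{s}$ and $q(4)=\frac{s+2}{2}$ to this product (with a small parameter to make the constant $\le1/100$), the first factor raised to $p(4)$ produces exactly $\e_{1}^{p(4)}\|D^{s+2}w\|^{2}$, while the second raised to $q(4)$ produces exactly $\|u\|_{H^{s_{0}}}^{q(4)}\|w\|^{2}$. This gives the first two terms of the claimed bound.

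The only delicate point is the bookkeeping of exponents: one must integrate by parts precisely once, not twice, so that $u$ ends with three derivatives and $w$ with $2s$, because this is the unique splitting for which the interpolation powers combine so that Young's inequality with the stated pair $(p(4),q(4))$ produces simultaneously $\e_{1}^{p(4)}\|D^{s+2}w\|^{2}$ and $\|u\|_{H^{s_{0}}}^{q(4)}\|w\|^{2}$; integrating by parts twice would leave $2s+1$ derivatives on $w$ and force a power of $\e_{1}$ strictly below $p(4)$, which (as $\e_{1}\le1$) cannot be controlled by $\e_{1}^{p(4)}$. Everything else is routine Hölder and $L^{2}$-contraction estimates for $\HT$.
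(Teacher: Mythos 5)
Your proof is correct and follows essentially the same route as the paper: one integration by parts to put three derivatives on $u$ (absorbed by $\|u\|_{H^{s_0}}$ via Sobolev embedding), the balanced term estimated directly by H\"older, and the unbalanced term handled by the interpolation $\|D^{s+1}w\|\|D^{s-1}w\|\lesssim\|w\|^{4/(s+2)}\|D^{s+2}w\|^{2s/(s+2)}$ followed by Young's inequality with the conjugate pair $(p(4),q(4))$. The exponent bookkeeping matches the paper's exactly.
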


\begin{proof}
We set
\EQQS{
\int_{\T}\ds^{4}u(\HT D^{s}w)\HT D^{s-1}wdx
&=-\int_{\T}\ds^{3}uD^{s+1}w\HT D^{s-1}wdx-\int_{\T}\ds^{3}u(\HT D^{s}w)D^{s}wdx\\
&=:A+B.
}
It is clear that $|B|\lesssim\|u\|_{H^{s_{0}}}\|w\|_{H^{s}}^{2}$
Interpolation and the Young inequality show that
\EQQS{
\e_{1}|A|
&\le \e_{1}C\|u\|_{H^{s_{0}}}\|D^{s+1}w\|\|D^{s-1}w\|\\
&\le \e_{1}C\|u\|_{H^{s_{0}}}\|w\|^{4/(s+2)}\|D^{s+2}w\|^{2-4/(s+2)}
\le\frac{\e_{1}^{p(4)}}{100}\|D^{s+2}w\|^{2}+C\|u\|_{H^{s_{0}}}^{q(4)}\|w\|^{2},
}
as desired.
\end{proof}

By a similar argument to the proof of Lemma \ref{lem3.50}, we can show the rest of estimates for viscous terms in $M_{s}^{(1)}(u,v)$.

\begin{lem}\label{lem3.51}
Let $s\ge1$, $s_{0}>7/2$ and $\e_{1}\in[0,1]$.
Then there exists $C=C(s_{0},s)>0$ such that
for any $u,v\in H^{\max\{s+4,s_{0}\}}(\T)$,
\EQQS{
\left|\e_{1}\int_{\T}u(\HT D^{s}\ds^{4}w)\HT D^{s-1}wdx\right|
\le\frac{\sum_{j=1}^{3}\e_{1}^{p(j)}}{100}\|D^{s+2}w\|^{2}+C\sum_{j=1}^{3}\|u\|_{H^{s_{0}}}^{q(j)}\|w\|^{2},
}
where $w=u-v$.
\end{lem}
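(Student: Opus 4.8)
The plan is to integrate by parts \emph{exactly twice} in the factor carrying the four derivatives, arranging matters so that the top-order factor of every resulting term is $\ds D^{s+1}w$ (of order $s+2$), while the remaining derivatives are distributed onto $u$ and onto $\HT D^{s-1}w$. This is the decisive difference from Lemma~\ref{lem3.50}: there the four derivatives sat on $u$, so one integration by parts already had to land on $u$, which dropped the total $w$-order to $2s$ and produced the single exponent $p(4)$. Here, stopping after two integrations by parts keeps the total order of the $w$-content at least $2s+1$ in each term, so only $p(1),p(2),p(3)$ occur, matching the statement.

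Concretely, set $P:=D^{s+1}w$ and $Q:=\HT D^{s-1}w$. Since $\HT\ds=D$, a check on the Fourier side gives $\HT D^{s}\ds^{4}=\ds^{3}D^{s+1}$, hence $\HT D^{s}\ds^{4}w=\ds^{3}P$. Integrating by parts twice moves two derivatives off $\ds^{3}P$ onto $uQ$:
\[
\int_{\T}u(\HT D^{s}\ds^{4}w)\HT D^{s-1}w\,dx
=\int_{\T}\ds^{2}(uQ)\,\ds P\,dx
=\sum_{m=0}^{2}\binom{2}{m}\int_{\T}\ds^{m}u\,\ds^{2-m}Q\,\ds P\,dx .
\]
Here $\ds P=\ds D^{s+1}w$ has the same $L^{2}$ norm as $D^{s+2}w$, and $\ds^{2-m}Q=\ds^{2-m}\HT D^{s-1}w$ has the same $L^{2}$ norm as $D^{s+1-m}w$ (the zero mode being harmless, as noted in Section~1).

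Next I would estimate each summand. Because $s_{0}>7/2>5/2$, the embedding $H^{s_{0}}(\T)\hookrightarrow C^{2}(\T)$ yields $\|\ds^{m}u\|_{\I}\lec\|u\|_{H^{s_{0}}}$ for $m=0,1,2$, so that
\[
\left|\int_{\T}\ds^{m}u\,\ds^{2-m}Q\,\ds P\,dx\right|
\lec\|u\|_{H^{s_{0}}}\,\|D^{s+1-m}w\|\,\|D^{s+2}w\| .
\]
For each $m\in\{0,1,2\}$ I would interpolate $\|D^{s+1-m}w\|\lec\|w\|^{(m+1)/(s+2)}\|D^{s+2}w\|^{(s+1-m)/(s+2)}$, so that, after multiplying by $\e_{1}$, the $m$-th term is controlled by $\e_{1}\|u\|_{H^{s_{0}}}\|w\|^{(m+1)/(s+2)}\|D^{s+2}w\|^{(2(s+2)-(m+1))/(s+2)}$. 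Applying Young's inequality with the conjugate exponents $p(m+1)$ and $q(m+1)$ — and using that $\tfrac{2(s+2)-(m+1)}{s+2}\,p(m+1)=2$ and $\tfrac{m+1}{s+2}\,q(m+1)=2$ — produces exactly $\tfrac{\e_{1}^{p(m+1)}}{100}\|D^{s+2}w\|^{2}+C\|u\|_{H^{s_{0}}}^{q(m+1)}\|w\|^{2}$, the binomial and interpolation constants being absorbed into $C$ via the $\|w\|^{2}$ term. Summing over $m=0,1,2$, i.e. over $j=m+1\in\{1,2,3\}$, gives the asserted bound.

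The only genuine obstacle is the bookkeeping choice in the first two paragraphs: one must resist a third integration by parts. A third one would move a derivative onto $u$ (giving $\ds^{3}u$, still in $L^{\I}$ since $s_{0}>7/2$) but would simultaneously drop the partner factor from $\ds P$ of order $s+2$ down to $P$ of order $s+1$, lowering the total $w$-order to $2s$ and forcing a $p(4)$ contribution that does not appear in the statement. Stopping at two integrations by parts keeps the highest admissible factor $\ds D^{s+1}w$ intact and exactly exhausts the viscous gain $\|D^{s+2}w\|^{2}$ supplied by the parabolic term, which is why the sum terminates at $j=3$. No commutator estimates are needed, as every factor is measured in $L^{2}$ and $\HT$ is an $L^{2}$ isometry up to the harmless zero mode.
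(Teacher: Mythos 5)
Your proof is correct and follows essentially the route the paper intends (the paper omits this proof, stating only that it is "similar to Lemma \ref{lem3.50}", i.e.\ integration by parts, the interpolation $\|D^{a}w\|\le\|w\|^{1-a/(s+2)}\|D^{s+2}w\|^{a/(s+2)}$, and Young's inequality with the conjugate pair $p(j),q(j)$). Your identification $\HT D^{s}\ds^{4}=\ds^{3}D^{s+1}$ and the Leibniz expansion of $\ds^{2}(uQ)$ is exactly the bookkeeping that produces the three exponents $j=1,2,3$, and all the exponent arithmetic checks out.
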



\begin{lem}\label{lem3.52}
Let $s\ge1$, $s_{0}>7/2$ and $\e_{1}\in[0,1]$.
Then there exists $C=C(s_{0},s)>0$ such that
for any $u,v\in H^{\max\{s+3,s_{0}\}}(\T)$,
\EQQS{
\left|\e_{1}\int_{\T}u(\HT D^{s}w)D^{s}\ds^{3}wdx\right|
\le\frac{\sum_{j=1}^{2}\e_{1}^{p(j)}}{100}\|D^{s+2}w\|^{2}+C\sum_{j=1}^{2}\|u\|_{H^{s_{0}}}^{q(j)}\|w\|^{2},
}
where $w=u-v$.
\end{lem}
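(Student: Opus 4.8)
The plan is to integrate by parts once in $x$ so as to redistribute the three derivatives carried by the factor $D^{s}\ds^{3}w$, lowering the top order on each copy of $w$ to at most $s+2$, which is precisely the order controlled by the dissipative gain $\|D^{s+2}w\|^{2}$. Since $u,v\in H^{\max\{s+3,s_{0}\}}(\T)$, the integrand is in $L^{1}(\T)$ and the manipulation is legitimate. Writing $D^{s}\ds^{3}w=\ds(D^{s}\ds^{2}w)$ and integrating by parts gives
\[
\int_{\T}u(\HT D^{s}w)D^{s}\ds^{3}w\,dx
=-\int_{\T}\ds u\,(\HT D^{s}w)\,D^{s}\ds^{2}w\,dx
-\int_{\T}u\,(\ds\HT D^{s}w)\,D^{s}\ds^{2}w\,dx.
\]
Here I will use the Fourier-side identities $\ds\HT D^{s}=D^{s+1}$ and $D^{s}\ds^{2}=-D^{s+2}$, both of which follow from $\sgn(\xi)\xi=|\xi|$, so that $\ds\HT D^{s}w=D^{s+1}w$ and $\|D^{s}\ds^{2}w\|=\|D^{s+2}w\|$ (and $\|\HT D^{s}w\|\le\|D^{s}w\|$). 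The task is thus reduced to bounding one term carrying $D^{s}w$ and one carrying $D^{s+1}w$, each paired against $D^{s+2}w$.

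For the first term I would place $\ds u$ in $L^{\I}$, which is legitimate because $s_{0}>7/2$ yields $\|\ds u\|_{\I}\lec\|u\|_{H^{s_{0}}}$ by Sobolev embedding, and estimate it by $\|u\|_{H^{s_{0}}}\|D^{s}w\|\|D^{s+2}w\|$. Interpolating $\|D^{s}w\|\lec\|w\|^{2/(s+2)}\|D^{s+2}w\|^{s/(s+2)}$ turns the product into $\|w\|^{2/(s+2)}\|D^{s+2}w\|^{2-2/(s+2)}$; after inserting $\e_{1}$ and applying Young's inequality with the conjugate pair $p(2),q(2)$, the high-order factor contributes $\e_{1}^{p(2)}\|D^{s+2}w\|^{2}/100$ (using $(2-2/(s+2))p(2)=2$) and the remaining factor contributes $C\|u\|_{H^{s_{0}}}^{q(2)}\|w\|^{2}$ (using $(2/(s+2))q(2)=2$), which is the $j=2$ summand.

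For the second term I would instead place $u$ in $L^{\I}$, bounding it by $\|u\|_{H^{s_{0}}}\|D^{s+1}w\|\|D^{s+2}w\|$, and interpolate $\|D^{s+1}w\|\lec\|w\|^{1/(s+2)}\|D^{s+2}w\|^{(s+1)/(s+2)}$, so that the product becomes $\|w\|^{1/(s+2)}\|D^{s+2}w\|^{2-1/(s+2)}$. Young's inequality with $p(1),q(1)$ then produces $\e_{1}^{p(1)}\|D^{s+2}w\|^{2}/100+C\|u\|_{H^{s_{0}}}^{q(1)}\|w\|^{2}$, the $j=1$ summand. Adding the two contributions gives the stated bound, in complete parallel with the proof of Lemma \ref{lem3.50}.

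There is no essential obstacle here: the argument is the same integration-by-parts-then-Young scheme already used for Lemma \ref{lem3.50}, and the only delicate point is the bookkeeping that matches each interpolation weight to the correct conjugate pair $(p(j),q(j))$. Specifically, the $D^{s}$-factor forces $j=2$ and the $D^{s+1}$-factor forces $j=1$, and one must verify the elementary identities $(2-k/(s+2))p(k)=2$ and $(k/(s+2))q(k)=2$ so that Young's inequality reproduces exactly the coefficients $\e_{1}^{p(j)}$ on $\|D^{s+2}w\|^{2}$ and the powers $\|u\|_{H^{s_{0}}}^{q(j)}$ on $\|w\|^{2}$. Keeping these $L^{2}$-level endpoints (rather than trying to avoid the $D^{s+2}$ loss) is in fact the point, since the surplus $\|D^{s+2}w\|^{2}$ is meant to be absorbed by the dissipation supplied by the parabolic regularization.
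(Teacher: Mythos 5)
Your proposal is correct and follows exactly the scheme the paper intends: the proof of Lemma \ref{lem3.52} is omitted as being ``similar to that of Lemma \ref{lem3.50},'' and your argument — one integration by parts to split off $\ds u(\HT D^{s}w)D^{s}\ds^{2}w$ and $u D^{s+1}wD^{s}\ds^{2}w$, followed by interpolation against $\|w\|$ and $\|D^{s+2}w\|$ and Young's inequality with the conjugate pairs $(p(2),q(2))$ and $(p(1),q(1))$ respectively — is precisely that scheme, with the exponent bookkeeping checked correctly.
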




\begin{lem}\label{lem3.53}
Let $s\ge1$, $s_{0}>7/2$ and $\e_{1},\e_{2}\in[0,1]$.
Then there exists $C=C(s_{0},s)>0$ such that
for any $u,v\in H^{\max\{s+4,s_{0}\}}(\T)$,
\EQQS{
\left|(\e_{1}-\e_{2})\int_{\T}u(\HT D^{s}\ds^{4}v)\HT D^{s-1}wdx\right|
\le C\max\{\e_{1}^{2},\e_{2}^{2}\}\|v\|_{H^{s+4}}^{2}+C\|u\|_{H^{s_{0}}}^{2}\|w\|_{H^{s}}^{2},
}
where $w=u-v$.
\end{lem}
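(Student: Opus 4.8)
The plan is to exploit the structural feature that distinguishes this estimate from Lemmas \ref{lem3.50}--\ref{lem3.52}: here the top-order derivative $\ds^{4}$ falls on the fixed function $v$ rather than on $w$. Since the target inequality already permits the term $\max\{\e_{1}^{2},\e_{2}^{2}\}\|v\|_{H^{s+4}}^{2}$ on the right-hand side, there is no need to absorb anything into $\|D^{s+2}w\|^{2}$ via interpolation as in the previous viscous-term lemmas; a direct application of H\"older's inequality followed by Young's inequality will suffice.

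First I would record the elementary bound $|\e_{1}-\e_{2}|\le\max\{\e_{1},\e_{2}\}$, valid because $\e_{1},\e_{2}\ge0$, together with $\max\{\e_{1},\e_{2}\}^{2}=\max\{\e_{1}^{2},\e_{2}^{2}\}$. Next, applying H\"older's inequality with $u$ in $L^{\I}$ and the two remaining factors in $L^{2}$, and using that $\HT$ is an $L^{2}$-contraction (so $\|\HT D^{s}\ds^{4}v\|\le\|D^{s}\ds^{4}v\|$ and $\|\HT D^{s-1}w\|\le\|D^{s-1}w\|$), I obtain
\[
\left|\int_{\T}u(\HT D^{s}\ds^{4}v)\HT D^{s-1}w\,dx\right|
\le\|u\|_{\I}\|D^{s}\ds^{4}v\|\|D^{s-1}w\|.
\]
The Sobolev embedding $\|u\|_{\I}\lesssim\|u\|_{H^{s_{0}}}$ (valid since $s_{0}>7/2>1/2$, cf.\ Lemma \ref{G.N.} with $l=0$, $p=\I$), together with $\|D^{s}\ds^{4}v\|\le\|v\|_{H^{s+4}}$ and $\|D^{s-1}w\|\le\|w\|_{H^{s}}$, then bounds the right-hand side by $\|u\|_{H^{s_{0}}}\|v\|_{H^{s+4}}\|w\|_{H^{s}}$.

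Combining with the prefactor $\max\{\e_{1},\e_{2}\}$ and applying Young's inequality $ab\le\frac{1}{2}a^{2}+\frac{1}{2}b^{2}$ with $a=\max\{\e_{1},\e_{2}\}\|v\|_{H^{s+4}}$ and $b=\|u\|_{H^{s_{0}}}\|w\|_{H^{s}}$ yields the claimed bound. There is no genuine obstacle in this argument; the only point worth emphasizing is the structural observation that, because the highest derivative acts on $v$ and the right-hand side tolerates the full norm $\|v\|_{H^{s+4}}^{2}$, the delicate interpolation-and-absorption scheme needed for the $w$-dependent viscous terms becomes entirely unnecessary here.
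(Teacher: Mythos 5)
Your proof is correct and is essentially the argument the paper intends: the paper omits an explicit proof of this lemma (referring to the "similar argument" remark, and for the analogous difference-of-viscosities term in Lemma \ref{lem3.60} simply stating "this follows from the H\"older inequality"), and your H\"older--Sobolev--Young chain, together with the observation that no interpolation/absorption into $\|D^{s+2}w\|^{2}$ is needed because the fourth derivative lands on $v$ and the right-hand side tolerates $\|v\|_{H^{s+4}}^{2}$, is exactly that argument.
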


\begin{lem}\label{lem3.54}
Let $s\ge1$, $s_{0}>7/2$ and $\e_{1},\e_{2}\in[0,1]$.
Then there exists $C=C(s_{0},s)>0$ such that
for any $u,v\in H^{\max\{s+4,s_{0}\}}(\T)$,
\EQQS{
\left|(\e_{1}-\e_{2})\int_{\T}u(\HT D^{s}w)D^{s}\ds^{3}vdx\right|
\le C\max\{\e_{1}^{2},\e_{2}^{2}\}\|v\|_{H^{s+3}}^{2}+C\|u\|_{H^{s_{0}}}^{2}\|w\|_{H^{s}}^{2},
}
where $w=u-v$.
\end{lem}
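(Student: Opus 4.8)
The plan is to bound the integral directly, with no integration by parts, since the orders of differentiation already match the target norms on the right-hand side: $D^s\ds^3 v$ carries exactly $s+3$ derivatives and $\HT D^s w$ carries exactly $s$. First I would place $u$ in $L^\I$ and the two remaining factors in $L^2$. Since $s_0>7/2>1/2$, the Sobolev embedding gives $\|u\|_\I\lec\|u\|_{H^{s_0}}$; the Hilbert transform is bounded on $L^2$, so $\|\HT D^s w\|\lec\|w\|_{H^s}$; and plainly $\|D^s\ds^3 v\|\lec\|v\|_{H^{s+3}}$. H\"older's inequality then yields
\EQQS{
\left|\int_\T u(\HT D^s w)D^s\ds^3 v\,dx\right|\lec\|u\|_{H^{s_0}}\|w\|_{H^s}\|v\|_{H^{s+3}}.
}

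Next I would multiply through by $|\e_1-\e_2|$ and split the resulting product with the Young inequality $ab\le\frac12(a^2+b^2)$, taking $a=|\e_1-\e_2|\,\|v\|_{H^{s+3}}$ and $b=\|u\|_{H^{s_0}}\|w\|_{H^s}$. This produces
\EQQS{
|\e_1-\e_2|\left|\int_\T u(\HT D^s w)D^s\ds^3 v\,dx\right|\lec|\e_1-\e_2|^2\|v\|_{H^{s+3}}^2+\|u\|_{H^{s_0}}^2\|w\|_{H^s}^2.
}
Finally, because $\e_1,\e_2\ge0$ one has $|\e_1-\e_2|^2\le\max\{\e_1^2,\e_2^2\}$, which converts the first term into $\max\{\e_1^2,\e_2^2\}\|v\|_{H^{s+3}}^2$ and completes the estimate after absorbing the implied constants into $C=C(s_0,s)$.

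I expect no genuine obstacle here; the argument runs exactly parallel to Lemma \ref{lem3.53} (and to the prototype Lemma \ref{lem3.50}), the point being that these difference-type viscous terms arise from the $(\e_2-\e_1)\ds^4 v$ part of $\dt w$, so the high-regularity norm is allowed to fall on $v$. The only step deserving care is the choice in Young's inequality: the factor $|\e_1-\e_2|$ must be paired with $\|v\|_{H^{s+3}}$ rather than with $\|w\|_{H^s}$, so that the viscous smallness is attached to the high-order term $v$. This is precisely what yields the error contribution $\max\{\e_1^2,\e_2^2\}\|v\|_{H^{s+3}}^2$, which is in turn absorbed into the term $\max\{\e_1^2,\e_2^2\}\|u_2\|_{H^{s'+4}}^2$ in Proposition \ref{ene.est.pr}. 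Note that the hypothesis $v\in H^{\max\{s+4,s_0\}}(\T)$ is used only to guarantee that all quantities are finite; the estimate itself invokes only $\|v\|_{H^{s+3}}$.
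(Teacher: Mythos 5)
Your proof is correct and is essentially the paper's own argument: the paper omits the proof, noting only that it is "similar to that of Lemma \ref{lem3.50}," and for the difference-type viscous terms (cf.\ Lemma \ref{lem3.60}, whose proof is stated to "follow from the H\"older inequality") the intended argument is exactly your H\"older--Young combination with $|\e_1-\e_2|\le\max\{\e_1,\e_2\}$. No integration by parts or interpolation is needed here since the derivative count already matches the stated norms, just as you observe.
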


The following three lemmas are estimates for viscous terms $-\e_{1}\ds^{4}u+\e_{2}\ds^{4}v$ in $M_{s}^{(2)}(u,v)$.
We omit the proofs of these lemmas since they are similar to that of Lemma \ref{lem3.50}.

\begin{lem}\label{lem3.55}
Let $s\ge1$, $s_{0}>7/2$ and $\e_{1}\in[0,1]$.
Then there exists $C=C(s_{0},s)>0$ such that
for any $u,v\in H^{\max\{s+4,s_{0}\}}(\T)$,
\EQQS{
\left|\e_{1}\int_{\T}(\HT\ds^{5}u)(D^{s-1}w)^{2}dx\right|
\le\frac{\e_{1}^{p(4)}}{100}\|D^{s+2}w\|^{2}+C\|u\|_{H^{s_{0}}}^{q(4)}\|w\|^{2}+C\|u\|_{H^{s_{0}}}\|w\|_{H^{s}}^{2},
}
where $w=u-v$.
\end{lem}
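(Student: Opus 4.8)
The plan is to mirror the proof of Lemma \ref{lem3.50}, the only genuinely new feature being that $\HT\ds^{5}u$ carries one more derivative than the $\ds^{4}u$ appearing there, so that a single integration by parts will no longer bring the $u$-factor down to the $L^{\I}$-threshold. Since $\HT$ commutes with $\ds$ and there are no boundary contributions on $\T$, I would integrate by parts \emph{twice}, transferring two derivatives from $\HT\ds^{5}u$ onto the product $(D^{s-1}w)^{2}$:
\[\int_{\T}(\HT\ds^{5}u)(D^{s-1}w)^{2}dx=\int_{\T}(\HT\ds^{3}u)\ds^{2}\big((D^{s-1}w)^{2}\big)dx.\]
The point of reaching exactly $\HT\ds^{3}u$ is that $s_{0}-3>1/2$, so the Sobolev embedding $H^{s_{0}-3}(\T)\hookrightarrow L^{\I}(\T)$ together with the fact that $\HT$ preserves $H^{\sigma}$-norms gives $\|\HT\ds^{3}u\|_{\I}\lesssim\|u\|_{H^{s_{0}}}$, which is precisely the gain that drives the whole estimate.

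Next I would expand by the Leibniz rule, $\ds^{2}\big((D^{s-1}w)^{2}\big)=2(D^{s-1}\ds w)^{2}+2(D^{s-1}w)(D^{s-1}\ds^{2}w)$, obtaining two terms. The \emph{balanced} term $\int_{\T}(\HT\ds^{3}u)(D^{s-1}\ds w)^{2}dx$ is harmless: using $\|D^{s-1}\ds w\|=\|D^{s}w\|$ it is bounded directly by $\|\HT\ds^{3}u\|_{\I}\|D^{s}w\|^{2}\lesssim\|u\|_{H^{s_{0}}}\|w\|_{H^{s}}^{2}$, which is the third term on the right-hand side. The \emph{unbalanced} term $\int_{\T}(\HT\ds^{3}u)(D^{s-1}w)(D^{s-1}\ds^{2}w)dx$ is the dangerous one, since $D^{s-1}\ds^{2}w$ has the same order $s+1$ as the top factor $D^{s+1}w$ in Lemma \ref{lem3.50}; here I would bound it by $\|u\|_{H^{s_{0}}}\|D^{s-1}w\|\,\|D^{s+1}w\|$.

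From this point the argument is identical to Lemma \ref{lem3.50}. Interpolation gives $\|D^{s-1}w\|\,\|D^{s+1}w\|\lesssim\|w\|^{4/(s+2)}\|D^{s+2}w\|^{2-4/(s+2)}$, and then Young's inequality, applied with the $\e_{1}$-weight attached to the $\|D^{s+2}w\|$-factor and with conjugate exponents $p(4)$ and $q(4)$, yields $\frac{\e_{1}^{p(4)}}{100}\|D^{s+2}w\|^{2}+C\|u\|_{H^{s_{0}}}^{q(4)}\|w\|^{2}$, completing the bound. The structural heart of the matter, and the step I would flag as the main point to get right, is the two-fold integration by parts (rather than the single one in Lemma \ref{lem3.50}): it is what both exposes the $L^{\I}$-controllable factor $\HT\ds^{3}u$ and, via the surviving order-$(s+1)$ factor, keeps the total $w$-order at $2s$, short of the maximal $2(s+2)$ by exactly $k=4$, which is what forces the exponents $p(4),q(4)$ and reproduces the viscous gain $\e_{1}^{p(4)}$.
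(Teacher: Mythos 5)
Your proposal is correct and is exactly the adaptation of the proof of Lemma \ref{lem3.50} that the paper intends (the paper omits the proof of Lemma \ref{lem3.55}, stating only that it is similar to that of Lemma \ref{lem3.50}). The double integration by parts to reach $\HT\ds^{3}u$ (so that $s_{0}-3>1/2$ gives the $L^{\I}$ bound), the splitting into the balanced term $\int(\HT\ds^{3}u)(D^{s}w)^{2}$ and the unbalanced term controlled by $\|u\|_{H^{s_{0}}}\|D^{s-1}w\|\|D^{s+1}w\|$, and the interpolation--Young step with exponents $p(4),q(4)$ all match the paper's template.
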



\begin{lem}\label{lem3.56}
Let $s\ge1$, $s_{0}>7/2$ and $\e_{1}\in[0,1]$.
Then there exists $C=C(s_{0},s)>0$ such that
for any $u,v\in H^{\max\{s+3,s_{0}\}}(\T)$,
\EQQS{
\left|\e_{1}\int_{\T}(\HT\ds u)D^{s-1}wD^{s-1}\ds^{4}wdx\right|
\le\frac{\sum_{j=2}^{3}\e_{1}^{p(j)}}{100}\|D^{s+2}w\|^{2}+C\sum_{j=2}^{3}\|u\|_{H^{s_{0}}}^{q(j)}\|w\|^{2},
}
where $w=u-v$.
\end{lem}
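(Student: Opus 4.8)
The plan is to mirror the proof of Lemma \ref{lem3.50}: a single integration by parts transfers one derivative off the top-order factor $D^{s-1}\ds^{4}w=\ds(D^{s-1}\ds^{3}w)$, after which the resulting pieces are interpolated between $\|w\|$ and $\|D^{s+2}w\|$ and absorbed by Young's inequality. First I would write
\EQQS{
\int_{\T}(\HT\ds u)(D^{s-1}w)(D^{s-1}\ds^{4}w)dx
&=-\int_{\T}(\HT\ds^{2}u)(D^{s-1}w)(D^{s-1}\ds^{3}w)dx\\
&\quad-\int_{\T}(\HT\ds u)(D^{s-1}\ds w)(D^{s-1}\ds^{3}w)dx.
}
No commutator with $\HT$ is needed here, since $\HT$ acts only on $u$ and commutes with $\ds$ and $D^{s-1}$; moreover $\|\HT\ds u\|_{\I}$ and $\|\HT\ds^{2}u\|_{\I}$ are both $\lec\|u\|_{H^{s_{0}}}$ by Sobolev embedding (valid since $s_{0}>7/2>5/2$) together with the fact that $\HT$ is an isometry on each $H^{k}(\T)$. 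I would also record the Fourier-side identities $\|D^{s-1}\ds^{3}w\|=\|D^{s+2}w\|$ and $\|D^{s-1}\ds w\|=\|D^{s}w\|$.

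For the first term I would bound it by $\e_{1}C\|u\|_{H^{s_{0}}}\|D^{s-1}w\|\|D^{s+2}w\|$ and interpolate $\|D^{s-1}w\|\lec\|w\|^{3/(s+2)}\|D^{s+2}w\|^{(s-1)/(s+2)}$, so that $\|D^{s+2}w\|$ appears to the power $2-3/(s+2)=2/p(3)$; Young's inequality with the conjugate pair $p(3),q(3)$ then produces $\e_{1}^{p(3)}\|D^{s+2}w\|^{2}/100+C\|u\|_{H^{s_{0}}}^{q(3)}\|w\|^{2}$, the last exponent being correct because $(3/(s+2))\,q(3)=2$. The second term is treated in exactly the same way, now using $\|D^{s}w\|\lec\|w\|^{2/(s+2)}\|D^{s+2}w\|^{s/(s+2)}$, which gives the power $2-2/(s+2)=2/p(2)$ and, via Young with $p(2),q(2)$, the contribution $\e_{1}^{p(2)}\|D^{s+2}w\|^{2}/100+C\|u\|_{H^{s_{0}}}^{q(2)}\|w\|^{2}$. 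Summing the two contributions gives precisely the claimed bound, with exactly the indices $j=3$ and $j=2$ appearing.

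The only point demanding care is the placement of $\e_{1}$ in Young's inequality: before applying $ab\le \delta a^{p}+C_{\delta}b^{q}$ one should regroup $\e_{1}\|D^{s+2}w\|^{2/p(j)}=(\e_{1}^{p(j)}\|D^{s+2}w\|^{2})^{1/p(j)}$, so that the dissipative factor $\e_{1}^{p(j)}$ (rather than merely $\e_{1}$) attaches to $\|D^{s+2}w\|^{2}$, while the conjugate factor $(\|u\|_{H^{s_{0}}}\|w\|^{j/(s+2)})^{q(j)}$ collapses to $\|u\|_{H^{s_{0}}}^{q(j)}\|w\|^{2}$ thanks to $j\,q(j)/(s+2)=2$; choosing $\delta=1/100$ finishes the estimate. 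This is routine bookkeeping rather than a genuine obstacle, which is why the paper omits it: the substantive structural content has already been isolated in Lemma \ref{lem3.50}, and here one merely verifies that the two interpolation exponents land exactly on $2/p(3)$ and $2/p(2)$.
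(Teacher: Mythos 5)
Your proof is correct and follows essentially the same route as the paper's: one integration by parts splitting the integral into the two terms with $D^{s-1}\ds^{3}w$, the interpolation bounds $\|D^{s-1}w\|\lec\|w\|^{2/q(3)}\|D^{s+2}w\|^{2/p(3)-1}\cdot\|D^{s+2}w\|^{0}$ giving total powers $2/p(3)$ and $2/p(2)$ of $\|D^{s+2}w\|$, and Young's inequality with the conjugate pairs $(p(j),q(j))$. The exponent bookkeeping matches the paper's exactly.
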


\begin{lem}\label{lem3.57}
Let $s\ge1$, $s_{0}>7/2$ and $\e_{1},\e_{2}\in[0,1]$.
Then there exists $C=C(s_{0},s)>0$ such that
for any $u,v\in H^{\max\{s+3,s_{0}\}}(\T)$,
\EQQS{
\left|(\e_{1}-\e_{2})\int_{\T}(\HT\ds u)D^{s-1}wD^{s-1}\ds^{4}vdx\right|
\le C\max\{\e_{1}^{2},\e_{2}^{2}\}\|v\|_{H^{s+3}}^{2}+C\|u\|_{H^{s_{0}}}^{2}\|w\|_{H^{s}}^{2},
}
where $w=u-v$.
\end{lem}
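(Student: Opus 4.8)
The plan is to proceed as in the proof of Lemma~\ref{lem3.50}, but to exploit the decisive structural feature that here the four extra derivatives produced by the viscous term fall on $v$ rather than on $w$. Since $v$ is one of the smooth approximating solutions and thus sits at the top of the regularity scale, the top-order factor $D^{s-1}\ds^{4}v$ may simply be absorbed into $\|v\|_{H^{s+3}}$; in particular there is no derivative loss to repair and no need for the interpolation machinery $p(k),q(k)$ that was forced upon us in Lemma~\ref{lem3.50}, where the high derivatives landed on $w$. The only genuine point is to split the prefactor $|\e_{1}-\e_{2}|$ correctly.

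First I would record the elementary inequality
\[
|\e_{1}-\e_{2}|\le\max\{\e_{1},\e_{2}\}=(\max\{\e_{1}^{2},\e_{2}^{2}\})^{1/2},
\]
valid because $\e_{1},\e_{2}\in[0,1]$ are nonnegative. This is precisely the device that allows the whole weight $\max\{\e_{1}^{2},\e_{2}^{2}\}$ to be attached to the high-regularity factor $\|v\|_{H^{s+3}}$, leaving the $w$-factor free of any power of $\e_{j}$. Next I would bound the integral by H\"older with exponents $(\I,2,2)$,
\[
\left|\int_{\T}(\HT\ds u)D^{s-1}w\,D^{s-1}\ds^{4}v\,dx\right|
\le\|\HT\ds u\|_{\I}\,\|D^{s-1}w\|\,\|D^{s-1}\ds^{4}v\|,
\]
and then treat the three factors separately. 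For the first, I combine the boundedness of the Fourier multiplier $\HT$ with the Sobolev embedding $H^{s_{0}-1}(\T)\hookrightarrow L^{\I}(\T)$, legitimate since $s_{0}-1>5/2>1/2$, to get $\|\HT\ds u\|_{\I}\lesssim\|u\|_{H^{s_{0}}}$. For the third, I use the identity $D^{s-1}\ds^{4}=D^{s+3}$ (the two Fourier multipliers coincide, as $|\xi|^{s-1}\xi^{4}=|\xi|^{s+3}$), so that $\|D^{s-1}\ds^{4}v\|=\|D^{s+3}v\|\lesssim\|v\|_{H^{s+3}}$; for the second, $\|D^{s-1}w\|\lesssim\|w\|_{H^{s}}$ is immediate.

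Combining these estimates gives $|\e_{1}-\e_{2}|$ times the integral $\lesssim(\max\{\e_{1}^{2},\e_{2}^{2}\})^{1/2}\|u\|_{H^{s_{0}}}\|w\|_{H^{s}}\|v\|_{H^{s+3}}$, and a single application of Young's inequality $ab\le\tfrac12 a^{2}+\tfrac12 b^{2}$ with $a=(\max\{\e_{1}^{2},\e_{2}^{2}\})^{1/2}\|v\|_{H^{s+3}}$ and $b=\|u\|_{H^{s_{0}}}\|w\|_{H^{s}}$ produces exactly the two terms on the right-hand side of the claim. I do not expect any real obstacle: the argument is routine once one observes that the dangerous derivatives rest on the smooth profile $v$. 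The one subtlety to handle carefully is the asymmetric splitting of $|\e_{1}-\e_{2}|$, so that the term $\|u\|_{H^{s_{0}}}^{2}\|w\|_{H^{s}}^{2}$ carries no positive power of $\e_{j}$---which is the same bookkeeping carried out in the companion Lemmas~\ref{lem3.53} and \ref{lem3.54} for $M_{s}^{(1)}(u,v)$.
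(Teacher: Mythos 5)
Your proposal is correct and follows essentially the route the paper intends: the paper omits this proof as routine, and for the analogous $(\e_{1}-\e_{2})$-difference term (Lemma \ref{lem3.60}) it simply invokes the H\"older inequality, exactly as you do, with the split $|\e_{1}-\e_{2}|\le(\max\{\e_{1}^{2},\e_{2}^{2}\})^{1/2}$ followed by Young's inequality. The observation that $D^{s-1}\ds^{4}=D^{s+3}$ so the top-order factor is absorbed into $\|v\|_{H^{s+3}}$, with no interpolation needed, is precisely the point.
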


The following three lemmas are estimates for viscous terms $-\e_{1}\ds^{4}u+\e_{2}\ds^{4}v$ in $M_{s}^{(3)}(u,v)$.

\begin{lem}\label{lem3.58}
Let $s\ge1$, $s_{0}>7/2$ and $\e_{1}\in[0,1]$.
Then there exists $C=C(s_{0},s)>0$ such that
for any $u,v\in H^{\max\{s+3,s_{0}\}}(\T)$,
\EQQS{
\left|\e_{1}\int_{\T}u\ds^{4}u(D^{s-1}w)^{2}dx\right|\le C\|u\|_{H^{s_{0}}}^{2}\|w\|_{H^{s}}^{2},
}
where $w=u-v$.
\end{lem}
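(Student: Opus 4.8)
The plan is to remove one of the four derivatives falling on $u$ by a single integration by parts, reducing $\ds^{4}u$ to $\ds^{3}u$, which is precisely the highest derivative of $u$ controllable in $L^{\I}$ under the hypothesis $s_{0}>7/2$. Since $\e_{1}\le1$, it suffices to bound the integral without the prefactor. Integrating by parts once and using $\ds((D^{s-1}w)^{2})=2(D^{s-1}w)(D^{s-1}\ds w)$ gives
\EQQS{
\int_{\T}u\ds^{4}u\,(D^{s-1}w)^{2}dx
=-\int_{\T}(\ds u)(\ds^{3}u)(D^{s-1}w)^{2}dx
 -2\int_{\T}u(\ds^{3}u)(D^{s-1}w)(D^{s-1}\ds w)\,dx.
}
The key structural point is that no derivative now lands on $w$ beyond order $s$: the factor $D^{s-1}w$ carries $s-1$ derivatives and $D^{s-1}\ds w$ carries exactly $s$. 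Hence, in contrast with Lemmas \ref{lem3.50}--\ref{lem3.52}, there is no derivative excess on $w$ to be absorbed by the parabolic smoothing, and the terms $\e_{1}^{p(j)}\|D^{s+2}w\|^{2}$ do not appear.

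For both integrals I would estimate the $u$-factors in $L^{\I}$ and the $w$-factors in $L^{2}$. The Sobolev embedding, valid because $s_{0}>7/2$, yields $\|\ds^{j}u\|_{\I}\lesssim\|u\|_{H^{s_{0}}}$ for $0\le j\le3$; the bound for $j=3$ is exactly where the threshold $s_{0}>7/2$ is used. Consequently
\EQQS{
\left|\int_{\T}(\ds u)(\ds^{3}u)(D^{s-1}w)^{2}dx\right|
&\le\|\ds u\|_{\I}\|\ds^{3}u\|_{\I}\|D^{s-1}w\|^{2}
\lesssim\|u\|_{H^{s_{0}}}^{2}\|D^{s-1}w\|^{2},\\
\left|\int_{\T}u(\ds^{3}u)(D^{s-1}w)(D^{s-1}\ds w)dx\right|
&\le\|u\|_{\I}\|\ds^{3}u\|_{\I}\|D^{s-1}w\|\|D^{s-1}\ds w\|
\lesssim\|u\|_{H^{s_{0}}}^{2}\|D^{s-1}w\|\|D^{s}w\|.
}
Since $s\ge1$, one has $|\xi|^{s-1}\lesssim\LR{\xi}^{s}$ and $|\xi|^{s-1}|\xi|=|\xi|^{s}$, so that $\|D^{s-1}w\|\lesssim\|w\|_{H^{s}}$ and $\|D^{s-1}\ds w\|=\|D^{s}w\|\lesssim\|w\|_{H^{s}}$; collecting the two bounds gives the claim.

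The computation is routine, so the only point requiring attention---indeed the sole reason for the single integration by parts---is that $\ds^{4}u$ is not controlled by $\|u\|_{H^{s_{0}}}$ for $s_{0}$ close to $7/2$, whereas $\ds^{3}u$ is controlled in $L^{\I}$ exactly when $s_{0}>7/2$. I therefore expect no genuine obstacle: once the top derivative is shifted off $u$, the estimate is purely a matter of H\"older's inequality together with the Sobolev embedding and the elementary multiplier comparisons above.
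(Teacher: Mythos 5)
Your proof is correct and follows exactly the route the paper intends: the paper's proof consists of the single sentence ``This is obvious thanks to the integration by parts,'' and your argument simply spells out that integration by parts together with the H\"older/Sobolev bounds (with $s_{0}>7/2$ controlling $\|\ds^{3}u\|_{\I}$) that make it work.
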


\begin{proof}
This is obvious thanks to the integration by parts.
\end{proof}

\begin{lem}\label{lem3.59}
Let $s\ge1$, $s_{0}>7/2$ and $\e_{1}\in[0,1]$.
Then there exists $C=C(s_{0},s)>0$ such that
for any $u,v\in H^{\max\{s+3,s_{0}\}}(\T)$,
\EQQS{
\left|\e_{1}\int_{\T}u^{2}D^{s-1}wD^{s-1}\ds^{4}wdx\right|
\le\frac{\sum_{j=2}^{3}\e_{1}^{p(j)}}{100}\|D^{s+2}w\|^{2}+C\sum_{j=2}^{3}\|u\|_{H^{s_{0}}}^{2q(j)}\|w\|^{2},
}
where $w=u-v$.
\end{lem}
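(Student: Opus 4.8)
The plan is to mirror the (omitted) proof of Lemma~\ref{lem3.56}, replacing the factor $\HT\ds u$ appearing there by $u^2$. The first step is to integrate by parts once, moving a single $\ds$ off the highest-order factor via $D^{s-1}\ds^4 w=\ds(D^{s-1}\ds^3 w)$, which gives
\[
\int_\T u^2 D^{s-1}w\, D^{s-1}\ds^4 w\,dx
=-\int_\T \ds(u^2)D^{s-1}w\,D^{s-1}\ds^3 w\,dx
-\int_\T u^2 D^{s-1}\ds w\,D^{s-1}\ds^3 w\,dx
=:A+B.
\]
Throughout I use $\|D^{s-1}\ds^3 w\|=\|D^{s+2}w\|$, so that the highest derivative hitting $w$ is always read off as $\|D^{s+2}w\|$, and this is the quantity I want to partially absorb into the left-hand side of the energy inequality.

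For $A$ I would bound $\ds(u^2)=2u\ds u$ in $L^\infty$ by $\|u\|_{H^{s_0}}^2$ (legitimate since $s_0>7/2>3/2$), estimate the two remaining factors by H\"older, and then interpolate via $\|D^{s-1}w\|\lesssim\|w\|^{3/(s+2)}\|D^{s+2}w\|^{(s-1)/(s+2)}$. This yields
\[
|A|\lesssim \|u\|_{H^{s_0}}^2\,\|w\|^{2/q(3)}\|D^{s+2}w\|^{2/p(3)},
\]
since $3/(s+2)=2/q(3)$ and $(s-1)/(s+2)+1=(2s+1)/(s+2)=2/p(3)$. For $B$ I would instead bound $u^2$ in $L^\infty$ by $\|u\|_{H^{s_0}}^2$ (here only $s_0>1/2$ is needed) and use $\|D^{s-1}\ds w\|=\|D^s w\|\lesssim\|w\|^{2/(s+2)}\|D^{s+2}w\|^{s/(s+2)}$, obtaining
\[
|B|\lesssim \|u\|_{H^{s_0}}^2\,\|w\|^{2/q(2)}\|D^{s+2}w\|^{2/p(2)},
\]
because $2/(s+2)=2/q(2)$ and $s/(s+2)+1=(2s+2)/(s+2)=2/p(2)$.

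Finally I would multiply through by $\e_1$ and apply Young's inequality to each term with the conjugate pair $p(j),q(j)$, which is admissible since $2(s+2)\ge 6>3\ge j$ for $s\ge1$: I view $\e_1|A|$ as a product of $\e_1\|D^{s+2}w\|^{2/p(3)}$ and $\|u\|_{H^{s_0}}^{2}\|w\|^{2/q(3)}$ and choose the implicit constant so that the first factor is absorbed into $\tfrac{\e_1^{p(3)}}{100}\|D^{s+2}w\|^2$, leaving $C\|u\|_{H^{s_0}}^{2q(3)}\|w\|^2$, and likewise for $\e_1|B|$ with $p(2),q(2)$. Summing the two bounds reproduces exactly $\frac{\sum_{j=2}^{3}\e_1^{p(j)}}{100}\|D^{s+2}w\|^2+C\sum_{j=2}^{3}\|u\|_{H^{s_0}}^{2q(j)}\|w\|^2$. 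The only delicate point is the exponent bookkeeping—checking that the interpolation powers align with $2/p(j)$ and $2/q(j)$ and that Young's inequality extracts precisely the factor $\e_1^{p(j)}$—but beyond this routine matching no genuine obstacle arises, so I expect the argument to go through as cleanly as that of Lemma~\ref{lem3.50}.
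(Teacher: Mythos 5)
Your proposal is correct and follows essentially the same route as the paper: the paper's proof performs the identical single integration by parts, splitting into $A=-2\int u\ds u\,D^{s-1}w\,D^{s-1}\ds^{3}w\,dx$ and $B=-\int u^{2}D^{s-1}\ds w\,D^{s-1}\ds^{3}w\,dx$, and then cites "the same argument as before" (the $L^\infty$ bound, interpolation, and Young's inequality with the pairs $p(j),q(j)$ from Lemma \ref{lem3.50}) to reach exactly the bounds $|A|\le C\|u\|_{H^{s_{0}}}^{2}\|w\|^{2/q(3)}\|D^{s+2}w\|^{2/p(3)}$ and $|B|\le C\|u\|_{H^{s_{0}}}^{2}\|w\|^{2/q(2)}\|D^{s+2}w\|^{2/p(2)}$ that you derive. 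Your exponent bookkeeping checks out, so nothing further is needed.
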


\begin{proof}
First we set
\EQQS{
&\int_{\T}u^{2}D^{s-1}wD^{s-1}\ds^{4}wdx\\
&=-2\int_{\T}u\ds uD^{s-1}wD^{s-1}\ds^{3}wdx-\int_{\T}u^{2}D^{s-1}\ds wD^{s-1}\ds^{3}wdx=:A+B.
}
The same argument as before implies that
\EQQS{
&|A|\le C\|u\|_{H^{s_{0}}}^{2}\|w\|^{2/q(3)}\|D^{s+2}w\|^{2/p(3)},\\
&|B|\le C\|u\|_{H^{s_{0}}}^{2}\|w\|^{2/q(2)}\|D^{s+2}w\|^{2/p(2)},
}
which completes the proof.
\end{proof}

\begin{lem}\label{lem3.60}
Let $s\ge1$, $s_{0}>7/2$ and $\e_{1}\in[0,1]$.
Then there exists $C=C(s_{0},s)>0$ such that
for any $u,v\in H^{\max\{s+3,s_{0}\}}(\T)$,
\EQQS{
\left|(\e_{1}-\e_{2})\int_{\T}u^{2}D^{s-1}wD^{s-1}\ds^{4}vdx\right|
\le C\max\{\e_{1}^{2},\e_{2}^{2}\}\|v\|_{H^{s+3}}^{2}+C\|u\|_{H^{s_{0}}}^{4}\|w\|_{H^{s}}^{2},
}
where $w=u-v$.
\end{lem}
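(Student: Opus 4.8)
The plan is to mimic the proof of Lemma \ref{lem3.59}, the only essential difference being that the highest-order factor is now $v$ rather than $w$, so that every derivative we cannot afford to place on $w$ can be absorbed into a high Sobolev norm of $v$ weighted by $\max\{\e_1^2,\e_2^2\}$. The starting observation is the elementary bound $|\e_1-\e_2|\le 2\max\{\e_1,\e_2\}=2\max\{\e_1^2,\e_2^2\}^{1/2}$, valid since $\e_1,\e_2\in[0,1]$; this is what converts the prefactor $(\e_1-\e_2)$ into the $\max\{\e_1^2,\e_2^2\}$ appearing on the right-hand side, after a final application of Young's inequality.

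First I would integrate by parts once in $x$, exactly as in Lemma \ref{lem3.59}, using $D^{s-1}\ds^{4}v=\ds(D^{s-1}\ds^{3}v)$ and $\ds(u^{2}D^{s-1}w)=2u\ds u\,D^{s-1}w+u^{2}D^{s-1}\ds w$, to move one derivative off $\ds^{4}v$:
\[
\begin{aligned}
\int_{\T}u^{2}D^{s-1}wD^{s-1}\ds^{4}v\,dx
&=-2\int_{\T}u\ds u\,D^{s-1}w\,D^{s-1}\ds^{3}v\,dx\\
&\quad-\int_{\T}u^{2}\,D^{s-1}\ds w\,D^{s-1}\ds^{3}v\,dx.
\end{aligned}
\]
The point of this step is that on the right-hand side the factor involving $w$ carries at most $s$ derivatives ($D^{s-1}w$ in the first integral, $D^{s-1}\ds w$ in the second), so no derivative loss on $w$ occurs, while the factor $D^{s-1}\ds^{3}v$ carries $s+2$ derivatives on $v$, controlled by $\|v\|_{H^{s+2}}\le\|v\|_{H^{s+3}}$.

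Next I would estimate each of the two resulting integrals by H\"older's inequality, placing the $u$-factors in $L^{\I}$ and the two remaining factors in $L^{2}$. Since $s_{0}>7/2>3/2$, the Sobolev embedding gives $\|u\|_{\I},\|\ds u\|_{\I}\lesssim\|u\|_{H^{s_0}}$, hence $\|u^{2}\|_{\I},\|u\ds u\|_{\I}\lesssim\|u\|_{H^{s_0}}^{2}$, and therefore
\[
\left|\int_{\T}u^{2}D^{s-1}wD^{s-1}\ds^{4}v\,dx\right|
\lesssim\|u\|_{H^{s_0}}^{2}\,\|w\|_{H^{s}}\,\|v\|_{H^{s+3}}.
\]
Multiplying by $|\e_1-\e_2|\lesssim\max\{\e_1^2,\e_2^2\}^{1/2}$ and applying Young's inequality $ab\le\tfrac12 a^2+\tfrac12 b^2$ with $a=\max\{\e_1,\e_2\}\|v\|_{H^{s+3}}$ and $b=\|u\|_{H^{s_0}}^{2}\|w\|_{H^{s}}$ yields exactly the claimed bound $C\max\{\e_1^2,\e_2^2\}\|v\|_{H^{s+3}}^{2}+C\|u\|_{H^{s_0}}^{4}\|w\|_{H^{s}}^{2}$.

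I do not expect a genuine obstacle here: this is a viscous-error estimate of the same type as Lemmas \ref{lem3.53}, \ref{lem3.54} and \ref{lem3.57}, and once the single integration by parts is performed no derivative of $w$ beyond order $s$ survives, so the Sobolev embeddings close the estimate immediately. The only points requiring care are the bookkeeping that turns the difference $\e_1-\e_2$ into $\max\{\e_1^2,\e_2^2\}$ via the square-root estimate together with Young's inequality, and the harmless observation that using the weight $\|v\|_{H^{s+3}}$ rather than $\|v\|_{H^{s+2}}$ only enlarges the right-hand side (the stronger hypothesis $v\in H^{\max\{s+3,s_0\}}$ being needed merely to make the original integral well defined).
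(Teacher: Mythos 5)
Your proof is correct and matches the paper's (one-line) argument in substance: the estimate is just H\"older's inequality in the form $\|u^{2}\|_{\I}\|D^{s-1}w\|\|D^{s-1}\ds^{4}v\|\lesssim\|u\|_{H^{s_{0}}}^{2}\|w\|_{H^{s}}\|v\|_{H^{s+3}}$ followed by Young's inequality with $|\e_{1}-\e_{2}|\le\max\{\e_{1},\e_{2}\}$. The only difference is that your preliminary integration by parts is superfluous here --- since the fourth derivative falls on $v$ rather than $w$ and the right-hand side is allowed to carry $\|v\|_{H^{s+3}}$, all four derivatives can stay on $v$, which is exactly why the paper disposes of this lemma with ``This follows from the H\"older inequality'' while reserving the integration-by-parts device for Lemma \ref{lem3.59}.
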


\begin{proof}
This follows from the H\"older inequality.
\end{proof}

Finally, we are ready to show the main inequality in this paper.

\begin{proof}[Proof of Proposition \ref{ene.est.pr}]
Let $s'\in[1,s]$.
Put $w:=u_{1}-u_{2}$.
Note that $w$ satisfies
\EQS{\label{eq3.7}
\dt w=\ds(K(u_{1})-K(u_{2}))-\e_{1}\ds^{4}w+(\e_{1}-\e_{2})\ds^{4}u_{2}
}
on $[0,\min\{T_{\e_{1}},T_{\e_{2}}\})$.
By Lemma \ref{lem2.3new}, \ref{lem2.4new}, \ref{lem2.5new}, \ref{lem2.6new}, \ref{lem2.8new}, \ref{lem2.9new}, \ref{lem2.10new} and \ref{lem.fo}, we have
\EQS{
\begin{aligned}\label{eq3.1}
&\left|\frac{1}{2}\frac{d}{dt}\|D^{s'}w\|^{2}+\la_{1}(s')\LR{\ds u_{1},(D^{s'}\ds w)^{2}}
 +\la_{2}(s')\LR{(\HT\ds^{2}u_{1})\HT D^{s'}\ds w,D^{s'}w}\right.\\
 &\quad+\la_{3}(s')\LR{u_{1}\ds u_{1}\HT D^{s'}\ds w,D^{s'}w}+2\e_{1}\|D^{s'+2}w\|^{2}|\\
&=|\LR{D^{s'}\{\ds(K(u_{1})-K(u_{2}))-\e_{1}\ds^{4}w+(\e_{1}-\e_{2})\ds^{4}u_{2}\},D^{s'}w}\\
&\quad+\la_{1}(s')\LR{\ds u_{1},(D^{s'}\ds w)^{2}}
 +\la_{2}(s')\LR{(\HT\ds^{2}u_{1})\HT D^{s'}\ds w,D^{s'}w}+2\e_{1}\|D^{s'+2}w\|^{2}|\\
&\le CI_{s_{0}}(u_{1},u_{2})^{3}\{\|w\|_{H^{s'}}^{2}+\|w\|_{H^{s_{0}-3}}^{2}\|u_{2}\|_{H^{s'+3}}^{2}+\|w\|_{H^{s_{0}}}^{2}(\|u_{1}\|_{H^{s'}}^{2}+\|u_{2}\|_{H^{s'}}^{2})\}\\
&\quad+\max\{\e_{1}^{2},\e_{2}^{2}\}\|u_{2}\|_{H^{s'+4}}^{2}.
\end{aligned}
}
By Lemma \ref{m.e.1.2new}, \ref{lem4.23}, \ref{lem4.25}, \ref{lem4.26}, \ref{lem4.28}, \ref{lem4.29}, \ref{lem4.30}, \ref{lem4.31}, \ref{lem4.32}, \ref{lem4.39}, \ref{lem4.41}, \ref{lem4.42}, \ref{lem4.43}, \ref{lem4.44}, \ref{lem4.45}, \ref{lem3.45}, \ref{lem3.50}, \ref{lem3.51}, \ref{lem3.52}, \ref{lem3.53} and \ref{lem3.54}, we also have
\EQS{
\begin{aligned}\label{eq3.2}
&\left|\frac{d}{dt}M_{s'}^{(1)}(u_{1},u_{2})-\la_{1}(s')\LR{\ds u_{1},(D^{s'}\ds w)^{2}}
 +\frac{\la_{1}(s')\la_{4}(s')}{4}\LR{(\HT\ds^{2}u_{1})\HT D^{s'}\ds w,D^{s'}w}\right|\\
&=\left|\frac{\la_{1}(s')}{4}(\LR{\dt u\HT D^{s'}w,\HT D^{s'-1}w}+\LR{u\HT D^{s'}\dt w,\HT D^{s'-1}w}+\LR{u\HT D^{s'}w,\HT D^{s'-1}\dt w})\right.\\
&\quad\left.-\la_{1}(s')\LR{\ds u_{1},(D^{s'}\ds w)^{2}}
 +\frac{\la_{1}(s')\la_{4}(s')}{4}\LR{(\HT\ds^{2}u_{1})\HT D^{s'}\ds w,D^{s'}w}\right|\\
&\le CI_{s_{0}}(u_{1},u_{2})^{2(s'+2)}\{\|w\|_{H^{s'}}^{2}+\|w\|_{H^{s_{0}-3}}^{2}\|u_{2}\|_{H^{s'+3}}^{2}+\|w\|_{H^{s_{0}}}^{2}(\|u_{1}\|_{H^{s'}}^{2}+\|u_{2}\|_{H^{s'}}^{2})\}\\
&\quad+\frac{\e_{1}}{10}\|D^{s'+2}w\|^{2}+\max\{\e_{1}^{2},\e_{2}^{2}\}\|u_{2}\|_{H^{s'+4}}^{2}.
\end{aligned}
}
Similarly, by Lemma \ref{lem2.14new}, \ref{lem4.34}, \ref{lem4.35}, \ref{lem4.37}, \ref{lem3.46}, \ref{lem3.55}, \ref{lem3.56}, \ref{lem3.57} and \ref{lem3.58}, we have
\EQS{
\begin{aligned}\label{eq3.3}
&\left|\frac{d}{dt}M_{s'}^{(2)}(u_{1},u_{2})-\la_{2}(s')\LR{(\HT\ds^{2}u_{1})\HT D^{s'}\ds w,D^{s'}w}\right|\\
&\le CI_{s_{0}}(u_{1},u_{2})^{2(s'+2)}\{\|w\|_{H^{s'}}^{2}+\|w\|_{H^{s_{0}-3}}^{2}\|u_{2}\|_{H^{s'+3}}^{2}\\
&\quad+\|w\|_{H^{s_{0}}}^{2}(\|u_{1}\|_{H^{s'}}^{2}+\|u_{2}\|_{H^{s'}}^{2})\}+\frac{\e_{1}}{10}\|D^{s'+2}w\|^{2}+\max\{\e_{1}^{2},\e_{2}^{2}\}\|u_{2}\|_{H^{s'+4}}^{2}.
\end{aligned}
}
Moreover, by Lemma \ref{lem2.21new}, \ref{lem3.49}, \ref{lem3.59} and \ref{lem3.60}, we obtain
\EQS{
\begin{aligned}\label{eq3.4}
&\left|\frac{d}{dt}M_{s'}^{(3)}(u_{1},u_{2})-\frac{\la_{1}(s')\la_{4}(s')+4\la_{3}(s')}{4}\LR{u\ds u\HT D^{s'}\ds w,D^{s'}w}\right|\\
&\le CI_{s_{0}}(u_{1},u_{2})^{2(s'+2)}\{\|w\|_{H^{s'}}^{2}+\|w\|_{H^{s_{0}-3}}^{2}\|u_{2}\|_{H^{s'+3}}^{2}\\
&\quad+\|w\|_{H^{s_{0}}}^{2}(\|u_{1}\|_{H^{s'}}^{2}+\|u_{2}\|_{H^{s'}}^{2})\}+\frac{\e_{1}}{10}\|D^{s'+2}w\|^{2}+\max\{\e_{1}^{2},\e_{2}^{2}\}\|u_{2}\|_{H^{s'+4}}^{2}.
\end{aligned}
}
It is easy to see that
\EQS{
\begin{aligned}\label{eq3.5}
\frac{d}{dt}\{\|w\|^{2}(1+C\|u_{1}\|^{2}+C\|u_{1}\|^{4s'})\}\le CI_{s_{0}}(u_{1},u_{2})^{4s'+3}\|w\|_{H^{s'}}^{2}.
\end{aligned}
}
Therefore, collecting \eqref{eq3.1}, \eqref{eq3.2}, \eqref{eq3.3}, \eqref{eq3.4} and \eqref{eq3.5}, we obtain \eqref{eq2.4}.
\end{proof}


\section{The energy estimate in $L^{2}$}
In this section, we prove Proposition \ref{ene.est.0}, which is the only thing left to prove.
We introduce some estimates for the operator $J$.

\begin{lem}\label{freq.est.2}
Let $k\in\N\cup\{0\}$.
There exists $C=C(k)>0$ such that
\[\|\HT J\ds^{k+1}f-\ds^{k}f\|\le C\|f\|\]
for any $f\in L^{2}(\T)$.
\end{lem}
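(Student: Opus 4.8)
The plan is to reduce the claim to a uniform bound on a Fourier multiplier. Writing everything on the Fourier side, for $\xi\in\Z\setminus\{0\}$ the operator $\HT J\ds^{k+1}-\ds^{k}$ acts as multiplication by
\[
m(\xi):=-i\sgn(\xi)\frac{\psi(\xi)}{|\xi|}(i\xi)^{k+1}-(i\xi)^{k},
\]
since $\widehat{\ds^{k+1}f}(\xi)=(i\xi)^{k+1}\hat f(\xi)$, $\widehat{Jg}(\xi)=|\xi|^{-1}\psi(\xi)\hat g(\xi)$ and $\widehat{\HT h}(\xi)=-i\sgn(\xi)\hat h(\xi)$. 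Thus by Parseval's identity it suffices to prove $\sup_{\xi\in\Z}|m(\xi)|\le C(k)$, after which $\|\HT J\ds^{k+1}f-\ds^{k}f\|\le\big(\sup_\xi|m(\xi)|\big)\,\|f\|$ immediately.

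First I would simplify $m(\xi)$. Using $(i\xi)^{k+1}=i^{k+1}\xi^{k+1}$ together with the identity $\sgn(\xi)\xi^{k+1}=|\xi|\xi^{k}$, the first term collapses to $-i^{k+2}\psi(\xi)\xi^{k}=i^{k}\psi(\xi)\xi^{k}$ (because $i^{2}=-1$); since the second term equals $i^{k}\xi^{k}$, this yields the clean expression
\[
m(\xi)=i^{k}\xi^{k}\bigl(\psi(\xi)-1\bigr),\qquad \xi\neq 0.
\]

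The conclusion is then immediate: because $\psi(\xi)=1$ whenever $|\xi|\ge 2$, the factor $\psi(\xi)-1$ vanishes there, so $m(\xi)=0$ for every integer $\xi$ with $|\xi|\ge 2$. The only surviving frequencies are $\xi\in\{-1,0,1\}$ (no integer lies in the transition region $1<|\xi|<2$), and on each of these $0\le\psi\le 1$ gives $|m(\xi)|=|\xi|^{k}\,|\psi(\xi)-1|\le 1$. The one point that needs separate attention is $\xi=0$, where $\sgn$ and $|\xi|^{-1}$ are undefined; but $\psi$ vanishes near the origin, so $Jg$ and hence $\HT J\ds^{k+1}f$ have vanishing zeroth Fourier coefficient, while the zeroth coefficient of $\ds^{k}f$ is $0$ for $k\ge 1$ and $\hat f(0)$ for $k=0$, so the symbol at $\xi=0$ is again bounded by $1$. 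Hence $\sup_\xi|m(\xi)|\le 1$ and the lemma follows (one may even take $C(k)=1$). There is no genuine obstacle here; the only things to be careful about are the handling of the zero mode and the observation that the lattice $\Z$ misses the region where $\psi$ is neither $0$ nor $1$.
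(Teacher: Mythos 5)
Your proof is correct and follows essentially the same route as the paper: pass to the Fourier side, observe that the symbol equals $i^{k}\xi^{k}(\psi(\xi)-1)$ in modulus $|\xi|^{k}|\psi(\xi)-1|$, and use that $1-\psi$ is supported in $\{|\xi|\le 2\}$ to get a uniform bound. Your extra care with the zero mode and the lattice points $\xi=\pm1$ is a harmless elaboration of the same argument.
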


\begin{proof}
It suffices to show that there exists $C=C(k)>0$ such that
\[\left|-i\sgn(\xi)\frac{\psi(\xi)}{|\xi|}(i\xi)^{k+1}-(i\xi)^{k}\right|\le C\]
for any $\xi\in\Z$.
But this is clear since the left hand side is equal to $|\xi|^{k}|\psi(\xi)-1|$ and $\supp(1-\psi)\subset\{|\xi|\le2\}$.
\end{proof}

\begin{lem}\label{reduction3}
Let $s_{0}>5/2$.
Let $u\in H^{s_{0}}(\T)$ and $v\in L^{2}(\T)$.
Then
\EQQS{
\|J\ds(u\ds^{2}v)+u\HT\ds^{2}v\|\lesssim\|u\|_{H^{s_{0}}}\|v\|.
}
\end{lem}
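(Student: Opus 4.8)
The plan is to reduce the estimate to a pointwise bound on a bilinear Fourier multiplier, in the spirit of the proof of Lemma \ref{freq.est.2}. First I would compute the Fourier coefficients of the left-hand side directly. Using $\widehat{\ds f}(\xi)=i\xi\hat f(\xi)$, $\widehat{\HT f}(\xi)=-i\sgn(\xi)\hat f(\xi)$ and the definition of $J$, a short computation (expanding the products as convolutions in frequency) gives
\[
\F\left(J\ds(u\ds^{2}v)+u\HT\ds^{2}v\right)(\xi)=i\sum_{\eta}\eta^{2}\big(\sgn(\eta)-\psi(\xi)\sgn(\xi)\big)\hat u(\xi-\eta)\hat v(\eta),
\]
so it suffices to bound the bilinear operator with symbol $m(\xi,\eta):=\eta^{2}\big(\sgn(\eta)-\psi(\xi)\sgn(\xi)\big)$.

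The heart of the matter is the pointwise estimate $|m(\xi,\eta)|\lesssim\langle\xi-\eta\rangle^{2}$ for all $\xi,\eta\in\Z$. The crucial observation is that the bracket $b(\xi,\eta):=\sgn(\eta)-\psi(\xi)\sgn(\xi)$ \emph{vanishes identically} when $|\xi|\ge2$ and $\sgn(\xi)=\sgn(\eta)$, since then $\psi(\xi)=1$; this is precisely the region in which $|\eta|$ may be much larger than $|\xi-\eta|$, so the vanishing of $b$ is what absorbs the two derivatives falling on $v$. When instead $b(\xi,\eta)\ne0$, one must have either $|\xi|<2$ or $\sgn(\xi)\ne\sgn(\eta)$; in the first case $|\xi-\eta|\ge|\eta|-2$, and in the second $|\xi-\eta|=|\xi|+|\eta|\ge|\eta|$. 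In both situations $|\eta|\le\langle\xi-\eta\rangle+2\lesssim\langle\xi-\eta\rangle$, and since $|b|\le2$ we obtain $|m(\xi,\eta)|=\eta^{2}|b(\xi,\eta)|\lesssim\langle\xi-\eta\rangle^{2}$, which of course also holds trivially when $b=0$.

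Finally I would convert this into the $L^{2}$ bound. By Plancherel, the left-hand side equals $\big\|\sum_{\eta}m(\xi,\eta)\hat u(\xi-\eta)\hat v(\eta)\big\|_{\ell^{2}_{\xi}}$, which by the symbol bound is dominated by $\big\|(\langle\cdot\rangle^{2}|\hat u|)\ast|\hat v|\big\|_{\ell^{2}}$. Young's convolution inequality bounds this by $\|\langle\cdot\rangle^{2}\hat u\|_{\ell^{1}}\,\|\hat v\|_{\ell^{2}}=\|\langle\cdot\rangle^{2}\hat u\|_{\ell^{1}}\,\|v\|$, and Cauchy--Schwarz gives $\|\langle\cdot\rangle^{2}\hat u\|_{\ell^{1}}\le\big(\sum_{\zeta}\langle\zeta\rangle^{4-2s_{0}}\big)^{1/2}\|u\|_{H^{s_{0}}}\lesssim\|u\|_{H^{s_{0}}}$, the series converging exactly because $s_{0}>5/2$. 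This yields the claim.

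The main obstacle is the pointwise multiplier estimate, specifically recognizing the exact cancellation of $b$ in the same-sign high-frequency region. A naive splitting $J\ds=-\HT+(\text{bounded, frequency-localized})$ followed by the commutator bound of Lemma \ref{comm.est.H} only gains one derivative on $[\HT,u]\ds^{2}v$ and would cost $\|u\|_{H^{s_{0}+2}}$ rather than $\|u\|_{H^{s_{0}}}$; it is the precise vanishing of the symbol, and not merely a commutator gain, that makes the stated estimate with only $s_{0}>5/2$ derivatives on $u$ possible. The hypothesis $s_{0}>5/2$ then enters solely to guarantee convergence of the $\ell^{1}$ sum of $\langle\cdot\rangle^{2}\hat u$.
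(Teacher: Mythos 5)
Your proof is correct, but it follows a genuinely different route from the paper's. The paper never computes the bilinear symbol: it rewrites $u\ds^{2}v=\ds^{2}(uv)-2\ds(\ds u\,v)+\ds^{2}u\,v$ so that the left-hand side becomes $(J\ds^{3}+\HT\ds^{2})(uv)-2(J\ds^{2}+\HT\ds)(\ds u\,v)+J\ds(\ds^{2}u\,v)+\HT(\ds^{2}u\,v)-[\HT,u]\ds^{2}v$, and then observes that each operator $J\ds^{k+1}+\HT\ds^{k}$ is bounded on $L^{2}$ (its symbol is supported in $|\xi|\le 2$, which is the content of Lemma \ref{freq.est.2}) while the commutator is controlled by Lemma \ref{comm.est.H}; the requirement $s_{0}>5/2$ enters through $\|\ds^{2}u\|_{\I}$ and through the commutator bound. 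Your argument instead exhibits the cancellation directly at the level of the bilinear multiplier $m(\xi,\eta)=\eta^{2}(\sgn(\eta)-\psi(\xi)\sgn(\xi))$, which is self-contained and arguably more transparent about \emph{why} the two derivatives on $v$ are harmless: the bracket vanishes exactly where $|\eta|\gg|\xi-\eta|$. Both routes give the same threshold $s_{0}>5/2$. One correction to your closing commentary: the ``naive'' splitting $J\ds=-\HT+R$ does \emph{not} cost $\|u\|_{H^{s_{0}+2}}$. It reduces the problem to $-[\HT,u]\ds^{2}v+R(u\ds^{2}v)$, and Lemma \ref{comm.est.H} applied with $k=2$, $s_{1}=2$, $s_{2}=0$ puts both derivatives on $u$ at the price $\|u\|_{H^{\sigma+2}}\|v\|$ for any $\sigma>1/2$, i.e.\ exactly $\|u\|_{H^{s_{0}}}\|v\|$ for $s_{0}>5/2$; this commutator gain (equivalently, the Leibniz rearrangement) is precisely the mechanism the paper uses, so the exact symbol cancellation, while elegant, is not the only way to reach the stated regularity.
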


\begin{proof}
Note that
\EQQS{
&J\ds(u\ds^{2}v)+u\HT\ds^{2}v\\
&=(J\ds^{3}+\HT\ds^{2})(uv)-2(J\ds^{2}+\HT\ds)(\ds uv)+J\ds(\ds^{2}uv)-[\HT,u]\ds^{2}v+\HT(\ds^{2}uv).
}
Lemma \ref{freq.est.2} shows the desired inequality.
\end{proof}

As a corollary, we have the following.

\begin{cor}\label{reduction3.H}
Let $s_{0}>5/2$.
Let $u\in H^{s_{0}}(\T)$ and $v\in L^{2}(\T)$.
Then
\EQQS{
\|\HT J\ds(u\HT\ds^{2}v)-u\HT\ds^{2}v\|\lesssim\|u\|_{H^{s_{0}}}\|v\|.
}
\end{cor}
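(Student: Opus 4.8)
The plan is to reduce the claim directly to Lemma \ref{reduction3} by exploiting the algebraic relation between $\HT J$ and $J\ds$, together with the fact that the two extra Hilbert transforms in the statement (one inside the argument, one outside) conspire to produce a harmless $L^2$-bounded perturbation. The quantity to control is $\HT J\ds(u\HT\ds^2 v)-u\HT\ds^2 v$, and the target upper bound is $\|u\|_{H^{s_0}}\|v\|$.

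First I would set $g:=\HT\ds^2 v$ and rewrite the expression so that Lemma \ref{reduction3} becomes applicable. The difficulty is that Lemma \ref{reduction3} is stated for the operator $J\ds$ (not $\HT J\ds$) acting on $u\ds^2(\cdot)$, so I need to move one Hilbert transform from inside the second-derivative factor to the front. The cleanest route is to apply the commutator $[\HT,u]$: write
\EQQS{
\HT J\ds(u\HT\ds^2 v)
=\HT\big(J\ds(u\HT\ds^2 v)+u\HT\ds^2(\HT v)\big)-\HT\big(u\HT\ds^2(\HT v)\big).
}
Lemma \ref{reduction3} applied with $v$ replaced by $\HT v$ controls the first grouped term by $\|u\|_{H^{s_0}}\|\HT v\|\lesssim\|u\|_{H^{s_0}}\|v\|$, since $\HT$ is bounded on $L^2$ and $\|\HT\ds^2 v\|$-type quantities never appear isolated. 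For the second term I would use $\HT(u\HT w)=\HT([\HT,u]w)+\HT\HT(uw)$ together with $\HT\HT=-\mathrm{Id}$ (modulo the mean, which is $L^2$-controlled as noted in the introduction) to recover $-u\HT\ds^2 v$ up to commutator and zero-mode errors.

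The key step, and the one I expect to be the main obstacle, is bookkeeping the two lower-order remainders produced by this rearrangement: the commutator term $\HT([\HT,u]\ds^2(\HT v))$ and the zero-frequency correction from $\HT\HT$. For the commutator I would invoke Lemma \ref{comm.est.H} with $k=2$, $s_1=2$, $s_2=0$, which bounds $\|[\HT,u]\ds^2 h\|$ by $\|u\|_{H^{s_0+2}}\|h\|$; since $s_0>5/2$ gives $s_0+2>9/2$ this is fine, and $h=\HT v$ contributes only $\|v\|$. The zero-mode term is handled exactly as in the introduction's remark via $|\hat f(0)|\lesssim\|f\|_{L^2}$. Assembling these, every remainder is bounded by $\|u\|_{H^{s_0}}\|v\|$, and the main terms telescope to leave precisely $\HT J\ds(u\HT\ds^2 v)-u\HT\ds^2 v$ on the left, which completes the proof. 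The only subtlety to verify carefully is that the Sobolev index on $u$ really closes at $s_0$ rather than $s_0+2$; this should follow because the worst derivative count falls on $v$, not $u$, so Lemma \ref{comm.est.H}'s distribution of derivatives can be chosen to keep $u$ at order $s_0$.
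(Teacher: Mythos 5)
Your overall strategy is the intended one: the paper gives no separate proof of this corollary precisely because it is meant to follow from Lemma \ref{reduction3} applied with $v$ replaced by $\HT v$, together with $\HT\HT=-\mathrm{Id}$ (up to the zero mode) and a commutator $[\HT,u]$, exactly as you propose. Two points in your bookkeeping, however, do not close as written. First, the commutator term: you bound $\|[\HT,u]\ds^{2}(\HT v)\|$ by Lemma \ref{comm.est.H} and assert the indices work out ``because the worst derivative count falls on $v$, not $u$.'' That is backwards. Choosing $s_{1}=0$, $s_{2}=2$ in Lemma \ref{comm.est.H} puts the derivatives on $v$ and yields $\|u\|_{H^{s_{0}}}\|v\|_{H^{2}}$, which is unavailable since $v$ is only in $L^{2}$; the derivatives must land on $u$. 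The estimate still closes at $\|u\|_{H^{s_{0}}}\|v\|$, but for a different reason: the regularity parameter in Lemma \ref{comm.est.H} only needs to exceed $1/2$, so one applies that lemma with parameter $s_{0}-2>1/2$ and $s_{1}=2$, $s_{2}=0$, giving $\|[\HT,u]\ds^{2}(\HT v)\|\lesssim\|u\|_{H^{(s_{0}-2)+2}}\|v\|=\|u\|_{H^{s_{0}}}\|v\|$. This is exactly where the hypothesis $s_{0}>5/2$ is used.

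Second, the zero mode. You collapse $\HT\HT$ acting on the product $u\,\HT\ds^{2}v$, which produces the coefficient $\widehat{u\HT\ds^{2}v}(0)$; the crude bound $|\hat f(0)|\le\|f\|_{L^{1}}$ quoted from the introduction then gives $\|u\|\,\|\ds^{2}v\|$, which again requires $v\in H^{2}$. (It can be rescued by pairing Fourier modes and throwing both derivatives onto $u$, using $s_{0}\ge2$, but that is an extra argument you have not supplied.) The clean route is to collapse the two Hilbert transforms before they meet the product: $\HT\ds^{2}\HT v=\HT\HT\ds^{2}v=-\ds^{2}v$ exactly, since $\widehat{\ds^{2}v}(0)=0$. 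Then the second term of your decomposition is simply $\HT(u\ds^{2}v)=u\HT\ds^{2}v+[\HT,u]\ds^{2}v$ and no zero-mode correction appears at all.
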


\begin{lem}\label{reduction4}
Let $s_{0}>5/2$.
Let $u\in H^{s_{0}}(\T)$ and $v\in L^{2}(\T)$.
Then
\EQQS{
\|J\ds(\ds u\ds v)+\ds u\HT\ds v\|\lesssim\|u\|_{H^{s_{0}}}\|v\|.
}
\end{lem}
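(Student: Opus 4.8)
The plan is to mimic the proof of Lemma \ref{reduction3}, transferring all derivatives off of $v$ and isolating the claimed leading term $\ds u\HT\ds v$. First I would use the Leibniz identity $\ds u\ds v=\ds(\ds u\,v)-\ds^2u\,v$ to write
\[
J\ds(\ds u\ds v)=J\ds^2(\ds u\,v)-J\ds(\ds^2u\,v).
\]
Next I would replace $J\ds^2$ by $-\HT\ds$ at the cost of the operator $J\ds^2+\HT\ds$, whose Fourier multiplier is $|\xi|(1-\psi(\xi))$, hence bounded and supported in $\{|\xi|\le 2\}$; this is exactly the mechanism behind Lemma \ref{freq.est.2} (case $k=1$), so $J\ds^2+\HT\ds$ is bounded on $L^2(\T)$. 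This yields
\[
J\ds(\ds u\ds v)+\ds u\HT\ds v=\big(-\HT\ds(\ds u\,v)+\ds u\HT\ds v\big)+(J\ds^2+\HT\ds)(\ds u\,v)-J\ds(\ds^2u\,v).
\]

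The key algebraic step is that the leading bracket collapses. Expanding $\HT\ds(\ds u\,v)=\HT(\ds^2u\,v)+\HT(\ds u\ds v)$ and writing $\HT(\ds u\ds v)=\ds u\HT\ds v+[\HT,\ds u]\ds v$, I obtain
\[
-\HT\ds(\ds u\,v)+\ds u\HT\ds v=-\HT(\ds^2u\,v)-[\HT,\ds u]\ds v.
\]
Thus the whole expression is a sum of four terms, each of which I would bound in $L^2$ by $\|u\|_{H^{s_0}}\|v\|$. The terms $(J\ds^2+\HT\ds)(\ds u\,v)$ and $J\ds(\ds^2u\,v)$ are controlled using that $J\ds^2+\HT\ds$ and $J\ds$ are bounded on $L^2$ (the multiplier of $J\ds$ is $i\psi(\xi)\sgn(\xi)$, bounded by $1$), together with $\|\ds u\|_{\infty}+\|\ds^2u\|_{\infty}\lesssim\|u\|_{H^{s_0}}$, valid since $s_0>5/2$. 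The term $\HT(\ds^2u\,v)$ is handled identically, using that $\HT$ is bounded on $L^2$.

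The main obstacle is the commutator term $[\HT,\ds u]\ds v$: naively it carries one derivative on $\ds u$ and one on $v$, so with $v\in L^2$ only it is not manifestly in $L^2$. The point is the smoothing of $[\HT,\cdot]$, which allows both derivatives to be absorbed by $u$. Concretely I would apply Lemma \ref{comm.est.H} with its parameter taken to be $s_0-2>1/2$ and with $k=1$, $s_1=1$, $s_2=0$, giving
\[
\|[\HT,\ds u]\ds v\|\le C\|\ds u\|_{H^{s_0-1}}\|v\|\lesssim\|u\|_{H^{s_0}}\|v\|.
\]
Summing the four estimates completes the proof. The threshold $s_0>5/2$ enters precisely here and in the $L^\infty$ bound on $\ds^2u$, which is consistent with the hypotheses of Lemma \ref{reduction4}.
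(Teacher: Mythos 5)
Your proposal is correct and coincides with the paper's proof: the paper establishes exactly the same identity
$J\ds(\ds u\ds v)+\ds u\HT\ds v=(J\ds^{2}+\HT\ds)(\ds uv)-J\ds(\ds^{2}uv)-\HT(\ds^{2}uv)-[\HT,\ds u]\ds v$
and then invokes Lemma \ref{freq.est.2} (the paper leaves the commutator bound via Lemma \ref{comm.est.H} and the $L^{\I}$ bounds on $\ds u,\ds^{2}u$ implicit, which you spell out correctly). No further comment is needed.
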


\begin{proof}
This follows from the follwing equality
\EQQS{
&J\ds(\ds u\ds v)+\ds u\HT\ds v\\
&=(J\ds^{2}+\HT\ds)(\ds uv)-J\ds(\ds^{2}uv)-\HT(\ds^{2}uv)-[\HT,\ds u]\ds v
}
and Lemma \ref{freq.est.2}.
\end{proof}

As a corollary, we have the following.

\begin{cor}\label{reduction4.H}
Let $s_{0}>5/2$.
Let $u\in H^{s_{0}}(\T)$ and $v\in L^{2}(\T)$.
Then
\EQQS{
\|\HT J\ds((\HT\ds u)\HT\ds v)-(\HT\ds u)\HT\ds v\|\lesssim\|u\|_{H^{s_{0}}}\|v\|.
}
\end{cor}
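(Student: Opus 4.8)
The plan is to deduce the corollary from Lemma~\ref{reduction4} by the substitution $u\mapsto\HT u$, $v\mapsto\HT v$, followed by one application of $\HT$ and a single commutator correction; this mirrors the way Corollary~\ref{reduction3.H} follows from Lemma~\ref{reduction3}.

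First I would apply Lemma~\ref{reduction4} with $u$ and $v$ replaced by $\HT u$ and $\HT v$. Since $\HT$ commutes with $\ds$, one has $\ds\HT u=\HT\ds u$, and since $\widehat{\ds v}(0)=0$ the identity $\HT(\HT g)=-g+\hat g(0)$ gives $\HT\ds\HT v=\HT^{2}\ds v=-\ds v$. Therefore the expression controlled by Lemma~\ref{reduction4} becomes
\[
J\ds((\HT\ds u)\HT\ds v)-(\HT\ds u)\ds v,
\]
and, using $\|\HT u\|_{H^{s_{0}}}\lesssim\|u\|_{H^{s_{0}}}$ and $\|\HT v\|\le\|v\|$, its $L^{2}$-norm is $\lesssim\|u\|_{H^{s_{0}}}\|v\|$.

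Next I would apply $\HT$, which is bounded on $L^{2}(\T)$, so the same bound persists for
\[
\HT J\ds((\HT\ds u)\HT\ds v)-\HT\big((\HT\ds u)\ds v\big).
\]
It then remains to pass from $\HT((\HT\ds u)\ds v)$ to $(\HT\ds u)\HT\ds v$. Writing $\HT((\HT\ds u)\ds v)=(\HT\ds u)\HT\ds v+[\HT,\HT\ds u]\ds v$ and inserting this into the display above, the claim reduces, by the triangle inequality, to the commutator estimate $\|[\HT,\HT\ds u]\ds v\|\lesssim\|u\|_{H^{s_{0}}}\|v\|$.

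This last estimate is the only delicate point, and it is where the hypothesis $s_{0}>5/2$ enters. Applying Lemma~\ref{comm.est.H} with $k=1$, $s_{1}=1$, $s_{2}=0$ and auxiliary index $s_{0}-2>1/2$ gives $\|[\HT,\HT\ds u]\ds v\|\lesssim\|\HT\ds u\|_{H^{s_{0}-1}}\|v\|\lesssim\|u\|_{H^{s_{0}}}\|v\|$, and combining this with the previous display completes the argument. Everything else is routine bookkeeping with the identities $\ds\HT=\HT\ds$ and $\HT^{2}\ds v=-\ds v$.
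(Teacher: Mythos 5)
Your proof is correct and follows the route the paper clearly intends (it states the result as an immediate corollary of Lemma \ref{reduction4} without further argument): substitute $u\mapsto\HT u$, $v\mapsto\HT v$, use $\HT\ds\HT v=-\ds v$, apply the bounded operator $\HT$, and absorb the remaining discrepancy into the commutator $[\HT,\HT\ds u]\ds v$, which Lemma \ref{comm.est.H} controls precisely because $s_{0}-2>1/2$. All the steps check out, including the sign bookkeeping and the verification that $\widehat{\ds v}(0)=0$ so no zero-mode correction appears.
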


\begin{lem}\label{comm.est.4.1}
Let $s_{0}>1/2$ and $\La=D^{2}$ or $D\ds$.
There exists $C(s_{0})>0$ such that for any $f\in H^{s_{0}+1}(\T)$ and $g\in L^{2}(\T)$,
\[\|[\LR{D}^{-1}\La,f]g\|\le C\|f\|_{H^{s_{0}+1}}\|g\|.\]
\end{lem}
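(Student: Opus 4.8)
The operator $\LR{D}^{-1}\La$ is a Fourier multiplier of order $1$: its symbol is $m(\x)=\x^{2}\LR{\x}^{-1}$ when $\La=D^{2}$, and $m(\x)=i\x|\x|\LR{\x}^{-1}$ when $\La=D\ds$. The plan is to work entirely on the Fourier side. Using $\F(fg)(\x)=\sum_{\y\in\Z}\ha{f}(\x-\y)\ha{g}(\y)$, the commutator has coefficients
\[
\F([\LR{D}^{-1}\La,f]g)(\x)=\sum_{\y\in\Z}(m(\x)-m(\y))\ha{f}(\x-\y)\ha{g}(\y),
\]
so the whole statement reduces to a pointwise bound for the symbol difference $m(\x)-m(\y)$.

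The key estimate I would establish is
\[
|m(\x)-m(\y)|\lec|\x-\y|\qquad(\x,\y\in\R).
\]
Since $m$ has order $1$, this follows from the mean value theorem once I verify that $m\in C^{1}(\R)$ with $m'$ bounded. For $\La=D^{2}$ the symbol $m(\x)=\x^{2}\LR{\x}^{-1}$ is smooth, and a direct computation gives $m'(\x)=2\x\LR{\x}^{-1}-\x^{3}\LR{\x}^{-3}$, which is bounded on $\R$. For $\La=D\ds$ the factor $\x|\x|$ is only $C^{1}$ at the origin, so the one subtle point is to note that $m(\x)=i\x|\x|\LR{\x}^{-1}$ is nonetheless $C^{1}$ with $m'(\x)=i(2|\x|\LR{\x}^{-1}-\x^{2}|\x|\LR{\x}^{-3})$ bounded and continuous on all of $\R$, including $\x=0$ (where it vanishes). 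In both cases $\sup_{\R}|m'|<\I$, and the mean value theorem yields the displayed linear bound.

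Finally, using $|\x-\y|\le\LR{\x-\y}$, I would estimate
\[
|\F([\LR{D}^{-1}\La,f]g)(\x)|\lec\sum_{\y\in\Z}\LR{\x-\y}|\ha{f}(\x-\y)||\ha{g}(\y)|=(F*G)(\x),
\]
where $F(\x):=\LR{\x}|\ha{f}(\x)|$ and $G(\y):=|\ha{g}(\y)|$. Young's inequality gives $\|F*G\|_{\ell^{2}(\Z)}\le\|F\|_{\ell^{1}(\Z)}\|G\|_{\ell^{2}(\Z)}$, and Cauchy--Schwarz gives
\[
\|F\|_{\ell^{1}}=\sum_{\x}\LR{\x}^{-s_{0}}\LR{\x}^{s_{0}+1}|\ha{f}(\x)|\le\Big(\sum_{\x}\LR{\x}^{-2s_{0}}\Big)^{1/2}\Big(\sum_{\x}\LR{\x}^{2(s_{0}+1)}|\ha{f}(\x)|^{2}\Big)^{1/2}.
\]
The hypothesis $s_{0}>1/2$ is exactly what makes $\sum_{\x}\LR{\x}^{-2s_{0}}$ converge, so $\|F\|_{\ell^{1}}\lec\|f\|_{H^{s_{0}+1}}$ and $\|G\|_{\ell^{2}}\sim\|g\|$, which closes the estimate. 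The main (and essentially only) obstacle is the symbol-difference bound of the second paragraph; once $|m(\x)-m(\y)|\lec|\x-\y|$ is in hand, the remainder is the standard convolution argument, with the assumption $s_{0}>1/2$ entering solely through the $\ell^{2}$-summability of $\LR{\x}^{-2s_{0}}$.
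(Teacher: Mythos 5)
Your proof is correct: the reduction to the symbol-difference bound $|m(\x)-m(\y)|\lec|\x-\y|$, the verification that $m\in C^{1}(\R)$ with bounded derivative in both cases (including the only delicate point, $\x\mapsto\x|\x|$ at the origin), and the Young/Cauchy--Schwarz step using $s_{0}>1/2$ are all sound. The paper itself gives no proof here, deferring to Lemma 2.4(ii) of \cite{Tanaka}; your argument is the standard first-order commutator estimate that underlies such results, so it is a complete and self-contained substitute for the citation.
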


\begin{proof}
See $(ii)$ of Lemma 2.4 in \cite{Tanaka}.
\end{proof}

We estimate the time derivative of $M^{(j)}(u,v)$ for $j=1,2,3$.

\begin{lem}\label{m.e.1.2new0}
Let $s_{0}>7/2$.
Let $u,w\in H^{s_{0}+1}(\T)$.
Then
\EQQS{
|\LR{(\HT\ds^{4}u)\HT w,\HT Jw}-\LR{u\ds^{4}w,\HT Jw}-\LR{u\HT w,J\ds^{4}w}-4\LR{\ds u,(\ds w)^{2}}|
\lesssim\|u\|_{H^{s_{0}}}\|w\|^{2}.
}
\end{lem}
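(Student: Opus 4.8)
The plan is to apply Lemma \ref{fourth} with $f=u$, $g=\HT w$ and $h=\HT Jw$, since the left-hand side of that identity reproduces exactly the three bilinear expressions under consideration. Using $\HT\HT=-\mathrm{id}$ up to the zero Fourier mode (harmless here, because in each case the mode-zero contribution is paired against $\ds^{4}$ of a function and thus annihilated) together with the fact that $J$ commutes with $\ds$ and $\HT$, one checks that $\LR{f\HT\ds^{4}g,h}=-\LR{u\ds^{4}w,\HT Jw}$ and $\LR{fg,\HT\ds^{4}h}=-\LR{u\HT w,J\ds^{4}w}$, while $\LR{\HT\ds^{4}f,gh}=\LR{(\HT\ds^{4}u)\HT w,\HT Jw}$ is unchanged. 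Hence the sum of the three bilinear terms equals the right-hand side of Lemma \ref{fourth}, and it remains to estimate those five pieces, one of which will supply the designated loss $4\LR{\ds u,(\ds w)^{2}}$.

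First I would dispose of the two commutator terms $\LR{[\HT,\HT Jw]\ds^{4}u,\HT w}$ and $\LR{[\HT,u]\ds^{4}(\HT Jw),\HT w}$ by Lemma \ref{comm.est.H}. The decisive point — and the place where the threshold $s_{0}>7/2$ enters — is the index split $s_{1}+s_{2}=4$: since $\HT J$ gains one derivative, $\ds^{4}(\HT Jw)$ is only three derivatives worse than $w$ in $L^{2}$, so for the second commutator I would take $s_{2}=1$ (so that $\|\HT Jw\|_{H^{1}}\lesssim\|w\|$) and $s_{1}=3$ with a commutator parameter $\sigma_{0}$ slightly above $1/2$ satisfying $\sigma_{0}+3\le s_{0}$; this assigns three of the four derivatives to the smooth factor $u$ (which has $s_{0}>7/2$ derivatives to spare) and only one to $w$, giving the bound $\lesssim\|u\|_{H^{s_{0}}}\|w\|^{2}$. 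The first commutator is handled analogously, now taking $s_{1}$ slightly below $1/2$ so that $\sigma_{0}+s_{1}\le1$ keeps $\|\HT Jw\|_{H^{\sigma_{0}+s_{1}}}\lesssim\|w\|$ while $s_{2}=4-s_{1}\le s_{0}$ keeps $\|u\|_{H^{s_{2}}}\lesssim\|u\|_{H^{s_{0}}}$; this range is nonempty precisely because $s_{0}>7/2$.

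For the remaining three pieces I would exploit the symbol identities $\ds\HT J=\psi(\ds)$ and $\HT J\ds^{2}=\ds+r$, where $r:=(\HT J\ds^{2}-\ds)w$ has symbol $i\xi(\psi(\xi)-1)$ supported in $\{|\xi|\le2\}$, so that $\|r\|_{H^{\sigma}}\lesssim\|w\|$ for every $\sigma$; this is exactly Lemma \ref{freq.est.2} with $k=1$. The terms $4\LR{\ds^{3}u\,\HT\HT w,\ds\HT Jw}$ and $2\LR{\ds^{2}u\,\HT\HT w,\ds^{2}\HT Jw}$ then collapse, after at most one integration by parts, to integrals of $\ds^{2}u$ or $\ds^{3}u$ against a quadratic expression in $w$, each bounded by $\|u\|_{H^{s_{0}}}\|w\|^{2}$ since $\|\ds^{3}u\|_{\infty}\lesssim\|u\|_{H^{s_{0}}}$. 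Finally, the term $-4\LR{\ds f\,\HT\ds g,\ds^{2}h}$ produces the loss: because $\HT\ds g=-\ds w$ and $\ds^{2}h=\ds w+r$, it equals $4\LR{\ds u,(\ds w)^{2}}+4\int_{\T}\ds u\,\ds w\,r\,dx$. Subtracting the main term $4\LR{\ds u,(\ds w)^{2}}$ leaves only this remainder, which I expect to be the main obstacle, for it carries an a priori uncontrolled factor $\ds w$. I would remove it by a single integration by parts, $4\int_{\T}\ds u\,\ds w\,r\,dx=-4\int_{\T}(\ds^{2}u\,r+\ds u\,\ds r)w\,dx$, and then bound by $\|u\|_{H^{s_{0}}}\|w\|^{2}$ using that $r$ and $\ds r$ are $L^{2}$-controlled by $\|w\|$ because $r$ is frequency-localized. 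Collecting all contributions yields the asserted estimate.
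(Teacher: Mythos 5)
Your proposal is correct and follows essentially the same route as the paper: both apply Lemma \ref{fourth} with $f=u$, $g=\HT w$, $h=\HT Jw$, dispose of the two commutators via Lemma \ref{comm.est.H} (with exactly the index splits you describe, which is where $s_{0}>7/2$ enters) and of the low-frequency remainders via Lemma \ref{freq.est.2}, and extract the loss $4\LR{\ds u,(\ds w)^{2}}$ from the term $-4\LR{\ds f\HT\ds g,\ds^{2}h}$. The only difference is cosmetic bookkeeping: the paper first regroups the last two terms of the identity by an integration by parts into $6\LR{\ds^{2}f\HT g,\ds^{2}h}+4\LR{\ds f\HT g,\ds^{3}h}$ before isolating the main term, whereas you isolate it directly.
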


\begin{proof}
We use Lemma \ref{fourth} with $f=u$, $g=\HT w$ and $h=\HT Jw$.
It is clear that \eqref{eq3.6} with $s=0$ holds.
Set
\EQQS{
-4\LR{\ds f\HT\ds g,\ds^{2}h}+2\LR{\ds^{2}f\HT g,\ds^{2}h}
=6\LR{\ds^{2}f\HT g,\ds^{2}h}+4\LR{\ds f\HT g,\ds^{3}h}=:A+B.
}
For $A$, Lemma \ref{freq.est.2} shows that
\EQQS{
|A|
&\le6|\LR{w\ds^{2}u,(\HT J\ds^{2}-\ds)w}|+3|\LR{\ds^{3}u,w^{2}}|+6|\hat{w}(0)\LR{\ds^{3}u,\HT J\ds w}|\\
&\lesssim\|u\|_{H^{s_{0}}}\|w\|^{2}.
}
Similarly, Lemma \ref{lem4.5} and \ref{freq.est.2} show that
\EQQS{
&|B-4\LR{\ds u,(\ds w)^{2}}|\\
&\le4|\LR{w\ds u,(\HT J\ds^{3}-\ds^{2})w}|+4|\LR{\ds uw,\ds^{2}w}+\LR{\ds u,(\ds w)^{2}}|
\lesssim\|u\|_{H^{s_{0}}}\|w\|^{2},
}
which completes the proof.
\end{proof}



\begin{lem}\label{lem2.14new0}
Let $s_{0}>7/2$.
Let $u,w\in H^{s_{0}+1}(\T)$.
Then
\EQQS{
|\LR{\ds^{5}u,(Jw)^{2}}-2\LR{(\HT\ds u)Jw,\HT J\ds^{4}w}+4\LR{(\HT\ds^{2}u)\HT\ds w,w}|
\lesssim\|u\|_{H^{s_{0}}}\|w\|^{2}.
}
\end{lem}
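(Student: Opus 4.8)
The statement is the $L^{2}$ (that is, $s=0$) counterpart of Lemma \ref{lem2.14new}, with the operator $D^{s-1}$ replaced by the low-frequency regularization $J$; accordingly I would follow the proof of Lemma \ref{lem2.14new}, importing the $J$-bookkeeping from the proof of Lemma \ref{m.e.1.2new0}. The first step is to reorganize the three terms, by integration by parts in $x$ and with the help of Lemma \ref{freq.est.2} (which lets one trade $\HT J\ds^{k+1}w$ for $\ds^{k}w$ up to an $L^{2}$-error $\lesssim\|w\|$), so that the genuinely top-order part is exhibited as the symmetric triple $\LR{\HT\ds^{4}f,gh}+\LR{f\HT\ds^{4}g,h}+\LR{fg,\HT\ds^{4}h}$ appearing on the left of Lemma \ref{fourth}. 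The natural choice is $f=\HT u$ and $h=Jw$ (the $s=0$ value of $D^{s-1}w$), with $g$ the regularized counterpart of $D^{s-1}\ds w$; note that $D^{-1}\ds w=-\HT w$ already kills the zero mode, so no regularization is needed in the $g$-slot.

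Applying Lemma \ref{fourth} then rewrites the triple as its right-hand side. The two Hilbert-transform commutators $\LR{[\HT,h]\ds^{4}f,g}$, $\LR{[\HT,f]\ds^{4}h,g}$ and the term $4\LR{\ds^{3}f\HT g,\ds h}$ are all controlled by the commutator estimate Lemma \ref{comm.est.H} together with Sobolev embedding, distributing the four derivatives so that at most $s_{0}$ of them fall on $u$; this is exactly the content of \eqref{eq3.6} with $s=0$ and gives $\lesssim\|u\|_{H^{s_{0}}}\|w\|^{2}$. The piece $\LR{\ds f\HT\ds g,\ds^{2}h}$ reduces, after moving one derivative, to the form $\LR{\ds(v\HT\ds\cdot),\cdot}$ and is therefore bounded by $\lesssim\|u\|_{H^{s_{0}}}\|w\|^{2}$ via the symmetry estimate Lemma \ref{reduction} applied with parameter $s_{0}-3>1/2$ (this is where $s_{0}>7/2$ is used), and with Lemma \ref{lem4.5} recovering any residual third-order loss.

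The decisive term is $2\LR{\ds^{2}f\HT g,\ds^{2}h}$. A direct computation (using $\HT^{2}=-1$ modulo the mean, $\ds^{k}Jw=J\ds^{k}w$, and replacing $J\ds^{2}w$ by $-\HT\ds w$ up to an $L^{2}$-error via Lemma \ref{freq.est.2}) shows that its leading part is a nonzero multiple of $\LR{(\HT\ds^{2}u)\HT\ds w,w}$, the remaining contributions — the zero-mode term carrying $\hat{w}(0)$ (bounded through $|\hat{w}(0)|\lesssim\|w\|$) and the $J$-correction errors — being $\lesssim\|u\|_{H^{s_{0}}}\|w\|^{2}$. The coefficient $4$ in front of $\LR{(\HT\ds^{2}u)\HT\ds w,w}$ in the statement is precisely arranged so that this leading part cancels against $2\LR{\ds^{2}f\HT g,\ds^{2}h}$ together with the corresponding leftover produced by the initial integration by parts, leaving only terms of size $\|u\|_{H^{s_{0}}}\|w\|^{2}$. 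Corollaries \ref{reduction3.H} and \ref{reduction4.H} supply the analogous trades when the $J$-corrections act on quadratic-in-$w$ products.

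I expect the main obstacle to be exactly this cancellation and the attendant bookkeeping. The term $4\LR{(\HT\ds^{2}u)\HT\ds w,w}$ is of size $\|u\|_{H^{s_{0}}}\|w\|_{H^{1}}\|w\|$ and is \emph{not} individually dominated by $\|u\|_{H^{s_{0}}}\|w\|^{2}$, so the proof cannot estimate it directly; it must be cancelled exactly, and one has to verify that every replacement $\HT J\ds^{k+1}\rightsquigarrow\ds^{k}$, every zero-mode correction $\hat{w}(0)$, and every commutator contributes only $\lesssim\|u\|_{H^{s_{0}}}\|w\|^{2}$, never leaving a stray factor of $\|w\|_{H^{1}}$ and never placing more than $s_{0}$ derivatives on $u$. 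This is the same mechanism that drives the proof of Lemma \ref{m.e.1.2new0}.
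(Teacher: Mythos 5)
Your proposal matches the paper's proof in all essentials: after integrating by parts, the paper applies Lemma \ref{fourth} with $f=\HT u$, $g=J\ds w$, $h=Jw$, controls the commutator terms via \eqref{eq3.6} with $s=0$ and the term $\LR{\ds f\HT\ds g,\ds^{2}h}$ via Lemma \ref{reduction}, and extracts the exact cancellation of $-2\LR{(\HT\ds^{2}u)\HT\ds w,w}$ from $2\LR{\ds^{2}f\HT g,\ds^{2}h}$ by trading $\HT J\ds^{k+1}$ for $\ds^{k}$ with Lemma \ref{freq.est.2}, precisely the mechanism you identify. The only cosmetic differences are that the paper keeps the regularization in the $g$-slot (taking $g=J\ds w$ rather than $-\HT w$), and that Lemma \ref{lem4.5} and Corollaries \ref{reduction3.H}--\ref{reduction4.H} turn out not to be needed for this particular term.
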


\begin{proof}
The integration by parts shows that
\EQQS{
&|\LR{\ds^{5}u,(Jw)^{2}}-2\LR{(\HT\ds u)Jw,\HT J\ds^{4}w}+4\LR{(\HT\ds^{2}u)\HT\ds w,w}|\\
&=|\LR{\ds^{4}uJw,J\ds w}-\LR{(\HT u)J\ds w,\HT J\ds^{4}w}-\LR{(\HT u)Jw,\HT J\ds^{5}w}\\
&\quad-2\LR{(\HT\ds^{2}u)\HT\ds w,w}|,
}
which allows us to use Lemma \ref{fourth} with $f=\HT u$, $g=J\ds w$ and $h=Jw$.
Then \eqref{eq3.6} with $s=0$ holds.
Lemma \ref{reduction} shows that
\EQQS{
|\LR{\ds f\HT\ds g,\ds^{2}h}|=|\LR{\ds((\HT\ds u)\HT J\ds^{2}w),J\ds w}|\lesssim\|u\|_{H^{s_{0}}}\|w\|^{2}.
}
Finally, we have
\EQQS{
&2\LR{\ds^{2}f\HT g,\ds^{2}h}\\
&=2\LR{(\HT\ds^{2}u)\HT J\ds w,J\ds^{2}w}\\
&=2\LR{(\HT\ds^{2}u)\HT J\ds w,(J\ds^{2}+\HT\ds)w}+2\LR{(\HT\ds^{3}u)(\HT J\ds-1)w,\HT w}\\
&\quad+2\LR{(\HT\ds^{2}u)(\HT J\ds^{2}-\ds)w,\HT w}-2\LR{(\HT\ds^{2}u)\HT\ds w,w},
}
which completes the proof.
\end{proof}



\begin{lem}\label{lem2.21new0}
Let $s_{0}>7/2$.
Let $u,w\in H^{s_{0}+1}(\T)$.
Then
\EQQS{
|\LR{u\HT\ds^{4}u,(Jw)^{2}}+\LR{u^{2}Jw,\HT J\ds^{4}w}-4\LR{u\ds u\HT\ds w,w}|
\lesssim\|u\|_{H^{s_{0}}}^{2}\|w\|^{2}.
}
\end{lem}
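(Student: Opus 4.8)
The plan is to mirror the proof of Lemma \ref{lem2.21new}, replacing the operator $D^{s-1}$ (which has no sensible $s=0$ meaning at low frequency) by the regularized operator $J$, and to use the near-identities $\HT J\ds^{k+1}\approx\ds^{k}$ of Lemma \ref{freq.est.2} to absorb the low-frequency discrepancies that the cutoff in $J$ produces. Throughout I would use that $J$ maps $L^{2}$ into $H^{1}$ with $\|Jw\|_{H^{1}}\lesssim\|w\|_{H^{-1}}\lesssim\|w\|$ (Lemma \ref{H^-1}), so that both $\|Jw\|$ and $\|Jw\|_{\I}$ are $\lesssim\|w\|$ and every pairing against $(Jw)^{2}$ costs only $\|w\|^{2}$.

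First I would symmetrize the quartic factor. Doubling the left-hand side and adding and subtracting $\LR{\HT\ds^{4}(u^{2}),(Jw)^{2}}$ gives
\begin{align*}
&2\left|\LR{u\HT\ds^{4}u,(Jw)^{2}}+\LR{u^{2}Jw,\HT J\ds^{4}w}-4\LR{u\ds u\HT\ds w,w}\right|\\
&\le\left|\LR{\HT\ds^{4}(u^{2}),(Jw)^{2}}+2\LR{u^{2}Jw,\HT J\ds^{4}w}-8\LR{u\ds u\HT\ds w,w}\right|\\
&\quad+\left|\LR{2u\HT\ds^{4}u-\HT\ds^{4}(u^{2}),(Jw)^{2}}\right|.
\end{align*}
The Leibniz rule yields $2u\HT\ds^{4}u-\HT\ds^{4}(u^{2})=-2[\HT,u]\ds^{4}u-8\HT(\ds u\ds^{3}u)-6\HT((\ds^{2}u)^{2})$, whose $L^{2}$ norm is $\lesssim\|u\|_{H^{s_{0}}}^{2}$ by Lemma \ref{comm.est.H} for the commutator and by Lemma \ref{Leibniz} for the two product terms; pairing against $(Jw)^{2}$ and using $\|Jw\|_{\I}\|Jw\|\lesssim\|w\|^{2}$ then bounds the last term by $\|u\|_{H^{s_{0}}}^{2}\|w\|^{2}$.

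For the symmetrized expression I would apply Lemma \ref{fourth} with $f=u^{2}$ and $g=h=Jw$. Since $J$ and $\ds^{4}$ commute we have $\HT\ds^{4}Jw=\HT J\ds^{4}w$, so the left-hand side of Lemma \ref{fourth} is precisely $\LR{\HT\ds^{4}(u^{2}),(Jw)^{2}}+2\LR{u^{2}Jw,\HT J\ds^{4}w}$. Its right-hand side consists of the two commutators $\LR{[\HT,Jw]\ds^{4}(u^{2}),Jw}$ and $\LR{[\HT,u^{2}]\ds^{4}Jw,Jw}$, the terms $4\LR{\ds^{3}(u^{2})\HT Jw,\ds Jw}$ and $2\LR{\ds^{2}(u^{2})\HT Jw,\ds^{2}Jw}$, and the distinguished term $-4\LR{\ds(u^{2})\HT\ds Jw,\ds^{2}Jw}=-8\LR{u\ds u\,\HT\ds Jw,\ds^{2}Jw}$. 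The two commutators are $\lesssim\|u\|_{H^{s_{0}}}^{2}\|w\|^{2}$ by Lemma \ref{comm.est.H}, and the two explicit terms likewise after one integration by parts, exactly as in the estimate \eqref{eq3.6} specialized to $s=0$.

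The crux, and the step I expect to be most delicate, is the distinguished term $-8\LR{u\ds u\,\HT\ds Jw,\ds^{2}Jw}$. Here I would invoke Lemma \ref{freq.est.2}: the multiplier of $\HT\ds J$ equals $\psi$, so $\HT\ds Jw=w$ up to a low-frequency error of size $\lesssim\|w\|$, while the multiplier of $\ds^{2}J+\HT\ds$ is $|\xi|(1-\psi)$, supported in $|\xi|\le2$, so $\ds^{2}Jw=-\HT\ds w$ up to a comparable error. Substituting these, the leading contribution is $-8\LR{u\ds u\,w,-\HT\ds w}=+8\LR{u\ds u\HT\ds w,w}$, which cancels exactly the $-8\LR{u\ds u\HT\ds w,w}$ in the symmetrized expression; every error term is a product carrying at least one low-frequency factor and is controlled by $\lesssim\|u\|_{H^{s_{0}}}^{2}\|w\|^{2}$ via Lemma \ref{freq.est.2}, Lemma \ref{reduction}, and an integration by parts moving the surviving derivative off $w$. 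Dividing by $2$ then yields the claim. The main obstacle is bookkeeping the several $J$-regularization errors so that each genuinely reduces to a low-frequency pairing; the algebraic cancellation itself is forced by the structure of Lemma \ref{fourth}.
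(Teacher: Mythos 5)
Your proposal follows the paper's proof essentially verbatim: the same symmetrization via adding and subtracting $\LR{\HT\ds^{4}(u^{2}),(Jw)^{2}}$, the same application of Lemma \ref{fourth} with $f=u^{2}$, $g=h=Jw$, and the same use of Lemma \ref{freq.est.2} to convert the distinguished term $-4\LR{\ds f\HT\ds g,\ds^{2}h}$ into $+8\LR{u\ds u\HT\ds w,w}$ plus low-frequency errors. The only caveat is that the commutator $[\HT,u]\ds^{4}u$ in your Leibniz expansion should be integrated by parts once against $(Jw)^{2}$ before estimating (which is what the paper's citation of Lemma \ref{H^-1} implicitly achieves), since bounding its $L^{2}$ norm directly via Lemma \ref{comm.est.H} costs $\|u\|_{H^{4}}\|u\|_{H^{s_{0}}}$ rather than $\|u\|_{H^{s_{0}}}^{2}$ when $7/2<s_{0}<4$.
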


\begin{proof}
Adding and subtraction a term, we have
\EQQS{
&2|\LR{u\HT\ds^{4}u,(Jw)^{2}}+\LR{u^{2}Jw,\HT J\ds^{4}w}-4\LR{u\ds u\HT\ds w,w}|\\
&\le|\LR{\HT\ds^{4}(u^{2}),(Jw)^{2}}+2\LR{u^{2}Jw,\HT J\ds^{4}w}-8\LR{u\ds u\HT\ds w,w}|\\
&\quad+|\LR{2u\HT\ds^{4}u-\HT\ds^{4}(u^{2}),(Jw)^{2}}|.
}
Lemma \ref{H^-1} and \ref{comm.est.H} show that the second term in the right hand side can be estimated by $\lesssim\|u\|_{H^{s_{0}}}^{2}\|w\|^{2}$.
We use Lemma \ref{fourth} with $f=u^{2}$ and $g=h=Jw$.
Note that \eqref{eq3.6} with $s=0$ holds.
It is easy to see that
\EQQS{
|\LR{\ds^{2}f\HT g,\ds^{2}h}|=|\LR{\ds(\ds^{2}(u^{2})\HT Jw),J\ds w}|\lesssim\|u\|_{H^{s_{0}}}^{2}\|w\|^{2}.
}
Finally, Lemma \ref{freq.est.2} shows that
\EQQS{
&-4\LR{\ds f\HT\ds g,\ds^{2}h}\\
&=-8\LR{u\ds u\HT J\ds w,(J\ds^{2}+\HT\ds)w}-8\LR{\ds(u\ds u)(\HT J\ds-1)w,\HT w}\\
&\quad-8\LR{u\ds u(\HT J\ds^{2}-\ds)w,\HT w}+8\LR{u\ds u\HT\ds w,w},
}
which completes the proof.
\end{proof}


\begin{lem}\label{lem4.23.0}
Let $s_{0}>7/2$.
Let $u,v\in H^{s_{0}}(\T)$.
Then
\EQQS{
|\LR{u\HT\ds(u\ds^{2}u-v\ds^{2}v),\HT Jw}-3\LR{u\ds u\HT\ds w,w}|
\lesssim I_{s_{0}}(u,v)^{2}\|w\|^{2},
}
where $w=u-v$.
\end{lem}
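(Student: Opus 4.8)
The plan is to mirror the proof of Lemma~\ref{lem4.23}, carrying it out at the $L^2$ level: the operator $D^{s-1}$, which is unavailable at $s=0$, is replaced by the regularized operator $J$, and the commutator estimates for the $P_s^{(j)}$ are replaced by the $J$-reduction lemmas (Lemmas~\ref{reduction3}, \ref{reduction4}, \ref{freq.est.2} and Corollaries~\ref{reduction3.H}, \ref{reduction4.H}). First I would split
\[
u\ds^{2}u-v\ds^{2}v=u\ds^{2}w+w\ds^{2}v,
\]
which separates the target into a genuinely singular main term $T_1:=\LR{u\HT\ds(u\ds^{2}w),\HT Jw}$ and an auxiliary term $T_2:=\LR{u\HT\ds(w\ds^{2}v),\HT Jw}$.

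For $T_2$, all derivatives of order at least two fall on the smooth factor $v\in H^{s_0}$, so only $w$ and $\ds w$ appear in low-order slots, together with the derivative-gaining factor $\HT Jw$ (harmless in $L^2$ since $\|Jw\|\le 2\|w\|_{H^{-1}}\lesssim\|w\|$ by Lemma~\ref{H^-1}). Integrating by parts to shift the stray derivative off $w$ onto $\HT Jw$ and trading $\HT J\ds^{k+1}$ for $\ds^{k}$ via Lemma~\ref{freq.est.2}, then rewriting the Hilbert transforms of products through commutators via Lemma~\ref{comm.est.H}, I expect every resulting piece to be bounded by $I_{s_0}(u,v)^2\|w\|^2$, so $T_2$ only feeds the error term.

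The core of the argument is $T_1$, where three derivatives fall on $w$, which carries no regularity; hence it must be handled globally rather than term by term. Using $\HT\ds=\ds\HT$ and integration by parts I would move the outermost derivative onto $\HT Jw$, producing the factor $\HT J\ds w$, and then repeatedly invoke Lemma~\ref{freq.est.2} to replace each occurrence of $\HT J\ds^{k+1}$ by $\ds^{k}$ modulo an $L^2$-bounded remainder. Combined with the reduction in Corollary~\ref{reduction3.H} (which collapses $\HT J\ds(u\HT\ds^{2}\,\cdot\,)$ to a local expression up to a term controlled by $\|u\|_{H^{s_0}}\|\cdot\|$) and with Lemma~\ref{comm.est.H}, this peels the top-order loss off $w$ and leaves, after integration by parts, a leading contribution of the form $3\LR{u\ds u\HT\ds w,w}$, matching the value $3-s$ at $s=0$ of the coefficient in Lemma~\ref{lem4.23}. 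The residual worst term, of the shape $\LR{\ds(v\HT\ds w),w}$, is exactly the type annihilated by the symmetrization Lemma~\ref{reduction}, so it is absorbed into $I_{s_0}(u,v)^2\|w\|^2$.

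The main obstacle is the bookkeeping in $T_1$: one must choose the integration-by-parts and reduction order so that the genuinely singular contribution (three derivatives on $w\in L^2$) cancels and the prescribed coefficient $3$ emerges. Once that cancellation is arranged, the remaining terms are routine consequences of Lemmas~\ref{comm.est.H}, \ref{freq.est.2} and \ref{reduction}.
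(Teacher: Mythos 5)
Your splitting into $T_{1}=\LR{u\HT\ds(u\ds^{2}w),\HT Jw}$ and $T_{2}=\LR{u\HT\ds(w\ds^{2}v),\HT Jw}$, your treatment of $T_{2}$, and the target coefficient $3$ all agree with the paper. The gap is in the mechanism you propose for $T_{1}$. Corollary \ref{reduction3.H} (and Lemma \ref{reduction3}) concern expressions of the form $\HT J\ds(u\HT\ds^{2}v)$, i.e.\ where $J\ds$ acts directly on the product; that is the configuration of Lemmas \ref{lem4.25.0}, \ref{lem4.32.0} and \ref{lem4.34.0}, whose pairings read $\LR{u\HT w,\HT J\ds(\cdots)}$. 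In $T_{1}$ the operator $J$ sits on the opposite side of the pairing, and since $J$ does not commute with multiplication by $u$ it cannot be transported onto $\HT\ds(u\ds^{2}w)$; the cited reduction is therefore inapplicable here. Likewise, a single integration by parts followed by ``repeatedly invoking Lemma \ref{freq.est.2}'' does not close the argument: after one integration by parts and one $[\HT,u]$ commutation you are left with terms such as $\LR{u^{2}\ds^{2}w,J\ds w}$, in which two derivatives still sit on $w\in L^{2}$ and are buried inside a product rather than adjacent to $J$.

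The device the paper uses, and which your plan is missing, is the algebraic rewriting
\[
\ds(u\ds^{2}w)=\ds^{3}(uw)-2\ds^{2}(\ds u\,w)+\ds(\ds^{2}u\,w),
\]
which expresses the dangerous expression as total derivatives of $L^{2}$ products. Each resulting piece $\LR{u\HT\ds^{k}(\cdot\,w),\HT Jw}$ is then dualized so that $\HT\ds^{k}$ lands on $u\HT Jw$; the Leibniz expansion there, together with $[\HT,u]$ commutators (Lemma \ref{comm.est.H}) and the replacement $\HT J\ds^{k+1}\mapsto\ds^{k}$ (Lemma \ref{freq.est.2}), controls everything except the quadratic terms $\LR{u\ds u\HT\ds w,w}$. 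The coefficient $3$ then emerges as $1+2$ from the $\ds^{3}(uw)$ and $-2\ds^{2}(\ds u\,w)$ pieces respectively, with the residual $\LR{\ds(u^{2}\HT\ds w),w}$ absorbed by Lemma \ref{reduction}, as you anticipated. Without this rearrangement, or an equivalent systematic way of detaching all three derivatives from $w$ before dualizing, the cancellation that produces the coefficient $3$ is not established.
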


\begin{proof}
First we set
\EQQS{
A:=\LR{u\HT\ds(u\ds^{2}w),\HT Jw},\quad B:=\LR{u\HT\ds(w\ds^{2}v),\HT Jw}.
}
It is clear that $|B|\lesssim\|u\|_{H^{s_{0}}}\|v\|_{H^{s_{0}}}\|w\|^{2}$.
Note that
\[\ds(u\ds^{2}w)=\ds^{3}(uw)-2\ds^{2}(\ds uw)+\ds(\ds^{2}uw).\]
Then we set
\EQQS{
A
&=\LR{u\HT\ds^{3}(uw),\HT Jw}-2\LR{u\HT\ds^{2}(\ds uw),\HT Jw}+\LR{u\HT\ds(\ds^{2}uw),\HT Jw}\\
&=:A_{1}+A_{2}+A_{3}.
}
It is clear that $|A_{3}|\lesssim\|u\|_{H^{s_{0}}}^{2}\|w\|^{2}$.
For $A_{1}$, we have
\EQQS{
A_{1}
&=\LR{uw,\HT\ds^{3}(u\HT Jw)}\\
&=\LR{uw,\HT(\ds^{3}u\HT Jw)}+3\LR{uw,\HT(\ds^{2}u\HT J\ds w)}+3\LR{uw,\HT(\ds u\HT J\ds^{2}w)}\\
&\quad+\LR{uw,\HT(u\HT J\ds^{3}w)}=:A_{11}+\cdots+A_{14}.
}
It is clear that $|A_{11}|+|A_{12}|\lesssim\|u\|_{H^{s_{0}}}^{2}\|w\|^{2}$.
For $A_{13}$, we have
\EQQS{
A_{13}
&=3\LR{uw,[\HT,\ds u]\HT J\ds^{2}w}-3\LR{u\ds uw,J\ds^{2}w}\\
&=3\LR{uw,[\HT,\ds u]\HT J\ds^{2}w}-3\LR{u\ds uw,J\ds^{2}w+\HT\ds w}+3\LR{u\ds u\HT\ds w,w}.
}
Similarly, we have
\EQQS{
A_{14}
&=\LR{uw,[\HT,u]\HT J\ds^{3}w}-\LR{u^{2}w,J\ds^{3}+\HT\ds^{2}w}\\
&\quad+\LR{\ds(u^{2}\HT\ds w),w}-2\LR{u\ds u\HT\ds w,w}.
}
Finally, we have
\EQQS{
A_{2}
&=2\LR{\ds uw,\HT(\ds^{2}u\HT Jw)}+4\LR{\ds uw,\HT(\ds u\HT J\ds w)}+2\LR{\ds uw,\HT(u\HT J\ds^{2}w)}\\
&=:A_{21}+A_{22}+A_{23}.
}
Obviously, $|A_{21}|+|A_{22}|\lesssim\|u\|_{H^{s_{0}}}^{2}\|w\|^{2}$.
Observe that
\EQQS{
A_{23}=2\LR{\ds uw,[\HT,u]\HT J\ds^{2}w}-2\LR{u\ds u(J\ds^{2}w+\HT\ds w),w}+2\LR{u\ds u\HT\ds w,w}.
}
Therefore, we have
\EQQS{
&|A+B-3\LR{u\ds u\HT\ds w,w}|\\
&\le|A_{1}-\LR{u\ds u\HT\ds w,w}|+|A_{2}-2\LR{u\ds u\HT\ds w,w}|+|A_{3}|+|B|
\lesssim I_{s_{0}}(u,v)^{2}\|w\|^{2},
}
which completes the proof.
\end{proof}

\begin{lem}\label{lem4.25.0}
Let $s_{0}>7/2$.
Let $u,v\in H^{s_{0}+1}(\T)$.
Then
\EQQS{
|\LR{u\HT w,\HT J\ds(u\ds^{2}u-v\ds^{2}v)}-2\LR{u\ds u\HT\ds w,w}|
\lesssim I_{s_{0}}(u,v)^{2}\|w\|^{2},
}
where $w=u-v$.
\end{lem}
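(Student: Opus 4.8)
The plan is to mirror the structure of the preceding $L^2$ lemmas (e.g.\ Lemma \ref{lem4.23.0}): split off the genuinely lower order piece, reduce the operator $\HT J\ds$ to the identity modulo $L^2$-bounded errors, and then extract the principal term by a single integration by parts. Write $w=u-v$ and decompose
\[u\ds^2u-v\ds^2v=u\ds^2w+w\ds^2v.\]
Since $v\in H^{s_0+1}$ and $s_0>7/2$, the factor $\ds^2v$ lies in $L^\I$, so the $w\ds^2v$ contribution is harmless: using that $\HT J$ is skew-adjoint and then integrating by parts, $\LR{u\HT w,\HT J\ds(w\ds^2v)}=\LR{\ds\HT J(u\HT w),w\ds^2v}$, and since $\ds\HT J=\HT J\ds$ differs from the identity by an $L^2$-bounded operator (Lemma \ref{freq.est.2} with $k=0$), this equals $\LR{u\HT w,w\ds^2v}$ plus a bounded error, both controlled by $I_{s_0}(u,v)^2\|w\|^2$ via $\|\ds^2v\|_\I\lesssim\|v\|_{H^{s_0}}$.

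For the main piece $\LR{u\HT w,\HT J\ds(u\ds^2w)}$ I would first convert $\HT J\ds(u\ds^2w)$ into $u\ds^2w$ up to controllable terms. By Lemma \ref{reduction3}, $J\ds(u\ds^2w)=-u\HT\ds^2w+R$ with $\|R\|\lesssim\|u\|_{H^{s_0}}\|w\|$; applying $\HT$ and using $\HT^{2}(\ds^2w)=-\ds^2w$ (as $\widehat{\ds^2w}(0)=0$) together with the commutator $[\HT,u]$ gives
\[\HT J\ds(u\ds^2w)=u\ds^2w-[\HT,u]\HT\ds^2w+\HT R.\]
Here $\|[\HT,u]\HT\ds^2w\|=\|[\HT,u]\ds^2\HT w\|\lesssim\|u\|_{H^{s_0}}\|w\|$ by Lemma \ref{comm.est.H} (taking $k=2$, $s_1=2$, $s_2=0$, admissible since $s_0>5/2$), so after pairing with $u\HT w$ both $[\HT,u]\HT\ds^2w$ and $\HT R$ contribute only $O(I_{s_0}(u,v)^2\|w\|^2)$. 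It then remains to treat $\LR{u\HT w,u\ds^2w}=\LR{u^2\HT w,\ds^2w}$. Integrating by parts once and regrouping (using $\ds(w\HT w)=(\HT w)\ds w+w\HT\ds w$ and $\ds(u^2)=2u\ds u$, with the resulting error $\LR{\ds^2(u^2),w\HT w}$) gives
\[\LR{u^2\HT w,\ds^2w}=2\LR{u\ds u\HT\ds w,w}-\int_{\T}u^2(\HT\ds w)\ds w\,dx+O(I_{s_0}(u,v)^2\|w\|^2),\]
whose first term is exactly the claimed principal term.

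The main obstacle is the balanced remainder $\int_{\T}u^2(\HT\ds w)\ds w\,dx$, which at first sight costs a derivative on $w$. The resolution is that antisymmetry of $\HT$ turns it into a commutator, $\int_{\T}u^2(\HT\ds w)\ds w\,dx=-\tfrac12\LR{[\HT,u^2]\ds w,\ds w}$; integrating by parts once more, $\LR{[\HT,u^2]\ds w,\ds w}=-\LR{\ds([\HT,u^2]\ds w),w}$, and expanding $\ds[\HT,u^2]=[\HT,\ds(u^2)]+[\HT,u^2]\ds$ reduces matters to $\|[\HT,\ds(u^2)]\ds w\|$ and $\|[\HT,u^2]\ds^2w\|$, both bounded by $\|u\|_{H^{s_0}}^2\|w\|$ through Lemma \ref{comm.est.H} (the two derivatives on $w$ in the latter being absorbed by the smoothing of the commutator, since $s_0>5/2$). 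Hence this remainder is also $O(I_{s_0}(u,v)^2\|w\|^2)$, and collecting all pieces yields the stated estimate. The only genuinely delicate step is recognizing that this balanced term does not in fact lose a derivative; everything else is routine bookkeeping with Lemmas \ref{freq.est.2}, \ref{reduction3} and \ref{comm.est.H}.
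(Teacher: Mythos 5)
Your proof is correct and follows essentially the same route as the paper: the same splitting $u\ds^{2}u-v\ds^{2}v=u\ds^{2}w+w\ds^{2}v$, reduction of $\HT J\ds(u\ds^{2}w)$ to $u\ds^{2}w$ via Lemma \ref{reduction3} and Lemma \ref{comm.est.H}, and integration by parts on $\LR{u^{2}\HT w,\ds^{2}w}$ to extract $2\LR{u\ds u\HT\ds w,w}$. The only difference is that you re-derive the bound for the balanced term $\int_{\T}u^{2}(\HT\ds w)\ds w\,dx$ by the commutator trick, whereas the paper simply invokes Lemma \ref{reduction}, which is exactly this statement.
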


\begin{proof}
First we set $A:=\LR{u\HT w,\HT J\ds(u\ds^{2}w)}$ and $\quad B:=\LR{u\HT w,\HT J\ds(w\ds^{2}v)}$.
It is easy to see that $|B|\lesssim I(u,v)^{2}\|w\|^{2}$.
We have
\EQQS{
A
&=\LR{u\HT w,\HT\{J\ds(u\ds^{2}w)+u\HT\ds^{2}w\}}-\LR{u\HT w,[\HT,u]\ds^{2}w}\\
&\quad+\LR{\ds^{2}(u^{2})\HT w,w}+\LR{\ds(u^{2}\HT\ds w),w}+2\LR{u\ds u\HT\ds w,w}.
}
Lemma \ref{comm.est.H}, \ref{reduction} and \ref{reduction3} show that
\EQQS{
|A-2\LR{u\ds u\HT\ds w,w}|\lesssim I_{s_{0}}(u,v)^{2}\|w\|^{2},
}
which completes the proof.
\end{proof}

We modify Lemma \ref{lem4.26} in $L^{2}(\T)$.

\begin{lem}\label{lem4.26.0}
Let $s_{0}>7/2$.
Let $u,v\in H^{s_{0}}(\T)$.
Then
\EQQS{
|\LR{u\HT\ds((\ds u)^{2}-(\ds v)^{2}),\HT Jw}+2\LR{u\ds u\HT\ds w,w}|
\lesssim I_{s_{0}}(u,v)^{2}\|w\|^{2},
}
where $w=u-v$.
\end{lem}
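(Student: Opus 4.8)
The plan is to imitate the treatment of the structurally identical quantity in Lemma~\ref{lem4.23.0} (the $s=0$ case of Lemma~\ref{lem4.26}), but in a streamlined form that exploits two symbol identities: $\HT\ds=D$ is self-adjoint with symbol $|\xi|$, and $\HT J\ds=\psi(D)$, so that $\HT J\ds w=w$ up to an $L^2$-bounded error by Lemma~\ref{freq.est.2}. Writing $z:=u+v$ we have $(\ds u)^2-(\ds v)^2=\ds z\,\ds w$. Using that both $u$ and $D$ are self-adjoint I would first rewrite $\LR{u\HT\ds(\ds z\,\ds w),\HT Jw}=\LR{\ds z\,\ds w,\,\HT\ds(u\HT Jw)}$ and expand $\ds(u\HT Jw)=\ds u\,\HT Jw+u\,\HT J\ds w$; replacing $\HT J\ds w$ by $w$ this decomposes the quantity as $\LR{\ds z\,\ds w,\HT(\ds u\,\HT Jw)}+\LR{\ds z\,\ds w,\HT(uw)}$ plus an $L^2$-bounded error.

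For the first piece I would write $\HT(\ds u\,\HT Jw)=[\HT,\ds u]\HT Jw-\ds u\,Jw$ (using $\HT^2=-\mathrm{Id}$ on mean-zero functions); since $\HT Jw$ and $Jw$ behave like $\ds^{-1}w$, a single integration by parts in the surviving $\ds w$, combined with Lemma~\ref{comm.est.H} and the reductions $\HT J\ds w\approx w$, $J\ds w\approx-\HT w$ of Lemma~\ref{freq.est.2}, shows this piece is $\lesssim\|u\|_{H^{s_0}}\|z\|_{H^{s_0}}\|w\|^2\lesssim I_{s_0}(u,v)^2\|w\|^2$. Here the surplus $s_0>7/2$ is what makes the bookkeeping close: I would apply Lemma~\ref{comm.est.H} with its internal index taken as small as $s_0-2>1/2$, so that commutators such as $[\HT,\ds u]$ and $[\HT,u]$ acting on the relevant derivatives of $w$ cost only $\|u\|_{H^{s_0}}$. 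For the second piece, $\HT(uw)=[\HT,u]w+u\HT w$; the commutator term is harmless, while $\LR{\ds z\,\ds w,u\HT w}=\LR{u\ds z\,\ds w,\HT w}$ is converted, by the integration-by-parts identity underlying Lemma~\ref{reduction}, into $-\LR{u\ds z\,\HT\ds w,w}$ modulo $\lesssim I_{s_0}(u,v)^2\|w\|^2$.

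It then remains to match the stated main term. Substituting $\ds z=2\ds u-\ds w$ gives $-\LR{u\ds z\,\HT\ds w,w}=-2\LR{u\ds u\,\HT\ds w,w}+\LR{u\ds w\,\HT\ds w,w}$, so after transposing the factor $2\LR{u\ds u\,\HT\ds w,w}$ the claim reduces to the single bound $|\LR{u\ds w\,\HT\ds w,w}|\lesssim I_{s_0}(u,v)^2\|w\|^2$. This is the step I expect to be the real obstacle: it is a genuinely first-order trilinear expression with the two derivatives distributed over two distinct copies of $w$, so it is \emph{not} controlled by crude H\"older estimates (those would leave two factors in $H^{s_0}$ and only one in $L^2$). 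The way through is the Gagliardo--Nirenberg inequality of Lemma~\ref{G.N.}: I would estimate $|\LR{u\ds w\,\HT\ds w,w}|\lesssim\|u\|_{\infty}\|\ds w\|_{L^4}^2\|w\|$ and interpolate $\|\ds w\|_{L^4}\lesssim\|w\|^{1-\theta}\|D^{s_0}w\|^{\theta}$ with $\theta=5/(4s_0)$; since $s_0>7/2$ forces $1-2\theta>0$, the exponents recombine as $\|w\|^{3-2\theta}\|D^{s_0}w\|^{2\theta}\le\|w\|^2\|w\|_{H^{s_0}}$, giving $\lesssim\|u\|_{H^{s_0}}\|w\|_{H^{s_0}}\|w\|^2\le I_{s_0}(u,v)^2\|w\|^2$. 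This is exactly the type of elementary computation anticipated by the Remark following Lemma~\ref{lem.fo} for the case $s=0$, and with it the estimate is complete.
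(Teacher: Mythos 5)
Your argument is correct and, at its core, follows the same strategy as the paper's proof: dualize to move the operator onto $u\HT Jw$, peel derivatives off that factor, use $\HT^2=-\mathrm{Id}$ (mod mean) together with the reductions $\HT J\ds^{k+1}\approx\ds^{k}$ of Lemma~\ref{freq.est.2} and the commutator bound of Lemma~\ref{comm.est.H}, and extract the main term $-2\LR{u\ds u\HT\ds w,w}$ via $\ds z=2\ds u-\ds w$. The two genuine differences are these. First, the paper rewrites $\ds z\,\ds w=\ds(\ds z\,w)-\ds^{2}z\,w$ \emph{before} dualizing, so that the left slot carries $\ds z\,w$ (no derivative on $w$) and two derivatives fall on $u\HT Jw$; you instead keep $\ds z\,\ds w$ on the left and remove the stray derivative from $w$ by a final integration by parts. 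These are the same bookkeeping in a different order. On this point, be careful that your phrase ``plus an $L^2$-bounded error'' is not by itself sufficient: terms such as $\LR{\ds z\,\ds w,\HT(u(\HT J\ds-1)w)}$ and $\LR{\ds z\,\ds w,[\HT,u]w}$ still pair a bare $\ds w$ against data controlled only at the level of $\|w\|$, and $\|\ds w\|\,\|w\|$ is \emph{not} $\lesssim\|w\|^{2}$; each of them needs the same ``integrate by parts in the surviving $\ds w$'' move (after which Lemma~\ref{freq.est.2} with $k=1$ closes the estimate), so this should be stated uniformly rather than only for the first piece. Second, and more substantively, both proofs reduce to the same residual trilinear term, $\LR{u\ds w\,\HT\ds w,w}=-\tfrac12\LR{\ds(u\HT\ds w),w^{2}}$; the paper disposes of it by rewriting it as $\LR{\ds((uw)\HT\ds w),w}$ and invoking the symmetrization estimate of Lemma~\ref{reduction} with weight $uw$ (which costs $\|uw\|_{H^{s_0'+2}}\lesssim\|u\|_{H^{s_0}}\|w\|_{H^{s_0}}$ and hence fits into $I_{s_0}(u,v)^{2}\|w\|^{2}$), whereas you use the Gagliardo--Nirenberg inequality of Lemma~\ref{G.N.} with $\theta=5/(4s_0)<1/2$. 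Your route is more elementary and self-contained at this step (it needs only $s_0\ge 5/2$ and $L^{4}$-boundedness of $\HT$), while the paper's route reuses a lemma already needed elsewhere; both are valid, and both land on the same bound.
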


\begin{proof}
Set $z=u+v$.
First we set 
\EQQS{
\LR{u\HT\ds(\ds z\ds w),\HT Jw}
&=\LR{u\HT\ds^{2}(\ds zw),\HT Jw}-\LR{u\HT\ds(\ds^{2}zw),\HT Jw}\\
&=:A+B.
}
It is clear that $|B|\lesssim I(u,v)^{2}\|w\|^{2}$.
Moreover, we set
\EQQS{
A
&=-\LR{\ds zw,\HT\ds^{2}(u\HT Jw)}\\
&=-\LR{\ds zw,\HT(\ds^{2}u\HT Jw)}-2\LR{\ds zw,\HT(\ds u\HT J\ds w)}-\LR{\ds zw,\HT(u\HT J\ds^{2}w)}\\
&=:A_{1}+A_{2}+A_{3}.
}
It is also clear that $|A_{1}|+|A_{2}|\lesssim I(u,v)^{2}\|w\|^{2}$.
For $A_{3}$, we have
\EQQS{
A_{3}
&=-\LR{\ds zw,[\HT,u]\HT J\ds^{2}w}+\LR{\ds zw,uJ\ds^{2}w+\HT\ds w}-2\LR{u\ds u\HT\ds w,w}\\
&\quad-\frac{1}{2}\LR{\ds(u\HT\ds w),w^{2}},
}
from which follows that
\EQQS{
&|A+B+2\LR{u\ds u\HT\ds w,w}|\\
&\le|A_{1}|+|A_{2}|+|A_{3}+2\LR{u\ds u\HT\ds w,w}|+|B|
\lesssim I_{s_{0}}(u,v)^{2}\|w\|^{2},
}
which completes the proof.
\end{proof}

\begin{lem}\label{lem4.28.0}
Let $s_{0}>7/2$.
Let $u,v\in H^{s_{0}}(\T)$.
Then
\EQQS{
|\LR{u\HT w,\HT J\ds((\ds u)^{2}-(\ds v)^{2})}+2\LR{u\ds u\HT\ds w,w}|
\lesssim I_{s_{0}}(u,v)^{2}\|w\|^{2},
}
where $w=u-v$.
\end{lem}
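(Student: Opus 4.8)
The plan is to treat the statement as the $L^{2}$ analogue of Lemma \ref{lem4.28}, disposing of the operator $\HT J\ds$ by the frequency estimates for $J$ and then extracting the main term by integration by parts. Put $z:=u+v$, so that $(\ds u)^{2}-(\ds v)^{2}=\ds z\,\ds w$ and the quantity to estimate is $\LR{u\HT w,\HT J\ds(\ds z\,\ds w)}$. First I would remove $\HT J\ds$: its Fourier symbol is $(-i\sgn\xi)\frac{\psi(\xi)}{|\xi|}(i\xi)=\psi(\xi)$, which is real and even, so $P:=\HT J\ds$ is self-adjoint and equals the identity plus a smoothing operator $P-\mathrm{Id}$ whose symbol $\psi(\xi)-1$ is supported in $\{|\xi|\le 2\}$. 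Transferring $P$ to the first factor gives $\LR{u\HT w,\HT J\ds(\ds z\,\ds w)}=\LR{u\HT w,\ds z\,\ds w}+\LR{(P-\mathrm{Id})(u\HT w),\ds z\,\ds w}$, and in the last term $(P-\mathrm{Id})(u\HT w)$ lies in $H^{1}$ with norm $\lesssim\|u\HT w\|\lesssim I_{s_{0}}(u,v)\|w\|$; integrating by parts to move $\ds$ off $\ds w$ then bounds it by $I_{s_{0}}(u,v)^{2}\|w\|^{2}$. (Equivalently, as in Lemma \ref{lem4.26.0}, one may use Lemma \ref{reduction4} to replace $J\ds(\ds z\,\ds w)$ by $-\ds z\HT\ds w$ up to an $L^{2}$ error of size $\|z\|_{H^{s_{0}}}\|w\|$, and then use $\HT^{2}=-\mathrm{Id}$ modulo the mean.)

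Next I would extract the main term from $\LR{u\HT w,\ds z\,\ds w}=\LR{u\ds z\HT w,\ds w}$. One integration by parts yields $-\LR{\ds(u\ds z)\HT w,w}-\LR{u\ds z\HT\ds w,w}$, and the first summand is harmless since $\ds(u\ds z)\in L^{\infty}$ with norm $\lesssim I_{s_{0}}(u,v)^{2}$. In the second summand I substitute $\ds z=2\ds u-\ds w$, which gives precisely $-2\LR{u\ds u\HT\ds w,w}$, the asserted main term, together with the residual $\LR{u\ds w\HT\ds w,w}$. Thus the whole statement reduces to the single inequality $|\LR{u\ds w\HT\ds w,w}|\lesssim I_{s_{0}}(u,v)^{2}\|w\|^{2}$.

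The hard part will be this last, cubic, remainder $\LR{u\ds w\HT\ds w,w}=\int_{\T}uw\,\ds w\,\HT\ds w\,dx$: two of its three $w$-factors carry a derivative, so it appears to lose a derivative while only $\|w\|=\|w\|_{L^{2}}$ is allowed on the right. The resolution is Gagliardo--Nirenberg together with the a priori embedding $\|w\|\le\|w\|_{H^{s_{0}}}\lesssim I_{s_{0}}(u,v)$. Estimating $|\LR{u\ds w\HT\ds w,w}|\le\|u\|_{L^{\infty}}\|w\|_{L^{4}}\|\ds w\|_{L^{4}}\|\ds w\|_{L^{2}}$ and applying Lemma \ref{G.N.} to each $w$-factor, the interpolation exponents sum to $\tfrac{5}{2s_{0}}$, which is strictly less than $1$ exactly because $s_{0}>7/2$; hence the product is $\lesssim\|w\|^{\,3-5/(2s_{0})}\|D^{s_{0}}w\|^{5/(2s_{0})}$. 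Since $3-5/(2s_{0})>2$, the surplus factor $\|w\|^{\,1-5/(2s_{0})}$ may be absorbed into $\|w\|_{H^{s_{0}}}\lesssim I_{s_{0}}(u,v)$, leaving $\lesssim I_{s_{0}}(u,v)^{2}\|w\|^{2}$. This is the same mechanism that controls the cubic remainders throughout the $L^{2}$ estimates, and the threshold $s_{0}>7/2$ is exactly what makes the interpolation close.
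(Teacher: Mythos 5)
Your proof is correct, and its skeleton coincides with the paper's: strip the operator $\HT J\ds$ down to the identity modulo a harmless remainder, integrate by parts in $\LR{u\ds z\HT w,\ds w}$, and substitute $\ds z=2\ds u-\ds w$ to expose the main term $-2\LR{u\ds u\HT\ds w,w}$. The two technical ingredients differ, though. For the nonlocal operator, the paper invokes Lemma \ref{reduction4} to replace $J\ds(\ds z\,\ds w)$ by $-\ds z\HT\ds w$ up to an $L^{2}$ error and then peels off the commutator $[\HT,\ds z]$ via Lemma \ref{comm.est.H}; your observation that $\HT J\ds=\psi(D)$ is a self-adjoint multiplier equal to the identity plus a low-frequency smoothing operator is cleaner and entirely equivalent (and you record the Lemma \ref{reduction4} route as an alternative). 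The genuine divergence is the cubic remainder $\LR{u\ds w\HT\ds w,w}$: the paper writes it as $-\tfrac12\LR{\ds(u\HT\ds w),w^{2}}=\LR{\ds((uw)\HT\ds w),w}$ and relies on the Hilbert-transform symmetrization of Lemma \ref{reduction}, which moves both derivatives onto the smooth factor $uw$ and needs nothing about $w$ beyond its $L^{2}$ norm. You instead use H\"older and Lemma \ref{G.N.} to get $\|w\|^{3-5/(2s_{0})}\|D^{s_{0}}w\|^{5/(2s_{0})}$ and absorb the surplus $\|w\|^{1-5/(2s_{0})}\|D^{s_{0}}w\|^{5/(2s_{0})}\lesssim\|w\|_{H^{s_{0}}}\lesssim I_{s_{0}}(u,v)$; this is legitimate precisely because $w=u-v$ is itself controlled in $H^{s_{0}}$ by $I_{s_{0}}(u,v)$, and the exponent count closes already for $s_{0}>5/2$, so $s_{0}>7/2$ is sufficient here but not, as you suggest, the exact threshold for this step. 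The paper's route is structural and would survive even without an a priori $H^{s_{0}}$ bound on $w$; yours is more elementary and bypasses the symmetrization lemma entirely. Either way the stated bound $I_{s_{0}}(u,v)^{2}\|w\|^{2}$ follows.
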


\begin{proof}
Set $z=u+v$.
Note that
\EQQS{
&\LR{u\HT w,\HT J\ds(\ds z\ds w)}\\
&=\LR{u\HT w,\HT\{J\ds (\ds z\ds w)+\ds z\HT\ds w\}}-\LR{u\HT w,[\HT,\ds z]\HT\ds w}\\
&\quad-\LR{\ds(u\ds z)\HT w,w}-\LR{\ds(u\HT\ds w),w^{2}}/2-2\LR{u\ds u\HT\ds w,w}.
}
Then, Lemma \ref{comm.est.H} together with Lemma \ref{reduction4} completes the proof.
\end{proof}

\begin{lem}\label{lem4.29.0}
Let $s_{0}>7/2$.
Let $u,v\in H^{s_{0}}(\T)$.
Then
\EQQS{
|\LR{u\HT\ds((\HT\ds u)^{2}-(\HT\ds v)^{2}),\HT Jw}|
\lesssim I_{s_{0}}(u,v)^{2}\|w\|^{2},
}
where $w=u-v$.
\end{lem}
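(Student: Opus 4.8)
The plan is to follow the template of the preceding $L^2$-lemmas, especially Lemmas \ref{lem4.23.0} and \ref{lem4.26.0}: set $z=u+v$, factor the difference, and integrate by parts repeatedly so that every derivative is thrown onto the smooth factors $u$ and $z$, leaving only $\|w\|_{L^2}$. Writing $(\HT\ds u)^2-(\HT\ds v)^2=(\HT\ds z)(\HT\ds w)$, the quantity to estimate is $\LR{u\HT\ds((\HT\ds z)(\HT\ds w)),\HT Jw}$. Note that, in contrast with its neighbours, this lemma subtracts no leading term: the whole point is that every would-be resonant contribution of the type $\LR{u\ds u\,\HT\ds w,w}$ cancels, and I only have to produce the error bound $I_{s_0}(u,v)^2\|w\|^2$.

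The first block of steps eliminates $J$ and the outer derivatives. Since $\HT\ds$ is self-adjoint on $L^2(\T)$, I move it onto $u\HT Jw$ and expand $\HT\ds(u\HT Jw)=\HT(\ds u\,\HT Jw)+\HT(u\,\HT J\ds w)$. By Lemma \ref{freq.est.2} one has $\HT J\ds w=w$ up to an $L^2$-error $\lesssim\|w\|$, and $\|\HT Jw\|\lesssim\|w\|$ by Lemma \ref{H^-1}. Keeping the leading piece $\HT(uw)$ and discarding the remaining terms (each pairs an $L^2$-bounded expression against $(\HT\ds z)(\HT\ds w)$ after one integration by parts that moves the derivative off the $\HT\ds w$ factor, lowering it onto $\HT\ds^2 z$ and onto $\ds u$) reduces the problem, modulo admissible $I_{s_0}(u,v)^2\|w\|^2$ remainders, to the single resonant term $\LR{u\HT((\HT\ds z)(\HT\ds w)),w}$. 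An equivalent route is the Leibniz split $\HT\ds((\HT\ds z)(\HT\ds w))=\HT\ds^2((\HT\ds z)\HT w)-\HT\ds((\HT\ds^2 z)\HT w)$ followed by integration by parts and Lemma \ref{freq.est.2}.

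For the resonant term I use $\LR{u\HT((\HT\ds z)(\HT\ds w)),w}=\LR{\HT((\HT\ds z)(\HT\ds w)),uw}$ and expand $\HT((\HT\ds z)(\HT\ds w))=(\HT\ds z)\HT(\HT\ds w)+[\HT,\HT\ds z]\HT\ds w$; since $\HT^2\ds w=-\ds w$ (the mean-zero corrections on $\T$ are harmless, each being $\lesssim\|w\|$ because $|\hat w(0)|\lesssim\|w\|$), this splits into a symmetric part $-\LR{u(\HT\ds z)\ds w,w}$ and a commutator part $\LR{u[\HT,\HT\ds z]\HT\ds w,w}$. The symmetric part is disposed of by $2w\ds w=\ds(w^2)$ and one integration by parts, throwing the last derivative onto $u\HT\ds z$; as $\ds(u\HT\ds z)\in L^\infty$ for $s_0>5/2$, it is $\lesssim\|u\|_{H^{s_0}}\|z\|_{H^{s_0}}\|w\|^2$, in the spirit of Lemma \ref{reduction} and Lemma \ref{lem4.26.0}.

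The main obstacle is the commutator part $\LR{u[\HT,\HT\ds z]\HT\ds w,w}$, where a full derivative still sits on $w$ inside $\HT\ds w=\ds\HT w$. This is exactly where the first Calder\'on commutator estimate $\|[\HT,a]\ds g\|_{L^2}\lesssim\|\ds a\|_{L^\infty}\|g\|_{L^2}$ is decisive: with $a=\HT\ds z$ and $g=\HT w$ it yields $\|[\HT,\HT\ds z]\HT\ds w\|\lesssim\|\HT\ds^2 z\|_{L^\infty}\|w\|\lesssim\|z\|_{H^{s_0}}\|w\|$, so the whole term is $\lesssim I_{s_0}(u,v)^2\|w\|^2$. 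The gain of one derivative in this commutator is precisely what keeps $z$ at the $H^{s_0}$ level (rather than the $H^{s_0+2}$ that Lemma \ref{comm.est.H} with $s_2=0$ would cost) and makes the estimate close at $\|w\|_{L^2}^2$; the threshold $s_0>7/2$ enters through putting $\ds^2 z,\ds^3 z\in L^\infty$ after the integrations by parts. Collecting the symmetric part, the commutator part, and the remainders from the reduction step gives the claimed bound $\lesssim I_{s_0}(u,v)^2\|w\|^2$.
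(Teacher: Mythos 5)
Your proof is correct, and its skeleton is the same as the paper's: factor the difference as $(\HT\ds z)(\HT\ds w)$, use the smoothing of $J$ together with duality/Leibniz to reduce to a single resonant term carrying one net derivative on $w$, and then dispose of that term by a Hilbert-transform commutator plus a symmetrization $2w\ds w=\ds(w^2)$ landing on $\frac12\LR{\ds(u\HT\ds z),w^2}$. The genuine difference is where you place the commutator: the paper keeps the pairing in the form $\LR{(\HT\ds z)\HT w,\HT(u\HT J\ds^2 w)}$ and commutes $\HT$ past the \emph{outer} factor $u$ (so the needed bound is $\|[\HT,u]\HT J\ds^2w\|\lesssim\|u\|_{H^{s_0}}\|w\|$, which follows from Lemma \ref{comm.est.H} and Lemma \ref{freq.est.2}), whereas you commute $\HT$ past the \emph{inner} factor $\HT\ds z$ and invoke the classical first Calder\'on commutator estimate $\|[\HT,a]\ds g\|\lesssim\|\ds a\|_{L^\infty}\|g\|$, which the paper never states. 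That appeal is legitimate but unnecessary: Lemma \ref{comm.est.H} applied with a \emph{small} auxiliary index ($k=1$, $s_1=1$, $s_2=0$, auxiliary regularity $1/2+\e$) already gives $\|[\HT,\HT\ds z]\ds(\HT w)\|\lesssim\|z\|_{H^{5/2+\e}}\|w\|\lesssim\|z\|_{H^{s_0}}\|w\|$, so your parenthetical worry that this lemma ``would cost $H^{s_0+2}$ on $z$'' rests on a misreading — the $s_0$ in Lemma \ref{comm.est.H} is a free parameter $>1/2$, not the $s_0>7/2$ of the theorem. Apart from that cosmetic point, both routes close at the same regularity and with the same error structure; yours is marginally more self-contained at the resonant step (the cancellation $\HT^2\ds w=-\ds w$ is made explicit), while the paper's keeps every tool internal to its own lemma list.
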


\begin{proof}
Set $z=u+v$ and set
\EQQS{
&\LR{u\HT\ds((\HT\ds z)\HT\ds w),\HT Jw}\\
&=\LR{u\HT\ds^{2}((\HT\ds z)\HT w),\HT Jw}-\LR{u\HT\ds((\HT\ds^{2}z)\HT w),\HT Jw}
=:A+B.
}
It is clear that $|B|\lesssim I_{s_{0}}(u,v)^{2}\|w\|^{2}$.
Observe that
\EQQS{
A
&=-\LR{(\HT\ds z)\HT w,\HT\ds^{2}(u\HT Jw)}\\
&=-\LR{(\HT\ds z)\HT w,\HT(\ds^{2}u\HT Jw)}-2\LR{(\HT\ds z)\HT w,\HT(\ds u\HT J\ds w)}\\
&\quad-\LR{(\HT\ds z)\HT w,\HT(u\HT J\ds^{2}w)}=:A_{1}+A_{2}+A_{3}.
}
Again, $|A_{1}|+|A_{2}|\lesssim I_{s_{0}}(u,v)^{2}\|w\|^{2}$.
And we note that
\EQQS{
A_{3}
&=-\LR{(\HT\ds z)\HT w,[\HT,u]\HT J\ds^{2}w}+\LR{(\HT\ds z)\HT w,uJ\ds^{2}w}\\
&=-\LR{(\HT\ds z)\HT w,[\HT,u]\HT J\ds^{2}w}+\LR{u(\HT\ds z)\HT w,J\ds^{2}w+\HT\ds w}\\
&\quad+\frac{1}{2}\LR{\ds(u\HT\ds z),(\HT w)^{2}},
}
which completes the proof.
\end{proof}

\begin{lem}\label{lem4.30.0}
Let $s_{0}>7/2$.
Let $u,v\in H^{s_{0}}(\T)$.
Then
\EQQS{
|\LR{u\HT w,\HT J\ds((\HT\ds u)^{2}-(\HT\ds v)^{2})}|
\lesssim I_{s_{0}}(u,v)^{2}\|w\|^{2},
}
where $w=u-v$.
\end{lem}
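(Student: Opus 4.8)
The plan is to follow the template of the preceding $L^{2}$ lemmas, most closely Lemma \ref{lem4.28.0}, whose only difference from the present statement is that $(\ds u)^{2}-(\ds v)^{2}$ is replaced by $(\HT\ds u)^{2}-(\HT\ds v)^{2}$. Accordingly I would first set $z=u+v$ so that $(\HT\ds u)^{2}-(\HT\ds v)^{2}=(\HT\ds z)(\HT\ds w)$, reducing the quantity to $\LR{u\HT w,\HT J\ds((\HT\ds z)\HT\ds w)}$. The two operating principles are: remove the smoothing operator $\HT J\ds$ from the product by paying only an $L^{2}$-error, and then remove the surviving derivative on $w$ via integration by parts.

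For the first step I would split
\EQQS{
\LR{u\HT w,\HT J\ds((\HT\ds z)\HT\ds w)}
&=\LR{u\HT w,(\HT\ds z)\HT\ds w}\\
&\quad+\LR{u\HT w,\HT J\ds((\HT\ds z)\HT\ds w)-(\HT\ds z)\HT\ds w}.
}
Corollary \ref{reduction4.H}, applied with $z$ playing the role of the $H^{s_{0}}$-factor and $w$ the role of the $L^{2}$-factor, controls the last pairing by $\|u\|_{\I}\|w\|\cdot\|z\|_{H^{s_{0}}}\|w\|\lesssim I_{s_{0}}(u,v)^{2}\|w\|^{2}$, using that $\HT$ is bounded on $L^{2}$ and $\|u\|_{\I}\lesssim\|u\|_{H^{s_{0}}}$ since $s_{0}>7/2$.

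For the remaining main term the key observation is that the dangerous factor $\HT\ds w$ pairs with $\HT w$ into an exact derivative, $(\HT w)(\HT\ds w)=\frac{1}{2}\ds((\HT w)^{2})$, so that
\EQQS{
\LR{u\HT w,(\HT\ds z)\HT\ds w}
=\frac{1}{2}\LR{u\HT\ds z,\ds((\HT w)^{2})}
=-\frac{1}{2}\LR{\ds(u\HT\ds z),(\HT w)^{2}}.
}
Since $s_{0}>5/2$ gives $\|\ds(u\HT\ds z)\|_{\I}\lesssim\|u\|_{H^{s_{0}}}\|z\|_{H^{s_{0}}}$ by the Sobolev embedding (both $\ds u$, $\HT\ds z$, and $\HT\ds^{2}z$ land in $L^{\I}$), and $\|\HT w\|^{2}\le\|w\|^{2}$, this term is bounded by $\|u\|_{H^{s_{0}}}\|z\|_{H^{s_{0}}}\|w\|^{2}\lesssim I_{s_{0}}(u,v)^{2}\|w\|^{2}$, completing the estimate.

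The only genuine subtlety is the single derivative on $w$ hidden inside $\HT\ds w$: treated naively it would cost a factor $\|w\|_{H^{1}}$ instead of $\|w\|$. Both the replacement step, via the $J$-smoothing estimate of Corollary \ref{reduction4.H}, and the integration-by-parts identity are designed precisely to shift that derivative onto the smooth factors $u,z\in H^{s_{0}}$, so that no derivative ever falls on $w$ outside the $L^{2}$-bounded operators $\HT$ and $\HT J\ds$. Everything else is a routine application of the Hölder inequality and Sobolev embedding.
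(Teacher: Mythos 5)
Your proposal is correct and follows exactly the paper's own argument: the same decomposition into the $\HT J\ds$-commutator error (handled by Corollary \ref{reduction4.H}) plus the main term, which is rewritten as $-\tfrac{1}{2}\LR{\ds(u\HT\ds z),(\HT w)^{2}}$ by integration by parts. The paper merely states this identity and invokes the corollary, while you spell out the routine H\"older and Sobolev estimates; there is no substantive difference.
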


\begin{proof}
Set $z=u+v$.
Note that
\EQQS{
&\LR{u\HT w,\HT J\ds((\HT\ds z)\HT\ds w)}\\
&=\LR{u\HT w,\HT J\ds((\HT\ds z)\HT\ds w)-(\HT\ds z)\HT\ds w}-\frac{1}{2}\LR{\ds(u\HT\ds z),(\HT w)^{2}}.
}
Corollary \ref{reduction4.H} completes the proof.
\end{proof}

\begin{lem}\label{lem4.31.0}
Let $s_{0}>7/2$.
Let $u,v\in H^{s_{0}}(\T)$.
Then
\EQQS{
|\LR{u\ds(u\HT\ds^{2}u-v\HT\ds^{2}v),\HT Jw}-3\LR{u\ds u\HT\ds w,w}|
\lesssim I_{s_{0}}(u,v)^{2}\|w\|^{2},
}
where $w=u-v$.
\end{lem}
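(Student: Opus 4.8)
The plan is to mirror the proof of Lemma \ref{lem4.23.0} almost verbatim, exploiting that $\HT$ and $\ds$ commute as Fourier multipliers. First I would write $u\HT\ds^{2}u-v\HT\ds^{2}v=u\HT\ds^{2}w+w\HT\ds^{2}v$ and split the pairing as $A:=\LR{u\ds(u\HT\ds^{2}w),\HT Jw}$ and $B:=\LR{u\ds(w\HT\ds^{2}v),\HT Jw}$. The term $B$ carries its top-order derivatives on the smooth factor $v$: expanding $\ds(w\HT\ds^{2}v)=\ds w\,\HT\ds^{2}v+w\,\HT\ds^{3}v$ and integrating by parts to remove the derivative on $w$ from the first summand, one gets $|B|\lesssim\|u\|_{H^{s_{0}}}\|v\|_{H^{s_{0}}}\|w\|^{2}$, using $\|\HT\ds^{3}v\|_{\I}\lesssim\|v\|_{H^{s_{0}}}$ (valid since $s_{0}>7/2$), $\|\HT Jw\|\lesssim\|w\|$ and $\|\ds\HT Jw\|=\|\HT J\ds w\|\lesssim\|w\|$ (Lemmas \ref{H^-1} and \ref{freq.est.2}).

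For the main term $A$ I would commute $\HT\ds^{2}w=\ds^{2}\HT w$ and abbreviate $\tilde{w}:=\HT w$ (so $\|\tilde{w}\|\le\|w\|$), which puts $A$ in exactly the shape handled in Lemma \ref{lem4.23.0}. Using the Leibniz reorganization $\ds(u\ds^{2}\tilde{w})=\ds^{3}(u\tilde{w})-2\ds^{2}(\ds u\,\tilde{w})+\ds(\ds^{2}u\,\tilde{w})$, which keeps $\tilde{w}$ undifferentiated, I split $A=A_{1}+A_{2}+A_{3}$. The piece $A_{3}$ is lower order and is bounded directly. For $A_{1}=\LR{u\ds^{3}(u\tilde{w}),\HT Jw}$ and $A_{2}=-2\LR{u\ds^{2}(\ds u\,\tilde{w}),\HT Jw}$ I would move the multiplier $u$ and all the derivatives onto the smooth factor by duality and integration by parts, e.g. $A_{1}=-\LR{u\tilde{w},\ds^{3}(u\HT Jw)}$, then expand by Leibniz. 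In the resulting sums the only nonnegligible contributions are the terms in which every derivative falls on $\HT Jw$; there I replace $\ds^{2}\HT Jw=\HT J\ds^{2}w$ by $\ds w$ and $\ds^{3}\HT Jw=\HT J\ds^{3}w$ by $\ds^{2}w$, each up to an $L^{2}$-error controlled by Lemma \ref{freq.est.2}.

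The bookkeeping then extracts the main term exactly as in Lemma \ref{lem4.23.0}: $A_{1}$ contributes $\LR{u\ds u\HT\ds w,w}$ and $A_{2}$ contributes $2\LR{u\ds u\HT\ds w,w}$, for a total of $3\LR{u\ds u\HT\ds w,w}$. Two algebraic identities do the rearranging: integration by parts turns $-\LR{u\ds u\,\HT w,\ds w}$ into $\LR{u\ds u\HT\ds w,w}$ plus a harmless remainder (since $\ds(u\ds u)\in L^{\I}$), and $\LR{u^{2}\HT w,\ds^{2}w}$ reduces, again by parts, to $2\LR{u\ds u\HT\ds w,w}$ plus remainders. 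Collecting $|A-3\LR{u\ds u\HT\ds w,w}|$ together with $|B|$ yields the claim.

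I expect the main obstacle to be the borderline remainders of the form $\int_{\T}u^{2}(\HT\ds w)(\ds w)\,dx$ that survive this reduction: at face value they carry one full derivative too many on $w$ to be absorbed into $\|w\|^{2}$. The resolution is that such a pairing is a Calder\'on-type commutator; rewriting it as $-\LR{\ds(u^{2}\HT\ds w),w}$, it is controlled by Lemma \ref{reduction} applied with its internal exponent equal to $s_{0}-2>1/2$, which gives $\lesssim\|u^{2}\|_{H^{s_{0}}}\|w\|^{2}\lesssim\|u\|_{H^{s_{0}}}^{2}\|w\|^{2}$. The remaining commutators $[\HT,\ds u]\ds w$ and $[\HT,u]\ds^{2}w$ produced after the frequency substitutions are handled by Lemma \ref{comm.est.H} combined with Lemma \ref{freq.est.2}; note in particular that everything stays within the regularity $u,v\in H^{s_{0}}$, $s_{0}>7/2$, assumed in the statement.
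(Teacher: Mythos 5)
Your proposal follows essentially the same route as the paper's proof: the same splitting into $A=\LR{u\ds(u\HT\ds^{2}w),\HT Jw}$ and $B=\LR{u\ds(w\HT\ds^{2}v),\HT Jw}$, the same Leibniz reorganization $\ds(u\ds^{2}\HT w)=\ds^{3}(u\HT w)-2\ds^{2}(\ds u\HT w)+\ds(\ds^{2}u\HT w)$, the replacement of $\HT J\ds^{k+1}$ by $\ds^{k}$ via Lemma \ref{freq.est.2}, and the same extraction of the coefficients ($+3$ and $-2$ from $A_{1}$, $+2$ from $A_{2}$), with the borderline term $\LR{\ds(u^{2}\HT\ds w),w}$ handled by Lemma \ref{reduction} exactly as in the paper. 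The argument is correct as outlined.
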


\begin{proof}
First we set
\EQQS{
A:=\LR{u\ds(u\HT\ds^{2}w),\HT Jw},\quad B:=\LR{u\ds(w\HT\ds^{2}v),\HT Jw}.
}
It is clear that $|B|\lesssim I_{s_{0}}(u,v)^{2}\|w\|^{2}$.
We also set
\EQQS{
A
&=\LR{u\ds^{3}(u\HT w),\HT Jw}-2\LR{u\ds^{2}(\ds u\HT w),\HT Jw}+\LR{u\ds(\ds^{2}u\HT w),\HT Jw}\\
&=:A_{1}+A_{2}+A_{3}.
}
Again, it is clear that $|A_{3}|\lesssim I_{s_{0}}(u,v)^{2}\|w\|^{2}$.
Note that
\EQQS{
A_{1}
&=-\LR{u\HT w,\ds^{3}(u\HT Jw)}\\
&=-\LR{u\HT w,\ds^{3}u\HT Jw}-3\LR{u\HT w,\ds^{2}u\HT J\ds w}-3\LR{u\HT w,\ds u\HT J\ds^{2}w}\\
&\quad-\LR{u^{2}\HT w,\HT J\ds^{3}w}=:A_{11}+A_{12}+A_{13}+A_{14}.
}
It is easy to see that $|A_{11}|+|A_{12}|\lesssim I_{s_{0}}(u,v)^{2}\|w\|^{2}$.
We have
\EQQS{
A_{13}
=-3\LR{u\ds u\HT w,(\HT J\ds^{2}-\ds)w}+3\LR{\ds(u\ds u)\HT w,w}+3\LR{u\ds u\HT\ds w,w}
}
and
\EQQS{
A_{14}
&=-\LR{u^{2}\HT w,(\HT J\ds^{3}-\ds^{2})w}-\LR{\ds^{2}(u^{2})\HT w,w}-\LR{\ds(u^{2}\HT\ds w),w}\\
&\quad-2\LR{u\ds u\HT\ds w,w}.
}
Similarly, we have
\EQQS{
A_{2}
&=-2\LR{\ds u\HT w,\ds^{2}u\HT Jw+2\ds u\HT J\ds w}-2\LR{u\ds u\HT w,\HT J\ds^{2}w}\\
&=-2\LR{\ds u\HT w,\ds^{2}u\HT Jw+2\ds u\HT J\ds w}-2\LR{u\ds u\HT w,(\HT J\ds^{2}-\ds)w}\\
&\quad+2\LR{\ds(u\ds u)\HT w,w}+2\LR{u\ds u\HT\ds w,w}.
}
Therefore, we see that
\EQQS{
&|A+B-3\LR{u\ds u\HT\ds w,w}|\\
&\le|A_{11}|+|A_{12}|+|A_{13}-3\LR{u\ds u\HT\ds w,w}|+|A_{14}+2\LR{u\ds u\HT\ds w,w}|\\
&\quad+|A_{2}-2\LR{u\ds u\HT\ds w,w}|+|A_{3}|+|B|\lesssim I_{s_{0}}(u,v)^{2}\|w\|^{2},
}
which completes the proof.
\end{proof}

\begin{lem}\label{lem4.32.0}
Let $s_{0}>7/2$.
Let $u,v\in H^{s_{0}}(\T)$.
Then
\EQQS{
|\LR{u\HT w,J\ds(u\HT\ds^{2}u-v\HT\ds^{2}v)}-2\LR{u\ds u\HT\ds w,w}|
\lesssim I_{s_{0}}(u,v)^{2}\|w\|^{2},
}
where $w=u-v$.
\end{lem}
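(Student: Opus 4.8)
The plan is to follow the pattern of Lemmas \ref{lem4.25.0} and \ref{lem4.31.0}. First I would use $w=u-v$ to write $u\HT\ds^{2}u-v\HT\ds^{2}v=u\HT\ds^{2}w+w\HT\ds^{2}v$, so that
\[\LR{u\HT w,J\ds(u\HT\ds^{2}u-v\HT\ds^{2}v)}=A+B,\quad A:=\LR{u\HT w,J\ds(u\HT\ds^{2}w)},\quad B:=\LR{u\HT w,J\ds(w\HT\ds^{2}v)}.\]
The term $B$ is purely lower order: the Fourier multiplier of $J\ds$ equals $i\sgn(\xi)\psi(\xi)$ and is bounded, so $J\ds$ is bounded on $L^{2}$, whence $\|J\ds(w\HT\ds^{2}v)\|\lesssim\|w\HT\ds^{2}v\|\lesssim\|w\|\|v\|_{H^{s_{0}}}$ (using $H^{s_{0}-2}\hookrightarrow L^{\I}$ since $s_{0}>7/2$), and $\|u\HT w\|\lesssim\|u\|_{H^{s_{0}}}\|w\|$; hence $|B|\lesssim I_{s_{0}}(u,v)^{2}\|w\|^{2}$.

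The heart of the matter is $A$. The first step is to trade the operator $J\ds$ for a differential operator by applying Lemma \ref{reduction3} with its second argument equal to $\HT w\in L^{2}$. Since $\ds^{2}(\HT w)=\HT\ds^{2}w$ and $\HT\HT\ds^{2}w=-\ds^{2}w$ (the zero mode being killed by $\ds^{2}$), Lemma \ref{reduction3} gives
\[\|J\ds(u\HT\ds^{2}w)-u\ds^{2}w\|\lesssim\|u\|_{H^{s_{0}}}\|w\|.\]
Pairing the remainder against $u\HT w$ costs at most $\|u\|_{H^{s_{0}}}^{2}\|w\|^{2}$, so it remains to analyze $\LR{u\HT w,u\ds^{2}w}=\LR{u^{2}\HT w,\ds^{2}w}$.

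To extract the singular term I would integrate by parts twice. Writing $\LR{u^{2}\HT w,\ds^{2}w}=-\LR{\ds(u^{2})\HT w,\ds w}-\LR{u^{2}\HT\ds w,\ds w}$, a further integration by parts converts the first summand into $\LR{\ds^{2}(u^{2})\HT w,w}+2\LR{u\ds u\HT\ds w,w}$; here the first piece carries no derivative on $w$ and is $\lesssim\|u\|_{H^{s_{0}}}^{2}\|w\|^{2}$, while the second is exactly the claimed term $2\LR{u\ds u\HT\ds w,w}$. For the second summand, integration by parts gives $-\LR{u^{2}\HT\ds w,\ds w}=\LR{\ds(u^{2}\HT\ds w),w}$, which is precisely of the form controlled by Lemma \ref{reduction}; applying that lemma with regularity index $s_{0}-2>1/2$ together with Lemma \ref{Leibniz} bounds it by $\|u^{2}\|_{H^{s_{0}}}\|w\|^{2}\lesssim\|u\|_{H^{s_{0}}}^{2}\|w\|^{2}$. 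Collecting everything yields $A=2\LR{u\ds u\HT\ds w,w}+O(I_{s_{0}}(u,v)^{2}\|w\|^{2})$, which with the bound on $B$ is the assertion.

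The main obstacle is the summand $-\LR{u^{2}\HT\ds w,\ds w}$, which carries two derivatives on $w$ and cannot be estimated directly; the resolution is that after one integration by parts it has exactly the structure $\LR{\ds(v\HT\ds u),u}$ of Lemma \ref{reduction}, where the skew-symmetry of $\HT$ provides the hidden smoothing, and that one must invoke Lemma \ref{reduction} with the low index $s_{0}-2$ so that only $u\in H^{s_{0}}$ (rather than $H^{s_{0}+1}$) is required. A secondary technical point is the passage from $J\ds$ to $\ds^{2}$ via Lemma \ref{reduction3}, which relies on the identity $\HT\HT\ds^{2}w=-\ds^{2}w$.
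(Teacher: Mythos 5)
Your proposal is correct and follows essentially the same route as the paper: the same splitting into $A$ and $B$, the same use of Lemma \ref{reduction3} (with second argument $\HT w$ and the identity $\HT\HT\ds^{2}w=-\ds^{2}w$) to replace $J\ds(u\HT\ds^{2}w)$ by $u\ds^{2}w$, and the same double integration by parts producing $\LR{\ds^{2}(u^{2})\HT w,w}+\LR{\ds(u^{2}\HT\ds w),w}+2\LR{u\ds u\HT\ds w,w}$, with the middle term handled by Lemma \ref{reduction}. Your write-up is in fact more explicit than the paper's (which leaves the estimates of $B$ and of the remainder terms implicit), and it correctly reads $w\HT\ds^{2}v$ in the definition of $B$ where the paper's displayed formula has a typo.
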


\begin{proof}
First we set $A=\LR{u\HT w,J\ds(u\HT\ds^{2}w)}$ and $\quad B:=\LR{u\HT w,J\ds(w\ds^{2}v)}$.
It is clear that $|B|\lesssim I(u,v)^{2}\|w\|^{2}$.
On the other hand, we have
\EQQS{
A
&=\LR{u\HT w,J\ds(u\HT\ds^{2}w)-u\ds^{2}w}+\LR{\ds^{2}(u^{2})\HT w,w}+\LR{\ds(u^{2}\HT\ds w),w}\\
&\quad+2\LR{u\ds u\HT\ds w,w}.
}
Then, Lemma \ref{reduction3} completes the proof.
\end{proof}

\begin{lem}\label{lem4.34.0}
Let $s_{0}>7/2$.
Let $u,v\in H^{s_{0}}(\T)$.
Then
\EQQS{
|\LR{(\HT\ds u)Jw,J\ds(u\ds^{2}u-v\ds^{2}v)}|
\lesssim I_{s_{0}}(u,v)^{2}\|w\|^{2},
}
where $w=u-v$.
\end{lem}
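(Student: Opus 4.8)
The plan is to follow the same scheme as in Lemma~\ref{lem4.32.0}. First I would write $u\ds^{2}u-v\ds^{2}v=u\ds^{2}w+w\ds^{2}v$ with $w=u-v$ and split the pairing as
\[
\LR{(\HT\ds u)Jw,J\ds(u\ds^{2}u-v\ds^{2}v)}=\LR{(\HT\ds u)Jw,J\ds(u\ds^{2}w)}+\LR{(\HT\ds u)Jw,J\ds(w\ds^{2}v)}=:A+B.
\]
The term $B$ is harmless: since $J\ds$ is a Fourier multiplier with bounded symbol $i\,\sgn(\xi)\psi(\xi)$, it is bounded on $L^{2}$, so $\|J\ds(w\ds^{2}v)\|\lesssim\|w\ds^{2}v\|\le\|\ds^{2}v\|_{\I}\|w\|\lesssim\|v\|_{H^{s_{0}}}\|w\|$ by the embedding $H^{s_{0}-2}\hookrightarrow L^{\I}$ (valid since $s_{0}-2>1/2$). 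Combined with $\|(\HT\ds u)Jw\|\le\|\HT\ds u\|_{\I}\|Jw\|\lesssim\|u\|_{H^{s_{0}}}\|w\|$ (using Lemma~\ref{H^-1} together with $H^{s_{0}-1}\hookrightarrow L^{\I}$), the Cauchy--Schwarz inequality gives $|B|\lesssim I_{s_{0}}(u,v)^{2}\|w\|^{2}$.

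For the main term $A$ the difficulty is that $u\ds^{2}w$ carries two derivatives on $w$ while $J$ recovers only one, so one net derivative must still be removed. To expose this structure I would first apply Lemma~\ref{reduction3} with its second argument equal to $w$, replacing $J\ds(u\ds^{2}w)$ by $-u\HT\ds^{2}w$ up to an $L^{2}$ error of size $\|u\|_{H^{s_{0}}}\|w\|$; paired against $(\HT\ds u)Jw$ this error costs only $\|u\|_{H^{s_{0}}}^{2}\|w\|^{2}$. Thus matters reduce to estimating
\[
A_{0}:=-\LR{u(\HT\ds u)Jw,\HT\ds^{2}w}.
\]

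Setting $g:=u\HT\ds u\in H^{s_{0}-1}$ and using that $\HT$ is skew-adjoint, I would integrate by parts twice (legitimate since $w\in H^{s_{0}}$ with $s_{0}>7/2$ is classical) to move $\ds^{2}$ off $w$:
\[
A_{0}=\LR{\HT(gJw),\ds^{2}w}=\LR{\HT\ds^{2}(gJw),w}=\LR{\HT((\ds^{2}g)Jw),w}+2\LR{\HT((\ds g)\ds Jw),w}+\LR{\HT(g\,\ds^{2}Jw),w}.
\]
Because $Jw$ and $\ds Jw$ lie in $L^{2}$ with norm $\lesssim\|w\|$ and $\ds g,\ds^{2}g\in L^{\I}$ (here $s_{0}-3>1/2$ is precisely what is needed, which is where the hypothesis $s_{0}>7/2$ enters), the first two terms are $\lesssim\|u\|_{H^{s_{0}}}^{2}\|w\|^{2}$. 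The last term is the genuine obstacle, since the multiplier of $\ds^{2}J$ is unbounded. I would resolve it by the symbol identity $\ds^{2}Jw=-\HT\ds w+R w$ with $\|Rw\|\lesssim\|w\|$ (an elementary computation in the spirit of Lemma~\ref{freq.est.2}, as $\ds^{2}J+\HT\ds$ has symbol $|\xi|(1-\psi(\xi))$ supported in $\{|\xi|\le2\}$): the $Rw$ contribution gives $\lesssim\|u\|_{H^{s_{0}}}^{2}\|w\|^{2}$, while the main part, after using skew-adjointness of $\HT$ and $\HT\ds=\ds\HT$, becomes $\LR{g\,\ds\HT w,\HT w}=-\tfrac12\LR{\ds g,(\HT w)^{2}}$, bounded by $\|\ds g\|_{\I}\|w\|^{2}\lesssim\|u\|_{H^{s_{0}}}^{2}\|w\|^{2}$. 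Collecting the bounds for $A$ and $B$ yields the claim. The main obstacle is thus the single surviving derivative hidden in $A_{0}$, which is removed only through the skew-symmetry of $\HT$ together with the low-frequency reduction of $\ds^{2}J$.
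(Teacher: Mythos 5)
Your proof is correct and follows essentially the same route as the paper: split off the harmless term $\LR{(\HT\ds u)Jw,J\ds(w\ds^{2}v)}$, use Lemma \ref{reduction3} to trade $J\ds(u\ds^{2}w)$ for $-u\HT\ds^{2}w$, integrate by parts via the Leibniz rule on $\ds^{2}(u(\HT\ds u)Jw)$, and kill the surviving derivative with the boundedness of $J\ds^{2}+\HT\ds$ (Lemma \ref{freq.est.2}) plus the antisymmetry identity $\LR{g\HT\ds w,\HT w}=-\tfrac12\LR{\ds g,(\HT w)^{2}}$. All signs, regularity counts ($s_{0}>7/2$ for $\ds^{2}(u\HT\ds u)\in L^{\infty}$), and the low-frequency symbol computation check out.
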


\begin{proof}
First we set $A=\LR{(\HT\ds u)Jw,J\ds(u\ds^{2}w)}$ and $B=\LR{(\HT\ds u)Jw,J\ds(w\ds^{2}v)}.$
It is easy to see that $|B|\lesssim I_{s_{0}}(u,v)^{2}\|w\|^{2}$.
We also set $A'=\LR{(\HT\ds u)Jw,u\HT\ds^{2}w}$.
Lemma \ref{reduction3} shows that $|A+A'|\lesssim I_{s_{0}}(u,v)^{2}\|w\|^{2}$.
So we consider $A'$.
Note that
\EQQS{
A'
&=\LR{\ds^{2}(u\HT\ds u)Jw,\HT w}+2\LR{\ds(u\HT\ds u)J\ds w,\HT w}+\LR{u(\HT\ds u)J\ds^{2}w,\HT w}\\
&=:A_{1}'+A_{2}'+A_{3}'.
}
It is clear that $|A_{1}'|+|A_{2}'|\lesssim I_{s_{0}}(u,v)^{2}\|w\|^{2}$.
Lemma \ref{freq.est.2} shows that
\EQQS{
A_{3}'=\LR{u(\HT\ds u)(J\ds^{2}+\HT\ds)w,\HT w}+\frac{1}{2}\LR{\ds(u\HT\ds u),(\HT u)^{2}},
}
which completes the proof.
\end{proof}


\begin{lem}\label{lem4.35.0}
Let $s_{0}>7/2$.
Let $u,v\in H^{s_{0}}(\T)$.
Then
\EQQS{
|\LR{(\HT\ds u)Jw,J\ds((\ds u)^{2}-(\ds v)^{2})}|
\lesssim I_{s_{0}}(u,v)^{2}\|w\|^{2},
}
where $w=u-v$.
\end{lem}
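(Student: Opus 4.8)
The plan is to mirror the $H^{s}$ computation of Lemma~\ref{lem4.35}, replacing the operator $D^{s-1}$ by $J$ and the commutator estimate of Lemma~\ref{comm.est.3} by its $L^{2}$ analogue Lemma~\ref{reduction4}. First I would set $z=u+v$ and use the factorisation $(\ds u)^{2}-(\ds v)^{2}=\ds z\,\ds w$, so that the quantity to bound becomes $\LR{(\HT\ds u)Jw,J\ds(\ds z\,\ds w)}$. Applying Lemma~\ref{reduction4} with the smooth factor $z\in H^{s_{0}}$ and the rough factor $w\in L^{2}$ gives $J\ds(\ds z\,\ds w)=-\ds z\,\HT\ds w+R$ with $\|R\|\lesssim\|z\|_{H^{s_{0}}}\|w\|$. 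Since $\|(\HT\ds u)Jw\|\le\|\HT\ds u\|_{\I}\|Jw\|\lesssim\|u\|_{H^{s_{0}}}\|w\|$ by Sobolev embedding ($s_{0}>3/2$) together with Lemma~\ref{H^-1}, the contribution of $R$ is at once $\lesssim I_{s_{0}}(u,v)^{2}\|w\|^{2}$, and up to sign I am left with the single main term $A':=\LR{(\HT\ds u)(\ds z)Jw,\HT\ds w}$.

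The key step is to dispose of the derivative that still lands on $w$ in the factor $\HT\ds w$. Using the skew-adjointness of $\HT$ on $L^{2}(\T)$ and one integration by parts I would rewrite
\EQQS{
A'=-\LR{\HT[(\HT\ds u)(\ds z)Jw],\ds w}=\LR{\HT\ds[(\HT\ds u)(\ds z)Jw],w},
}
where I have used that $\ds$ and $\HT$ commute. Expanding the derivative by the Leibniz rule produces exactly three terms, and moving $\HT$ back across the pairing (again by skew-adjointness) turns them into
\EQQS{
A'=-\LR{(\HT\ds^{2}u)(\ds z)Jw,\HT w}-\LR{(\HT\ds u)(\ds^{2}z)Jw,\HT w}-\LR{(\HT\ds u)(\ds z)\ds Jw,\HT w}.
}

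Each of these three pairings is now harmless, and this is where the argument closes. In the first two, every occurrence of $w$ is either undifferentiated or appears through $Jw$, which satisfies $\|Jw\|\le2\|w\|_{H^{-1}}\le2\|w\|$ by Lemma~\ref{H^-1}; the coefficients $\HT\ds^{2}u$, $\ds z$ and $\ds^{2}z$ are bounded in $L^{\I}$ by $\|u\|_{H^{s_{0}}}$ and $\|z\|_{H^{s_{0}}}$ because $s_{0}>5/2$, so both terms are $\lesssim I_{s_{0}}(u,v)^{2}\|w\|^{2}$. In the third term the only delicate point is the factor $\ds Jw$, but its Fourier multiplier $i\sgn(\xi)\psi(\xi)$ is bounded, whence $\|\ds Jw\|\le\|w\|$, and the same $L^{\I}$ estimates on the coefficients finish the bound. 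The main obstacle is thus entirely concentrated in the single integration by parts above: before it the term $-\ds z\,\HT\ds w$ carries a full derivative on $w$ that is not controlled by $\|w\|$, and transferring that derivative onto the smooth factors $u$ and $z$ is precisely what makes every remaining factor involving $w$ controllable by $\|w\|$, possibly after replacing $w$ by $Jw$ or $\ds Jw$. Collecting the four estimates completes the proof.
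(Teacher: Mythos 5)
Your proof is correct and rests on the same mechanism as the paper's: after writing $(\ds u)^{2}-(\ds v)^{2}=\ds z\,\ds w$, the derivative still sitting on $w$ is transferred onto the smooth coefficients by one integration by parts, with the one-derivative gain of $J$ (so that $Jw$ and $\ds Jw$ are controlled by $\|w\|$) making every resulting term harmless. The only cosmetic difference is that you first invoke Lemma~\ref{reduction4} to replace $J\ds(\ds z\,\ds w)$ by $-\ds z\,\HT\ds w$ plus a bounded remainder and then expand by Leibniz, whereas the paper performs the whole reduction in a single duality identity $\LR{(\HT\ds u)Jw,J\ds(\ds z\ds w)}=\LR{\ds(\ds z\,J\ds((\HT\ds u)Jw)),w}$.
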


\begin{proof}
Set $z=u+v$.
Note that
\EQQS{
\LR{(\HT\ds u)Jw,J\ds((\ds u)^{2}-(\ds v)^{2})}
=-\LR{\ds(\ds zJ\ds((\HT\ds u)Jw)),w},
}
which shows the desired inequality.
\end{proof}


\begin{lem}\label{lem4.36.0}
Let $s_{0}>7/2$.
Let $u,v\in H^{s+2}(\T)$.
Then
\EQQS{
|\LR{(\HT\ds u)Jw,J\ds((\HT\ds u)^{2}-(\HT\ds v)^{2})}|
\lesssim I_{s_{0}}(u,v)^{2}\|w\|^{2},
}
where $w=u-v$.
\end{lem}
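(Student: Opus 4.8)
The plan is to mimic the integration-by-parts reduction used in Lemma \ref{lem4.35.0}, inserting the extra Hilbert transforms that arise from the factor $(\HT\ds u)^{2}-(\HT\ds v)^{2}$. First I set $z=u+v$, so that $(\HT\ds u)^{2}-(\HT\ds v)^{2}=(\HT\ds z)(\HT\ds w)$, and the quantity to be estimated becomes $\LR{(\HT\ds u)Jw,J\ds((\HT\ds z)(\HT\ds w))}$. The goal is to move every derivative off the factor $w$ by duality.

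Since $J$ is self-adjoint and $\ds,\HT$ are both skew-adjoint on real $L^{2}(\T)$, repeated integration by parts yields an identity of the form
\EQQS{
\LR{(\HT\ds u)Jw,J\ds((\HT\ds z)(\HT\ds w))}
=-\LR{\HT\ds((\HT\ds z)J\ds((\HT\ds u)Jw)),w},
}
where I also use $\ds J=J\ds$. By the Cauchy-Schwarz inequality and the $L^{2}$-boundedness of $\HT$, it then suffices to bound $\|\ds((\HT\ds z)J\ds((\HT\ds u)Jw))\|$ by $I_{s_{0}}(u,v)^{2}\|w\|$. Expanding the outer derivative by the Leibniz rule produces two groups of terms. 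The terms in which $\ds$ falls on $\HT\ds z$ are harmless: $J\ds$ is a bounded multiplier of order $0$, so $\|J\ds((\HT\ds u)Jw)\|\lesssim\|(\HT\ds u)Jw\|\lesssim\|u\|_{H^{s_{0}}}\|Jw\|$, and Lemma \ref{H^-1} gives $\|Jw\|\le2\|w\|_{H^{-1}}\le2\|w\|$; together with $\|\HT\ds^{2}z\|_{\I}\lesssim\|z\|_{H^{s_{0}}}$ (valid since $s_{0}>5/2$) this is of the required size.

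The delicate term is $(\HT\ds z)J\ds^{2}((\HT\ds u)Jw)$. After the Leibniz rule its worst contribution is $(\HT\ds u)J\ds^{2}Jw$, which naively loses one derivative on $w$ since $\ds^{2}Jw$ is of order $+1$. Here the two copies of $J$—the outer one and the one hidden in $Jw$—save the estimate: the multiplier $J\ds^{2}J$ has symbol $-\psi(\xi)^{2}$ and is therefore bounded on $L^{2}$, which is exactly the cancellation recorded in Lemma \ref{freq.est.2}. The remaining contributions, namely $(\HT\ds^{3}u)Jw$ and $(\HT\ds^{2}u)\ds Jw$ together with the commutator $[J,\HT\ds u]\ds^{2}Jw$, are of strictly lower order and are controlled by Lemma \ref{comm.est.H} and Lemma \ref{H^-1}, where I use $s_{0}>7/2$ to place $\HT\ds^{j}u\in L^{\I}$ for $j\le3$.

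The main obstacle is precisely this apparent derivative loss in the second-derivative term $(\HT\ds u)\ds^{2}Jw$: unlike in Lemma \ref{lem4.35.0}, where only a single derivative of $w$ occurs inside $J\ds$, the square $(\HT\ds w)^{2}$ here forces a second derivative of $w$ to appear. The resolution is structural rather than computational—the outer $J$ combined with the $J$ in $Jw$ converts $\ds^{2}$ into the order-zero multiplier $J\ds^{2}J=-\psi(D)^{2}$—so once the integration-by-parts identity above is in place the estimate follows from Lemma \ref{freq.est.2} and the elementary bound of Lemma \ref{H^-1}, exactly as in the companion Lemmas \ref{lem4.34.0} and \ref{lem4.35.0}.
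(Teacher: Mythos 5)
Your proposal is correct and follows the same route as the paper: the paper's proof of this lemma just says it is identical to that of Lemma \ref{lem4.35.0}, namely the single integration-by-parts identity moving all derivatives onto $w$ (your identity $-\LR{\HT\ds((\HT\ds z)J\ds((\HT\ds u)Jw)),w}$ is exactly the analogue of the one displayed there), followed by the Leibniz rule and the $L^{2}$-boundedness of the order-zero multiplier $J\ds$. One small remark: the commutator $[J,\HT\ds u]\ds^{2}Jw$ you introduce is standard but not literally covered by Lemma \ref{comm.est.H}; it can be avoided entirely by writing $J\ds^{2}((\HT\ds u)Jw)=J\ds\bigl((\HT\ds^{2}u)Jw+(\HT\ds u)J\ds w\bigr)$ and then using only the boundedness of $J\ds$ together with $\|Jw\|+\|J\ds w\|\lesssim\|w\|$.
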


\begin{proof}
The proof is identical with that of the previous lemma.
\end{proof}

A similar argument to the proof of Lemma \ref{lem4.34.0} with using Corollary \ref{reduction3.H},
we can show the following:

\begin{lem}\label{lem4.37.0}
Let $s_{0}>7/2$.
Let $u,v\in H^{s_{0}}(\T)$.
Then
\EQQS{
|\LR{(\HT\ds u)Jw,\HT J\ds(u\HT\ds^{2}u-v\HT\ds^{2}v)}|
\lesssim I_{s_{0}}(u,v)^{2}\|w\|^{2},
}
where $w=u-v$.
\end{lem}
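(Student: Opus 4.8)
The plan is to mimic the proof of Lemma \ref{lem4.34.0}, replacing the role of Lemma \ref{reduction3} by Corollary \ref{reduction3.H}. First I would expand the difference as $u\HT\ds^{2}u-v\HT\ds^{2}v=u\HT\ds^{2}w+w\HT\ds^{2}v$ and split the pairing into
\[A:=\LR{(\HT\ds u)Jw,\HT J\ds(u\HT\ds^{2}w)},\quad B:=\LR{(\HT\ds u)Jw,\HT J\ds(w\HT\ds^{2}v)}.\]
The term $B$ is the genuinely lower-order piece: since $\HT J\ds$ is bounded on $L^{2}$ and $\|Jw\|\lesssim\|w\|_{H^{-1}}\lesssim\|w\|$ by Lemma \ref{H^-1}, a direct estimate gives $|B|\lesssim I_{s_{0}}(u,v)^{2}\|w\|^{2}$ (here $v$ carries the derivatives and lies in $H^{s_{0}}$ with $s_{0}>7/2$, so $\HT\ds^{2}v\in L^{\I}$).

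Then I would apply Corollary \ref{reduction3.H} with $u\in H^{s_{0}}$ and $w\in L^{2}$ to replace $\HT J\ds(u\HT\ds^{2}w)$ by $u\HT\ds^{2}w$ at the cost of an $L^{2}$-error of size $\lesssim\|u\|_{H^{s_{0}}}\|w\|$. Pairing this error against $(\HT\ds u)Jw$ (whose $L^{2}$-norm is $\lesssim\|u\|_{H^{s_{0}}}\|w\|$) shows $|A-A'|\lesssim I_{s_{0}}(u,v)^{2}\|w\|^{2}$, where $A':=\LR{(\HT\ds u)Jw,u\HT\ds^{2}w}$. Note that $A'$ is \emph{exactly} the quantity analyzed in Lemma \ref{lem4.34.0}; the only change from that lemma is the sign produced by the extra Hilbert transform (Corollary \ref{reduction3.H} produces $+u\HT\ds^{2}w$, whereas Lemma \ref{reduction3} produces $-u\HT\ds^{2}w$), which is irrelevant for an absolute-value estimate.

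It therefore remains to reproduce the bound for $A'$. Since $\HT$ commutes with $\ds$, I would integrate by parts twice, moving $\ds^{2}$ off $\HT w$ and distributing it by the Leibniz rule, obtaining
\[A'=\LR{\ds^{2}(u\HT\ds u)Jw,\HT w}+2\LR{\ds(u\HT\ds u)J\ds w,\HT w}+\LR{u(\HT\ds u)J\ds^{2}w,\HT w}=:A_{1}'+A_{2}'+A_{3}'.\]
The first two terms are harmless: $J$ and $J\ds$ are bounded on $L^{2}$ (with $\|Jw\|\lesssim\|w\|$ and $\|J\ds w\|\lesssim\|w\|$), while the coefficients involve at most $\ds^{3}u\in H^{s_{0}-3}\subset L^{\I}$, so $|A_{1}'|+|A_{2}'|\lesssim I_{s_{0}}(u,v)^{2}\|w\|^{2}$.

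The one point requiring care, and the main obstacle, is $A_{3}'$, where $J\ds^{2}w$ carries a net derivative that cannot be absorbed by $\|w\|$ alone. Here I would invoke Lemma \ref{freq.est.2} (with $k=1$) to write $J\ds^{2}w=(J\ds^{2}+\HT\ds)w-\HT\ds w$, the first summand being $L^{2}$-bounded by $C\|w\|$. The remaining piece $-\LR{u(\HT\ds u)\HT\ds w,\HT w}$ is then handled by noting $(\HT\ds w)(\HT w)=\tfrac12\ds((\HT w)^{2})$ and integrating by parts once more, which produces $\tfrac12\LR{\ds(u\HT\ds u),(\HT w)^{2}}\lesssim I_{s_{0}}(u,v)^{2}\|w\|^{2}$. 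Collecting the bounds for $A-A'$, $B$, and $A_{1}',A_{2}',A_{3}'$ completes the proof; the only delicate bookkeeping is verifying that the one-derivative loss in $J\ds^{2}w$ is cancelled exactly by the $\HT\ds$ correction supplied by Lemma \ref{freq.est.2}, precisely as in Lemma \ref{lem4.34.0}.
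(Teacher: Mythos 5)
Your proposal is correct and is exactly the argument the paper intends: the paper's proof of this lemma is literally the one-line remark that it follows from the proof of Lemma \ref{lem4.34.0} with Corollary \ref{reduction3.H} in place of Lemma \ref{reduction3}, and your write-up fills in those details faithfully (including the correct handling of the sign flip and the cancellation of the derivative in $J\ds^{2}w$ via Lemma \ref{freq.est.2}).
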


By the integration by parts with Lemma \ref{comm.est.H} and \ref{freq.est.2}, we obtain the following:

\begin{lem}\label{lem29}
Let $s_{0}>7/2$.
Let $u,v\in H^{s_{0}}(\T)$.
Then
\EQQS{
&\sum_{j=3}^{4}(|\LR{u\HT\ds(F_{j}(u)-F_{j}(v)),\HT Jw}|+|\LR{u\HT w,\HT J\ds(F_{j}(u)-F_{j}(v))}|)\\
&\lesssim I_{s_{0}}(u,v)^{4}\|w\|^{2},
}
where $w=u-v$.
\end{lem}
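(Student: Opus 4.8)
The plan is to handle the two families $j=3$ and $j=4$ uniformly by extracting the factor $w=u-v$ from each difference and then integrating by parts so that at most $w$ and a single copy of $\ds w$ ever survive undifferentiated, every remaining derivative being shifted onto the smooth coefficients built from $u,v$. First I would record the algebraic identities
\EQQS{
&u^{2}\ds u-v^{2}\ds v=u^{2}\ds w+(u+v)w\ds v,\qquad u^{2}\HT\ds u-v^{2}\HT\ds v=u^{2}\HT\ds w+(u+v)w\HT\ds v,\\
&u\HT(u\ds u)-v\HT(v\ds v)=u\HT(u\ds w+w\ds v)+w\HT(v\ds v),\qquad u^{4}-v^{4}=(u^{3}+u^{2}v+uv^{2}+v^{3})w,
}
so that $F_{3}(u)-F_{3}(v)$ and $F_{4}(u)-F_{4}(v)$ decompose into sums of terms of the schematic form (smooth)$\,\cdot\,w$ or (smooth)$\,\cdot\,\ds w$. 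Since $s_{0}>7/2$, any coefficient assembled from $u,v$ and at most three of their derivatives lies in $L^{\I}(\T)$, and $H^{s_{0}}(\T)$ is an algebra, so each such coefficient is controlled by a power of $I_{s_{0}}(u,v)$ equal to its degree.

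For the pieces in which $w$ is genuinely differentiated I would use that $\HT$ and $\ds$ are skew-adjoint, hence $\HT\ds$ is self-adjoint, to move $\HT\ds$ (in the first slot) or $\HT J\ds$ (in the second slot) onto the factor $u\,\HT Jw$, respectively $u\,\HT w$; the zero-mode ambiguity of $\HT\HT$ contributes only terms controlled by $\|w\|$, as noted in the introduction. A further integration by parts then shifts the last derivative off $w$, so that every dangerous pairing is replaced by (i) a commutator $[\HT,f]\ds^{k}g$, (ii) a total derivative of the form $\tfrac12\LR{\ds(\text{smooth}),w^{2}}$ or $\tfrac12\LR{\ds(\text{smooth}),(\HT w)^{2}}$, and (iii) a residual $(\HT J\ds^{k+1}-\ds^{k})w$. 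The commutators are bounded by Lemma \ref{comm.est.H}, the total derivatives are immediately $\lesssim I_{s_{0}}(u,v)^{4}\|w\|^{2}$, and the residuals are absorbed using Lemma \ref{freq.est.2}, while the undifferentiated low-order factors satisfy $\|Jw\|,\|\HT Jw\|\lesssim\|w\|$ by Lemma \ref{H^-1}. This is precisely the scheme already carried out in Lemmas \ref{lem4.23.0} and \ref{lem4.31.0}; for $j=4$ it is even easier, since $F_{4}(u)-F_{4}(v)=-zw$ carries no derivative on $w$, so one application of skew-adjointness together with $\|\HT Jw\|\lesssim\|w\|$ and $\|\HT J\ds w\|\lesssim\|w\|$ (Lemma \ref{freq.est.2} with $k=0$) closes the estimate.

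Counting degrees, the prefactor $u$, the cubic coefficient $z=u^{3}+u^{2}v+uv^{2}+v^{3}$, and the two copies of $w$ produce exactly four smooth factors for $j=4$, which is the source of the power $I_{s_{0}}(u,v)^{4}$; the $j=3$ contributions carry only degree three and are dominated since $I_{s_{0}}(u,v)\ge1$. The main obstacle is the bookkeeping in the middle step: after transferring $\HT J\ds$ onto the smooth factor one still encounters pairings such as $\LR{u^{3}\ds w,\HT J\ds w}$ in which both entries carry a derivative on $w$ and which are therefore not of size $\|w\|^{2}$ at sight. The resolution is to integrate by parts to rewrite this as $-\LR{w,u^{3}\HT J\ds^{2}w}$ up to a manifestly harmless term, then invoke Lemma \ref{freq.est.2} with $k=1$ to write $\HT J\ds^{2}w=\ds w+r$ with $\|r\|\lesssim\|w\|$, whereupon $\LR{w,u^{3}\ds w}=-\tfrac12\LR{\ds(u^{3}),w^{2}}$ is again a total derivative. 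Checking that every top-order contribution of $F_{3}$ and $F_{4}$ collapses in exactly this fashion, leaving no uncancelled derivative on $w$, is the only delicate point, and once it is verified the asserted bound $\lesssim I_{s_{0}}(u,v)^{4}\|w\|^{2}$ follows.
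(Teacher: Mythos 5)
Your proposal is correct and follows essentially the same route as the paper, which disposes of this lemma with the single remark that it follows ``by the integration by parts with Lemma \ref{comm.est.H} and \ref{freq.est.2}'': you decompose the differences so that a factor $w$ or $\ds w$ appears, move the multiplier operators by (skew-)adjointness, and resolve every residual derivative on $w$ through a commutator bounded by Lemma \ref{comm.est.H}, a total derivative $\tfrac12\LR{\ds(\cdot),w^{2}}$, or a remainder $(\HT J\ds^{k+1}-\ds^{k})w$ controlled by Lemma \ref{freq.est.2}, with $\|Jw\|\lec\|w\|$ from Lemma \ref{H^-1}. Your identification of the pairing $\LR{u^{3}\ds w,\HT J\ds w}$ as the delicate case, and its resolution via one more integration by parts plus Lemma \ref{freq.est.2} with $k=1$, is exactly the bookkeeping the paper leaves implicit, and the degree count giving $I_{s_{0}}(u,v)^{4}$ (four smooth factors from the prefactor $u$ and the cubic coefficient in the $j=4$ term) is right.
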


By the presence of $J$, we can easily obtain the following two lemmas:

\begin{lem}\label{lem30}
Let $s_{0}>7/2$.
Let $u,v\in H^{s_{0}}(\T)$.
Then
\EQQS{
\sum_{j=3}^{4}|\LR{(\HT\ds u)Jw,J\ds(F_{j}(u)-F_{j}(v))}|
\lesssim I_{s_{0}}(u,v)^{4}\|w\|^{2},
}
where $w=u-v$.
\end{lem}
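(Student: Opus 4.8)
The plan is to integrate by parts once at the level of the full inner product, exploiting that $J\ds$ is anti-self-adjoint on $L^2(\T)$: its Fourier multiplier $i\,\sgn(\xi)\psi(\xi)$ is purely imaginary and odd, and $J$ and $\ds$ commute. Setting $h:=J\ds[(\HT\ds u)Jw]$, this yields
\[
\LR{(\HT\ds u)Jw,\, J\ds(F_{j}(u)-F_{j}(v))}=-\LR{h,\, F_{j}(u)-F_{j}(v)},
\]
so the whole estimate reduces to pairing the auxiliary function $h$ against the difference $F_{j}(u)-F_{j}(v)$. The first and central step is to show that $h\in H^{1}(\T)$ with $\|h\|+\|\ds h\|\lesssim I_{s_{0}}(u,v)\|w\|$. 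This is precisely where the presence of $J$ makes things easy: since $J\ds$ has the bounded multiplier $i\,\sgn(\xi)\psi(\xi)$ it is bounded on $L^{2}(\T)$ (cf.\ Lemma~\ref{freq.est.2}), so $\|h\|\lesssim\|(\HT\ds u)Jw\|$ and $\|\ds h\|\lesssim\|\ds[(\HT\ds u)Jw]\|$. Expanding the latter by the Leibniz rule, using $\ds Jw=J\ds w$ together with the boundedness of $J\ds$, Lemma~\ref{H^-1} (which gives $\|Jw\|\le 2\|w\|$), and the Gagliardo--Nirenberg inequality (Lemma~\ref{G.N.}) in the form $\|\ds^{k}u\|_{\infty}\lesssim\|u\|_{H^{s_{0}}}$ for $k\le 2$ (valid since $s_{0}>5/2$), both norms of $h$ are controlled by $I_{s_{0}}(u,v)\|w\|$.

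With the $H^{1}$-bound on $h$ in hand, the case $j=4$ is immediate: since $F_{4}(u)-F_{4}(v)=-w(u^{3}+u^{2}v+uv^{2}+v^{3})$ carries no derivative on $w$, one has $\|F_{4}(u)-F_{4}(v)\|\lesssim I_{s_{0}}(u,v)^{3}\|w\|$, and Cauchy--Schwarz gives $\lesssim I_{s_{0}}(u,v)^{4}\|w\|^{2}$ (this quartic term is what forces the fourth power of $I_{s_{0}}$). For $j=3$ I would expand $F_{3}(u)-F_{3}(v)$ with $z:=u+v$ into terms of the shape $a(u,v)\ds w$, $\HT(a(u,v)\ds w)$ and $u\HT(u\ds w)$, where $a$ is quadratic in $u,v$, together with "good" terms in which the derivative built into $F_{3}$ falls on $u$ or $v$. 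The good terms, carrying no derivative on $w$, are bounded directly by $\|h\|\,\|G_{\mathrm{good}}\|\lesssim I_{s_{0}}(u,v)\|w\|\cdot I_{s_{0}}(u,v)^{2}\|w\|$.

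The only genuine obstacle is the "bad" part, in which the derivative of $F_{3}$ lands on $w$, so that a naive $L^{2}$ estimate of $F_{3}(u)-F_{3}(v)$ would cost a full derivative $\|\ds w\|$. I resolve this with a second integration by parts that transfers the derivative off $w$: for a representative bad term,
\[
\LR{h,\, \HT(a\,\ds w)}=\LR{\ds(a\HT h),\, w},
\]
and since $\|\ds(a\HT h)\|\le\|\ds a\|_{\infty}\|h\|+\|a\|_{\infty}\|\ds h\|\lesssim I_{s_{0}}(u,v)^{3}\|w\|$ by the $H^{1}$-bound on $h$ and the Sobolev control of the quadratic coefficient $a$, the pairing against $w$ is $\lesssim I_{s_{0}}(u,v)^{3}\|w\|^{2}$; the variants $a\HT\ds w$ and $u\HT(u\ds w)$ are handled identically, moving the derivative either onto $\ds h$ or onto the coefficient. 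Summing the good and bad contributions over the three terms of $F_{3}$ and adding the $j=4$ estimate gives the claimed bound $\lesssim I_{s_{0}}(u,v)^{4}\|w\|^{2}$. In short, the heart of the matter is that $J$ renders $J\ds$ an operator of order zero, so that $h$ is no worse than $w$ in $L^{2}$ but one derivative smoother, leaving exactly enough room to absorb the single derivative carried by $F_{3}$.
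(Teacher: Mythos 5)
Your proposal is correct, and it fills in exactly the argument the paper leaves implicit: the paper states Lemma \ref{lem30} without proof, remarking only that it follows ``by the presence of $J$,'' and your route --- moving the order-zero operator $J\ds$ across the pairing, establishing the $H^{1}$ bound $\|h\|+\|\ds h\|\lesssim I_{s_{0}}(u,v)\|w\|$ for $h=J\ds[(\HT\ds u)Jw]$, and integrating by parts once more to shift the single derivative of $F_{3}$ off $w$ --- is the same mechanism the paper uses explicitly in the neighboring lemmas (e.g.\ Lemma \ref{lem4.35.0}). The only point worth flagging is that the anti-self-adjointness of $J\ds$ on real functions presupposes $\psi$ even (otherwise the transpose has symbol $i\sgn(-\xi)\psi(-\xi)$); the paper makes the same tacit assumption, and since only absolute values are estimated the sign is immaterial.
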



\begin{lem}\label{lem31}
Let $s_{0}>7/2$.
Let $u,v\in H^{s_{0}}(\T)$.
Then
\EQQS{
\sum_{j=2}^{4}|\LR{u^{2}Jw,J\ds(F_{j}(u)-F_{j}(v))}|
\lesssim I_{s_{0}}(u,v)^{5}\|w\|^{2},
}
where $w=u-v$.
\end{lem}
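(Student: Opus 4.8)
The plan is to treat the three values $j=2,3,4$ separately, writing each difference $F_j(u)-F_j(v)$ as a finite sum of terms that are linear in $w=u-v$ with smooth coefficients built from $u$ and $v$, and then to bound every resulting pairing by $\|w\|^2$ times a power of $I_{s_0}(u,v)$. The basic tools I would use are that $J$ is a self-adjoint Fourier multiplier and $\ds$ is skew-adjoint, so that $\LR{u^2Jw, J\ds g}=-\LR{\ds J(u^2 Jw),g}$ for $g=F_j(u)-F_j(v)$; the smoothing bounds $\|Jw\|\lesssim\|w\|$ (Lemma \ref{H^-1}) and $\|Jw\|_{H^1}\lesssim\|w\|$, both immediate from the definition of $J$ since $\LR{\xi}\psi(\xi)/|\xi|\lesssim1$ on $\supp\psi$; and the product bound $\|u^2 Jw\|\lesssim\|u\|_{H^{s_0}}^2\|w\|$. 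The key accounting is the derivative budget: the factor $Jw$ carries one spare derivative (i.e. $\|Jw\|_{H^1}\lesssim\|w\|$), while $J\ds$ is a bounded operator of order zero on $L^2$ (its symbol is $i\sgn(\xi)\psi(\xi)$); hence at most one derivative falling on $w$ inside $F_j(u)-F_j(v)$ can be absorbed for free.

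For $F_4=-u^4$ there is no derivative on $w$: writing $u^4-v^4=(u^3+u^2v+uv^2+v^3)w$ and using that $J\ds$ is bounded on $L^2$ gives $\|J\ds(F_4(u)-F_4(v))\|\lesssim I_{s_0}(u,v)^3\|w\|$, so Cauchy--Schwarz together with $\|u^2Jw\|\lesssim\|u\|_{H^{s_0}}^2\|w\|$ yields $I_{s_0}(u,v)^5\|w\|^2$; this term produces the top power $I_{s_0}^5$ claimed in the statement. For $F_3$ every summand of $F_3(u)-F_3(v)$ carries at most one derivative on $w$ (for instance $u^2\ds u-v^2\ds v=u^2\ds w+(u+v)w\ds v$), so after one integration by parts moving $\ds$ onto $u^2 Jw$ I can spend the single spare derivative of $Jw$, while the Hilbert transforms occurring in $F_3$ are dealt with by the commutator estimate Lemma \ref{comm.est.H}; this contributes at most $I_{s_0}(u,v)^4\|w\|^2$.

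The one real obstacle is $F_2$, which contains two derivatives. Inspecting $F_2(u)-F_2(v)$, only the summands $c_1u\ds^2u$ and $c_4\HT(u\HT\ds^2u)$ place both derivatives on $w$, namely $c_1u\ds^2 w$ and $c_4\HT(u\HT\ds^2 w)$; the pieces $(\ds u)^2-(\ds v)^2=\ds w\,\ds(u+v)$ and $(\HT\ds u)^2-(\HT\ds v)^2$ factor so that $w$ is differentiated only once, and the ``$w\ds^2v$'' pieces leave $w$ undifferentiated, so all of these fall under the $F_3$-type treatment above. For the two genuinely second-order terms I would first apply the reduction lemmas to strip the outer $J\ds$: Lemma \ref{reduction3} gives $J\ds(u\ds^2 w)=-u\HT\ds^2 w+O_{L^2}(\|u\|_{H^{s_0}}\|w\|)$, and (since $\HT$ commutes with $J$ and $\ds$) Corollary \ref{reduction3.H} gives $\HT J\ds(u\HT\ds^2 w)=u\HT\ds^2 w+O_{L^2}(\|u\|_{H^{s_0}}\|w\|)$, the remainders being harmless after pairing with $u^2Jw$. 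This reduces both cases to $\LR{u^3Jw,\HT\ds^2 w}$.

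Finally I would integrate by parts twice to move $\ds^2$ onto $u^3Jw$: in $\ds^2(u^3Jw)=u^3\ds^2 Jw+2\ds(u^3)\ds Jw+\ds^2(u^3)Jw$ the last two terms are controlled by $\|Jw\|_{H^1}\lesssim\|w\|$, and the top term $\LR{u^3\ds^2 Jw,\HT w}$ is where the missing derivative must be recovered. Since $\ds^2 Jw=J\ds^2 w$, Lemma \ref{freq.est.2} (with $k=1$), combined with $\HT\HT=-\mathrm{Id}$ up to the bounded zero-mode correction, shows $J\ds^2 w=-\HT\ds w+O_{L^2}(\|w\|)$, after which one last integration by parts turns $-\LR{u^3\HT\ds w,\HT w}$ into $\tfrac12\LR{\ds(u^3),(\HT w)^2}$, bounded by $I_{s_0}(u,v)^3\|w\|^2$. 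Collecting the three cases and retaining the largest power $I_{s_0}(u,v)^5$ (coming from $F_4$) completes the proof. The delicate point is precisely this last step for $F_2$: a naive application of Cauchy--Schwarz is short by exactly one derivative, and it is the oscillation encoded in Lemma \ref{freq.est.2} that supplies it.
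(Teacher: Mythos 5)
Your proposal is correct, and it fills in exactly the argument the paper leaves implicit: the paper states this lemma without proof (``by the presence of $J$\dots''), relying on the fact that $J$ gains one derivative while $J\ds$ is order zero, which is precisely your derivative budget. Your careful treatment of the two genuinely second-order pieces of $F_{2}$ --- reducing via Lemma \ref{reduction3} and Corollary \ref{reduction3.H} to $\LR{u^{3}Jw,\HT\ds^{2}w}$, converting $J\ds^{2}$ to $-\HT\ds$ by Lemma \ref{freq.est.2}, and closing with the symmetrization $\LR{u^{3}\HT\ds w,\HT w}=-\tfrac12\LR{\ds(u^{3}),(\HT w)^{2}}$ --- is sound and is the only nontrivial point.
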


\begin{proof}[Proof of Proposition \ref{ene.est.0}]
The proof is similar to that of Proposition \ref{ene.est.}.
Put $w:=u_{1}-u_{2}$.
Then $w$ satisfies \eqref{eq3.7} on $[0,T]$.
By Lemma \ref{NT0}, we have
\EQS{
\begin{aligned}\label{eq4.1}
&\left|\frac{1}{2}\frac{d}{dt}\|w\|^{2}+\la_{1}(0)\LR{\ds u_{1},(\ds w)^{2}}
 +\la_{2}(0)\LR{(\HT\ds^{2}u_{1})\HT\ds w,w}+\la_{3}(0)\LR{u_{1}\ds u_{1}\HT\ds w,w}\right|\\
&\le CI_{s_{0}}(u_{1},u_{2})^{3}\|w\|_{H^{s}}^{2}+\max\{\e_{1}^{2},\e_{2}^{2}\}\|u_{2}\|_{H^{s_{0}+1}}^{2}.
\end{aligned}
}
By Lemma \ref{m.e.1.2new0}, \ref{lem4.23.0}, \ref{lem4.25.0}, \ref{lem4.26.0}, \ref{lem4.28.0}, \ref{lem4.29.0}, \ref{lem4.30.0}, \ref{lem4.31.0}, \ref{lem4.32.0}, \ref{lem29}, we also have
\EQS{
\begin{aligned}\label{eq4.2}
&\left|\frac{d}{dt}M^{(1)}(u_{1},u_{2})-\la_{1}(0)\LR{\ds u_{1},(\ds w)^{2}}
 +\frac{\la_{1}(0)\la_{4}(0)}{4}\LR{(\HT\ds^{2}u_{1})\HT\ds w,w}\right|\\
&\le CI_{s_{0}}(u_{1},u_{2})^{5}\|w\|_{H^{s}}^{2}+\max\{\e_{1}^{2},\e_{2}^{2}\}\|u_{2}\|_{H^{s_{0}+1}}^{2}
\end{aligned}
}
Similarly, by Lemma \ref{lem2.14new0}, \ref{lem4.34.0}, \ref{lem4.35.0}, \ref{lem4.36.0}, \ref{lem4.37.0}, \ref{lem30}, we have
\EQS{
\begin{aligned}\label{eq4.3}
&\left|\frac{d}{dt}M^{(2)}(u_{1},u_{2})-\la_{2}(0)\LR{(\HT\ds^{2}u_{1})\HT\ds w,w}\right|\\
&\le CI_{s_{0}}(u_{1},u_{2})^{5}\|w\|_{H^{s}}^{2}+\max\{\e_{1}^{2},\e_{2}^{2}\}\|u_{2}\|_{H^{s_{0}+1}}^{2}.
\end{aligned}
}
Moreover, by Lemma \ref{lem2.21new0} and \ref{lem31}, we obtain
\EQS{
\begin{aligned}\label{eq4.4}
&\left|\frac{d}{dt}M^{(3)}(u_{1},u_{2})-\frac{\la_{1}(0)\la_{4}(0)+4\la_{3}(0)}{4}\LR{u\ds u\HT\ds w,w}\right|\\
&\le CI_{s_{0}}(u_{1},u_{2})^{5}\|w\|_{H^{s}}^{2}+\max\{\e_{1}^{2},\e_{2}^{2}\}\|u_{2}\|_{H^{s_{0}+1}}^{2}.
\end{aligned}
}
It is easy to see that
\EQS{
\begin{aligned}\label{eq4.5}
\frac{d}{dt}\{\|w\|_{H^{-1}}^{2}(1+C\|u_{1}\|^{2}+C\|u_{1}\|^{4})\}\le CI_{s_{0}}(u_{1},u_{2})^{7}\|w\|^{2}.
\end{aligned}
}
Indeed, we have
\EQQS{
&\LR{\LR{D}^{-1}\ds(u_{1}\ds^{2}u_{1}-u_{2}\ds^{2}u_{2}),\LR{D}^{-1}w}\\
&=-\LR{[\LR{D}^{-1}\ds^{2},u_{1}]w,\LR{D}^{-1}\ds w}-\frac{1}{2}\LR{\ds u_{1},(\LR{D}^{-1}\ds w)^{2}}\\
&\quad+2\LR{\LR{D}^{-1}\ds(\ds u_{1}w),\LR{D}^{-1}\ds w}-\LR{\LR{D}^{-1}(\ds^{2}u_{1}w),\LR{D}^{-1}\ds w}\\
&\quad+\LR{\LR{D}^{-1}\ds(w\ds^{2}u_{2}),\LR{D}^{-1}w},
}
which together with Lemma \ref{comm.est.4.1} implies that
\[|\LR{\LR{D}^{-1}\ds(u_{1}\ds^{2}u_{1}-u_{2}\ds^{2}u_{2}),\LR{D}^{-1}w}|\lesssim I_{s_{0}}(u_{1},u_{2})\|w\|^{2}.\]
Other terms can be estimated in a simiar way, and then we obtain \eqref{eq4.5}.
Therefore, collecting \eqref{eq4.1}, \eqref{eq4.2}, \eqref{eq4.3}, \eqref{eq4.4} and \eqref{eq4.5}, we obtain \eqref{eq2.5}.
\end{proof}

\section*{Acknowlegdements}
The author would like to express his deep gratitude to Professor Kotaro Tsugawa for encouragement and valuable comments, especially for Definition \ref{J}.

\end{document}